\theoremstyle{plain}
\newtheorem{theorem}{Theorem}[section]
\newtheorem{corollary}[theorem]{Corollary}
\newtheorem{lemma}[theorem]{Lemma}
\newtheorem{sublemma}[theorem]{Sublemma}
\newtheorem{observation}[theorem]{Observation}
\newcommand{\vast}{\bBigg@{4}}
\newcommand{\Vast}{\bBigg@{5}}
\definecolor{bulgarianrose}{rgb}{0.28, 0.02, 0.03}
\definecolor{gray}{rgb}{0.5, 0.5, 0.5}
\theoremstyle{definition}
\newtheorem{remark}[theorem]{Remark}
\theoremstyle{remark}
\def\namedlabel#1#2{\begingroup
    #2%
    \def\@currentlabel{#2}%
    \phantomsection\label{#1}\endgroup
}
\newcommand\tsup[2][2]{%
 \def\useanchorwidth{T}%
  \ifnum#1>1%
    \stackon[-.5pt]{\tsup[\numexpr#1-1\relax]{#2}}{\scriptscriptstyle\sim}%
  \else%
    \stackon[.5pt]{#2}{\scriptscriptstyle\sim}%
  \fi%
}
\newcommand{\vol}{\text{vol}}
\newcommand{\degg}{\text{deg}}
\title{\scshape
  On the modularity of $3-$regular random graphs and random graphs with given degree sequences}
\author[1]{Lyuben Lichev}
\author[2,3]{Dieter Mitsche\footnote{Dieter Mitsche has been supported by grant GrHyDy ANR-20-CE40-0002 and by IDEXLYON of Universit\'{e} de Lyon (Programme Investissements d'Avenir ANR16-IDEX-0005).}}
\affil[1]{Ecole Normale Supérieure de Lyon, Lyon, France}
\affil[2]{Institut Camille Jordan, Univ. Lyon 1, Lyon, France}
\affil[3]{Univ. Jean Monnet, Saint-Etienne, France}
\begin{document}

\maketitle
 
\begin{abstract}
The modularity of a graph is a parameter that measures its community structure; the higher its value (between $0$ and $1$), the more clustered the graph is.

In this paper we show that the modularity of a random $3-$regular graph is at least $0.667026$ asymptotically almost surely (a.a.s.), thereby proving a conjecture of McDiarmid and Skerman. We also improve the a.a.s.\ upper bound given therein to $0.789998$.

For a uniformly chosen graph $G_n$ over a given bounded degree sequence with average degree $d(G_n)$ and with $|CC(G_n)|$ many connected components, we distinguish two regimes with respect to the existence of a giant component.  In the subcritical regime, we compute the second term of the modularity. In the supercritical regime, we prove that there is $\varepsilon > 0$, for which the modularity is a.a.s.\ at least 
\begin{equation*}
\dfrac{2\left(1 - \mu\right)}{d(G_n)}+\varepsilon,
\end{equation*}
where $\mu$ is the asymptotically almost sure limit of $\dfrac{|CC(G_n)|}{n}$. 
\end{abstract}

\section{Introduction}
Recent years have seen a fast increase of network data available and the need for detecting clusters - disjoint groups of nodes with many connections between the elements within a single group and rather few connections between elements of different groups - has become more and more important. Identifying clusters helps to exploit a network more effectively: for example, having detected clusters in social networks allows for targeted advertisements, or having detected clusters in collaboration networks allows for identifying similar papers. Whereas traditional clustering approaches either fix the number of clusters and/or the sizes of the clusters, the concept of \emph{modularity} allows for more flexibility here: whilst rewarding a partition for containing edges within its parts, it penalizes parts incident to too many edges. Introduced by Newman and Girvan in~\cite{Girvan}, it was first studied in physics (see~\cite{Physics1Mod, Physics2Mod}) due to its connections to the Potts model in statistical physics presented in~\cite{Physics3Mod}. It was then analyzed in different applications including protein discovery and identifying connections between websites: see~\cite{SurveyMod1} and~\cite{SurveyMod2} for surveys on the use of modularity for community detection in networks. After this successful application in practice, modularity was then also studied from a mathematical point of view. We first give the definition and our results and then refer to related work in the mathematics literature.\par

\noindent For a subset $A$ of vertices of $G$, we denote by $e(A)$ the number of edges with two endvertices in $A$ and by $\vol(A)$ the sum of the degrees of the vertices in $A$. The \textit{modularity of a partition $\mathcal A = \{A_1, A_2, \dots, A_k\}$} of the vertices of a graph $G$ with $m$ edges is defined as 
\begin{equation}\label{mod:definition}
    q(\mathcal A) = \dfrac{1}{m}\sum_{i=1}^k e(A_i) - \dfrac{1}{4m^2}\sum_{i=1}^k \vol(A_i)^2.
\end{equation}
The first term corresponds to the proportion of edges of $G$ that have both endvertices in the same part of $\mathcal A$; this term ensures that the edge density within the communities is high. The second term stands for the expected proportion of edges within the given parts in a random graph with the same degree sequence as $G$ (or also in a vertex-weighted random graph e.g. given by the Chung-Lu model). Its form suggests that it may be interpreted as a sort of ``degree tax''.

For a graph $G=(V, E)$, the \textit{modularity} $q^*(G)$ of $G$ is defined as 
\begin{equation*}
    \max_{\mathcal A \in \mathcal{P}(V)} q(\mathcal{A}),
\end{equation*}
where $\mathcal{P}(V)$ stands for the set of partitions of the vertex set $V$ of $G$. It is well known and easy to see that $0 \le q^*(G) < 1$ for every graph $G$ (for a graph $G$ without edges, by convention, $q^*(G)=0$). 
For every $d \ge 1$, we denote by $\mathcal G_d(n)$ the set of all $d-$regular graphs.
Denote also by $G_d(n)$ (or simply $G_d$) the random $d-$regular graph with $n$ vertices following the uniform distribution over the set $\mathcal G_d(n)$.\par 
For a sequence of probability spaces $(\Omega_n, \mathcal F_n, \mathbb P_n)_{n\geq 1}$ and a sequence of events $(A_n)_{n\geq 1}$, where $A_n\in \mathcal F_n$ for every $n\geq 1$, we say that $(A_n)_{n\geq 1}$ happens \textit{asymptotically almost surely} or \textit{a.a.s.}, if $\underset{n\to +\infty}{\lim}\mathbb P_n(A_n) = 1$. The sequence of events $(A_n)_{n\geq 1}$ itself is said to be \textit{asymptotically almost sure} or again \textit{a.a.s.} Our first result concerns $G_3(n)$: McDiarmid and Skerman conjectured in~\cite{ColinFiona} that there exists $\delta > 0$ such that a.a.s.\ $q^*(G_3(n)) \ge \frac23 + \delta$. Our first theorem confirms this conjecture:
\begin{theorem}\label{LB3reg}
Let $G_3 \in \mathcal G_3(n)$. Then a.a.s.\ $q^*(G_3) \ge 0.667026$.
\end{theorem}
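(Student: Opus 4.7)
The plan is to exhibit an explicit random partition $\mathcal{A}$ of $V(G_3)$ whose modularity is, a.a.s., at least $0.667026$. Since $m = 3n/2$ for a $3$-regular graph, modularity reads
\begin{equation*}
q(\mathcal{A}) \;=\; \frac{2}{3n}\sum_i e(A_i) \;-\; \frac{1}{9n^2}\sum_i \vol(A_i)^2,
\end{equation*}
so the aim is to build a partition with many internal edges and small aggregate squared volume. The benchmark $\tfrac{2}{3}$, which comes from partitioning into disjoint cycles of slowly-growing length (and corresponds to $\tfrac{2(1-\mu)}{d(G_n)}$ in the general formula stated in the abstract), must be overcome by an explicit constant $\delta \approx 3.6\cdot 10^{-4}$. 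This forces the parts to carry, on average, strictly more internal edges than a single spanning cycle of the part would provide, i.e.\ the parts must absorb extra chord edges coming from a "third matching" of a proper $3$-edge-colouring.

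I would work in the configuration model. Conditioning on the (a.a.s.) event that $G_3$ is simple and $3$-edge-colourable, I fix a decomposition $E(G_3) = M_1 \sqcup M_2 \sqcup M_3$ into three perfect matchings of $V$. The graph $F := M_1 \cup M_2$ is $2$-regular and hence a disjoint union of cycles $C_1,\ldots,C_r$, and, conditional on $F$, the matching $M_3$ behaves essentially like a uniform perfect matching of the remaining half-edges and contributes a near-uniform chord pattern. I then construct $\mathcal{A}$ in two stages with a parameter $\ell = \ell(n)$ to be optimised. Stage I cuts each $F$-cycle of length $>\ell$ into arcs of length $\ell$ and keeps each shorter $F$-cycle whole; a direct calculation gives the baseline modularity $q(\mathcal{A}_{\text{base}}) = \tfrac{2}{3} - \tfrac{2}{3\ell} - \tfrac{\ell}{n} + o(1)$ a.a.s. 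Stage II refines $\mathcal{A}_{\text{base}}$ using $M_3$: for every $M_3$-edge $\{u,v\}$ whose endpoints fall into adjacent parts of $\mathcal{A}_{\text{base}}$, either shift the intervening cut so that $u,v$ wind up in the same arc, or merge the two parts whenever the local identity
\begin{equation*}
\Delta q \;=\; \frac{2\,c(A_i,A_j)}{3n} \;-\; \frac{2\,\vol(A_i)\vol(A_j)}{9n^2} \;>\; 0
\end{equation*}
holds, iterating greedily until no profitable move remains. Letting $R(\ell)$ denote the number of $M_3$-edges internal to the final partition, one obtains $q(\mathcal{A}) \ge \tfrac{2}{3} - \tfrac{2}{3\ell} - \tfrac{\ell}{n} + \tfrac{2 R(\ell)}{3n} + o(1)$ a.a.s. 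The proof is then finished by (i) producing a sharp lower bound on $\mathbb{E}[R(\ell)\mid F]$ through a configuration-model computation, (ii) transferring it to an a.a.s.\ bound via concentration (the switching method, or Azuma's inequality on the pairing-revealing filtration), and (iii) numerically optimising over $\ell$.

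The main obstacle is step (i): controlling $\mathbb{E}[R(\ell)\mid F]$ precisely enough. Most $M_3$-chords join vertices that are far apart on a giant $F$-cycle, and such chords cannot be kept internal for \emph{any} choice of cut positions; only chords whose two endpoints lie within $F$-distance $O(\ell)$ are candidates. Counting the latter in the near-uniform chord diagram on the giant $F$-cycle, tracking how the cascade of profitable merges in Stage II inflates part volumes (and thereby turns off the profitability condition), and keeping all $o(1)$ terms under control are the bulk of the technical work. The specific numerical constant $0.667026$ is obtained only at the very end, by balancing the loss $\tfrac{2}{3\ell}+\tfrac{\ell}{n}$ against the chord-retention gain $\tfrac{2R(\ell)}{3n}$ at the optimal scale of $\ell$ and substituting the explicit retention rate proved in step (i).
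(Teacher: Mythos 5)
Your plan cannot reach the stated bound, and the obstruction is quantitative, not merely technical. In your two-stage scheme every part has order $O(\ell)$, and an $M_3$-chord can only be kept internal if its two endpoints lie within $F$-distance $O(\ell)$ of each other. Conditional on $F$, the $M_3$-partner of a vertex is spread essentially uniformly over the cycle(s), so the expected number of such short chords is of order $n\cdot\frac{\ell}{n}=O(\ell)$, whence $R(\ell)=O(\ell)$ and the gain term is $\frac{2R(\ell)}{3n}=O\!\left(\frac{\ell}{n}\right)$. Plugging this into your own formula, even with perfect retention you would need
\begin{equation*}
\frac{2\ell}{3n}\;>\;\frac{2}{3\ell}+\frac{\ell}{n},
\end{equation*}
which is impossible for every $\ell$; the best your construction can give is $\frac{2}{3}-\Theta(1/\sqrt{n})$, never $\frac{2}{3}+\delta$ for a constant $\delta>0$. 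The only escape is that the greedy merges of Stage II produce parts of \emph{linear} order, but then the accounting $-\frac{\ell}{n}$ for the volume penalty and the whole ``balance $\frac{2}{3\ell}+\frac{\ell}{n}$ against $\frac{2R(\ell)}{3n}$'' optimisation no longer apply, and you have given no argument that the merged parts have density bounded away from $1$. Note also a distributional issue: conditioning on $3$-edge-colourability and then ``fixing'' a colouring does not make $M_3$ a near-uniform perfect matching given $F$; to argue this way you would need a contiguity statement for superposition models (Hamilton cycle plus matching, or three matchings), which you neither invoke nor prove.

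The paper's route is structurally different precisely because a constant gain over $\frac{2}{3}$ forces at least one part of linear order whose induced density exceeds $1$ by a constant. It grows such a part inside the configuration model by an exploration process: first a seed $C_0$ of $\varepsilon n$ vertices, then successive phases that add cherries and chains of length three whose endpoints are already in the component, with all phase trajectories tracked by the differential equation method; optimising over $\varepsilon$ gives a single part of size $\approx 0.0448n$ and relative modularity $\approx 0.6747$. The remainder of the graph is shown (after contracting explored paths and applying a connectivity result for the $2$-core-type degree sequence) to have a giant component that is cut, via the spanning-tree partition lemma, into connected pieces of order $\Theta(\sqrt{n})$, each contributing relative modularity $\frac{2}{3}-o(1)$. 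Averaging the linear dense part against these pieces yields $0.667026$. If you want to salvage your matching-decomposition picture, you would have to replace Stage II by an argument that deliberately builds one linear-size subgraph with a linear surplus of chords — at which point you are essentially redoing the paper's exploration analysis.
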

As a complementary result, we also improve on the upper bound: to our knowledge the best results before this paper were  $q^*(G_3) \le 0.804$ (see~\cite{ColinFiona} and~\cite{OstPralat}). We prove the following result:
\begin{theorem}\label{UB3reg}
Let $G_3 \in \mathcal G_3(n)$. Then a.a.s.\ $q^*(G_3) \le 0.789998$.
\end{theorem}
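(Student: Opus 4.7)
The plan is a first-moment argument over vertex partitions in the configuration model $G_3^*(n)$. Since a random pairing yields a simple graph with probability bounded away from zero, any a.a.s.\ upper bound on $q^*$ in $G_3^*(n)$ transfers to $G_3(n)$. Writing $m=3n/2$ and $f_i = |A_i|/n$, we have
\begin{equation*}
q(\mathcal{A}) = 1 - \frac{\mathrm{cut}(\mathcal{A})}{m} - \sum_i f_i^2,
\end{equation*}
so the theorem reduces to showing that a.a.s.\ no vertex partition satisfies $\sum_i e(A_i) > (\sum_i f_i^2 + 0.789998)\, m$. We bound the expected number of such partitions.

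The first step is to reduce to a bounded number of parts. Parts of total size at most $\delta n$ contribute at most $\delta$ to $\sum_i f_i^2$ and can only add edges to the cut, so merging or discarding all parts of size less than $\delta n$ changes $q(\mathcal{A})$ by at most $O(\delta)$. Hence, at the price of a negligible loss in the target constant, one may restrict attention to partitions with $k \le 1/\delta$ parts, each of size at least $\delta n$.

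For each profile $(f_1,\dots,f_k)$, the number of vertex partitions is at most $\exp(n H(f) + o(n))$ where $H(f) = -\sum_i f_i \log f_i$. Conditioned on a fixed such partition with part-volumes $3 f_i n$, the joint law of $(e(A_1),\dots,e(A_k))$ in the configuration model admits an exact enumeration via half-edge pairings; Stirling's approximation then produces an explicit rate function $\Phi$ such that, with $\beta_i = e(A_i)/m$, the probability of $e(A_i) \ge \beta_i m$ for every $i$ decays as $\exp\bigl(-n \Phi(f,\beta) + o(n)\bigr)$. Combining these, the expected number of partitions with $q(\mathcal{A}) \ge 0.789998$ is at most
\begin{equation*}
\sum_{f,\beta} \exp\bigl(n\bigl[H(f) - \Phi(f,\beta)\bigr] + o(n)\bigr),
\end{equation*}
the sum being taken over feasible pairs with $\sum_i \beta_i \ge 0.789998 + \sum_i f_i^2$. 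The theorem follows once this exponent is strictly negative on the whole feasible region.

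The remaining task is a constrained non-linear optimization. Symmetry and convexity should reduce the effective search to small $k$ (heuristically $k\le 3$), after which Lagrange multipliers identify candidate maximizers. The main obstacle lies precisely here: rigorously certifying that $H(f) - \Phi(f,\beta) < 0$ throughout a high-dimensional non-convex region requires a combination of analytic reductions (such as monotonicity in each $\beta_i$ and symmetrization over equal part sizes) together with careful numerical work. The somewhat unwieldy constant $0.789998$ reflects the numerical output of this program, computed with enough precision to ensure strict negativity, and is attained for balanced (or near-balanced) profiles.
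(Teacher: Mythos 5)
Your outline stops exactly where the mathematical content of the theorem lies, so as it stands there is a genuine gap. Everything after ``Stirling's approximation then produces an explicit rate function $\Phi$'' is a description of work to be done, not an argument: the function $\Phi$ is never written down (and doing so is not routine, since for each part one must count graphs with a prescribed internal degree profile --- in particular one must control vertices whose degree inside their part is $0$ or $1$, where formulas of the type of Corollary~\ref{cor 2.3} do not apply); the reduction ``symmetry and convexity should reduce the effective search to small $k$'' is only a heuristic, and the decisive inequality $H(f)-\Phi(f,\beta)<0$ on the feasible region is simply asserted to be checkable numerically. Moreover there is no evidence that a first moment taken over \emph{whole partitions} certifies the specific constant $0.789998$: compared with a union bound over single sets, you pay the full partition entropy $H(f)$ (up to $\log(1/\delta)$ per vertex after your merging step, and the merging itself costs $O(\delta)$ in the target constant), so the threshold your program would output need not match the one in the statement. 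In short, the proposal is a plausible programme, but the theorem is precisely the unverified optimization at its end.

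The paper avoids this high-dimensional optimization by a different first step, which is the idea your proposal is missing: since $q(\mathcal A)=\sum_i \frac{|A_i|}{n}q_r(A_i)$ is a weighted average of relative modularities, a partition with modularity at least $0.789998$ forces a \emph{single} set $A$ with $\frac{2e(A)}{3|A|}-\frac{|A|}{n}\ge 0.789998$. Small such sets are excluded by a union bound over sets of size $s\le \varepsilon_0 n$ using the density requirement $e(A)\ge 1.184997\,s$ (Lemma~\ref{lem 4.1}), and linear-sized sets are massaged into a cut $(B,V\setminus B)$ in which both sides have minimum degree two (Lemmas~\ref{lem 4.2} and~\ref{lem 4.3}), so that Corollary~\ref{cor 2.3} gives an exact exponential count. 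The resulting first moment is over cuts parametrized by just two quantities $(\varepsilon,\beta)$, and the negativity of the rate function $f(\beta,\varepsilon)$ in~\eqref{eq:function} is checked by elementary calculus (monotonicity in $\beta$, then a one-variable analysis of $g(\varepsilon)$). If you want to salvage your approach, the cleanest fix is to import this single-part reduction at the start; otherwise you must actually construct $\Phi$, justify the reduction in $k$, and carry out the multi-dimensional optimization, none of which is present in the proposal.
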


In fact, in the spirit of Theorem~\ref{LB3reg}, we also obtain an improved lower bound for more general degree sequences $(D_n)_{n \ge 1}$. For a graph $G$, we denote by $CC(G)$ the set of connected components of $G$. Denote by $\Delta(n)$ the maximum degree in $D_n$ and, for every $i\ge 0$, denote by $d_i(n)$ the number of vertices in $D_n$ of degree $i$. A sequence of degree sequences is \textit{bounded} if there is $\Delta\in \mathbb N$ such that for every $n\ge 1, \Delta(n)\le \Delta$. In this paper we assume that $d_0(n) = 0$ for all $n$. We say that the sequence of degree sequences $(D_n)_{n \ge 1}$ is \textit{regular} if for every positive integer $i \ge 1$ there is $p_i \ge 0$, such that the proportion of vertices $\dfrac{d_i(n)}{n}$ of degree $i$ in $D_n$ tends to a limit $p_i$ with $n$.\par

\begin{theorem}\label{LBgeneral}
Fix a sequence of bounded regular degree sequences $(D_n)_{n\ge 1}$ with limit vector $\boldsymbol{p} = (p_i)_{i\ge 1}$ and maximal degree $\Delta$. Define 
\begin{equation}
Q = Q(\boldsymbol{p}) := \sum_{i\ge 1} i(i-2)p_i
\end{equation}
and 
\begin{equation*}
M = M(\boldsymbol{p}) := \sum_{i\ge 1} ip_i.
\end{equation*}
Then
\begin{enumerate}
    \item\label{thm 3.1} If $Q < 0$, then a.a.s.\
    \begin{equation*}
    q^*(G(n)) = 1 - \dfrac{c}{Mn} + o\left(\dfrac{1}{n}\right),
    \end{equation*}
    where $c = c(\boldsymbol{p}, \Delta) > 0$ is given by the sum 
    \begin{equation*}
4 \sum_{t_2, \dots, t_{\Delta}\in \mathbb N} \dfrac{t_2+\dots+(\Delta-1)t_{\Delta}+1}{t_2+\dots+(\Delta-1)t_{\Delta}+2} \binom{t_2\dots+(\Delta - 1)t_{\Delta} + 2}{\sum (i-2)t_i + 2, t_2, \dots, t_{\Delta}} \left(\dfrac{p_1}{M}\right)^2\prod_{2\le i\le \Delta} \left(\dfrac{ip_i}{M}\left(\dfrac{p_1}{M}\right)^{(i-2)}\right)^{t_i},
    \end{equation*}
    where $0^0 = 1$ by convention.
    \item\label{thm 3.3} If $Q > 0$, there exists a constant $\varepsilon > 0$ so that a.a.s.\
    \begin{equation*}
        q^*(G_n) \ge \dfrac{2(1-\mu)}{M} + \varepsilon,
    \end{equation*}
    where $\mu = \mu(\boldsymbol{p})$ is the a.a.s.\ limit of $\dfrac{|CC(G(n)|}{n}$. 
\end{enumerate}
\end{theorem}

\begin{remark}
Point~\ref{thm 3.3} of Theorem~\ref{LBgeneral} proves on its own that the modularity of the random $3-$regular graph $G_3(n)$ is a.a.s.\ at least $2/3 + \varepsilon$ for some $\varepsilon > 0$, but the value of $\varepsilon$ is not given explicitly this time. Indeed, the random $3-$regular graph is a.a.s.\ connected (so $\mu = 0$), and its average degree is always $3$ (thus $M = 3$).
\end{remark}

\begin{remark}
It is a natural question to ask if a closed formula for $c(\boldsymbol{p}, \Delta)$ may be given. Sadly, even for $\Delta = 3$, one may only reduce the expression for $c(\boldsymbol{p}, \Delta)$ to a single sum of terms given by (a little more than) hypergeometric terms. The answer to this question seems therefore to be negative for $\Delta \ge 3$.
However, for $\Delta = 2$, the constant is not hard to calculate, it is equal to
\begin{equation*}
4\sum_{t_2\ge 1} \dfrac{t_2+1}{t_2+2}\binom{t_2+2}{2} \left(\dfrac{p_1}{M}\right)^2 \left(\dfrac{2p_2}{M}\right)^{t_2} = \dfrac{2(p_1+4p_2)}{p_1} - 2\left(\frac{p_1}{p_1+2p_2}\right)^2.
\end{equation*}
\end{remark}

\par\noindent
\textbf{Related work.}
After the introduction of the concept in the already mentioned paper by Newman and Girvan~\cite{Girvan}, due to its success in applications, modularity was analyzed for different graph classes: cycles were analyzed in~\cite{Cycles2}, lattices in~\cite{ColinFionaLattice} and~\cite{Lattices} respectively.
The study of modularity in trees was initiated by Bagrow in~\cite{Bagrow}, who showed that $k$-ary trees as well as Galton-Watson-trees have modularity tending to $1$. Later De Montgolfier, Soto and Viennot proved in~\cite{Montgolfier} that trees with maximum degree $\Delta=o(n^{1/5})$ have modularity tending to $1$, which was then extended by McDiarmid and Skerman~\cite{ColinFiona} to trees with maximum degree $o(n)$ (and more generally to graphs that are 'treelike' in the sense of having low treewidth). More generally, Ostroumova, Prokhorenkova, Pra\l{}at, and Raigorodskii showed in~\cite{OstPralat} that all connected graphs $G$ on $n$ vertices with maximal degree $\Delta(n)=o(n)$ and average degree $d(n)$ satisfy $q^*(G) \ge \frac{2}{d(n)} - 3\sqrt{\frac{\Delta(n)}{nd(n)}}-\frac{\Delta(n)}{nd(n)}$.
Modularity was also studied for random graphs: the Erd\H{o}s-R\'{e}nyi model $G(n,p)$ was studied by McDiarmid and Skerman in~\cite{ColinFiona2}: they showed that for $p \le 1/n$, a.a.s.\, $q^*(G(n,p))=1+o(1)$, whereas for $p \ge 1/n$ and $p < 1$, a.a.s.\, $q^*(G(n,p))=\Theta(\frac{1}{\sqrt{np}})$. Their results transfer also to the $G(n,M)$ model.
For random regular graphs, besides the already mentioned bounds given in~\cite{ColinFiona} and~\cite{OstPralat} of $\frac23 \le q^*(G) \le 0.804$ for $G \in \mathcal G_3(n)$, in~\cite{ColinFiona} McDiarmid and Skerman gave also lower and upper bounds for $d-$regular graphs for other values of $d$, in particular they showed that $0.7631/\sqrt{d} \le q^*(G_d(n))$ for $d$ sufficiently large, and $q^*(G_d(n)) \le 2/\sqrt{d}$ for all $d \ge 3$. For random $2-$regular graphs, they also proved $q^*(G_2(n))=1-\frac{2}{\sqrt{n}}+o(\frac{\log^2 n}{n})$. Regarding other models of random graphs, in~\cite{OstPralat} it was proved that for a graph $G$ chosen according to the preferential attachment model, when adding $m \ge 2$ edges at a time, $\max\{\frac{1}{m} - o(1),\Omega(\frac{1}{\sqrt{m}})\}  \le q^*(G) < 0.94$, where the constant hidden in the asymptotic notation is such that for $m \ge 1000$ the second lower bound is better. In the same paper, they also showed that for the spatial preferential attachment model with certain conditions on the parameters the modularity is $1+o(1)$.
From a computational point of view, finding the modularity of a graph was proved to be NP-hard and even approximation of the modularity within a constant multiplicative factor remains NP-hard, see~\cite{Brandes} and~\cite{ApproxHardModularity}. The concept of modularity was recently extended to hypergraphs, see~\cite{PawelHypergraphs}.

\par \noindent
\textbf{Overview of the proofs.} In the proof of Theorem~\ref{LB3reg} we choose an arbitrary vertex $v$ and start exploring its connected component $C$ one half-edge at a time chosen arbitrarily among the ones incident to $C$ until $\varepsilon n$ vertices have been processed. Call the explored set of vertices $C_0(\varepsilon)$. Then, start exploring one by one open half-edges sticking out of a processed vertex to grow little by little the set of processed vertices themselves, but without directly sending them in $C_0(\varepsilon)$ anymore. Throughout this process we add short paths to $C_0(\varepsilon)$ containing exactly two vertices in $C_0(\varepsilon)$ - the first and the last vertex of each path (see Figure~\ref{3-chain}). In this way one increases the modularity of $C_0(\varepsilon)$ at each step. By analysis of the first few steps of this procedure via the differential equation method and consequent optimization over $\varepsilon$, we deduce Theorem~\ref{LB3reg}.\par

\begin{figure}
\centering
\begin{tikzpicture}[line cap=round,line join=round,x=1cm,y=1cm]
\clip(-4,-6) rectangle (10,0.5);
\draw [rotate around={-1.325372843922181:(2.81,-3.15)},line width=2pt] (2.81,-3.15) ellipse (4.429072300114929cm and 2.115769703830106cm);
\draw [line width=0.5pt] (-1.08,-3.06)-- (0.24,-3.94);
\draw [line width=0.5pt] (0.24,-3.94)-- (3.4,-4.28);
\draw [line width=0.5pt] (3.4,-4.28)-- (6.7,-3.24);
\draw [line width=0.5pt] (6.7,-3.24)-- (3.12,-2.72);
\draw [line width=0.5pt] (0.24,-3.94)-- (3.12,-2.72);
\draw [line width=0.5pt] (-1.08,-3.06)-- (0.78,-2.1);
\draw [line width=0.5pt] (0.78,-2.1)-- (1.46,0.06);
\draw [line width=0.5pt] (1.46,0.06)-- (3.04,0.14);
\draw [line width=0.5pt] (3.04,0.14)-- (3.12,-2.72);
\begin{scriptsize}
\draw [fill=black] (-1.08,-3.06) circle (2.5pt);
\draw [fill=black] (6.7,-3.24) circle (2.5pt);
\draw [fill=black] (0.24,-3.94) circle (2.5pt);
\draw [fill=black] (3.4,-4.28) circle (2.5pt);
\draw [fill=black] (3.12,-2.72) circle (2.5pt);
\draw [fill=black] (0.78,-2.1) circle (2.5pt);
\draw [fill=black] (1.46,0.06) circle (2.5pt);
\draw [fill=black] (3.04,0.14) circle (2.5pt);
\draw[color=black] (3.45,0.14) node {\large $v_3$};
\draw[color=black] (1,0.14) node {\large $v_2$};
\draw[color=black] (1.2,-2.2) node {\large $v_1$};
\draw[color=black] (3.45,-2.55) node {\large $v_4$};
\draw[color=black] (5.3,-2) node {\Large $C_0(\varepsilon)$};
\end{scriptsize}
\end{tikzpicture}
\caption{The path $v_1v_2v_3v_4$ with vertices $v_1$ and $v_4$ in $C_0(\varepsilon)$ and vertices $v_2$ and $v_3$ outside $C_0(\varepsilon)$ just before being added to $C_0(\varepsilon)$ entirely.}
\label{3-chain}
\end{figure}
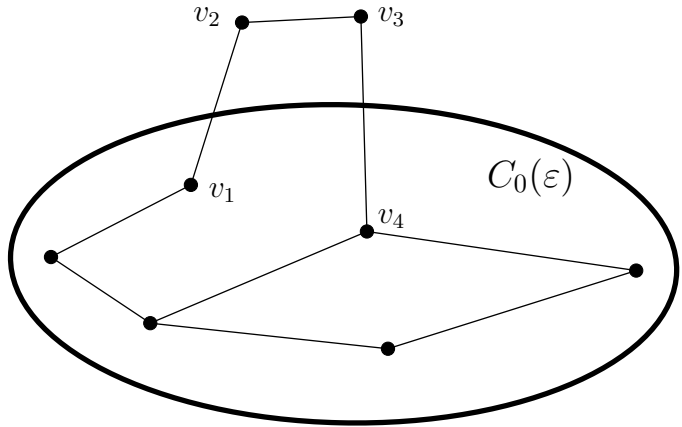

In the proof of Theorem~\ref{UB3reg}, we first observe that for the modularity of $G_3(n)$ to be at least the given upper bound, there must be a part $A_i$ in the optimal partition with
\begin{equation*}
\dfrac{2e(A_i)}{3|A_i|} - \dfrac{|A_i|}{n} \ge 0.789998.
\end{equation*}
We first rule out the possibility that the order of $A_i$ is smaller than $\varepsilon_0 n$ for some $\varepsilon_0 > 0$. The remainder of the proof is essentially an application of the first moment method.

The proof of Theorem~\ref{LBgeneral} in the supercritical regime is similar in spirit to the one of Theorem~\ref{LB3reg}, although there we are only interested in paths of length $\ell$ for some large enough $\ell$. An additional difficulty appears: when reasoning about the giant component, a number of vertices of degree 1 may arise in general. These may cause problems in case they increase the size of $C_0(\varepsilon)$ by too much. Nevertheless, we show that this is indeed not the case with probability tending to 1 as $n \to +\infty$. The subcritical regime in Theorem~\ref{LBgeneral} is based on an analysis of the orders of connected components in $G_n$.\\

\par\noindent 
\textbf{Notation.}
For a graph $G=(V,E)$ we call $|V|$ the \textit{order} of $G$ and $|E|$ the \textit{size} of $G$. For a vertex $v$ of $G$, we denote $\degg_G(v)$ or simply $\degg(v)$ the degree of the vertex $v$ in $G$. We also call $(u,w,v) \in V^3$ a \textit{cherry with center $w$}, if $uw, wv \in E$.  For a path $p$, the \textit{length} of $p$ is the number of edges in $p$. For a set $S\subseteq V$, an $S-$\textit{chain} or simply a \textit{chain} is a path of vertices $u_0=u, u_1, \ldots, u_k, u_{k+1}=v$, with $u,v \in S; u_1, \dots, u_k\in V\setminus S$ and, for every $0 \le i \le k$, $u_i u_{i+1} \in E$. For example, in Figure~\ref{3-chain} we see an $C_0(\varepsilon)$ chain of length 3. A \textit{leaf} in $G$ is a vertex of $G$ of degree 1. For two subsets $A,B \subseteq V$, we also denote by $e(A,B)$ the number of edges between $A$ and $B$.
\par\noindent
\textbf{Organization of the paper}.
The paper is organized as follows: in Section~\ref{sec:Prelim} we introduce preliminary definitions and concepts. We then prove Theorem~\ref{LB3reg} in Section~\ref{sec:Lower} and Theorem~\ref{UB3reg} in Section~\ref{sec:Upper}. Section~\ref{sec:General} is devoted to the proof of Theorem~\ref{LBgeneral}. We conclude with further remarks in Section~\ref{sec:Discussion}.

\section{Preliminaries}\label{sec:Prelim} 
In this section we collect concepts that will be used later on, both coming from graph theory as well as from probability theory.

\subsection*{Graph theoretic preliminaries}
First, for a graph $G(V,E)$ with $|V| = n$ and $|E| = m$ we define the \textit{relative modularity $q_r(A)$} of a set $A\subseteq V$:
\begin{equation*}
q_r(A) := \dfrac{n}{|A|} \left(\dfrac{e(A)}{m} - \dfrac{\vol(A)^2}{4m^2}\right).
\end{equation*}

Denoting by $d$ the average degree of the graph $G$, one may rewrite this formula as
\begin{equation}
q_r(A) := \dfrac{2e(A)}{d|A|} - \dfrac{\vol(A)^2}{d^2|A|n}.
\end{equation}

The main motivation of this definition is that one may define the modularity of a partition $\mathcal A = (A_1, A_2, \dots, A_k)$ of $V$ as a weighted average of the relative modularities of its parts:
\begin{equation*}
q(\mathcal A) = \sum_{1\le i\le k} \dfrac{|A_i|}{n} q_r(A_i).
\end{equation*}

In particular, if the modularity of the partition $\mathcal A$ is at least $q$, then there must exist a part $A_i$ with relative modularity at least $q$. At the same time, the relative modularity of a part $A_i$ depends only on $A_i$ itself and not on the partition that contains it.\par

One may easily remark that the relative modularity of a set of vertices $A$ in a $d-$regular graph can be rewritten as
\begin{equation*}
q_r(A) = \dfrac{2e(A)}{d|A|} -  \dfrac{|A|}{n}.
\end{equation*}
 
For the sake of completeness we include the proof of the following well-known result:

\begin{lemma}\label{lem:partition}
Every tree $T$ on $k\ge \lfloor\sqrt{n}\rfloor$ vertices with maximum degree $\Delta$ can be partitioned into subtrees of order between $\lfloor\sqrt{n}\rfloor$ and $\Delta\lceil\sqrt{n}\rceil$.
\end{lemma}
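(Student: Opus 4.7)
The plan is to proceed by strong induction on $k = |V(T)|$, cutting off one pendant subtree of the correct size at each step and recursing on the remainder. The base case consists of trees with $\lfloor\sqrt{n}\rfloor \le k \le \Delta\lceil\sqrt{n}\rceil$, for which $T$ itself forms a valid single part.

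For the inductive step, assume $k > \Delta\lceil\sqrt{n}\rceil$. Root $T$ at an arbitrary vertex $r$, and for every vertex $w$ write $T_w$ for the subtree rooted at $w$. Let $v$ be a vertex satisfying $|T_v| \ge \lfloor\sqrt{n}\rfloor$ such that every child $c$ of $v$ satisfies $|T_c| < \lfloor\sqrt{n}\rfloor$; equivalently, $v$ minimizes $|T_v|$ among vertices with subtree of size at least $\lfloor\sqrt{n}\rfloor$. First observe that $v \ne r$: otherwise every child of $r$ would have subtree size at most $\lfloor\sqrt{n}\rfloor - 1$, giving $|T| = 1 + \sum_{c}|T_c| \le 1 + \Delta(\lfloor\sqrt{n}\rfloor - 1) \le \Delta\lceil\sqrt{n}\rceil$, contradicting the hypothesis. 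Since $v$ has a parent, it has at most $\Delta - 1$ children in the rooted tree, so
\begin{equation*}
\lfloor\sqrt{n}\rfloor \;\le\; |T_v| \;\le\; 1 + (\Delta - 1)(\lfloor\sqrt{n}\rfloor - 1) \;\le\; \Delta\lceil\sqrt{n}\rceil,
\end{equation*}
and $T_v$ is a valid part.

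It remains to verify that $T' := T \setminus V(T_v)$, which is a connected subtree of $T$ (removing a pendant subtree preserves connectivity), has order at least $\lfloor\sqrt{n}\rfloor$ so that the induction hypothesis applies. A direct computation yields
\begin{equation*}
|T'| \;=\; k - |T_v| \;\ge\; \bigl(\Delta\lceil\sqrt{n}\rceil + 1\bigr) - \bigl(1 + (\Delta - 1)(\lfloor\sqrt{n}\rfloor - 1)\bigr) \;\ge\; \lfloor\sqrt{n}\rfloor + (\Delta - 1) \;\ge\; \lfloor\sqrt{n}\rfloor.
\end{equation*}
Applying the inductive hypothesis to $T'$ yields a partition into subtrees of the required sizes, and adjoining $T_v$ produces the desired partition of $T$. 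The main subtlety is ensuring the recursion never leaves a leftover of size strictly between $0$ and $\lfloor\sqrt{n}\rfloor$; the crux is that forcing $v \ne r$ tightens the upper bound on $|T_v|$ from $1 + \Delta(\lfloor\sqrt{n}\rfloor - 1)$ to $1 + (\Delta - 1)(\lfloor\sqrt{n}\rfloor - 1)$, producing exactly the slack of roughly $\lceil\sqrt{n}\rceil$ that makes the recursion go through.
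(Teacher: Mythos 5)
Your proof is correct. It is close in spirit to the paper's argument — both are inductions that locate, via the maximum-degree bound, an edge whose removal leaves both sides of order at least $\lfloor\sqrt{n}\rfloor$ — but the mechanism and recursion pattern differ. The paper picks the edge $e=uv$ maximizing the smaller of the two side-orders, shows by contradiction (otherwise $T$ would have at most $\Delta(\lfloor\sqrt{n}\rfloor-1)+1$ vertices) that both sides have order at least $\lfloor\sqrt{n}\rfloor$, and then recurses on \emph{both} halves, so a part of admissible size only appears once the recursion bottoms out. You instead root the tree, take a lowest vertex $v$ with $|T_v|\ge\lfloor\sqrt{n}\rfloor$, and peel off $T_v$, which is already inside the window $[\lfloor\sqrt{n}\rfloor,\Delta\lceil\sqrt{n}\rceil]$ because $v\neq r$ has at most $\Delta-1$ children with small subtrees; you then recurse only on the remainder, whose order you correctly check to be at least $\lfloor\sqrt{n}\rfloor$ (this is exactly where the improvement from $\Delta$ to $\Delta-1$ children is needed, as you point out). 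Your greedy peeling is slightly more constructive — each part is certified admissible the moment it is cut, and the recursion is one-sided — while the paper's balanced-split version is symmetric and avoids rooting the tree; the counting behind both is the same. One cosmetic remark: your ``equivalently'' is not a literal equivalence (a vertex all of whose children have small subtrees need not be a global minimizer), but since any minimizer has the property you use, existence of $v$ is not affected.
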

\begin{proof}
We argue by induction on $k$. If $k \le \Delta\lceil\sqrt{n}\rceil$, then we already have a tree of the prescribed order. Suppose that the induction hypothesis is satisfied for some $k\ge \Delta\lceil\sqrt{n}\rceil$. Let $T^{k+1}$ be a tree of order $k+1$ and maximum degree at most $\Delta$. Then, write on every edge $f$ of $T^{k+1}$ the orders $x_f,y_f$ of the two subtrees of $T^{k+1}$ in $T^{k+1}\setminus f$.\par 
We claim that there must be an edge in $T^{k+1}$, for which the minimal number of the two numbers written on it is at least $\lfloor\sqrt{n}\rfloor$. We argue by contradiction. Suppose that this is not the case. Let $e = uv$ be the edge, for which 
\begin{equation*}
x_e = |T^{k+1}_u|\le y_e = |T^{k+1}_v| \text{ and } x_e = \max_{f\in E(T^{k+1})} \min(x_f, y_f),
\end{equation*}
where $T^{k+1}_u \cup T^{k+1}_v = T^{k+1}\setminus e$ and $u\in T^{k+1}_u$. Now, since $x_e \ge \min\{x_{e_i}, y_{e_i}\}$ for every edge $e_i \in E(T^{k+1})$, deleting any edge $e_i\neq e$ incident to $v$ would yield that the smaller tree in $T^{k+1}\setminus e_i$ (which must be the one not containing $v$, see Figure~\ref{fig:lem 2.1}) of order $x_{e_i}$ satisfies $x_{e_i} \le x_e\le \lceil\sqrt{n}\rceil - 1$.  Thus   $T^{k+1}$ would contain at most $\Delta(\lfloor\sqrt{n}\rfloor - 1) + 1$ vertices, which is not the case since $k\ge \Delta\lceil\sqrt{n}\rceil$. This is a contradiction, which proves that $x_e\ge \lfloor \sqrt{n}\rfloor$.\par

Therefore, $T\setminus e$ consists of two trees of orders at least $\lfloor \sqrt{n}\rfloor$ and less than $k+1$. The induction hypothesis thus applies to both of them. The lemma is proved.
\end{proof}

\begin{figure}
\centering
\begin{tikzpicture}[line cap=round,line join=round,x=1cm,y=1cm]
\clip(-4.5,-7) rectangle (10,2);
\draw [line width=0.5pt] (-0.44,-2.3)-- (3.4,-2.28);
\draw [line width=0.5pt] (3.4,-2.28)-- (4.68,0.5);
\draw [line width=0.5pt] (3.4,-2.28)-- (7.04,-1.84);
\draw [line width=0.5pt] (3.4,-2.28)-- (4.74,-4.96);
\draw [rotate around={36.86989764584469:(-0.84,-2.6)},line width=2pt] (-0.84,-2.6) ellipse (1.609524614471849cm and 1.5298919846155026cm);
\draw [line width=0.5pt] (-1.24,-2.9)-- (-0.44,-2.3);
\draw [line width=0.5pt] (-1.24,-2.9)-- (-0.12,-3.16);
\draw [line width=0.5pt] (-0.12,-3.16)-- (-0.94,-3.8);
\draw [line width=0.5pt] (-1.24,-2.9)-- (-1.92,-2.18);
\draw [line width=0.5pt] (-1.92,-2.18)-- (-1.2,-1.7);
\draw [line width=0.5pt] (-1.24,-2.9)-- (-1.94,-3.4);
\draw [line width=0.5pt] (4.74,-4.96)-- (5.08,-5.8);
\draw [line width=0.5pt] (4.74,-4.96)-- (5.66,-5.22);
\draw [line width=0.5pt] (5.66,-5.22)-- (5.82,-6.08);
\draw [line width=0.5pt] (5.66,-5.22)-- (6.72,-5.52);
\draw [line width=0.5pt] (6.72,-5.52)-- (7.5,-4.86);
\draw [line width=0.5pt] (7.04,-1.84)-- (7.88,-2.14);
\draw [line width=0.5pt] (7.04,-1.84)-- (8.08,-1.32);
\draw [line width=0.5pt] (4.68,0.5)-- (5.48,0.84);
\draw [line width=0.5pt] (5.48,0.84)-- (6.36,0.56);
\draw [line width=0.5pt] (5.48,0.84)-- (5.78,0.22);
\draw [rotate around={88.73632744834094:(5.72,-2.5)},line width=2pt] (5.72,-2.5) ellipse (4.341233882335336cm and 3.3829442237696363cm);
\begin{scriptsize}
\draw [fill=black] (-0.44,-2.3) circle (2.5pt);
\draw [fill=black] (3.4,-2.28) circle (2.5pt);
\draw [fill=black] (4.68,0.5) circle (2.5pt);
\draw [fill=black] (7.04,-1.84) circle (2.5pt);
\draw [fill=black] (4.74,-4.96) circle (2.5pt);
\draw [fill=black] (-1.24,-2.9) circle (2.5pt);
\draw [fill=black] (-0.12,-3.16) circle (2.5pt);
\draw [fill=black] (-0.94,-3.8) circle (2.5pt);
\draw [fill=black] (-1.92,-2.18) circle (2.5pt);
\draw [fill=black] (-1.2,-1.7) circle (2.5pt);
\draw [fill=black] (-1.94,-3.4) circle (2.5pt);
\draw [fill=black] (5.08,-5.8) circle (2.5pt);
\draw [fill=black] (5.66,-5.22) circle (2.5pt);
\draw [fill=black] (5.82,-6.08) circle (2.5pt);
\draw [fill=black] (6.72,-5.52) circle (2.5pt);
\draw [fill=black] (7.5,-4.86) circle (2.5pt);
\draw [fill=black] (7.88,-2.14) circle (2.5pt);
\draw [fill=black] (8.08,-1.32) circle (2.5pt);
\draw [fill=black] (5.48,0.84) circle (2.5pt);
\draw [fill=black] (6.36,0.56) circle (2.5pt);
\draw [fill=black] (5.78,0.22) circle (2.5pt);
\draw [fill=black] (6.84,-3.16) circle (0.5pt);
\draw [fill=black] (6.64,-3.56) circle (0.5pt);
\draw [fill=black] (6.44,-3.92) circle (0.5pt);
\draw [fill=black] (6.22,-4.32) circle (0.5pt);
\draw[color=black] (-0.5,-2.08) node {\large $u$};
\draw[color=black] (0.1,-2.3) node[fill = white, draw] {\large $x_e$};
\draw[color=black] (2.8,-2.3) node[fill = white, draw] {\large $y_e$};
\draw[color=black] (3.28,-2.08) node {\large $v$};
\draw[color=black] (1.45,-2.55) node {\large $e$};
\draw[color=black] (4.5,0) node[fill = white, draw] {\large $x_{e_1}$};
\draw[color=black] (3.7,-1.65) node[fill = white, draw] {\large $y_{e_1}$};
\draw[color=black] (3.7,-0.85) node {\large $e_1$};
\draw[color=black] (6.5,-1.9) node[fill = white, draw] {\large $x_{e_2}$};
\draw[color=black] (4.3,-2.2) node[fill = white, draw] {\large $y_{e_2}$};
\draw[color=black] (5.2,-1.8) node {\large $e_2$};
\draw[color=black] (4.6,-4.5) node[fill = white, draw] {\large $x_{e_s}$};
\draw[color=black] (3.6,-2.8) node[fill = white, draw] {\large $y_{e_s}$};
\draw[color=black] (4.4,-3.65) node {\large $e_s$};
\end{scriptsize}
\end{tikzpicture}
\caption{Illustration of the Proof of Lemma~\ref{lem:partition}.
}
\label{fig:lem 2.1}
\end{figure}
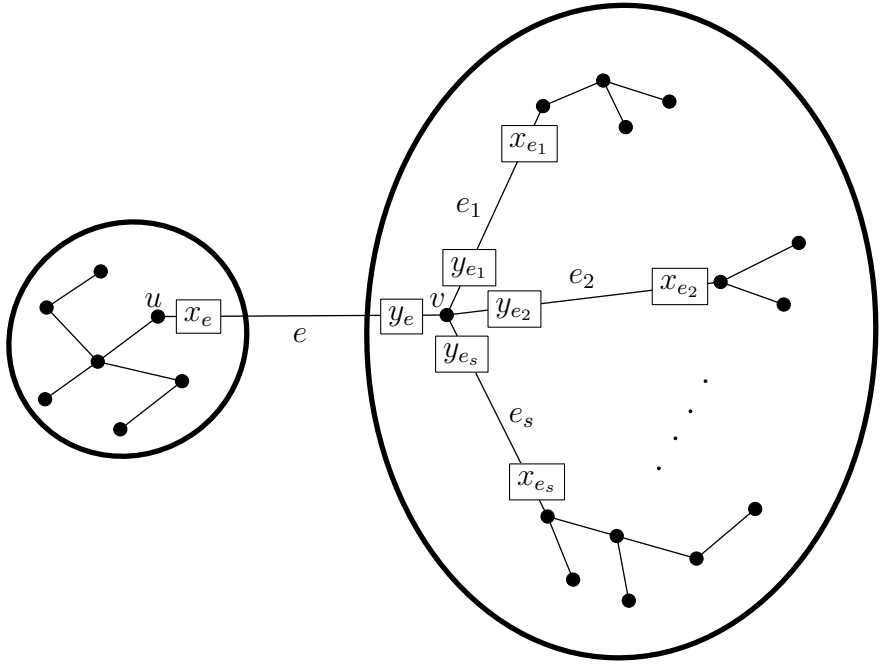

The next lemma counts the number of graphs of given degree sequence. It can be found in a more general form in \cite{Bol}.

\begin{lemma}[\cite{Bol}, Theorem 2.16]\label{count}
For a fixed number $\Delta$ and 
\begin{equation*}
2\le \deg(1) \le \deg(2) \le \dots \le \deg(n) \le \Delta, \sum_{1\le i\le n} \deg(i) = 2m,
\end{equation*}
the number of simple graphs on the vertex set $[n]$ is equivalent, for $n\to +\infty$, to
\begin{equation*}
\exp(-\lambda/2 -\lambda^2/4) \dfrac{(2m)!}{2^m m!} \prod_{1\le j\le n} \dfrac{1}{\deg(j)!},
\end{equation*}
where 
\begin{equation*}
\lambda = \dfrac{1}{m} \sum_{1\le i\le n} \binom{\deg(i)}{2}.
\end{equation*}
\qed
\end{lemma}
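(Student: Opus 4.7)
The plan is to prove the asymptotic count via the \emph{configuration model} (pairing model), which is the standard route to Bollob\'as-type asymptotic enumeration formulas.

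\textbf{Setup of the pairing model.} To each vertex $j\in [n]$ I would associate a cell of $\deg(j)$ labelled half-edges, producing $2m = \sum_j \deg(j)$ half-edges in total. A \emph{configuration} is a perfect matching of these $2m$ half-edges; contracting each cell to a single vertex yields a (multi)graph on $[n]$ with the prescribed degree sequence. The total number of configurations is
\[
N := \frac{(2m)!}{2^m\, m!}.
\]
Every \emph{simple} graph $G$ with this degree sequence arises from exactly $\prod_{j=1}^{n}\deg(j)!$ configurations, obtained by permuting half-edges within each cell. Hence, letting $p_n$ denote the probability that a uniformly random configuration is loopless and has no multiple edges, the number of simple graphs with the given degree sequence equals
\[
\frac{N \cdot p_n}{\prod_{j=1}^n \deg(j)!} \; = \; \frac{(2m)!}{2^m\, m!\, \prod_{j} \deg(j)!}\cdot p_n.
\]
It therefore suffices to prove $p_n \to \exp(-\lambda/2 - \lambda^2/4)$.

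\textbf{Poisson convergence of loops and multi-edges.} Let $L_n$ count loops in the random configuration, and $D_n$ count unordered vertex pairs $\{i,j\}$ joined by at least two parallel edges. A direct computation using the uniform distribution over perfect matchings yields
\[
\mathbb{E}[L_n] \; = \; \frac{1}{2m-1}\sum_{i=1}^n \binom{\deg(i)}{2} \;\longrightarrow\; \frac{\lambda}{2},
\]
and, after bookkeeping the $\binom{\deg(i)}{2}\binom{\deg(j)}{2}$ ways to form two parallel edges between cells $i\neq j$ and the $2/\bigl((2m-1)(2m-3)\bigr)$ chance for them to appear in the matching,
\[
\mathbb{E}[D_n] \;\longrightarrow\; \frac{\lambda^2}{4}.
\]
The main step is to upgrade these first-moment computations into joint convergence $(L_n, D_n) \Rightarrow (\mathrm{Po}(\lambda/2),\,\mathrm{Po}(\lambda^2/4))$ with asymptotic independence. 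I would carry this out by the method of factorial moments: for each $a,b\ge 0$, compute $\mathbb{E}\bigl[(L_n)_a\, (D_n)_b\bigr]$ by summing over ordered $a$-tuples of distinct potential loops and ordered $b$-tuples of distinct potential multi-edges, and show it converges to $(\lambda/2)^a(\lambda^2/4)^b$. The boundedness hypothesis $\deg(j)\le \Delta$ is crucial for two reasons: it keeps all factorial moments finite, and it forces "collisions'' (triple edges, a loop sharing a vertex with a multi-edge, overlapping potential multi-edges, etc.) to contribute only $o(1)$ to the moments.

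\textbf{Main obstacle.} The technical heart of the argument is precisely this joint factorial-moment computation: one must verify that disjoint small substructures are asymptotically independent despite the fact that half-edges are matched without replacement. The key estimate is that conditioning on a prescribed partial matching of $O(1)$ edges perturbs the remaining uniform pairing only by a factor $1+O(1/m)$, so disjoint "targets'' decorrelate in the limit. Once joint Poisson convergence with independence is established, the standard method-of-moments criterion gives
\[
\mathbb{P}(L_n = 0,\, D_n = 0) \;\longrightarrow\; e^{-\lambda/2}\, e^{-\lambda^2/4},
\]
which is exactly the required $p_n \to \exp(-\lambda/2 - \lambda^2/4)$, completing the proof.
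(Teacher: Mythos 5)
The paper gives no proof of this lemma at all: it is quoted directly from Bollob\'as (\cite{Bol}, Theorem 2.16) and stated without proof, so there is no in-paper argument to compare against. Your outline --- the configuration count $(2m)!/(2^m m!)$, the factor $\prod_j \deg(j)!$ of configurations per simple graph, and joint Poisson convergence of loops and double edges via factorial moments giving $p_n \to \exp(-\lambda/2-\lambda^2/4)$ --- is exactly the classical argument behind the cited theorem and is correct; the only point worth making explicit is that $\lambda$ depends on $n$ (though it is bounded by $\binom{\Delta}{2}$ because $m\ge n$), so the moment estimates must be uniform in $\lambda$ rather than phrased as convergence to a fixed limiting value.
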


Under the conditions of Lemma~\ref{count}, Wormald proves in \cite{Wor81b} that the probability that a random graph on a given bounded degree sequence contains for a given $\ell \in \mathbb N$, exactly $c_i$ cycles of length $i$ for every $i\in [\ell]$, is given by some function of $c_1, c_2, \dots, c_{\ell}$, and in particular it is bounded from below over the set of degree sequences given in Lemma~\ref{count} by a (universal) positive constant depending only on $\Delta$. For $c_1 = c_2 = 0$ this implies that the configuration model over the sequence 
\begin{equation*}
    2\le \deg(1) \le \deg(2) \le \dots \le \deg(n) \le \Delta
\end{equation*}
produces a simple graph with probability that is bounded from below by a universal positive constant. From here we deduce the following corollary of Lemma~\ref{count}.

\begin{corollary}\label{cor 2.3}
The number of (multi-)graphs on $n$ vertices and $m$ edges on the degree sequence 
\begin{equation*}
    2\le \deg(1) \le \deg(2) \le \dots \le \deg(n) \le \Delta, \sum_{1\le i\le n} \deg(i) = 2m
\end{equation*}
is 
\begin{equation*}
\Theta\left(\dfrac{(2m)!}{2^m m!} \prod_{1\le j\le n} \dfrac{1}{\deg(j)!}\right).
\end{equation*}
\qed
\end{corollary}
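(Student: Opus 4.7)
The plan is to argue via the configuration model. There are $N := \frac{(2m)!}{2^m m!}$ pairings of the $2m$ half-edges, and each pairing induces a multigraph on the prescribed degree sequence. A simple graph arises from exactly $\prod_j \deg(j)!$ pairings, whereas a multigraph $G$ with $L(G)$ loops and edge-multiplicities $(k_e(G))_{e\in E(G)}$ arises from $\prod_j \deg(j)!/\bigl(2^{L(G)} \prod_e k_e(G)!\bigr)$ pairings. Writing $X = N/\prod_j \deg(j)!$ for the quantity in the statement, summing the ratio (pairings giving $G$)/$\prod_j \deg(j)!$ over all multigraphs yields the identity
\begin{equation*}
\sum_{G} \frac{1}{2^{L(G)} \prod_e k_e(G)!} = X.
\end{equation*}

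For the lower bound, I would invoke Lemma~\ref{count}: the number $S$ of simple graphs on this degree sequence is asymptotically $\exp(-\lambda/2 - \lambda^2/4)\,X$, where $\lambda = m^{-1}\sum_i \binom{\deg(i)}{2}$. Because $2 \le \deg(i) \le \Delta$ and $n/\Delta \le m \le \Delta n/2$, the quantity $\lambda$ lies in a bounded positive interval depending only on $\Delta$, so $\exp(-\lambda/2-\lambda^2/4) = \Theta(1)$ and $S = \Theta(X)$. Every simple graph is also a multigraph, so this already yields $M \ge \Omega(X)$ for the total number $M$ of multigraphs.

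For the matching upper bound, rearrange the displayed identity as $M = X \cdot \mathbb{E}_\pi\bigl[2^{L(G_\pi)} \prod_e k_e(G_\pi)!\bigr]$, where $\pi$ is a uniformly random pairing and $G_\pi$ the multigraph it produces. Let $L$ be the number of loops of $G_\pi$ and, for each $k \ge 2$, let $E_k$ be the number of unordered vertex pairs joined by exactly $k$ parallel edges. Since multiplicities never exceed $\Delta$, one has $2^L \prod_e k_e! \le 2^L \prod_{k \ge 2}(\Delta!)^{E_k}$. The Wormald-type result \cite{Wor81b} recalled just above this corollary gives that, for the configuration model on a bounded degree sequence, the joint distribution of cycle counts of any fixed length is asymptotically a product of Poisson laws with intensities depending only on $\Delta$, and its factorial moments are uniformly bounded in $n$. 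This yields $\mathbb{E}\bigl[2^L \prod_{k\ge 2}(\Delta!)^{E_k}\bigr] = O(1)$ and hence $M = O(X)$.

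The main obstacle is precisely this last uniform exponential-moment estimate: Poisson limit laws alone give only pointwise convergence of moment-generating functions, so some uniform integrability is needed. This can be obtained from direct factorial-moment computations in the configuration model under a bounded maximum degree, which produce geometric-tail bounds for $L$ and each $E_k$ uniformly in $n$ and so control the exponential functional above.
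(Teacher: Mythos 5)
Your proposal reaches the right statement, but by a route that differs from the paper's. The paper gives no computation at all here: it deduces the corollary by combining Lemma~\ref{count} with the quoted result of Wormald that the configuration model on a bounded degree sequence with minimum degree two is simple with probability bounded below by a positive constant depending only on $\Delta$, i.e.\ it passes between multigraph counts and simple-graph counts through the simplicity probability. You instead make the counting explicit: from the exact identity $M=X\cdot\mathbb{E}_\pi\bigl[2^{L}\prod_e k_e!\bigr]$ you get the lower bound for free (indeed $M\ge X$ term by term, so your appeal to Lemma~\ref{count} for the lower bound is not even needed), and you reduce the upper bound to a uniform exponential-moment estimate for the simplicity defect under the configuration model. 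This is a legitimate and more self-contained argument, and it isolates precisely the point that the paper's one-line deduction leaves implicit, namely that the unweighted number of non-simple multigraphs cannot swamp $X$.

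Two points in your write-up need repair. First, the number of pairings producing a multigraph $G$ is $\prod_j\deg(j)!\big/\bigl(\prod_v 2^{l_v}\,l_v!\,\prod_{u<v}m_{uv}!\bigr)$, so a vertex carrying several loops contributes an extra $l_v!$; this does not affect the $\Theta$-estimate (the correction is again bounded by $(2\Delta!)^{L+D}$ with $D$ the number of multiple adjacencies), but it is needed for your identity to be exact. Second, and more seriously, your closing justification is too weak as stated: geometric tail bounds $\mathbb{P}(L\ge j)\le\rho^j$ do \emph{not} control $\mathbb{E}\bigl[c^{\,L}\bigr]$ unless $c\rho<1$, and here the relevant base is as large as $2\cdot\Delta!$. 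What the factorial-moment computation actually yields, using $\deg(i)\le\Delta$ and $2m\ge 2n$, is $\mathbb{E}[(L)_j]\le C(\Delta)^j$ and likewise for the multiple adjacencies, for all $j\le m/2$, hence Poisson-type tails of order $C^j/j!$; these do beat any fixed exponential base, and for the extreme range $j>m/2$ one uses the crude bound $2^{L}\prod_e k_e!\le e^{O(n)}$ against a tail probability $e^{-\Omega(n\log n)}$. Combining these (handling the product functional either by joint factorial moments or by Cauchy--Schwarz) gives the required uniform bound $\mathbb{E}\bigl[2^{L}\prod_e k_e!\bigr]=O(1)$ and closes your argument; as written, the sentence claiming geometric tails suffice would not.
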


We continue with a simple general lower bound on the modularity:
\begin{lemma}\label{lem:simplelb}
The modularity of a graph $G$ on $n$ vertices, with average degree $d$ and maximal degree bounded from above by $\Delta\in \mathbb N$, is at least
\begin{equation*}
    \dfrac{2(n - |CC(G)|)}{dn} - O\left(\dfrac{1}{\sqrt{n}}\right).
\end{equation*}
\end{lemma}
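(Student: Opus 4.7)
The plan is to exhibit a vertex partition $\mathcal{A}=\{A_1,\dots,A_k\}$ of $G$ whose modularity attains the claimed lower bound. To construct it, I would fix an arbitrary spanning tree in each connected component of $G$. For every component of order at least $\lfloor\sqrt{n}\rfloor$, I apply Lemma~\ref{lem:partition} to its spanning tree to split it into vertex sets of subtrees of order between $\lfloor\sqrt{n}\rfloor$ and $\Delta\lceil\sqrt{n}\rceil$; every smaller component is kept as a single part. This yields $\mathcal{A}$, in which each part has order at most $\Delta\lceil\sqrt{n}\rceil = O(\sqrt{n})$ and spans a connected subgraph of $G$ (either a subtree produced by Lemma~\ref{lem:partition} or an entire small component).

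For the edge term, the connectedness of every part implies $e(A_i)\ge |A_i|-1$. Each large component of order $|C|$ produces at most $|C|/\lfloor\sqrt{n}\rfloor$ parts, and each small component contributes one part, so the total number of parts satisfies $k \le |CC(G)| + n/\lfloor\sqrt{n}\rfloor = |CC(G)| + O(\sqrt{n})$. Summing the edge lower bounds,
\[
\sum_{i=1}^{k} e(A_i) \;\ge\; n - k \;\ge\; n - |CC(G)| - O(\sqrt{n}),
\]
and dividing by $m = dn/2$ (with $d \le \Delta$ bounded) gives
\[
\frac{1}{m}\sum_{i=1}^{k} e(A_i) \;\ge\; \frac{2(n-|CC(G)|)}{dn} - O\!\left(\frac{1}{\sqrt{n}}\right).
\]

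For the volume term, $|A_i|=O(\sqrt{n})$ and the degree bound give $\vol(A_i)\le \Delta|A_i|=O(\sqrt{n})$. Combining this with $\sum_i \vol(A_i) = 2m$ yields
\[
\sum_{i=1}^{k} \vol(A_i)^2 \;\le\; \Bigl(\max_i \vol(A_i)\Bigr)\sum_{i=1}^{k} \vol(A_i) \;=\; O(\sqrt{n})\cdot 2m \;=\; O(n^{3/2}),
\]
so $\frac{1}{4m^2}\sum_i \vol(A_i)^2 = O(1/\sqrt{n})$. Putting the two estimates together gives $q^*(G)\ge q(\mathcal{A}) \ge \frac{2(n-|CC(G)|)}{dn} - O(1/\sqrt{n})$, as desired.

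No step here poses a real obstacle once Lemma~\ref{lem:partition} is in hand. The only delicate point is the trade-off governed by the cutoff $\sqrt{n}$: it is chosen precisely so that the number of parts (and hence the edges lost from spanning-tree cuts in $\sum e(A_i)$) stays $O(\sqrt{n})$, while simultaneously each individual part has volume $O(\sqrt{n})$, which is exactly what is needed to make $\sum_i \vol(A_i)^2/m^2$ small. Any other threshold would make one of the two terms blow up beyond $O(1/\sqrt{n})$.
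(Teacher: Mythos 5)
Your proposal is correct and follows essentially the same route as the paper: split each sufficiently large component via Lemma~\ref{lem:partition} applied to a spanning tree into connected parts of order between $\lfloor\sqrt{n}\rfloor$ and $\Delta\lceil\sqrt{n}\rceil$, keep small components whole, lower-bound the edge term by connectivity ($e(A_i)\ge |A_i|-1$ with $k\le |CC(G)|+O(\sqrt{n})$ parts), and bound the volume term by $\max_i \vol(A_i)\cdot\sum_i\vol(A_i)=O(n^{3/2})$. The only cosmetic difference is the threshold at which you invoke the partition lemma, which does not affect the argument.
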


In the proof of Lemma~\ref{lem:simplelb} we apply Lemma~\ref{lem:partition} to divide the graph into components of order at most $\Delta\lceil \sqrt{n}\rceil$.

\begin{proof}[Proof of Lemma~\ref{lem:simplelb}]
Let us call $\mathcal C = (C_1, C_2, \dots, C_r)$ the partition of the vertices of $G$ into connected components. For every connected component $C_i$ with more than $\Delta\lceil \sqrt{n}\rceil$ vertices, apply Lemma~\ref{lem:partition} to an arbitrary spanning tree of $C_i$. We deduce that $C_i$ can be partitioned into connected subgraphs of order between $\lfloor \sqrt{n} \rfloor$ and $\Delta\lceil \sqrt{n}\rceil$. Let $\mathcal A = \{A_1, A_2, \dots, A_k\}$ be the partition obtained from $\mathcal C$ after dividing the connected components of $G$ of order more than $\Delta\lceil \sqrt{n}\rceil$.\par

By convexity of the function $x\in \mathbb R \mapsto x^2\in \mathbb R$, the sum $\sum_{1\le i\le k} |A_i|^2$ is maximal when all but at most one of the terms are either equal to $\Delta\lceil \sqrt{n}\rceil$ or to 0 for any fixed $k$. We have that
\begin{align*}
   &\sum_{1\le i\le k} |A_i|^2 \\
   \le  
   &\hspace{0.25em} \max_{1\le i\le k} |A_i| \sum_{1\le i\le k} |A_i|\\
   \le 
   &\hspace{0.25em} \Delta n\lceil \sqrt{n}\rceil.
\end{align*}

Since in the end we have exactly $k$ connected components induced by the vertex sets in $\mathcal A$, we obtain
\begin{equation*}
    q^*(G_n) \ge q(\mathcal A) = \dfrac{2}{nd}\sum_{i=1}^{k} e(A_i) - \dfrac{1}{n^2d^2}\sum_{i=1}^{k} \vol(A_i)^2 \ge \dfrac{2(n-1-(r-1)-(k-r))}{nd} - \dfrac{\Delta^3 \lceil\sqrt{n}\rceil}{nd^2}.
\end{equation*}
It remains to observe that all parts in $\mathcal A$ obtained from the division of connected components of $G$ of order more than $\Delta\lceil \sqrt{n}\rceil$ have order at least $\lfloor \sqrt{n}\rfloor$ by definition, so $k-r \le \dfrac{n}{\lfloor \sqrt{n}\rfloor} = O\left(\sqrt{n}\right)$. This proves the lemma.
\end{proof}

\subsection*{Probabilistic preliminaries}
In this subsection we gather probabilistic concepts used throughout the paper. 
We first recall the following version of Chernoff's bound, see for example (\cite{JLR}, Corollary 2.3).

\begin{lemma}\label{chernoff:bd}
Let $X \sim Bin(n,p)$ be a binomial random variable with $\mathbb E[X]=np=\mu$. For every $0 \le \delta \le 1$,
\begin{equation*}
\mathbb P(|X-\mu| \ge \delta \mu) \le 2\exp\bigg (-\dfrac{\delta^2 \mu}{3}\bigg).
\end{equation*}
\qed
\end{lemma}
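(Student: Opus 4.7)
The plan is to prove this by the standard exponential moment (Chernoff) method. Write $X = \sum_{i=1}^n X_i$, where $X_1, \dots, X_n$ are independent Bernoulli random variables with parameter $p$. For any $t > 0$, Markov's inequality applied to the nonnegative random variable $e^{tX}$ yields
\begin{equation*}
\mathbb{P}(X \ge (1+\delta)\mu) \le e^{-t(1+\delta)\mu}\, \mathbb{E}[e^{tX}].
\end{equation*}
By independence, the moment generating function factorizes as
\begin{equation*}
\mathbb{E}[e^{tX}] = (1 - p + p e^t)^n \le e^{np(e^t - 1)} = e^{\mu(e^t - 1)},
\end{equation*}
where the inequality uses $1 + x \le e^x$.

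Next I would optimize over $t$. The choice $t = \ln(1+\delta)$ minimises the bound and gives the classical form
\begin{equation*}
\mathbb{P}(X \ge (1+\delta)\mu) \le \left(\dfrac{e^\delta}{(1+\delta)^{1+\delta}}\right)^{\mu}.
\end{equation*}
The crucial analytic step is then the pointwise inequality
\begin{equation*}
\dfrac{e^\delta}{(1+\delta)^{1+\delta}} \le \exp\left(-\dfrac{\delta^2}{3}\right), \qquad 0 \le \delta \le 1,
\end{equation*}
which follows from a Taylor expansion of the function $\delta \mapsto \delta - (1+\delta)\ln(1+\delta)$ around $\delta = 0$, together with monotonicity of the resulting remainder on $[0,1]$. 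This is where the constant $3$ in the denominator of the exponent appears, and it is essentially the only non-mechanical step in the proof.

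For the lower tail, the same moment method applied with $t < 0$ (and the choice $t = \ln(1-\delta)$) gives
\begin{equation*}
\mathbb{P}(X \le (1-\delta)\mu) \le \left(\dfrac{e^{-\delta}}{(1-\delta)^{1-\delta}}\right)^\mu \le \exp\left(-\dfrac{\delta^2 \mu}{2}\right) \le \exp\left(-\dfrac{\delta^2 \mu}{3}\right),
\end{equation*}
valid for $0 \le \delta \le 1$ by an analogous Taylor comparison. Combining the two one-sided tails by a union bound yields the factor $2$ in front of the exponential in the stated inequality. The main obstacle is therefore purely analytic: verifying the elementary inequality $\tfrac{e^\delta}{(1+\delta)^{1+\delta}} \le e^{-\delta^2/3}$ uniformly on $[0,1]$ (and its lower-tail analogue); the probabilistic content reduces to Markov's inequality applied to $e^{tX}$ and the factorisation of the MGF over independent summands.
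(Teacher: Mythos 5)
Your proof is correct, and it is the standard exponential-moment (Chernoff) argument: Markov's inequality applied to $e^{tX}$, the factorisation $\mathbb{E}[e^{tX}]=(1-p+pe^t)^n\le e^{\mu(e^t-1)}$, optimisation over $t$, the elementary inequalities $e^{\delta}/(1+\delta)^{1+\delta}\le e^{-\delta^2/3}$ and $e^{-\delta}/(1-\delta)^{1-\delta}\le e^{-\delta^2/2}$ on $[0,1]$ (both of which indeed hold; e.g.\ $h(\delta)=\delta-(1+\delta)\ln(1+\delta)+\delta^2/3$ satisfies $h(0)=h'(0)=0$ and $h'\le 0$ on $[0,1]$), and a union bound for the factor $2$. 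Note that the paper itself gives no proof of this lemma: it is quoted directly from (\cite{JLR}, Corollary 2.3), so there is no authorial argument to compare against, and your write-up is exactly the standard proof of that cited result.
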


\subsubsection*{Configuration model}
The probability space of (multi-)graphs, with which we will be working until the end of this paper, is the \textit{configuration model} introduced by Bender and Canfield in~\cite{BC} and further developed by Bollobás in~\cite{Bol} and by Wormald in~\cite{W78}. We describe it first for the case of $d$-regular graphs: we are given $dn$ points (also called \textit{half-edges}), with $dn$ being even, indexed by $(P_{i,j})_{1\leq i\leq d, 1\leq j\leq n}$ and regrouped into $n$ buckets according to their second index. The probability space we work with is the space of perfect matchings of these $dn$ points equipped with the uniform probability. We call \textit{configuration} a perfect matching of $(P_{i,j})_{1\leq i\leq d, 1\leq j\leq n}$. 
We now reconstruct the random $d-$regular graph model as follows: we identify the $d-$point buckets with the vertices of our random graph. By abuse of terminology, we use both buckets and vertices in the sequel to refer to the same objects by the above identification. An edge in the random regular graph between two (not necessarily different) vertices $v$ and $v'$ corresponds to an edge of the configuration between a point $P$ in the bucket $v$ and a point $P'$ in the bucket $v'$. It is well known that this model is contiguous to the uniform distribution on random $d-$regular graphs for constant values of $d$, see~\cite{J95b}. This model can then be easily generalized for graphs with given degree sequences: given a sequence $(d_1, \ldots, d_n)$ with $d_i$ denoting the degree of the $i$-th vertex such that $\sum_{i=1}^n d_i = 2m$ for some $m \in \mathbb{N}$, identify the $i$-th vertex with a bucket having $d_i$ points. As before, choose a perfect matching uniformly at random among all pairings and add an edge in the graph between the two vertices $v_1$ and $v_2$ for every pair of points $(P_1, P_2)$ from the buckets $v_1$ and $v_2$ that participates in a common edge of the matching.

\subsection*{Differential equation method}
The theory of differential equations used to describe the evolution of a discrete random process was introduced by Wormald (see~\cite{Nick1, Nick2, Nick3}). Given a sequence of discrete random variables $(X_t)_{t \ge 0}$, the basic idea is to consider the expected change between times $t$ and $t+1$. Regarding the trajectories $(X_t)_{t \ge 0}$ (properly rescaled) as continuous, one may write the ordinary differential equations suggested by the expected changes. Concentration results from martingale theory are then used to show that, as the size of the input grows large, under relatively mild conditions the trajectory $(X_t)_{t \ge 0}$ is highly concentrated around the value suggested by the solution of the differential equation for a wide range of $t$.\par

The precise formulation of the theorem given here is taken from~\cite{Lutz}: we say that a function $f$ is said to be $L$-Lipschitz  on $D \subseteq R^{\ell}$, if $|f(x) - f(x')| \le L \max_{1 \le k \le \ell} |x_k - x'_k|$ holds for all points $x=(x_1, \ldots, x_{\ell})$ and $x'=(x'_1, \ldots, x'_{\ell})$ in $D$, where $\max_{1 \le k \le \ell} |x_k - x'_k|$ is the $\ell^{\infty}-$distance between $x$ and $x'$.
\begin{theorem}\cite{Lutz}\label{Thm:DEMethod}
Given $a, n \ge 1$, a bounded domain $D \subseteq \mathbb{R}^{a+1}$, functions $(F_k)_{1 \le k \le a}$ with $F_k: D \to \mathbb{R}$, and $\sigma$-algebras ${\mathcal{F}}_{0}\subseteq {\mathcal{F}}_{1} \subseteq \ldots$, suppose that the random variables $(Y_k(i))_{1 \le k \le a}$ are ${\mathcal{F}}_{i}$-measurable for $i \ge 0$. Suppose also that for all $i \ge 0$ and all $1 \le k \le a$, the following holds whenever $(i/n, Y_1(i)/n, \ldots, Y_a(i)/n) \in D$:
\begin{enumerate}
\item $\left| \mathbb{E}(Y_k(i+1) - Y_k(i) \mid  {\mathcal{F}}_{i}) - F_k(i/n, Y_1(i)/n, \ldots, Y_a(i)/n)\right| \le \delta$ for some $\delta \ge 0$, with $F_k$ being $L$-Lipschitz for $L \in \mathbb{R}$.
\item $|Y_k(i+1) - Y_k(i)| \le \beta$ for some $\beta > 0$,
\item $\max_{1 \le k \le a}|Y_k(0) - \hat{y}_kn| \le \lambda n$ for some $\lambda > 0$, for some $(0, \hat{y}_1, \ldots, \hat{y}_a) \in D$.
\end{enumerate}
Then there are $R=R(D,(F_k)_{1 \le k \le a},L) \in [1, \infty)$ and $T=T(D) \in (0, \infty)$ such that for $\lambda \ge \delta \min \{T, 1/L\} + R/n$, so that with probability at least $1-2a\exp\left(-\dfrac{n\lambda^2}{8T\beta^2}\right)$ we have
\begin{equation*}
\max_{0 \le i \le \sigma n} \max_{1 \le k \le a} |Y_k(i) - y_k(i/n) n| \le 3\exp(LT) \lambda n,
\end{equation*}
where $(y_k(t))_{1 \le k \le a}$ is the unique solution to the system of differential equations $y_k'(t)=F_k(t,y_1(t), \ldots, y_a(t))$ with $y_k(0)=\hat{y}_k$ for $1 \le k \le a$, and $\sigma=\sigma(\hat{y}_1, \ldots, \hat{y}_a) \in [0,T]$ is any choice of $\sigma \ge 0$ with the property that $(t, y_1(t), \ldots, y_A(t))$ has $\ell_{\infty}$-distance at least $3\exp(LT)\lambda$ from the boundary of $D$ for all $t \in [0, \sigma)$. 
\qed
\end{theorem}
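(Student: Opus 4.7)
The plan is to compare the rescaled discrete process to the ODE solution through a drift/martingale decomposition, then invoke Azuma--Hoeffding together with a discrete Gronwall estimate. Introduce the deviation
\begin{equation*}
Z_k(i) := Y_k(i) - n\, y_k(i/n), \qquad 1 \le k \le a,
\end{equation*}
and the stopping time $\tau$ defined as the first index $i$ at which either $(i/n, Y_1(i)/n, \ldots, Y_a(i)/n)$ fails to lie in $D$ or $\max_{k} |Z_k(i)| > 3\exp(LT)\lambda n$. By the hypothesis on $\sigma$, the continuous trajectory $(t, y_1(t),\ldots,y_a(t))$ has $\ell^\infty$-distance at least $3\exp(LT)\lambda$ from $\partial D$ for every $t \in [0,\sigma)$, so it is enough to prove $\tau > \sigma n$ with the stated probability.

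First I would compute the one-step conditional drift. For $i < \tau$ condition (1) applies, and since $F_k$ is bounded on $D$ the ODE $y_k'=F_k$ is Lipschitz, so Taylor expansion yields $n\bigl(y_k((i+1)/n) - y_k(i/n)\bigr) = F_k(i/n, y_1(i/n),\ldots) + O(1/n)$. Subtracting and invoking $L$-Lipschitzness of $F_k$ gives
\begin{equation*}
\bigl|\mathbb{E}[Z_k(i+1) - Z_k(i) \mid \mathcal{F}_i]\bigr| \le \frac{L}{n}\max_{j} |Z_j(i)| + \delta + \frac{C}{n}
\end{equation*}
for a constant $C = C(D,(F_k))$. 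Next, decompose $Z_k(i\wedge\tau) = Z_k(0) + A_k(i) + M_k(i)$ into predictable drift plus a martingale $M_k$. Condition (2) combined with the uniform bound on $y_k'$ forces the increments of $M_k$ to be bounded by $2\beta$, so Azuma--Hoeffding over at most $\sigma n \le T n$ steps yields
\begin{equation*}
\mathbb{P}\!\Bigl(\max_{0 \le i \le \sigma n} |M_k(i)| > \lambda n\Bigr) \le 2\exp\!\Bigl(-\frac{n\lambda^2}{8T\beta^2}\Bigr),
\end{equation*}
and a union bound over $k$ produces the factor $a$ in the stated probability.

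On the complementary event, condition (3) gives $\max_k |Z_k(0)| \le \lambda n$, so the drift inequality combined with a discrete Gronwall argument applied to $U(i) = \max_k |Z_k(i\wedge\tau)|$ yields
\begin{equation*}
U(i) \le \bigl(\lambda n + \sigma n \cdot \delta + R + \lambda n\bigr) \exp(L\sigma) \le 3\exp(LT)\lambda n,
\end{equation*}
where $R = R(D,(F_k),L)$ absorbs the accumulated $O(1/n)$ error from Taylor expansion. The last inequality uses the hypothesis $\lambda \ge \delta\min\{T, 1/L\} + R/n$, which is precisely tuned so that the $\delta T$ and $R$ summands re-enter the regime $\lesssim \lambda n$. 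This means $\tau$ is never triggered by the $|Z_k|$ threshold before time $\sigma n$; by the choice of $\sigma$ it cannot be triggered by leaving $D$ either, so $\tau > \sigma n$ on the desired event.

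The hard part will be the bootstrap intrinsic to such arguments: the drift control is only valid up to $\tau$, so one must set the stopping threshold for $\max_k |Z_k|$ exactly at the value $3\exp(LT)\lambda n$ that the Gronwall-plus-Azuma bound can close, and then verify that this value is compatible with the assumed lower bound on $\lambda$. Matching the precise constants $3\exp(LT)$ and $8T\beta^2$ in the conclusion amounts to tracking the additive slack $\lambda n$ for the martingale fluctuation versus the $\lambda n$ for the initial condition, and paying a factor of two in the Azuma denominator coming from the doubled increment bound $2\beta$.
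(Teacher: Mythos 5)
This statement is Theorem~\ref{Thm:DEMethod}, which the paper does not prove at all: it is quoted verbatim from~\cite{Lutz} (Warnke's formulation of Wormald's differential equation method) and marked with \qed, so there is no in-paper argument to compare yours against. Your sketch follows what is essentially Warnke's own proof -- deviation process $Z_k$, stopping time at the boundary/threshold, one-step drift bound from the Lipschitz hypothesis, martingale decomposition with increments bounded by $2\beta$ (using that condition (2) also bounds the conditional drift), Azuma--Hoeffding over at most $Tn$ steps giving the $8T\beta^2$ denominator, and a discrete Gronwall bootstrap closed by the assumption $\lambda \ge \delta\min\{T,1/L\} + R/n$ -- so in outline it is the right and standard route. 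One point to tighten: bounding the accumulated drift error crudely by $\sigma n\,\delta$ inside the Gronwall factor only yields the requirement $\lambda \gtrsim \delta T$, whereas the stated hypothesis allows the weaker $\lambda \ge \delta\min\{T,1/L\}+R/n$; to recover the $1/L$ alternative you must keep the geometric sum $\sum_{j<i}\delta(1+L/n)^{i-1-j} \le \tfrac{\delta n}{L}\bigl((1+L/n)^i-1\bigr)$ rather than the crude product bound. With that adjustment your constant tracking matches the theorem as stated.
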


\begin{remark}
In this paper we only work with differential equations of the type 
\begin{equation*}
x'(t) = F(x(t), t),
\end{equation*}
where $F$ is a Lipschitz function on a domain $D$. Thus, every differential equation with given initial values will admit a unique solution.
\qed
\end{remark}

Let $a,b$ be two positive real numbers with $b < a$. Let $(U_i)_{1\le i\le \lfloor an\rfloor}$ be urns, each of them with space for at most two balls, and $(B_j)_{1\le j\le \lfloor 2bn\rfloor}$ be balls that are, one after the other, thrown uniformly into some urn, where the probability that a ball is thrown into an urn is proportional to the free space in this urn at the moment of throwing.

\begin{lemma}\label{urns}
A.a.s. at the end of the process there will be $\dfrac{b(2a-b)}{a}n + o(n)$ urns with at least 1 ball.
\end{lemma}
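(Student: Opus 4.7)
The plan is to apply the differential equation method (Theorem~\ref{Thm:DEMethod}) to the triple tracking the number of urns with $0$, $1$, and $2$ balls. Let $Y_0(i)$, $Y_1(i)$, $Y_2(i)$ denote the number of urns containing, respectively, $0$, $1$, $2$ balls after $i$ balls have been thrown. Then $Y_0(0) = \lfloor an \rfloor$, $Y_1(0) = Y_2(0) = 0$, and the total free space at step $i$ is exactly $2\lfloor an \rfloor - i$. Since balls are placed proportionally to the free space, the conditional probability that the $(i+1)$-st ball lands in an empty urn equals $2Y_0(i)/(2\lfloor an\rfloor - i)$, while the probability that it lands in a one-ball urn equals $Y_1(i)/(2\lfloor an\rfloor - i)$. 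This yields
\begin{align*}
\mathbb{E}[Y_0(i+1) - Y_0(i) \mid \mathcal{F}_i] &= -\frac{2 Y_0(i)}{2\lfloor an \rfloor - i}, \\
\mathbb{E}[Y_1(i+1) - Y_1(i) \mid \mathcal{F}_i] &= \frac{2 Y_0(i) - Y_1(i)}{2\lfloor an \rfloor - i}, \\
\mathbb{E}[Y_2(i+1) - Y_2(i) \mid \mathcal{F}_i] &= \frac{Y_1(i)}{2\lfloor an \rfloor - i}.
\end{align*}

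Rescaling with $y_k(t) := Y_k(\lfloor tn \rfloor)/n$ for $t \in [0, 2b]$ suggests the ODE system
\begin{equation*}
y_0'(t) = -\frac{2 y_0(t)}{2a - t}, \qquad y_1'(t) = \frac{2 y_0(t) - y_1(t)}{2a - t}, \qquad y_2'(t) = \frac{y_1(t)}{2a - t},
\end{equation*}
with $y_0(0) = a$ and $y_1(0) = y_2(0) = 0$. The first equation separates to give $y_0(t) = (2a-t)^2/(4a)$. Substituting this into the second yields a linear first-order ODE with integrating factor $1/(2a - t)$, which is solved explicitly by $y_1(t) = t(2a-t)/(2a)$. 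Evaluating at $t = 2b$, the fraction of non-empty urns at the end is
\begin{equation*}
y_1(2b) + y_2(2b) = a - y_0(2b) = a - \dfrac{(a-b)^2}{a} = \dfrac{b(2a-b)}{a},
\end{equation*}
which matches the target.

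It remains to verify the hypotheses of Theorem~\ref{Thm:DEMethod}. Fix $\eta > 0$ small and work on the bounded domain $D := \{(t, u_0, u_1, u_2) : -\eta < t < 2b + \eta, \; -\eta < u_k < a + \eta\}$, where the denominator $2a - t$ is bounded below by $2(a-b) - \eta > 0$; the functions $F_k$ on the right-hand sides above are therefore Lipschitz on $D$. The one-step increments are bounded by $\beta = 1$, and the expected change equals $F_k$ applied to $(i/n, Y_0(i)/n, Y_1(i)/n, Y_2(i)/n)$ up to an additive error $\delta = O(1/n)$ stemming from the floor function in $\lfloor an \rfloor$. The initial conditions match with $\lambda = O(1/n)$. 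The main (and only mild) obstacle is ensuring that the solution $(y_0, y_1, y_2)$ stays bounded away from the boundary of $D$ on $[0, 2b]$; this is immediate since $2a - t \ge 2(a-b) > 0$ throughout the interval and each $y_k$ lies in $[0,a]$. Theorem~\ref{Thm:DEMethod} then gives, with probability $1 - o(1)$,
\begin{equation*}
\max_{0 \le i \le \lfloor 2bn \rfloor}\; \max_{k \in \{0,1,2\}} |Y_k(i) - y_k(i/n)\, n| = o(n),
\end{equation*}
which at $i = \lfloor 2bn \rfloor$ yields $Y_1(\lfloor 2bn\rfloor) + Y_2(\lfloor 2bn\rfloor) = \tfrac{b(2a-b)}{a}\, n + o(n)$ a.a.s., as claimed.
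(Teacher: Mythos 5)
Your proposal is correct and follows essentially the same route as the paper: the paper tracks only the number of empty urns, derives the same difference equation and ODE $y_0'(t)=-2y_0(t)/(2a-t)$ with solution $y_0(t)=a(1-t/(2a))^2$, and reads off $a-y_0(2b)=b(2a-b)/a$ via Theorem~\ref{Thm:DEMethod}. Your additional variables $Y_1,Y_2$ are harmless but redundant (only $y_0(2b)$ is needed), and for the concentration step one should fix a concrete $\lambda$ such as $n^{-1/3}$ (as the paper does) rather than $\lambda=O(1/n)$, so that the failure probability $2a\exp(-n\lambda^2/(8T\beta^2))$ actually tends to zero.
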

\begin{proof}
Denote by $N_0$ the number of urns containing no ball after $2bn$ steps. We have that
\begin{equation*}
    \mathbb E[N_0] = \dfrac{\binom{2an-2}{2bn}}{\binom{2an}{2bn}} an = (1+o(1)) \dfrac{(a-b)^2}{a} n
\end{equation*}
and 
\begin{equation*}
    \mathbb E[N_0(N_0-1)] = \dfrac{\binom{2an-4}{2bn}}{\binom{2an}{2bn}} an (an-1) = (1+o(1)) \dfrac{(a-b)^4}{a^2} n^2.
\end{equation*}
One may conclude by the second moment method that the number of urns with at least 1 ball in the end of the process is a.a.s. $(1+o(1))\left(an - \frac{(a-b)^2}{a} n\right) = (1+o(1)) \frac{b(2b-a)}{a} n$.

\end{proof}

We finish this section with a direct consequence of Theorem 2.2 in \cite{vHF}.

\begin{lemma}[see \cite{vHF}, Theorem 2.2]\label{connectivity}
Given $\Delta \ge 2$, let $(D_n)_{n\ge 1}$ be a bounded sequence of degree sequences such that one has $d_i(n)$ vertices of degree $i$ for every $1 \le i\le \Delta$ and $d_j(n) = 0$ for every $j\ge \Delta + 1$. Suppose that there is a constant $c < 1$ such that, for every $n$, $d_1(n) = 0$ and $d_2(n)\le cn$. The probability that the random graph on the degree sequence $D_n$ is connected is bounded from below by $(1-cn/m)^{1/2} + o(1)$, and moreover its largest component $C_{\max}(n)$ contains a.a.s.\ all but at most $\log n$ vertices.
\qed
\end{lemma}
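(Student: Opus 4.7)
The plan is to deduce the statement directly from Theorem~2.2 of~\cite{vHF}, so the work consists in checking that the hypotheses place us in the supercritical regime of the configuration model and then extracting the quantitative bound on the vertices lying outside the giant. For the supercriticality, set $Q_n := \sum_{i=1}^{\Delta} i(i-2) d_i(n)/n$. The assumption $d_1(n) = 0$ kills the only negative contribution, while $d_2(n) \le cn$ with $c < 1$ forces at least a fraction $1-c$ of the vertices to have degree at least $3$. Hence $Q_n \ge 3(1-c) > 0$ uniformly in $n$, which is precisely the supercriticality criterion guaranteeing the existence and uniqueness of a giant component $C_{\max}(n)$ of linear size.

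For the refined claim $|C_{\max}(n)| \ge n - \log n$ a.a.s., I would combine two ingredients that are both supplied by the analysis behind~\cite[Theorem~2.2]{vHF}. First, exploring the component of a typical vertex via the configuration model can be coupled with a branching process whose offspring distribution has bounded support (because degrees are uniformly bounded by $\Delta$). Outside the giant this branching process is subcritical, so its total progeny has an exponential tail of the form $\mathbb{P}(|C(v)| \ge k,\ v \notin C_{\max}(n)) \le A\rho^k$ for constants $A > 0$ and $\rho = \rho(c, \Delta) < 1$. A union bound over the $n$ vertices then shows that a.a.s.\ no component other than $C_{\max}(n)$ contains more than $C \log n$ vertices for an absolute constant $C$. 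Second, a direct first-moment count in the configuration model gives that the expected number of cycles of length $k$ consisting only of degree-$2$ vertices is of order $1/k$ uniformly, so that the expected total number of vertices contained in such cycles is $O(\log n)$, which by Markov's inequality transfers to an a.a.s.\ statement.

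The main obstacle is purely bookkeeping: one has to verify that cycles of degree-$2$ vertices are essentially the only obstruction to being swallowed by the giant. Since $d_1(n) = 0$, no component disjoint from $C_{\max}(n)$ can be a tree; if such a component contained a vertex of degree at least $3$, a direct configuration-model computation (analogous to those of Corollary~\ref{cor 2.3}) shows that the expected number of vertices in such components is $o(1)$. Combining the geometric tail for small components with the first-moment bound on small cycles yields $n - |C_{\max}(n)| \le \log n$ a.a.s., as required. Since only a logarithmic bound is asked for, none of these estimates needs to be sharp, which is why the lemma can legitimately be labelled as a direct consequence of~\cite[Theorem~2.2]{vHF}.
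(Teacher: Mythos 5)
Your proposal takes essentially the same route as the paper: the paper gives no proof of this lemma beyond citing Theorem 2.2 of \cite{vHF}, and your reduction---minimum degree two plus uniform supercriticality via $Q_n \ge 3(1-c) > 0$---is precisely the hypothesis check needed to invoke that result, so the argument is correct. One small slip in your supplementary sketch: an expected $O(\log n)$ number of vertices in all-degree-two cycles combined with Markov's inequality does not give ``at most $\log n$ a.a.s.''; but since the proportion of half-edges at degree-two vertices is at most $2c/(3-c) < 1$, the expected number of such cycle vertices is in fact $O(1)$, so Markov does yield the claim (consistent with the paper's remark that $\log n$ may be replaced by any function tending to infinity).
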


We remark that in the above theorem, $\log n$ may be replaced by any function $\omega(n)$ that tends to infinity with $n$.\par

\section{\texorpdfstring{Lower bound in the case of $3-$regular graphs}{}}\label{sec:Lower}
In this section we prove Theorem~\ref{LB3reg}, thereby improving Lemma~\ref{lem:simplelb}, which only gives an a.a.s.\ lower bound of $\frac{2}{3} - O\left(\frac{1}{\sqrt{n}}\right)$ for the modularity of the random $3-$regular graph. Indeed, since the random $3-$regular graph is a.a.s.\ connected, see for example (\cite{Bol}, Section 7.6), $|CC(G_3(n))| = 1$.\par

We work in the configuration model defined above. Choose a random vertex $v_0$ and start an exploration process, which goes as follows. At every step, fix a uniformly chosen open half-edge at some explored vertex (if there is no such edge, stop the process - this does not happen a.a.s.). Once we have chosen this half-edge, look where it goes. If it adds a new vertex, add it to the already explored ones and go to the next step. If it goes back to an already explored vertex, construct it and continue. By abuse of notation we call the explored graph at time $t$ the \textit{component at time $t$} since a.a.s.\ it is connected. We stop the exploration process when at least $\varepsilon n$ vertices have been explored. Together with the explored edges they will form a graph $C_0 = C_0(\varepsilon)$. We first give an a.a.s.\ estimate of the number of edges that will be present in $C_0$. Let $X(t)$ be the number of vertices in the component at time $t$, and let $\mathcal F_t$ be the complete history of explored vertices and edges up to time $t$. We have that
\begin{equation*}
    \mathbb E[X(t+1)\hspace{0.2em}|\hspace{0.2em}\mathcal F_t] = X(t) + \dfrac{3(n - X(t))}{3n - 2t - 1}.
\end{equation*}
Since the reasoning behind this formula is often used in the sequel, we explain it here. Fix one unmatched half-edge just after step $t$ emanating from a vertex already in the component at time $t$. Then, there are in total $3n-2t-1$ remaining unmatched half-edges, and exactly $3(n-X(t))$ of them are sticking out of vertices not yet in the component. Hence, the probability that the fixed half-edge is paired with a half-edge incident to a vertex not yet in the component is $\frac{3(n - X(t))}{3n - 2t - 1}$. Note that we also have $X(0)=0$. Our goal now is to apply Theorem~\ref{Thm:DEMethod}: we first transform the difference equation corresponding to the expected change into a differential equation and justify this step afterwards:
\begin{equation*}
x'(t)=\frac{3-3x(t)}{3-2t} \text{ with initial value } x(0)=0.
\end{equation*}

The solution is given by
\begin{equation*}
    x(t) = 1 - \left(1 - \dfrac{2t}{3}\right)^{3/2}.
\end{equation*}

Fix $t_0 = t_0(\varepsilon) := \frac{3(1 - (1 - \varepsilon)^{2/3})}{2}$. Then, there are exactly $\varepsilon n$ explored vertices in $t_0 n +o(n)$ steps both in expectation and a.a.s. (the proof of the corresponding a.a.s. statement is given in the next paragraph).\par
We now justify the passage to a differential equation: first, the expected difference between $X(t+1)$ and $X(t)$ differs from $\frac{3-3x(t)}{3-2t}$, for $n$ sufficiently large, by at most some term $\delta=O(1/n)$ as long as the number of non-explored edges is still $c n$ for some $c > 0$. Next, for every $t\ge 0$, the difference between $X(t+1)$ and $X(t)$ is at most $1$, and the initial values of the differential equation and of the difference equation match. Hence the three conditions of Theorem~\ref{Thm:DEMethod} are satisfied (with $a=1$). Choosing $\lambda=n^{-1/3}$ (in fact, every $\lambda$ of the type $n^{-\delta}$ with $0 < \delta < 1/2$ would work as well) and $\sigma=t_0(\varepsilon)$, we have that, by Theorem~\ref{Thm:DEMethod}, with probability at least $1-\exp(-\Theta(n^{1/3}))$, $\max_{0 \le t \le \sigma n}|X(i)-n x(i/n)| =O(\lambda n)=O(n^{2/3})$. All subsequent transformations of difference equations to differential equations could be justified in an analogous way, and thus we omit them in the sequel.\\

\underline{Phase 1.}\\

After having found $\varepsilon n$ vertices, we are now ready for the first phase. Recall that we explored a component $C_0$, which is a.a.s.\ connected, but not necessarily an induced subgraph of $G_3(n)$. We thus explore the open half-edges going out of the vertices of $C_0$ in search for cherries, whose center is an unexplored vertex, but whose two leaves are in $C_0$, and also for edges in the component that have not been seen in the 0-th phase of construction of $C_0$ (see Figure~\ref{fig:phase 1}).\par

\begin{figure}
\centering
\begin{tikzpicture}[line cap=round,line join=round,x=1cm,y=1cm]
\clip(-3,-7.3) rectangle (10,1.5);
\draw [rotate around={-2.1768617100377403:(3.34,-2.83)},line width=2pt] (3.34,-2.83) ellipse (4.703983564874782cm and 3.2270979809438787cm);
\draw [line width=1pt] (-0.08,-2.7)-- (1.16,-3.88);
\draw [line width=1pt] (1.16,-3.88)-- (1.6,-1.72);
\draw [line width=1pt] (1.16,-3.88)-- (3.6,-2.4);
\draw [line width=1pt] (3.6,-2.4)-- (3.58,-3.8);
\draw [line width=1pt] (3.58,-3.8)-- (5.36,-3.32);
\draw [line width=1pt] (5.36,-3.32)-- (5.24,-2.12);
\draw [line width=1pt] (5.24,-2.12)-- (6.76,-2.96);
\draw [line width=1pt, opacity = 0.2] (6.76,-2.96)-- (5.36,-3.32);
\draw [line width=1pt] (3.6,-2.4)-- (3.56,-0.88);
\draw [line width=1pt] (3.58,-3.8)-- (3.08,-5.02);
\draw [line width=1] (3.08,-5.02)-- (5.44,-4.4);
\draw [line width=1pt, opacity = 0.2pt] (1.6,-1.72)-- (3.56,-0.88);
\draw [line width=1pt, opacity = 0.2pt] (-0.08,-2.7)-- (-2.02,-1.94);
\draw [line width=1pt, opacity = 0.2pt] (-0.08,-2.7)-- (-0.26,0.14);
\draw [line width=1pt, opacity = 0.2pt] (-0.26,0.14)-- (1.6,-1.72);
\draw [line width=1pt, opacity = 0.2pt] (3.56,-0.88)-- (5.48,0.88);
\draw [line width=1pt, opacity = 0.2pt] (5.48,0.88)-- (5.24,-2.12);
\draw [line width=1pt, opacity = 0.2pt] (6.76,-2.96)-- (9.64,-2.8);
\draw [line width=1pt, opacity = 0.2pt] (5.44,-4.4)-- (7.8,-5.74);
\draw [line width=1pt, opacity = 0.2pt] (5.44,-4.4)-- (5.52,-6.92);
\draw [line width=1pt, opacity = 0.2pt] (3.08,-5.02)-- (2.12,-6.76);
\draw [line width=1pt, opacity = 0.2pt] (-0.26,0.14)-- (-1.56,0.14);
\draw [line width=1pt, opacity = 0.2pt] (5.48,0.88)-- (7,1);
\begin{scriptsize}
\draw[color=black] (-0.1, 0.3) node {\large $v_1$};
\draw[color=black] (5.2, 1) node {\large $v_2$};
\draw[color=black] (2.6, -1.5) node {\large $e_1$};
\draw[color=black] (6.1, -3.3) node {\large $e_2$};
\draw [fill=black] (-0.08,-2.7) circle (2.5pt);
\draw [fill=black] (6.76,-2.96) circle (2.5pt);
\draw [fill=black] (1.16,-3.88) circle (2.5pt);
\draw [fill=black] (1.6,-1.72) circle (2.5pt);
\draw [fill=black] (3.6,-2.4) circle (2.5pt);
\draw [fill=black] (3.58,-3.8) circle (2.5pt);
\draw [fill=black] (5.36,-3.32) circle (2.5pt);
\draw [fill=black] (5.24,-2.12) circle (2.5pt);
\draw [fill=black] (3.56,-0.88) circle (2.5pt);
\draw [fill=black] (3.08,-5.02) circle (2.5pt);
\draw [fill=black] (5.44,-4.4) circle (2.5pt);
\draw [fill=black] (-2.02,-1.94) circle (2.5pt);
\draw [fill=black] (-0.26,0.14) circle (2.5pt);
\draw [fill=black] (-1.56,0.14) circle (2.5pt);
\draw [fill=black] (5.48,0.88) circle (2.5pt);
\draw [fill=black] (7,1) circle (2.5pt);
\draw [fill=black] (9.64,-2.8) circle (2.5pt);
\draw [fill=black] (7.8,-5.74) circle (2.5pt);
\draw [fill=black] (5.52,-6.92) circle (2.5pt);
\draw [fill=black] (2.12,-6.76) circle (2.5pt);
\end{scriptsize}
\end{tikzpicture}
\caption{The black graph in the figure is $C_0$. The solid black edges are the ones that have been explored during the 0-th phase, the opaque edges were not explored during the 0-th phase. The edges $e_1$ and $e_2$ are added to $C_0$ during the first phase since these are edges between two vertices explored during the 0-th phase. The vertices $v_1$ and $v_2$ are added to $C_0$ during the first phase since these are centers of cherries $(u_i, v_i, w_i)$ with $u_i, w_i\in C_0$ for both $i=1,2$.}
\label{fig:phase 1}
\end{figure}
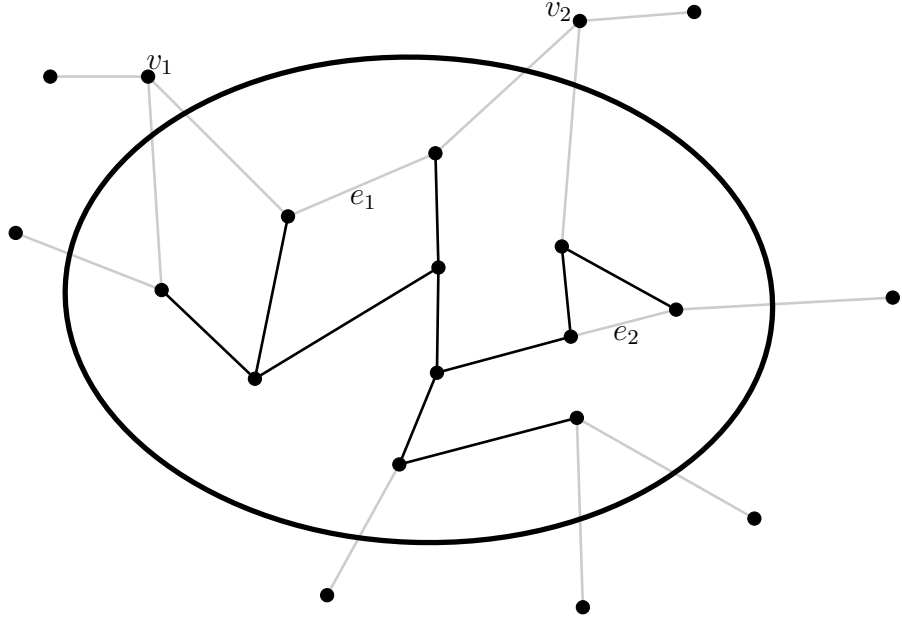

We order the half-edges in the component not yet matched and at any step we check where a half-edge goes. Translate time so that the first phase starts at $t = 0$ and not at $t = t_0(\varepsilon)$, as it should have since it comes right after the 0-th phase. We denote by $X^1_0(t)$ the number of vertices of degree 0 at time $t$ (that is, the number of vertices outside $C_0$ at time $t$, for which none of the three incident half-edges has been exposed), by $X^1_1(t)$ the vertices of degree 1 outside $C_0$ at time $t$ (one half-edge of such vertex has been exposed) and by $X^1_{2,3}(t)$ the vertices of degree 2 or 3 outside $C_0$ at time $t$ (2 or 3 half-edges of such a vertex have been exposed). We underline that $X^1_1(t)$ and $X^1_{2,3}(t)$ count only vertices that have had degree 0 at the end of the 0-th phase. Thus, in the beginning, $X^1_0(0) = (1 - \varepsilon)n$ and $X^1_1(0) = X^1_{2,3}(0) = 0$. We also denote by $A^1(t)$ the number of edges constructed in the component up to time $t$ that have not been there at the end of the 0-th phase and by $H^1(t)$ the number of half-edges remaining to be tested. We have the following initial conditions: $A^1(0) = 0, H^1(0) = (3\varepsilon - 2t_0(\varepsilon))n = 3(\varepsilon + (1-\varepsilon)^{2/3} - 1)n$. Let $\mathcal F_t$ denote the $\sigma-$algebra containing the complete history of explored half-edges up to time $t$. We have the following equations:
\begin{itemize}
    \item The vertices of degree 0 can only disappear, and this happens exactly when one new vertex of degree 1 appears:
    \begin{equation*}
        \mathbb E[X^1_0(t+1)\hspace{0.2em}|\hspace{0.2em}\mathcal F_t] = X^1_0(t) - \dfrac{3X^1_0(t)}{3n - 2t_0n - 2t - 1}.
    \end{equation*}
    In this case, the number of half-edges decreases by 1.
    \item The vertices of degree 1 disappear when a cherry is formed and appear when a vertex of degree 0 disappears:
    \begin{equation*}
        \mathbb E[X^1_1(t+1)\hspace{0.2em}|\hspace{0.2em}\mathcal F_t] = X^1_1(t) + \dfrac{3X^1_0(t)}{3n - 2t_0n - 2t - 1} - \dfrac{2X^1_1(t)}{3n - 2t_0n - 2t - 1}.
    \end{equation*}
    Here as well, the number of half-edges decreases by 1.
    \item The vertices of degree at least 2 counted by $X^1_{2,3}(t)$ appear exactly when a vertex of degree 1 disappears:
    \begin{equation*}
        \mathbb E[X^1_{2,3}(t+1)\hspace{0.2em}|\hspace{0.2em}\mathcal F_t] = X^1_{2,3}(t) + \dfrac{2X^1_1(t)}{3n - 2t_0n - 2t - 1}.
    \end{equation*}
    Moreover, at the creation of each vertex of degree 2, this vertex is immediately added to the explored component and its third half-edge, which stays unmatched up to this moment, is added to the ones to be tested. Thus, the number of half-edges does not change. 
    \item The number of edges between vertices of the component can only increase at each step. The probability of this event depends on the number of half-edges yet to be tested:
    \begin{equation*}
        \mathbb E[A^1(t+1)\hspace{0.2em}|\hspace{0.2em}\mathcal F_t] = A^1(t) + \dfrac{H^1(t) - 1}{3n - 2t_0n - 2t - 1}.
    \end{equation*}
    At any step a new edge inside the explored component is constructed, and hence $H^1(t)$ decreases by 2.
    \item Finally, the equation for $H^1(t)$ is given by
    \begin{equation*}
        \mathbb E[H^1(t+1) - H^1(t)\hspace{0.2em}|\hspace{0.2em}\mathcal F_t] = - 1 + \mathbb E[X^1_{2,3}(t+1) - X^1_{2,3}(t)\hspace{0.2em}|\hspace{0.2em}\mathcal F_t] - \mathbb E[A^1(t+1) - A^1(t)\hspace{0.2em}|\hspace{0.2em}\mathcal F_t].
    \end{equation*}
\end{itemize}

We remark that we continue until time $t_1(\varepsilon)$, which is the hitting time of $\varepsilon' n$ of the process $H^1(t)$ for some arbitrary $\varepsilon' > 0$, that is, the point, where the number of half-edges remaining to be tested is $\varepsilon' n$. In fact, for the purpose of Theorem~\ref{Thm:DEMethod} in this phase, one needs to choose $\varepsilon'$ to be strictly positive so that $\sigma$ can be set equal to $t_1(\varepsilon)$. However, $\varepsilon'$ can be chosen as close to 0 as we wish. Since our work will come down to purely numerical computation in the end, we may assume that $\varepsilon' \approx 10^{-17}$, that is, smaller than the numerical error of our calculations. In the same way as before, it can be checked that the conditions for transforming the above equations of expected changes into differential equations are satisfied. Therefore, the use of differential equations as approximation of the random processes defined above is justified by Theorem~\ref{Thm:DEMethod}. Rescaling the first process as $x_0(t) = X^1_0(\lfloor tn\rfloor)/n$ gives the following differential equation for the rescaled time parameter: 
\begin{equation*}
    x'_0(t) = - \dfrac{3x_0(t)}{3 - 2t_0 - 2t} \text{ with } x_0(0) = 1 - \varepsilon.
\end{equation*}
It has solution 
\begin{equation*}
    x_0(t) = (1 - \varepsilon)\left(1 - \dfrac{2t}{3 - 2t_0}\right)^{3/2}.
\end{equation*}

Plugging in this solution into the second differential equation for $x_1(t) = X^1_1(\lfloor tn\rfloor)/n$ and after rescaling of the time parameter we get
\begin{equation*}
    x'_1(t) = \dfrac{3x_0(t)}{3 - 2t_0 - 2t} - \dfrac{2x_1(t)}{3 - 2t_0 - 2t} \text{ with } x_1(0) = 0.
\end{equation*}
It has solution 
\begin{equation*}
    x_1(t) = 3(1-\epsilon)\left(\left(1 - \dfrac{2t}{3 - 2t_0}\right) - \left(1 - \dfrac{2t}{3 - 2t_0}\right)^{3/2}\right)
\end{equation*}
Finally, the evolution of $x_2(t) = X^1_{2,3}(\lfloor tn\rfloor)/n$ is described by the equation
\begin{equation*}
    x'_2(t) = \dfrac{2x_1(t)}{3 - 2t_0 - 2t}\text{ with } x_2(0) = 0.
\end{equation*}
After integrating we get 
\begin{equation*}
    x_2(t) = \dfrac{6(1-\varepsilon)}{3 - 2t_0}t + 2(1 - \varepsilon)\left(\left(1 - \dfrac{2t}{3 - 2t_0}\right)^{3/2} - 1\right).
\end{equation*}
Now, the same rescaling for $A^1(t)$ and $H^1(t)$ gives respectively
\begin{equation*}
    a'(t) = \dfrac{h(t)}{3 - 2t_0 - 2t} \text{ with } a(0) = 0
\end{equation*}
and
\begin{equation*}
    h'(t) = -1 + x'_2(t) - a'(t) = -1 + x'_2(t) - \dfrac{h(t)}{3 - 2t_0 - 2t} \text{ with } h(0) = 3\varepsilon - 2t_0 = 3(\varepsilon + (1-\varepsilon)^{2/3} - 1).
\end{equation*}

The solution of the second differential equation with this initial condition is given by 
\begin{equation*}
    h(t) = (6\varepsilon - 3 - 2t_0)\left(1 - \dfrac{2t}{3 - 2t_0}\right) + 3(1 - \varepsilon)\left(1 - \dfrac{2t}{3-2t_0}\right)^{3/2}.
\end{equation*}

Integrating the first equation to obtain $a(t)$ and using the initial condition we obtained yields
\begin{equation*}
    a(t) = (2\varepsilon - 1/2 - t_0) - (3\varepsilon - 3/2 - t_0)\left(1 - \dfrac{2t}{3 - 2t_0}\right) - (1 - \varepsilon)\left(1 - \dfrac{2t}{3-2t_0}\right)^{3/2}.
\end{equation*}

It remains to deduce the smallest time $t_1 = t_1(\varepsilon)$, for which $h(t_1) = 0$ and the first step terminates\footnote{Once again, formally we have to stop the process a little bit earlier in order to be able to apply Theorem~\ref{Thm:DEMethod} with $\sigma$ being the corresponding boundary point. Here and in what follows, we ignore this fact due to numerical errors we commit anyway.}. This time is the minimal positive solution of
\begin{equation*}
    \left(6\varepsilon - 3 - 2t_0 + 3(1-\varepsilon)\sqrt{1 - \dfrac{2t}{3-2t_0}}\right)\left(1 - \dfrac{2t}{3-2t_0}\right) = 0.
\end{equation*}
The two solutions of this equation are
\begin{equation*}
    \tsup[1]t(\varepsilon) = \dfrac{3 - 2t_0}{2} = \dfrac{3(1-\varepsilon)^{2/3}}{2}\text{ and } \tsup[2]t(\varepsilon) = \dfrac{3}{2}(4(1-\varepsilon)^{1/3} - 3(1-\varepsilon)^{2/3}-1).
\end{equation*}
Moreover, $\tsup[2]t(\varepsilon) < \tsup[1]t(\varepsilon)$ for every $\varepsilon$ such that
\begin{equation*}
6\varepsilon - 3 - 2t_0(\varepsilon) < 0 \iff \varepsilon < \dfrac{7}{8}.
\end{equation*}
In the sequel we assume that $\varepsilon < 7/8$ and therefore $t_1 = t_1(\varepsilon) := \tsup[2]t(\varepsilon)$.

By choosing $\sigma=t_1(\varepsilon)$ and $\lambda=n^{-1/3}$ in Theorem~\ref{Thm:DEMethod} we deduce that with probability at least $1-e^{-\Theta(n^{1/3})}$ one has
\begin{align*}
& \max_{0 \le t \le \sigma n}|X^1_0(i)-n x_0(i/n)| =O(\lambda n)=O(n^{2/3}), \\
& \max_{0 \le t \le \sigma n}|X^1_1(i)-n x_1(i/n)| =O(\lambda n)=O(n^{2/3}), \\
& \max_{0 \le t \le \sigma n}|X^1_{2,3}(i)-n x_2(i/n)| =O(\lambda n)=O(n^{2/3}), \\
& \max_{0 \le t \le \sigma n}|A^1(i)-n a(i/n)| =O(\lambda n)=O(n^{2/3}) \text{ and } \\
& \max_{0 \le t \le \sigma n}|H^1(i)-n h(i/n)| =O(\lambda n)=O(n^{2/3}).
\end{align*}

Call the component that was built at the end of the first phase $C_1 = C_1(\varepsilon)$, and also call the component consisting of all explored vertices and edges by $\overline{C_1} = \overline{C_1}(\varepsilon)$. Clearly $C_0\subseteq C_1\subseteq \overline{C_1}$, see Figure~\ref{fig:phase 2}.\\

\underline{Phase 2.}\\

Now, having $2nx_1(t_1)$ open half-edges attached to the vertices of $\overline{C_1}\setminus C_1$, we start testing for chains of length 3, for which only the first and the fourth vertex are in $C_1$ and the rest are in $\overline{C_1}\setminus C_1$. In the beginning, we order the $2nx_1(t_1)$ half-edges given above and match them one by one to free half-edges in vertices of $G_3\setminus C_1$. We underline that, for every half-edge that is matched to a vertex in $G_3\setminus \overline{C_1}$, we reveal only the information that this half-edge is matched to an unexplored vertex and do not reveal to which one exactly. Once again, we do a translation of the time parameter $t$ in order to start from 0 and not from $t_0+t_1$. We define the random variable $Z_0(t)$ to be the number of edges leading to vertices in $G_3\setminus \overline{C_1}$, and $Z_1(t)$ to be the number of edges formed between two vertices of $\overline{C_1}\setminus C_1$, see Figure~\ref{fig:phase 2}.

\begin{figure}
\centering
\begin{tikzpicture}[line cap=round,line join=round,x=1cm,y=1cm]
\clip(-2.7,-8) rectangle (9,2.1);
\draw [rotate around={-1.1160103914072743:(4.44,-3.18)},line width=2pt] (4.44,-3.18) ellipse (4.104632153389974cm and 2.712564306084342cm);
\draw [rotate around={0:(1,0)},line width=2pt] (3.3,-2.7) ellipse (5.5cm and 4.7cm);
\draw [line width=2pt] (1.36,-3.12)-- (2.16,-4.18);
\draw [line width=2pt] (1.36,-3.12)-- (2.16,-1.92);
\draw [line width=2pt] (2.16,-1.92)-- (3.76,-1.76);
\draw [line width=2pt] (3.76,-1.76)-- (3.08,-3.08);
\draw [line width=2pt] (3.08,-3.08)-- (2.16,-4.18);
\draw [line width=2pt] (3.08,-3.08)-- (4.36,-3.92);
\draw [line width=2pt] (4.36,-3.92)-- (5.5,-2.92);
\draw [line width=2pt] (5.5,-2.92)-- (7.52,-3.24);
\draw [line width=2pt] (7.52,-3.24)-- (6.06,-4.32);
\draw [line width=2pt] (6.06,-4.32)-- (5.5,-2.92);
\draw [line width=2pt] (7.52,-3.24)-- (5.86,-1.88);
\draw [line width=2pt] (5.86,-1.88)-- (3.76,-1.76);
\draw [line width=2pt] (4.36,-3.92)-- (4.06,-4.94);
\draw [line width=2pt] (4.06,-4.94)-- (6.06,-4.32);
\draw [line width=2pt] (2.16,-4.18)-- (2.64,-5.12);
\draw [line width=2pt] (2.64,-5.12)-- (4.06,-4.94);
\draw [line width=0.75pt] (2.64,-5.12)-- (1.36,-6.34);
\draw [line width=0.75pt] (1.36,-3.12)-- (-1.52,-3.6);
\draw [line width=0.75pt] (2.16,-1.92)-- (1.08,0.16);
\draw [line width=0.75pt] (5.86,-1.88)-- (6.4,0.54);
\draw [line width=0.75pt, opacity = 0.3] (6.4,0.54)-- (1.08,0.16);
\draw [line width=0.75pt, opacity = 0.3] (-1.52,-3.6)-- (1.08,0.16);
\draw [line width=0.75pt, opacity = 0.3] (6.4,0.54)-- (7.46,1.96);
\draw [line width=0.75pt, opacity = 0.3] (-1.52,-3.6)-- (-2.94,-4.6);
\draw [line width=0.75pt, opacity = 0.3] (1.36,-6.34)-- (1.42,-7.56);
\draw [line width=0.75pt, opacity = 0.3] (1.36,-6.34)-- (-0.1,-7);
\begin{scriptsize}
\draw [fill=black] (1.36,-3.12) circle (3pt);
\draw [fill=black] (7.52,-3.24) circle (3pt);
\draw [fill=black] (2.16,-4.18) circle (3pt);
\draw [fill=black] (2.16,-1.92) circle (3pt);
\draw [fill=black] (3.76,-1.76) circle (3pt);
\draw [fill=black] (3.08,-3.08) circle (3pt);
\draw [fill=black] (4.36,-3.92) circle (3pt);
\draw [fill=black] (5.5,-2.92) circle (3pt);
\draw [fill=black] (6.06,-4.32) circle (3pt);
\draw [fill=black] (5.86,-1.88) circle (3pt);
\draw [fill=black] (4.06,-4.94) circle (3pt);
\draw [fill=black] (2.64,-5.12) circle (3pt);
\draw [fill=black] (1.36,-6.34) circle (3pt);
\draw [fill=black] (-1.52,-3.6) circle (3pt);
\draw [fill=black] (1.08,0.16) circle (3pt);
\draw [fill=black] (6.4,0.54) circle (3pt);
\draw[color=black] (3.8,0.6) node {\large $e_1$};
\draw[color=black] (1.8,-6.4) node {\large $v_4$};
\draw[color=black] (7,-4.8) node {\Large $C_1$};
\draw[color=black] (6,-6.3) node {\Large $\overline{C_1}$};
\draw[color=black] (-1.4,-4) node {\large $v_3$};
\draw[color=black] (1,0.5) node {\large $v_2$};
\draw[color=black] (6.8,0.4) node {\large $v_1$};
\draw[color=black] (0,-1.8) node {\large $e_2$};
\end{scriptsize}
\end{tikzpicture}
\caption{The figure describes the situation after the first phase. The thick black edges are the ones in $C_1$, the thin black edges connect $C_1$ to explored vertices, which did not form cherries during the first phase and therefore are only present in $\overline{C_1}$, but not in $C_1$. The grey edges are the ones explored during the second phase. The edges $e_1$ and $e_2$ and the vertices $v_1, v_2, v_3$ are added to the component after the first phase, since all of them participate in chains of length 3. For the remaining grey edges, we learn during the second phase that they are matched to unexplored vertices, and we therefore leave them outside $C_2$. The vertex $v_4$ is also left outside $C_2$ since it does not participate in a chain of length 3.}
\label{fig:phase 2}
\end{figure}
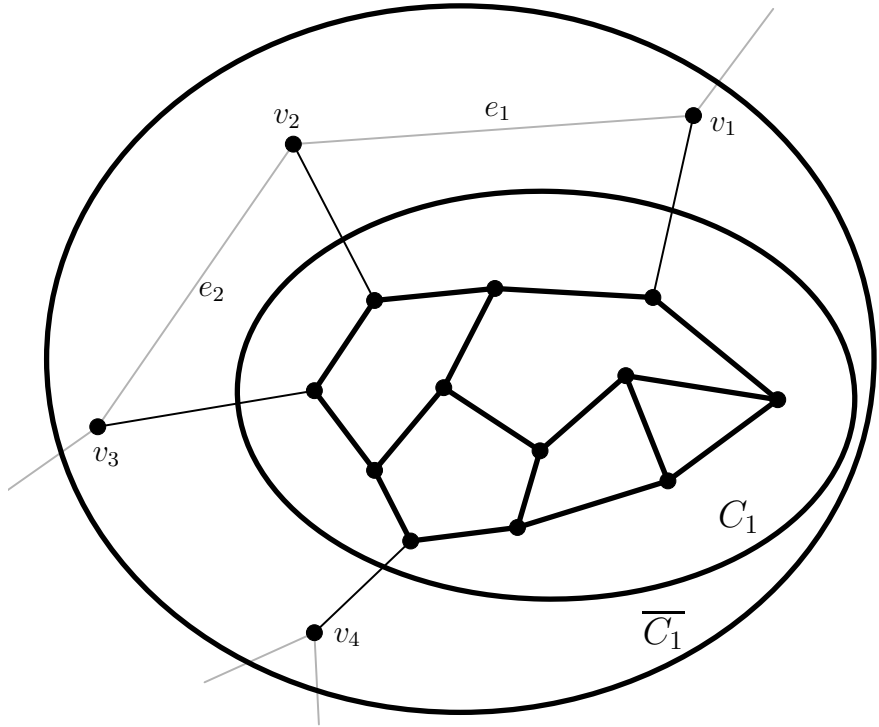

We have $Z_0(t) + Z_1(t) = t$. Moreover, at time $t$ (before scaling), one has $3X^1_0(t_1) - Z_0(t)$ open half-edges in vertices, unexplored after phase 1 - that is, in $G_3\setminus \overline{C_1}$. On the other hand, the total amount of open half-edges up to time $t$ is $3n - 2t_0n - 2t_1n - 2t - 1$. Thus clearly, for every half-edge matched to some vertex in $G_3\setminus \overline{C_1}$, the number of open half-edges attached to unexplored vertices in $G_3\setminus \overline{C_1}$ decreases by 1. We deduce that

\begin{equation*}
    \mathbb E[Z_0(t+1)\hspace{0.2em}|\hspace{0.2em}\mathcal F_t] = Z_0(t) + \dfrac{3X^1_0(t_1n) - Z_0(t)}{3n - 2t_0n - 2t_1n - 2t - 1}.
\end{equation*}

We also have $Z_0(0)=0$. As before, we rescale the time parameter $t$ and transform the difference equation into a differential equation by setting $z_0(t)=Z_0(\lfloor tn \rfloor)/n$.  We stop when $Z_0(t) + 2Z_1(t) = 2X^1_1(t_1n)$ or equivalently $Z_0(t)=2t-2X^1_1(t_1n)$: at this moment we know that all $2X^1_1(t_1)$ open half-edges in the vertices of $\overline{C_1}\setminus C_1$ have been processed during phase 2.

In other words, we will be looking for the smallest positive solution $t_2 = t_2(\varepsilon)$ of the corresponding equation for the rescaled time parameter
\begin{equation}\label{condition1}
    z_0(t) = 2t - 2x_1(t_1),
\end{equation}

where $z_0$ is given by the solution of the differential equation

\begin{equation*}
    z'_0(t) = \dfrac{3x_0(t_1) - z_0(t)}{3 - 2t_0 - 2t_1 - 2t} \text{ with } z_0(0) = 0.
\end{equation*}

One easily verifies that it is given by

\begin{equation*}
    z_0(t) = 3x_0(t_1)\left(1 - \sqrt{1 - \dfrac{2t}{3 - 2t_0 - 2t_1}}\right).
\end{equation*}

Solving~\eqref{condition1} for the rescaled time parameter boils down to solving the quadratic equation
\begin{equation*}
4t^2 + \left(\dfrac{18x^2_0(t_1)}{Q} - 4P\right)t + (P^2-9x^2_0(t_1)) = 0,
\end{equation*}
where 
\begin{equation*}
    P = P(\varepsilon) := 3x_0(t_1(\varepsilon)) + 2x_1(t_1(\varepsilon)) \text{ and }Q = Q(\varepsilon) := 3 - 2t_0(\varepsilon) - 2t_1(\varepsilon).
\end{equation*}

The solutions are given by
\begin{equation*}
    t_{\pm}(\varepsilon) = \dfrac{P}{2} -  \dfrac{9x^2_0(t_1) \mp \sqrt{81x^4_0(t_1) - 36x^2_0(t_1)PQ + 36x^2_0(t_1)Q^2}}{4Q}.
\end{equation*}

By Vieta's formulas one has $t_-(\varepsilon) t_+(\varepsilon) = \dfrac{P^2-9x^2_0(t_1)}{4} > 0$, so the two roots have the same sign, and $t_-(\varepsilon) + t_+(\varepsilon) = P - \dfrac{9x^2_0(t_1)}{2Q} > 0$ under the assumption $\varepsilon < 7/8$ (this can be checked by elementary algebraic transformations). Thus, in our setting both roots are positive and thus one has
\begin{equation*}
t_2 = t_2(\varepsilon) := t_-(\varepsilon).
\end{equation*}

In order to continue with the analysis, we first state and prove the following lemma:

\begin{lemma}\label{aux}
Conditionally on having $e$ edges on the set of $nx_1(t_1)$ vertices of $\overline{C_1}\setminus C_1$, the distribution of the graph on this set of vertices and $e$ edges is uniform among the graphs of degree at most 2 and $e$ edges.
\end{lemma}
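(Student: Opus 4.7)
The plan is to exploit the defining uniformity property of the configuration model on the set of yet-unmatched half-edges at the start of Phase 2. Write $V = \overline{C_1}\setminus C_1$ for the set in question. Before Phase 2 begins, the already-revealed information consists of the internal matching of $C_1$ together with the pairing between $C_1$ and $V$ (which identifies one half-edge per vertex of $V$ as "used up"); crucially, the $2|V|$ remaining half-edges at vertices of $V$ and the $3$ half-edges at each vertex of $G_3\setminus \overline{C_1}$ are still entirely unrevealed. By the standard property of the configuration model, conditional on the revealed information, the remaining pairing is a uniformly random perfect matching of all the remaining open half-edges.

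From this, the lemma will follow by two symmetry steps. First, by the symmetry of uniform perfect matchings, if one conditions on the event that a prescribed number $e$ of matched pairs lie entirely within the $2|V|$ half-edges of $V$, then the restriction of the matching to those $e$ internal pairs is uniform among all matchings of size $e$ on these $2|V|$ half-edges. Second, one translates this uniformity from half-edge matchings to graphs on $V$: since each vertex of $V$ has exactly two available half-edges, they are interchangeable (their labels carry no probabilistic weight), so for any two graphs $G_1, G_2$ on $V$ with the same number of edges $e$ and maximum degree at most $2$, there is a bijection between the half-edge matchings inducing $G_1$ and those inducing $G_2$ obtained by relabeling the half-edge pairs at each vertex.

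The main obstacle in executing the plan cleanly is the bookkeeping in the second symmetry step, since one must verify that the per-vertex multiplicity factors (essentially $\binom{2}{d_v(G)}$ at a vertex with $d_v(G)$ internal neighbors, together with contributions from any multi-edges and loops) agree across any two max-degree-$2$ graphs with $e$ edges. This is where one must be careful about how the statement interacts with loops and multi-edges; the cleanest way I would handle it is to first prove uniformity on the set of matchings (which is immediate from steps one and two) and then push it forward to graphs under the natural projection, checking that the projection has equal-size fibers over max-degree-$2$ graphs with $e$ edges in $V$.
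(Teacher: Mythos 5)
Your first step is exactly the paper's proof of Lemma~\ref{aux}: after Phase 1 the still-unmatched half-edges form a uniformly random matching, and conditioning a uniform distribution on the event that exactly $e$ of the pairs lie inside $\overline{C_1}\setminus C_1$ leaves the uniform distribution on the set of configurations compatible with that event. The paper's argument stops there.

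Your second step, however, rests on a claim that is false: the fibers of the projection from partial matchings to labelled graphs are \emph{not} of equal size over all graphs with maximum degree two and $e$ edges. For a simple graph $G$ in which $n_1$ vertices of $\overline{C_1}\setminus C_1$ have internal degree one and $n_2$ have internal degree two, the number of partial matchings of the $2nx_1(t_1)$ open half-edges inducing $G$ is $2^{n_1+n_2}$ (choose one of the two half-edges at each degree-one vertex, and one of the two assignments of its two edges to its two half-edges at each degree-two vertex), while the number of completions of the matching outside depends only on $e$. Already for $e=2$ this gives weight $2^4=16$ to two disjoint edges but only $2^3=8$ to a path on three vertices; a loop gets weight $1$ and a double edge weight $2$. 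So the induced law on labelled graphs is biased against longer paths, cycles, loops and multiple edges, and the bijection you propose, relabelling the half-edge pairs at each vertex, only permutes the fiber over a single graph; it does not carry the fiber over $G_1$ onto the fiber over $G_2$. The statement of Lemma~\ref{aux} should therefore be read, as the paper's one-line proof and its subsequent use make clear, at the level of configurations: conditionally on $e$, the internal pairing is a uniformly random partial matching of the open half-edges, i.e., the induced multigraph follows a configuration model on vertices carrying two available half-edges each. That matching-level uniformity, which your first step already establishes, is all that is needed downstream, since the count of vertices touched by at least one internal edge is performed by the half-edge urn process of Lemma~\ref{urns}.
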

\begin{proof}
This follows from the fact that the matching of the open half-edges after the first phase is uniform (i.e., a configuration model). Indeed, any conditioning on a uniform matching distribution leads to a uniform distribution on the set of configurations, which satisfy the restrictions, imposed by the conditioning. In our case, the restriction is that the number of edges between vertices of $\overline{C_1}\setminus C_1$ is fixed. 
\end{proof}

Thus, in order to calculate how many vertices of $\overline{C_1}\setminus C_1$ participate in chains of length 3, we apply Lemma~\ref{urns} with $a = x_1(t_1)$, where the urns are the vertices with 2 open half-edges, and $b = z_1(t_2)$, where the balls are the half-edges matched to vertices in $\overline{C_1}\setminus C_1$, which participate in edges between two vertices in $\overline{C_1}\setminus C_1$. Indeed, Lemma~\ref{aux} justifies that, conditionally on the number of edges in the graph induced by $\overline{C_1}\setminus C_1$, this graph may be constructed by attaching half-edges one by one uniformly at random so that no vertex is attached to more than 2 new half-edges, and matching them according to the configuration model. We deduce that the number of vertices participating in chains of length 3 is a.a.s.\ $\dfrac{z_1(t_2)(2x_1(t_1)-z_1(t_2))}{x_1(t_1)}n +o(n)$. We add then the vertices and edges participating in chains of length 3 to the component, thus adding a.a.s.\
\begin{equation}\label{see}
    \dfrac{z_1(t_2)(2x_1(t_1)-z_1(t_2))}{x_1(t_1)}n+o(n)
\end{equation}
vertices and
\begin{equation*}
    \dfrac{z_1(t_2)(2x_1(t_1)-z_1(t_2))}{x_1(t_1)}n + z_1(t_2)n+o(n)
\end{equation*}
edges to the component $C_1$. This produces a component $C_2$ with a.a.s.\
\begin{equation*}
    \left(\varepsilon + x_2(t_1)+\dfrac{z_1(t_2)(2x_1(t_1)-z_1(t_2))}{x_1(t_1)}\right)n+o(n)
\end{equation*}
vertices and 
\begin{equation*}
    \left(t_0+a(t_1)+2x_2(t_1)+\dfrac{z_1(t_2)(2x_1(t_1)-z_1(t_2))}{x_1(t_1)} + z_1(t_2)\right)n+o(n)
\end{equation*}
edges. We condition on this information in the sequel. This finishes the second phase.\\

\underline{Phase 3.}\\

The third phase will count the cherries that have their first and third vertex in $\overline{C_1}\setminus C_2$ and center in $G_3\setminus \overline{C_1}$. Of course, some vertices from $G_3\setminus \overline{C_1}$ can be connected also to the vertices in $C_2\setminus C_1$ that we added during the second phase to $C_1$, but this would only increase the modularity. Since our lower bound will not be sharp, we allow ourselves a bit of a tolerance in this third phase for the sake of a less technical analysis.
Our analysis goes as follows: we know that a.a.s.\ there are $z_0(t_2)n + o(n)$ edges between the vertices of $\overline{C_1} \setminus C_2$ and $G_3 \setminus \overline{C_1}$. First, choose the half-edges in the vertices of $G_3 \setminus \overline{C_1}$ that participate in the above edges uniformly at random. Then, match them uniformly at random to the half-edges sticking out of the vertices in $\overline{C_1}\setminus C_2$. This two-step procedure will allow us to learn the number of vertices in $G_3\setminus \overline{C_1}$ to be added to $C_2$ at the first step and the number of vertices in $\overline{C_1}\setminus C_2$ to be addded to $C_2$ at the second step.\par

In the beginning, we have $\left(x_1(t_1) - \dfrac{z_1(t_2)(2x_1(t_1)-z_1(t_2))}{x_1(t_1)}\right)n + o(n)$ vertices in $\overline{C_1}\setminus C_2$ and twice as many edges between $G_3\setminus \overline{C_1}$ and $\overline{C_1}\setminus C_2$ (recall that we condition on success of the previous stages). Start attaching these edges to vertices in $G_3\setminus \overline{C_1}$. Let $W_i(t)$ be the random variable counting the number of vertices in $G_3 \setminus \overline{C_1}$ of degree $i$ at time $t$ for $i = 0,1,2,3$. We have the following initial condition (directly after scaling, as in previous phases): 
\begin{equation*}
w_0(0) = x_0(t_1), w_1(0) = w_2(0) = w_3(0) = 0.
\end{equation*}
We have
\begin{align*}
& \mathbb E[W_0(t+1)\hspace{0.2em}|\hspace{0.2em}\mathcal F_t] = W_0(t) - \dfrac{3W_0(t)}{3x_0(t_1)n - t}, \\
& \mathbb E[W_1(t+1)\hspace{0.2em}|\hspace{0.2em}\mathcal F_t] = W_1(t) + \dfrac{3W_0(t)}{3x_0(t_1)n - t} - \dfrac{2W_1(t)}{3x_0(t_1)n - t}, \\
& \mathbb E[W_2(t+1)\hspace{0.2em}|\hspace{0.2em}\mathcal F_t] = W_2(t) + \dfrac{2W_1(t)}{3x_0(t_1)n - t} - \dfrac{W_2(t)}{3x_0(t_1)n - t}, \\
& \mathbb E[W_3(t+1)\hspace{0.2em}|\hspace{0.2em}\mathcal F_t] = W_3(t) + \dfrac{W_2(t)}{3x_0(t_1)n - t}.
\end{align*}

Transforming these into differential equations (as before) gives 
\begin{align*}
& w'_0(t) =  - \dfrac{3w_0(t)}{3x_0(t_1) - t} \text{ with }w_0(0) = x_0(t_1), \\
& w'_1(t) = \dfrac{3w_0(t)}{3x_0(t_1) - t} - \dfrac{2w_1(t)}{3x_0(t_1) - t}\text{ with }w_1(0) = 0, \\
& w'_2(t) = \dfrac{2w_1(t)}{3x_0(t_1) - t} - \dfrac{w_2(t)}{3x_0(t_1) - t}\text{ with }w_2(0) = 0, \\
& w'_3(t) = \dfrac{w_2(t)}{3x_0(t_1) - t}\text{ with }w_3(0) = 0.
\end{align*}

Solving these differential equations we obtain
\begin{align*}
    & w_0(t)=\dfrac{(3x_0(t_1)-t)^3}{27x_0(t_1)^2}, \\
    & w_1(t) = \dfrac{ t(t-3x_0(t_1))^2}{9x_0(t_1)^2}, \\
    & w_2(t)= \dfrac{t^2(3x_0(t_1)-t)}{9x_0(t_1)^2}, \\
    & w_3(t)=\dfrac{t^3}{27x_0(t_1)^2}.
\end{align*}

The time $t_3 = t_3(\varepsilon)$, at which the process stops, is the available number of edges between $G_3\setminus \overline{C_1}$ and $\overline{C_1}\setminus C_2$, more precisely

\begin{equation*}
    t_3 := 2\left(x_1(t_1) - \dfrac{z_1(t_2)(2x_1(t_1)-z_1(t_2))}{x_1(t_1)}\right)n + o(n).
\end{equation*}

Now we observe that, once the half-edges are attached to the vertices of $G_3\setminus \overline{C_1}$, we can match them uniformly at random to the $2\left(x_1(t_1) - \dfrac{z_1(t_2)(2x_1(t_1)-z_1(t_2))}{x_1(t_1)}\right)n + o(n)$ open half-edges, sticking out of the vertices in $\overline{C_1}\setminus C_2$. Indeed, since we consider a restriction of the configuration model, this additional matching is done uniformly at random and, once again, can be analyzed via the differential equation method (we omit the justification). We use Lemma~\ref{urns} with $a = \left(x_1(t_1) - \dfrac{z_1(t_2)(2x_1(t_1)-z_1(t_2))}{x_1(t_1)}\right)$, which is the number of vertices in $\overline{C_1}\setminus C_2$, and $2b = 2w_2(t_3)+3w_3(t_3)$, which is the number of half-edges in vertices of $G_3\setminus \overline{C_1}$, having at least two edges to $\overline{C_1}\setminus C_2$, to conclude that there are a.a.s.\
\begin{equation*}
    \dfrac{\left(w_2(t_3)+\dfrac{3w_3(t_3)}{2}\right)\left(2\left(x_1(t_1) - \dfrac{z_1(t_2)(2x_1(t_1)-z_1(t_2))}{x_1(t_1)}\right)-w_2(t_3)-\dfrac{3w_3(t_3)}{2}\right)}{\left(x_1(t_1) - \dfrac{z_1(t_2)(2x_1(t_1)-z_1(t_2))}{x_1(t_1)}\right)}n + o(n)
\end{equation*}
vertices in $\overline{C_1}\setminus C_2$ to be added to $C_2$ after phase 3 to form the component $C_3 = C_3(\varepsilon)$.\par

Finally, the total number of vertices in the component $C_3$ after the third phase is a.a.s.\
\begin{align*}
    v_3(\varepsilon) = & \left(\varepsilon + x_2(t_1)+\dfrac{z_1(t_2)(2x_1(t_1)-z_1(t_2))}{x_1(t_1)}\right)n +     w_2(t_3)n + w_3(t_3)n +\\ & \dfrac{\left(w_2(t_3)+\dfrac{3w_3(t_3)}{2}\right)\left(2\left(x_1(t_1) - \dfrac{z_1(t_2)(2x_1(t_1)-z_1(t_2))}{x_1(t_1)}\right)-w_2(t_3)-\dfrac{3w_3(t_3)}{2}\right)}{\left(x_1(t_1) - \dfrac{z_1(t_2)(2x_1(t_1)-z_1(t_2))}{x_1(t_1)}\right)}n +o(n).
\end{align*}
The total number of edges induced by the vertices of $C_3$ is a.a.s.\ also at least
\begin{align*}
        e_3(\varepsilon) = \left(t_0+a(t_1)+2x_2(t_1)+\dfrac{z_1(t_2)(2x_1(t_1)-z_1(t_2))}{x_1(t_1)} + z_1(t_2)\right)n + 2w_2(t_3)n+3w_3(t_3)n +\\
        \dfrac{\left(w_2(t_3)+\dfrac{3w_3(t_3)}{2}\right)\left(2\left(x_1(t_1) - \dfrac{z_1(t_2)(2x_1(t_1)-z_1(t_2))}{x_1(t_1)}\right)-w_2(t_3)-\dfrac{3w_3(t_3)}{2}\right)}{\left(x_1(t_1) - \dfrac{z_1(t_2)(2x_1(t_1)-z_1(t_2))}{x_1(t_1)}\right)}n + o(n).
\end{align*}

We now calculate the relative modularity of the component $C_3$ after the third phase - it is given by 
\begin{equation*}
q_r(C_3)=\frac{2e_3}{3v_3}-v_3.
\end{equation*}

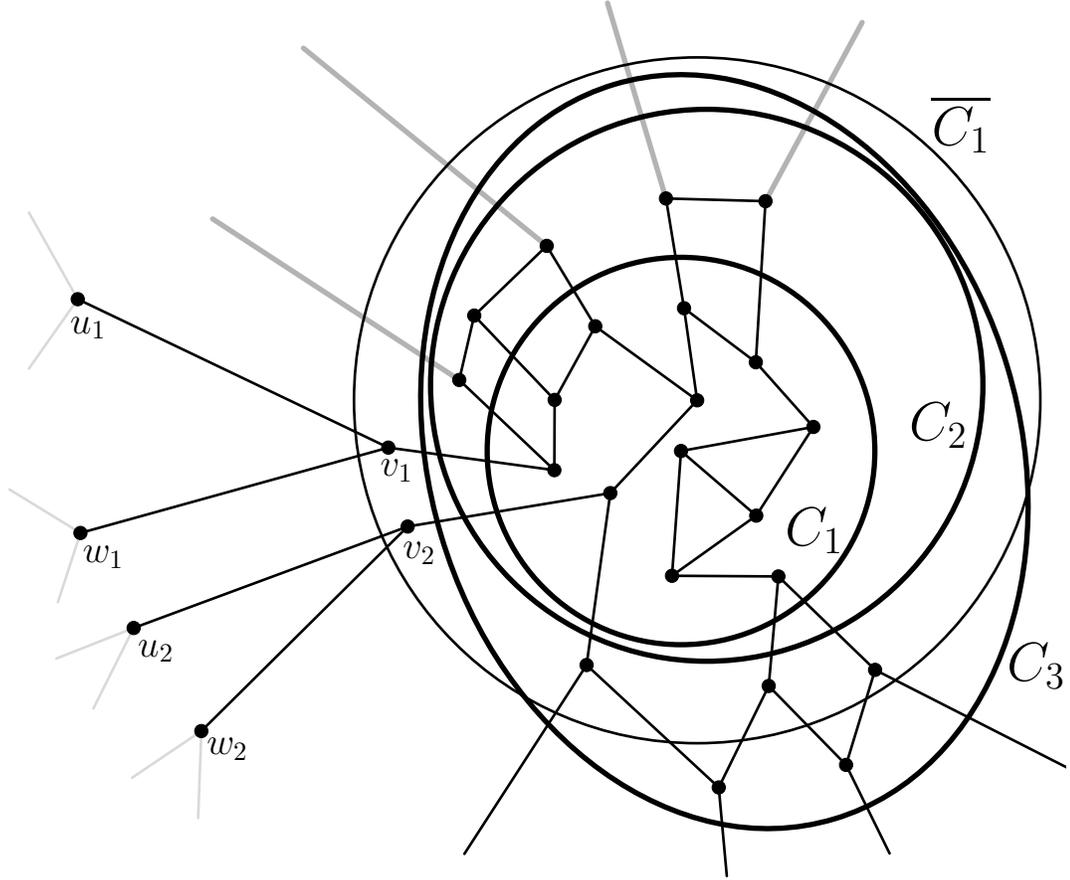
\begin{figure}
\centering
\begin{tikzpicture}[line cap=round,line join=round,x=1cm,y=1cm]
\clip(-3,-8.5) rectangle (12,4.4);
\draw [line width=2pt] (6.88,-2.34) circle (2.576897359228729cm);
\draw [line width=1pt] (7.094545454545452,-1.6654545454545453) circle (4.562246800400935cm);
\draw [line width=1pt] (3.9316079430398907,-1.3947362896577427)-- (4.13090909090909,-0.5381818181818181);
\draw [line width=1pt] (4.13090909090909,-0.5381818181818181)-- (5.094545454545453,0.38909090909090915);
\draw [line width=1pt] (5.094545454545453,0.38909090909090915)-- (5.74,-0.68);
\draw [line width=1pt] (5.74,-0.68)-- (7.094545454545452,-1.6654545454545453);
\draw [line width=1pt] (3.9316079430398907,-1.3947362896577427)-- (5.195862914300283,-2.5947071098370973);
\draw [line width=1pt] (4.13090909090909,-0.5381818181818181)-- (5.2,-1.66);
\draw [line width=1pt] (5.2,-1.66)-- (5.74,-0.68);
\draw [line width=1pt] (6.92,-0.44)-- (6.68,1.02);
\draw [line width=1pt] (6.68,1.02)-- (8.002937511505559,0.9837773003406233);
\draw [line width=1pt] (8.002937511505559,0.9837773003406233)-- (7.874369209343485,-1.15902773569394);
\draw [line width=1pt] (6.92,-0.44)-- (7.094545454545452,-1.6654545454545453);
\draw [line width=1pt] (6.92,-0.44)-- (7.874369209343485,-1.15902773569394);
\draw [line width=1pt] (5.2,-1.66)-- (5.195862914300283,-2.5947071098370973);
\draw [line width=1pt] (6.88,-2.34)-- (7.88,-3.2);
\draw [line width=1pt] (7.88,-3.2)-- (6.76,-4);
\draw [line width=1pt] (6.76,-4)-- (6.88,-2.34);
\draw [line width=2pt, opacity=0.3] (5.094545454545453,0.38909090909090915)-- (1.86,3.02);
\draw [line width=2pt, opacity=0.3] (6.68,1.02)-- (5.902988576191688,3.619427494663136);
\draw [line width=2pt, opacity=0.3] (8.002937511505559,0.9837773003406233)-- (9.288620533126297,3.3622908903389885);
\draw [line width=1pt] (5.94,-2.9)-- (7.094545454545452,-1.6654545454545453);
\draw [line width=1pt] (6.88,-2.34)-- (8.64,-2.02);
\draw [line width=1pt] (8.64,-2.02)-- (7.874369209343485,-1.15902773569394);
\draw [line width=1pt] (8.64,-2.02)-- (7.88,-3.2);
\draw [line width=1pt] (8.174361914388324,-4.008958433619909)-- (6.76,-4);
\draw [line width=1pt] (8.174361914388324,-4.008958433619909)-- (8.04579361222625,-5.466065858123412);
\draw [line width=1pt] (8.174361914388324,-4.008958433619909)-- (9.46004493600906,-5.2517853545199555);
\draw [line width=1pt] (5.94,-2.9)-- (5.624423921507194,-5.187501203438919);
\draw [line width=1pt] (5.94,-2.9)-- (3.2459103315088305,-3.3446888724491943);
\draw [line width=1pt] (5.195862914300283,-2.5947071098370973)-- (2.9887737271846833,-2.2947144047922583);
\draw [line width=2pt] (7.22181818181818,-1.4654545454545453) circle (3.6715569692587353cm);
\draw [line width=1pt] (2.9887737271846833,-2.2947144047922583)-- (-1.14,-0.32);
\draw [line width=1pt] (8.04579361222625,-5.466065858123412)-- (7.3815240510555356,-6.816033030825188);
\draw [line width=1pt] (7.3815240510555356,-6.816033030825188)-- (5.624423921507194,-5.187501203438919);
\draw [line width=1pt] (9.46004493600906,-5.2517853545199555)-- (9.07434002952284,-6.516040325780349);
\draw [line width=1pt] (9.07434002952284,-6.516040325780349)-- (8.04579361222625,-5.466065858123412);
\draw [line width=1pt] (9.46004493600906,-5.2517853545199555)-- (12.867104943304016,-6.987457433707952);
\draw [line width=1pt] (2.9887737271846833,-2.2947144047922583)-- (-1.1039838916413305,-3.430401073890577);
\draw [line width=1pt] (-0.3968582297499251,-4.694656045150969)-- (3.2459103315088305,-3.3446888724491943);
\draw [line width=1pt] (3.2459103315088305,-3.3446888724491943)-- (0.5031198853845911,-6.066051268213091);
\draw [rotate around={-73.35496178532401:(7.456522227316747,-2.348284530693123)},line width=2pt] (7.456522227316747,-2.348284530693123) ellipse (5.100603432953521cm and 3.9276897694090076cm);
\draw [line width=2pt, opacity=0.3] (3.9316079430398907,-1.3947362896577427)-- (0.6531162379070105,0.7480687463768206);
\draw [line width=1pt] (9.07434002952284,-6.516040325780349)-- (9.652897389252171,-7.694583095599358);
\draw [line width=1pt] (7.3815240510555356,-6.816033030825188)-- (7.488664302857264,-7.994575800644197);
\draw [line width=1pt, opacity=0.15] (-1.14,-0.32)-- (-1.7896815031723905,0.8337809478182032);
\draw [line width=1pt, opacity=0.15] (-1.14,-0.32)-- (-1.7896815031723905,-1.2447399371353232);
\draw [line width=1pt, opacity=0.15] (-1.1039838916413305,-3.430401073890577)-- (-2.046818107496538,-2.8518437141612454);
\draw [line width=1pt, opacity=0.15] (-1.1039838916413305,-3.430401073890577)-- (-1.4039765966861693,-4.3518072393854395);
\draw [line width=1pt, opacity=0.15] (-0.3968582297499251,-4.694656045150969)-- (-1.425404647046515,-5.101789001997537);
\draw [line width=1pt] (5.624423921507194,-5.187501203438919)-- (4,-7.7);
\draw [line width=1pt, opacity=0.15] (-0.3968582297499251,-4.694656045150969)-- (-0.9325594887585656,-5.766058563168252);
\draw [line width=1pt, opacity=0.15] (0.5031198853845911,-6.066051268213091)-- (-0.4182862801102707,-6.687464728663114);
\draw [line width=1pt, opacity=0.15] (0.5031198853845911,-6.066051268213091)-- (0.46026378466389983,-7.223165987671755);
\begin{scriptsize}
\draw[color=black] (8.65,-3.4) node {\huge $C_1$};
\draw[color=black] (10.3,-2) node {\huge $C_2$};
\draw[color=black] (11.6,-5.2) node {\huge $C_3$};
\draw[color=black] (10.6,2) node {\huge $\overline{C_1}$};
\draw [fill=black] (6.88,-2.34) circle (2.5pt);
\draw [fill=black] (7.094545454545452,-1.6654545454545453) circle (2.5pt);
\draw [fill=black] (3.9316079430398907,-1.3947362896577427) circle (2.5pt);
\draw [fill=black] (4.13090909090909,-0.5381818181818181) circle (2.5pt);
\draw [fill=black] (5.094545454545453,0.38909090909090915) circle (2.5pt);
\draw [fill=black] (5.74,-0.68) circle (2.5pt);
\draw [fill=black] (5.195862914300283,-2.5947071098370973) circle (2.5pt);
\draw [fill=black] (5.2,-1.66) circle (2.5pt);
\draw [fill=black] (6.92,-0.44) circle (2.5pt);
\draw [fill=black] (6.68,1.02) circle (2.5pt);
\draw [fill=black] (8.002937511505559,0.9837773003406233) circle (2.5pt);
\draw [fill=black] (7.874369209343485,-1.15902773569394) circle (2.5pt);
\draw [fill=black] (7.88,-3.2) circle (2.5pt);
\draw [fill=black] (6.76,-4) circle (2.5pt);
\draw [fill=black] (5.94,-2.9) circle (2.5pt);
\draw [fill=black] (8.64,-2.02) circle (2.5pt);
\draw [fill=black] (8.174361914388324,-4.008958433619909) circle (2.5pt);
\draw [fill=black] (8.04579361222625,-5.466065858123412) circle (2.5pt);
\draw [fill=black] (9.46004493600906,-5.2517853545199555) circle (2.5pt);
\draw [fill=black] (5.624423921507194,-5.187501203438919) circle (2.5pt);
\draw [fill=black] (3.2459103315088305,-3.3446888724491943) circle (2.5pt);
\draw [fill=black] (2.9887737271846833,-2.2947144047922583) circle (2.5pt);
\draw [fill=black] (7.3815240510555356,-6.816033030825188) circle (2.5pt);
\draw [fill=black] (9.07434002952284,-6.516040325780349) circle (2.5pt);
\draw [fill=black] (-1.1039838916413305,-3.430401073890577) circle (2.5pt);
\draw [fill=black] (-0.3968582297499251,-4.694656045150969) circle (2.5pt);
\draw [fill=black] (-1.14,-0.32) circle (2.5pt);
\draw [fill=black] (0.5031198853845911,-6.066051268213091) circle (2.5pt);
\draw[color=black] (0.85,-6.3) node {\Large $w_2$};
\draw[color=black] (-0.1,-5) node {\Large $u_2$};
\draw[color=black] (3.4,-3.7) node {\Large $v_2$};
\draw[color=black] (-0.8,-3.75) node {\Large $w_1$};
\draw[color=black] (-1,-0.7) node {\Large $u_1$};
\draw[color=black] (3.1,-2.6) node {\Large $v_1$};
\end{scriptsize}
\end{tikzpicture}
\caption{The first, the second and the third phase in one figure. The exposed edges after the third phase are colored in black. After the third phase, do a step-by-step exposure of the thick grey edges - these are the unexposed edges attached to vertices in $C_2\setminus C_1$ - and repeat the procedure of adding cherries to $C_3$ performed in phase 1. After that, contract the remaining paths in $G_3\setminus \overline{C_3}$ of length 2 of exposed edges left with centers in $\overline{C_1}\setminus C_3$ as, for example, $(u_1, v_1, w_1)$ and $(u_2, v_2, w_2)$ in the figure. Then, the vertices outside $\overline{C_3}$ after the contractions have degrees 2, 3 and 4, and the modified graph after the contractions follows a configuration model on the given degree sequence (note that all explored edges have been contracted).}
\label{fig:final phase}
\end{figure}

Optimizing $q_r(C_3) = q_r(C_3(\varepsilon))$ over $\varepsilon\in [0,7/8]$ gives $\varepsilon = 0.037562$. For this choice of $\varepsilon$ one obtains $q_r(C_3)= 0.674701 > \frac{2}{3}$. It would now be sufficient to prove that the graph induced by $V \setminus C_3$ has relative modularity at least $2/3$. Rather than doing this directly, however, we add some more vertices to $C_3$ first.\par

First, let us analyze under which conditions the operation of adding a cherry (recall that this is a path of length 2 with first and last vertex in a component and center outside of the component) to a component $C$ increases the relative modularity of $C$. Indeed, one needs that
\begin{equation*}
\dfrac{2e(C)}{3|C|}-\dfrac{|C|}{n} \le \dfrac{2(e(C)+2)}{3(|C|+1)}-\dfrac{|C|+1}{n} \iff n(3|C| + (9|C|-6e(C))) \ge 3|C|(3|C| + 3).
\end{equation*}
Since we are in the setting of $3-$regular graphs, $9|C|\ge 6e(C)$ and therefore, for a cherry to increase the modularity of a component $C$ when (its center is) added to $C$ it is sufficient that $|C| \le n/3 - 1$.\par

We now construct a component $\overline{C_3}$ by adding any cherries with first and last vertex in $C_3$ and center outside $C_3$. For this, one may explore the half-edges attached to vertices in $C_2\setminus C_1$, which were unexplored until the end of the third phase. The procedure of adding cherries was explained in the analysis of phase 1. We just underline that one path of length 2 may be such that one may not add it directly to $C_3$, but after a couple of other cherries have been already added to $C_3$ it may become a cherry itself. Since one may explicitly calculate the size of $C_3$ immediately after the third phase, which is $|C_3| = |C_3(0.037562)| = 0.044783 n$, even when we add all available cherries we will not increase the size of $C_3$ more than four times since at each step $e(C_3, V\setminus C_3)$ decreases by 1 and the maximal degree in $G_3$ is 3. Thus, $|C_3|$ remains smaller than $n/3 - 1$ throughout the whole process. By doing another analysis via the differential equation method one could get an explicit value for $\overline{C_3}$ and thus get an improved lower bound, but in the sequel we only use that $|\overline{C_3}|\ge|C_3|$.\par

Let us now analyze how the complement of $\overline{C_3}$ looks like after all cherries with respect to $C_3$ have been consecutively added to $C_3$. It contains only vertices of degree 2 and 3. However, some edges have been exposed by now - these are the edges incident to the vertices in $\overline{C_1}\setminus C_3$. Let $v$ be one such vertex and $u,w$ be its neighbors outside $C_3$. There are two cases (see Figure~\ref{fig:final phase}):
\begin{enumerate}
    \item $u$ (or $w$) became a cherry at one point with respect to $C_3$ in the construction of $\overline{C_3}$. Then, one adds $u$ to $C_3$ and $v$ becomes a cherry with respect to $C_3$. Then, one adds $v$.
    \item\label{contractions} Neither $u$ nor $w$ become cherries. Then, at least one edge incident to $u$ and at least one edge incident to $w$ go to vertices of $G_3\setminus \{\overline{C_3} \cup v\}$. Therefore, contracting the edges $uv$ and $vw$ produces a vertex of degree at least 2 in $G_3\setminus \overline{C_3}$.
\end{enumerate}
Performing the contractions given in point~\ref{contractions} above, one is left with a configuration model for the complement of $\overline{C_3}$ of minimal degree 2, where a positive proportion of the vertices have degree at least 3. Indeed, one has that $|\overline{C_3}(0.037562)|\le 4\times 0.044784 n = 0.179136 n$ and $|\overline{C_1}(0.037562)\setminus C_2(0.037562)|\le 3|C_3(0.037562)|\le 0.134353 n$. One deduces that, first, the number of contracted edges is at most $2 \times 0.134353 n$, and second, the number of vertices of degree 2 in $G_3\setminus \overline{C_3}(0.037562)$ is at most $3|\overline{C_3}(0.037562)|\le 0.537412 n$. One deduces that the number of vertices of degree 3 in $G_3\setminus \overline{C_3}$ is at least $n - 3|\overline{C_3}(0.037562)| -  |\overline{C_3}(0.037562)| = 0.283456 n$: we take out the vertices in $\overline{C_3}(0.037562)$ as well as the vertices that might participate in edge contractions - these are vertices at distance at most 2 from $\overline{C_3}(0.037562)$.\par

Now, a direct application of Lemma~\ref{connectivity} for the complement of $\overline{C_3}$ after the contractions of the explored edges in point~\ref{contractions} above gives that a.a.s.\ at most $\log n$ vertices of $G_3\setminus \overline{C_3}$ are outside the giant component $C'_{\max}$ in $G_3\setminus \overline{C_3}$. Since contractions do not modify connectivity, before contractions one should have that $G_3\setminus \overline{C_3}$ must contain a giant component $C_{\max}$ and all but at most $3\log n$ vertices must be in it a.a.s.\ (indeed, every vertex participates in at most 2 contracted edges).

By Lemma~\ref{lem:partition} applied to a spanning tree of $C_{\max}$, one may divide $C_{\max}$ into connected parts $(A_1, A_2, \dots, A_k)$ of orders between $\lfloor \sqrt{n}\rfloor$ and $\Delta \lceil \sqrt{n}\rceil$. Thus, setting 
\begin{equation*}
\mathcal A=\bigg\{ A_0 := \overline{C_3}, A_1, A_2, \dots, A_k, A_{k+1} := V\setminus \{\overline{C_3}\cup C_{\max}\}\bigg\},
\end{equation*}
and observing that every part $A_i$ satisfies $q_r(A_i)\ge \frac23+o(1)$ (which holds for a tree of maximum degree at most 3) we obtain
\begin{align*}
q^*(G) &\ge\hspace{0.25em} q(\mathcal A) \ge \hspace{0.25em} q_r(\overline{C_3})\frac{|\overline{C_3}|}{n}+\sum_{1\le i\le k}q_r(A_i) \frac{|A_i|}{n}\\
&\ge\hspace{0.25em} q_r(C_3)\frac{|\overline{C_3}|}{n}+\dfrac{2}{3}\frac{(1-|\overline{C_3}|)}{n}\\
&\ge\hspace{0.25em} 0.667026 - o(1),
\end{align*}
where we used that $q_r(\overline{C_3}) \ge q_r(C_3)$ and that $|\overline{C_3}|\ge|C_3|\ge 0.044783n$. Once again we point out that a slightly improved lower bound could be obtained by calculating an explicit value of $|\overline{C_3}|$ but we left this out for the sake of simplicity. The proof of Theorem~\ref{LB3reg} is completed.

\section{\texorpdfstring{Upper bound for random $3-$regular graphs}{}}\label{sec:Upper}

In this section we show that a.a.s.\ the modularity of a random 3-regular graph $G_3 = G_3(n)$ is at most $0.789998$.

Suppose therefore that in $G_{n, 3}$ there is a partition $\mathcal A = (A_1, A_2, \dots, A_k)$ with modularity at least $0.789998$. More formally,
\begin{equation*}
    \sum_{i=1}^{k} \dfrac{|A_i|}{n} q_r(A_i) \ge 0.789998,
\end{equation*}
and therefore there is $i\in [k]$, for which $q_r(A_i) \ge 0.789998$. By definition of $A_i$, this means that 
\begin{equation}\label{eq:mod}
    \dfrac{2e(A_i)}{3|A_i|} - \dfrac{|A_i|}{n} \ge 0.789998.
\end{equation}

We will show that a.a.s.\ such a set does not exist in $G_3(n)$. Fix the set $A\subseteq V$ with the largest relative modularity in $G_3$ among all subsets of $V$ and suppose that its relative modularity is at least $0.789998$. We start with a simple observation:

\begin{observation}\label{ob 4.0}
$G_3[A]$ is a connected graph.
\end{observation}
\begin{proof}
Let $S\subseteq V(G_3)$ be a union of disjoint non-empty sets $S', S''$ such that $e(S', S'') = 0$. Since $e(S'\cup S'') = e(S')+e(S'')$,
\begin{equation*}
    q_r(S) = \dfrac{2e(S)}{3|S|} - \dfrac{|S|}{n} < \dfrac{|S'|}{|S|}\left(\dfrac{2e(S')}{3|S'|} - \dfrac{|S'|}{n}\right) + \dfrac{|S''|}{|S|}\left(\dfrac{2e(S'')}{3|S''|} - \dfrac{|S''|}{n}\right)\le \max\{q_r(S'), q_r(S'')\}.
\end{equation*}
Since $A$ is the vertex set with maximal relative modularity, $G_3[A]$ must be a connected graph.

\end{proof}

\begin{lemma}\label{lem 4.1}
A.a.s. there is $\varepsilon_0 > 0$, for which $|A|\ge \varepsilon_0 n$.
\end{lemma}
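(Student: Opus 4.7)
The plan is a first-moment argument in the configuration model, which is contiguous to the uniform model on $\mathcal G_3(n)$ (so statements that hold a.a.s.\ in one hold a.a.s.\ in the other). Writing $k = |A|$ and recalling the hypothesis~\eqref{eq:mod}, every candidate set $A$ of size $k$ must satisfy
\begin{equation*}
e(A) \ge \beta_0(k/n) \cdot k, \qquad \beta_0(\alpha) := \tfrac{3}{2}\bigl(0.789998 + \alpha\bigr).
\end{equation*}
For a fixed vertex subset $A$ with $|A|=k$, counting configurations yields
\begin{equation*}
\Pr\bigl[e(A)= e\bigr] \;=\; \frac{\binom{3k}{2e}(2e-1)!!\,\binom{3(n-k)}{3k-2e}(3k-2e)!\,(3(n-k)-(3k-2e)-1)!!}{(3n-1)!!}.
\end{equation*}
The plan is to sum this over $e \ge \beta_0(k/n)k$, multiply by $\binom{n}{k}$, and show that the resulting expectation is $o(1)$ when summed over $k \in \{1, \dots, \lfloor\varepsilon_0 n\rfloor\}$ for some absolute constant $\varepsilon_0 > 0$.

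I would split the range of $k$ into two regimes. For $k$ bounded (say $k\le K$ for a large constant $K$), using $\binom{n}{k} = O(n^k)$ and the crude bound $\Pr[e(A)\ge e] = O(n^{-e})$ (each internal pairing costs a factor of $\Theta(1/n)$), the expected number of bad sets of size $k$ is $O(n^{k-e})$; since the required $e$ is at least $\lceil 1.185\,k \rceil \ge k+1$ for all $k \ge 1$, this is $o(1)$ and summation over $k\le K$ is $o(1)$. For $k = \alpha n$ with $\alpha \in (0,\varepsilon_0)$, applying Stirling to every factor above and writing $e = \beta k$ gives, uniformly on compact sub-intervals,
\begin{equation*}
\binom{n}{k}\Pr\bigl[e(A)\ge \beta_0(\alpha)k\bigr] \;=\; \exp\!\bigl(n\,\Phi(\alpha) + o(n)\bigr),
\end{equation*}
where $\Phi(\alpha) = H(\alpha) + \sup_{\beta \ge \beta_0(\alpha)}\Psi(\alpha,\beta)$ is an explicit continuous function built from the binary entropy $H$ and the entropies produced by the configuration-model denominator. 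A direct (but unilluminating) Stirling expansion shows
\begin{equation*}
\Phi(\alpha) \;\le\; \alpha\bigl(2.464 + 0.185\ln\alpha\bigr) + o(\alpha),
\end{equation*}
so that $\Phi(\alpha) < 0$ whenever $\alpha$ is smaller than some explicit threshold $\varepsilon_0 \in (0, e^{-14})$. Hence the expectation decays exponentially in $n$, uniformly over $k\in [K,\varepsilon_0 n]$, and a union bound over all such $k$ still gives $o(1)$. Combining the two regimes proves that a.a.s.\ no set $A$ of size below $\varepsilon_0 n$ satisfies~\eqref{eq:mod}.

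The main obstacle is verifying that the rate function $\Phi$ is indeed strictly negative on $(0,\varepsilon_0)$: one has to track the three competing entropy contributions (the choice of $A$, the internal matching of $2e$ half-edges in $A$, and the matching of the remaining $3k-2e$ half-edges of $A$ to the outside) and show that the entropy $H(\alpha)\sim \alpha|\ln\alpha|$ cannot compensate the cost of concentrating about $1.185\,\alpha n$ edges inside a set of size $\alpha n$ whose typical internal edge count is only $\tfrac{3\alpha^2 n}{2}$. Once $\varepsilon_0$ is fixed so that $\Phi<0$ on $(0,\varepsilon_0)$, the conclusion follows from Markov's inequality and contiguity.
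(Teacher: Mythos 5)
Your route is sound and genuinely different from the paper's. The paper first shows (Lemma~\ref{lem 4.0}) that the set of maximal relative modularity induces a connected subgraph, and then enumerates candidate sets through an unlabelled spanning tree of maximum degree three: the number of such trees is only $C^s/s^{3/2}$ with $C\approx 2.483$, so after pairing the $s-1$ tree edges the per-vertex cost is a constant, and the $\ge 0.184997\,s$ surplus edges each contribute a factor of order $s/n$; this gives a bound of the shape $\bigl(C\varepsilon^{0.184997}\bigr)^s$ and hence a comparatively large admissible threshold $\varepsilon_0\approx 10^{-2}$, determined by $2C^{1/0.184997}\varepsilon_0=3-2\varepsilon_0$. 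You instead ignore connectivity and union-bound over \emph{all} $k$-subsets, paying the full entropy $\binom{n}{k}\approx e^{k\ln(n/k)+k}$, and rely on the fact that forcing $e(A)\ge 1.185k$ costs roughly a factor $(\Theta(1)\cdot k/n)^{1.185k}$ in the configuration model; the net exponent per vertex is $-0.185\ln(n/k)+O(1)$, which is indeed negative once $k/n$ is below an explicit (much smaller) threshold of order $e^{-14}$. Both arguments prove the lemma, since any $\varepsilon_0>0$ suffices downstream (the function $g$ in the proof of Theorem~\ref{UB3reg} is negative on all of $(0,1/2]$, so a tiny $\varepsilon_0$ causes no harm); what the paper's connectivity-plus-tree-count buys is a quantitatively larger $\varepsilon_0$ and a cleaner explicit bound, while your version is more self-contained (no Lemma~\ref{lem 4.0}, no Otter-type tree enumeration).

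One point in your write-up needs tightening: the two regimes ``$k\le K$'' and ``$k=\alpha n$ with Stirling uniform on compact sub-intervals'' do not literally cover the intermediate range $K\le k=o(n)$, and there the formulation $\exp\bigl(n\Phi(\alpha)+o(n)\bigr)$ is useless because $n\Phi(k/n)\asymp -k\ln(n/k)$ can be dwarfed by a generic $o(n)$ error. The fix is to keep the estimate per value of $k$: bounding $\binom{n}{k}\binom{3k}{2e}(2e-1)!!\,(3n-2e)^{-e}$ directly (the tail over $e\ge\lceil\beta_0(k/n)k\rceil$ is geometric with ratio $O(k/n)$) gives $\exp\bigl(-0.185\,k\ln(n/k)+O(k)\bigr)$ uniformly for all $K\le k\le\varepsilon_0 n$, which is summable over $k$; this is exactly the computation you sketch, just organized so the error terms are $O(k)$ (or $O(\log n)$) rather than $o(n)$. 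With that adjustment the argument is complete.
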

\begin{proof}
Suppose the contrary and fix $\varepsilon_0 > 0$ to be chosen later. Let $|A|=s\le \varepsilon_0 n$. Note that in order for $A_i = A$ to satisfy~\eqref{eq:mod} we must have
\begin{equation*}
e(A) \ge \dfrac{3\times 0.789998 s}{2} = 1.184997 s,   
\end{equation*}
or equivalently in terms of the density, we must have 
\begin{equation*}
\dfrac{e(A)}{s} \ge 1.184997. 
\end{equation*}

First, for any constant $C > 0$ we cannot have $s\le C$, since by~\cite{JLR}, Proof of Theorem~9.5, a.a.s.\ there is no subgraph of fixed size with more edges than vertices, so we may assume in the sequel that $s$ is larger than any fixed positive constant (in fact $s \ge 11$ suffices). Next, we may assume that the graph induced by $A$ contains a spanning tree of maximal degree $3$ on $s$ vertices, and there are at least $0.184997 s$ edges added inside $A$ on top of the spanning tree. Since choosing such an unlabeled tree on $s$ vertices can be done in at most $\dfrac{C^s}{s^{3/2}}$ ways with $C \approx 2.483253$, see \cite{Ency1} and \cite{Ency2}. Now we bound from above the probability that there is a set $A$ of size $s$ such that $q_r(A) \ge 0.789998$ using a union bound over all subsets of $V$ of size $s$:

\begin{itemize}
    \item There are $\binom{n}{s}$ subsets of size $s$ of $V$.
    \item One chooses an unlabeled spanning tree on the given $s$ vertices of maximal degree at most 3 in $\dfrac{C^s}{s^{3/2}}$ ways.
    \item One chooses the labels of the $s$ vertices in $s!$ ways.
    \item One chooses $s'\in [0.184997 s, s]$ vertices, which will be incident to the additional $m\ge 0.184997 s$, edges in the component, i.e., the ones that do not participate in the spanning tree.
    \item For every vertex, one multiplies by a factor of 2 to choose if one or two half-edges (if present) will participate in the additional edges outside the spanning tree.
    \item Then, one multiplies by $(2m-1)!!$  to choose the matching within the chosen edges.
    \item Then, one multiplies by $(3n-2(s-1)-2m-1)!!$ for the matching of all other half-edges.
    \item Then we divide by the probability $\dfrac{1}{(3n-1)!!}$ that a particular graph appears.
\end{itemize}

In total, we obtain 

\begin{align*}
& \binom{n}{s} \dfrac{C^s}{s^{3/2}} s! \binom{s}{s'} 2^{s'} (2m-1)!! (3n - 2(s-1) - 2m-1)!! \dfrac{1}{(3n-1)!!} \\
\le\hspace{0.35em}
& \dfrac{(3C)^s}{\sqrt{s}} \binom{n}{s} \dfrac{(s-1)!(2m-1)!!(3n-2(s-1)-2m - 1)!!}{(3n-1)!!} \\
\le\hspace{0.35em}
& \dfrac{(3C)^s}{s^{3/2}} \prod_{0\le i\le s-1} \dfrac{n-i}{3n-2i-1} \prod_{s\le i\le s+m-2} \dfrac{2m+2s-2i-1}{3n-2i-1} \\
\le\hspace{0.35em}
& \dfrac{(3C)^s}{s^{3/2}} \dfrac{2}{3^s} \prod_{0\le i\le m-2} \dfrac{2m-2i-1}{3n-2s-2i-1}\\ 
\le\hspace{0.35em}
& \dfrac{2C^s}{s^{3/2}} \left(\dfrac{2m-1}{3n-2s-1}\right)^{m-1}\\ 
\le\hspace{0.35em}
& \dfrac{2C^s}{s^{3/2}} \left(\dfrac{s+1}{3n-2s-1}\right)^{0.184997 s - 1} \\
\le\hspace{0.35em}
& 2 C^{\frac{1}{0.184997}} \left(\dfrac{C^{\frac{1}{0.184997}}(s+1)}{3n-2s-1}\right)^{0.184997 s - 1}.
\end{align*}

For every large enough $n$, summing the above upper bound over the interval $s\in [11, \log n]$ gives an upper bound of 
\begin{equation*}
\dfrac{24 C^{11} \log n}{(3n-23)^{1.03497}},
\end{equation*}
and summing over the interval $s\in [\log n, \varepsilon_0 n]$ with $\varepsilon_0$ such that 
\begin{equation*}
2C^{\frac{1}{0.184997}} \varepsilon_0 = 3 - 2\varepsilon_0
\end{equation*}
gives an upper bound of 
\begin{equation*}
2C^{\frac{1}{0.184997}} \sum_{\log n \le i\le \varepsilon_0 n} \dfrac{1}{2^{0.184997i}} \le \dfrac{2 C^{\frac{1}{0.184997}}}{(1 - \frac{1}{2^{0.184997}}) 2^{0.184997 \log n}}.
\end{equation*}

Summing both bounds gives that the probability of having a subset of $V(G_3)$ of at most $\varepsilon_0 n$ vertices, where $\varepsilon_0 > 0$ was given above, inducing a subgraph of $G_3$ of relative modularity at least $0.789998$ tends to 0 with $n$. The lemma is proved.
\end{proof}

Due to Lemma~\ref{lem 4.1} we assume from now on that $|A|\ge \varepsilon_0 n$. Let $|A| = \varepsilon n$ for some $\varepsilon \ge \varepsilon_0 > 0$. Since $q_r(A)\ge 0.789998$, we conclude that 
\begin{equation*}
e(A) \ge \dfrac{3\varepsilon n}{2}(0.789998+ \varepsilon) \ge \dfrac{3\varepsilon n}{2}\left(\frac{2}{3}+0.123331+ \varepsilon\right) =(1 + 3(0.123331+ \varepsilon)/2)\varepsilon n,
\end{equation*}
i.e., the density of $A$ is at least $(1 + 3(0.123331 + \varepsilon)/2)$.\par

Now, for $k\in [\varepsilon_0 n, n/2]$, let $B_k$ be the subset of $V$ of size $k$ inducing a connected graph $G_3[B_k]$ with maximal number of edges. By assumption there is $k\in [\varepsilon_0 n, n/2]$ such that the given density is at least
\begin{equation*}
1 + \dfrac{3}{2}\left(\dfrac{k}{n} + 0.123331\right).
\end{equation*}

We prove that if such a set $B_k$ exists for some $k$ in the given range, then one may find a set $B$ such that 
\begin{enumerate}
\item\label{cond 1} $G_3[B]$ and $G_3[V\setminus B]$ both contain only vertices of degree 2 and 3,
\item\label{cond 2} $\varepsilon_0 n\le |B|\le n/2$, and
\item\label{cond 3}
\begin{equation*}
e(G_3[B]) \ge \left(1 + \dfrac{3}{2}\left(0.123331 + \dfrac{|B|}{n}\right)\right)|B|.
\end{equation*}
\end{enumerate}

For the sake of contradiction, assume that no set $B$ exists. Let $\overline{B}$ be a set that satisfies conditions~\ref{cond 2} and~\ref{cond 3} above (such a set exists by assumption), for which the quantity 
\begin{equation*}
e(G_3[\overline{B}]) - \left(1 + \dfrac{3}{2}\left(0.123331 + \dfrac{|B|}{n}\right)\right)|\overline{B}|
\end{equation*}
is maximal.\par

\begin{lemma}\label{lem 4.2}
For every $n\ge 10$, the graph $G_3[V\setminus \overline{B}]$ contains only vertices of degree 2 and 3.
\end{lemma}
\begin{proof}
We argue by contradiction. Suppose that there are edges $uv,vw$ in $G_3$ such that $u,w\in \overline{B}$ and $v\in V\setminus\overline{ B}$. Then, adding $v$ to $\overline{B}$ produces a graph with $e(G_3[\overline{B}])+2$ edges. On the one hand, one has that
\begin{align*}
& \left(1 + \dfrac{3}{2}\left(0.123331 + \dfrac{|\overline{B}|+1}{n}\right)\right)(|\overline{B}|+1)\\ 
=\hspace{0.25em} & \left(1 + \dfrac{3}{2}\left(0.123331 + \dfrac{|\overline{B}|}{n}\right)\right)|\overline{B}| + 1 + \dfrac{3}{2}\left(0.123331 + \dfrac{|\overline{B}|}{n}\right) + \dfrac{3}{2n}|\overline{B}| + \dfrac{3}{2n}\\
=\hspace{0.25em} & \left(1 + \dfrac{3}{2}\left(0.123331 + \dfrac{|\overline{B}|}{n}\right)\right)|\overline{B}| + 1.1849965 + \dfrac{3|\overline{B}|+3/2}{n}.
\end{align*}

On the other hand, since the density of a component cannot become larger than $3/2$, we have that
\begin{equation*}
1 + \dfrac{3}{2}\left(0.123331 + \dfrac{|\overline{B}|}{n}\right) \le \dfrac{3}{2} \iff |\overline{B}| \le 0.210002 n.
\end{equation*}

Since this is the case, one has that for every $n\ge 10$
\begin{align*}
& e(G_3[\overline{B}\cup v]) - \left(1 + \dfrac{3}{2}\left(0.123331 + \dfrac{|\overline{B}\cup v|}{n}\right)\right)(|\overline{B}\cup v|)\\ 
=\hspace{0.25em} & e(G_3[\overline{B}]) + 2 - \left(1 + \dfrac{3}{2}\left(0.123331 + \dfrac{|\overline{B}|}{n}\right)\right)|\overline{B}| - 1.1849965 - \dfrac{3|\overline{B}|+3/2}{n}\\
>\hspace{0.25em} & e(G_3[\overline{B}]) - \left(1 + \dfrac{3}{2}\left(0.123331 + \dfrac{|\overline{B}|}{n}\right)\right)|\overline{B}|.
\end{align*}
This is a contradiction with the choice of $\overline{B}$. The lemma is proved.
\end{proof}

\begin{lemma}\label{lem 4.3}
For every $n\ge 10$, the graph $G_3[\overline{B}]$ contains only vertices of degree 2 and 3.
\end{lemma}
\begin{proof}
We argue by contradiction. Suppose that $v$ is a vertex of degree 1 in $G_3[\overline{B}]$. Then, one has $e(G_3[\overline{B}\setminus v]) = e(G_3[\overline{B}]) - 1$ and
\begin{align*}
& \left(1 + \dfrac{3}{2}\left(0.123331 + \dfrac{|\overline{B}|-1}{n}\right)\right)(|\overline{B}|-1)\\
= & \left(1 + \dfrac{3}{2}\left(0.123331 + \dfrac{|\overline{B}|}{n}\right)\right)|\overline{B}| - \left(1 + \dfrac{3}{2}\left(0.123331 + \dfrac{|\overline{B}|}{n}\right)\right) - \dfrac{3|\overline{B}|}{2n} + \dfrac{3}{2n}.
\end{align*}
One immediately deduces that
\begin{equation*}
e(G_3[\overline{B}\setminus v]) - \left(1 + \dfrac{3}{2}\left(0.123331 + \dfrac{|\overline{B}\setminus v|}{n}\right)\right)(|\overline{B}\setminus v|) \hspace{0.3em}>\hspace{0.3em} e(G_3[\overline{B}]) - \left(1 + \dfrac{3}{2}\left(0.123331 + \dfrac{|\overline{B}|}{n}\right)\right)|\overline{B}|.
\end{equation*}
This is a contradiction with the choice of $\overline{B}$. The lemma is proved.
\end{proof}

\begin{corollary}\label{cor 4.4}
For every $n\ge 10$, the set $\overline{B}$ satisfies~\ref{cond 1}. 
\end{corollary}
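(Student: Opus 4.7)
The plan is extremely short, since Corollary~\ref{cor 4.4} is an immediate consequence of the two preceding lemmas. The statement that $\overline{B}$ satisfies~\ref{cond 1} requires exactly two things: that $G_3[\overline{B}]$ contains no vertex of degree 0 or 1 (recall that the maximum degree is 3 throughout, so ``degree two or three'' here is really ``degree at least two''), and that the same holds for $G_3[V\setminus \overline{B}]$.

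The first of these is precisely the content of Lemma~\ref{lem 4.3}: a vertex of degree $\leq 1$ in $G_3[\overline{B}]$ could be removed from $\overline{B}$ to obtain a set with strictly larger value of $e(G_3[\cdot]) - (1 + \tfrac{3}{2}(0.123331 + |\cdot|/n))|\cdot|$, contradicting the maximality in the definition of $\overline{B}$. The second is Lemma~\ref{lem 4.2}: a vertex $v\in V\setminus \overline{B}$ with two neighbours in $\overline{B}$ could be added to $\overline{B}$, again producing a strictly larger value of the same quantity, which is impossible. A vertex with all three neighbours in $\overline{B}$ is also covered by the same argument since adding it increases $e(G_3[\cdot])$ even more.

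Therefore, the only step is to invoke Lemmas~\ref{lem 4.2} and~\ref{lem 4.3} in turn, noting that both hypotheses $n\ge 10$ coincide with the hypothesis of the corollary, and conclude that for every $n \ge 10$ every vertex in $V(G_3)$ has degree either 2 or 3 in the subgraph induced by the side ($\overline{B}$ or $V\setminus \overline{B}$) containing it. There is no real obstacle; the corollary is essentially a repackaging to set up the first-moment calculation in the remainder of Section~\ref{sec:Upper}, where one needs both induced subgraphs to have minimum degree 2 in order to apply the configuration-model enumeration bound.
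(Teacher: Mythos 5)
Your proposal is correct and matches the paper's own argument, which likewise deduces Corollary~\ref{cor 4.4} directly by combining Lemma~\ref{lem 4.2} (for $G_3[V\setminus\overline{B}]$) with Lemma~\ref{lem 4.3} (for $G_3[\overline{B}]$). The extra remarks you add (reading ``degree two or three'' as ``degree at least two'' and noting the case of a vertex with three neighbours in $\overline{B}$) are accurate but not needed beyond what the two lemmas already give.
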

\begin{proof}
This follows directly from Lemma~\ref{lem 4.2} and Lemma~\ref{lem 4.3}.
\end{proof}

\begin{proof}[Proof of Theorem~\ref{UB3reg}]
We apply the first moment method to count the number of sets $B$ with $|B|=\varepsilon n$ for some $\varepsilon\in [\varepsilon_0, 1/2]$ satisfying conditions~\ref{cond 1} and~\ref{cond 3} from above. We proceed as follows:

\begin{itemize}
\item First we choose $\varepsilon n$ vertices out of $n$ that belong to $B$ in $\binom{n}{\varepsilon n}$ ways.
\item We choose a number of $\beta n$ vertices in $B$ in $\binom{\varepsilon n}{\beta n}$ ways and $\beta n$ vertices in $V\setminus B$ in $\binom{(1-\varepsilon)n}{\beta n}$ ways. These vertices will be endvertices of the edges between $B$ and $V\setminus B$ and therefore $\beta \le (1 - 3(0.123331 + \varepsilon))\varepsilon$, which follows directly from condition~\ref{cond 3}.
\item For each of the vertices chosen above, choose a half-edge that will participate in the edge between $B$ and $V\setminus B$ in three ways, and match the given half-edges in $(\beta n)!$ ways.
\item Choose a graph on the vertices of $B$ with $\beta n$ vertices of degree 2 (the ones chosen above) and $(\varepsilon -\beta)n$ vertices of degree 3. By Corollary~\ref{cor 2.3} with $m = (3\varepsilon -\beta)n/2$ there are 
\begin{equation*}
    \Theta\left(\dfrac{((3\varepsilon -\beta)n)!}{2^{(3\varepsilon -\beta)n/2} ((3\varepsilon -\beta)n/2)!} \dfrac{1}{2^{\beta n}}\dfrac{1}{6^{(\varepsilon - \beta) n}}\right)
\end{equation*}
choices. Multiply by a factor of $2^{\beta n}6^{(\varepsilon - \beta)n}$ to count configurations with labeled half-edges rather than graphs.
\item Then, choose a graph over the vertices of $V\setminus B$ with $\beta n$ vertices of degree 2 (the ones chosen above) and $(1 - \varepsilon -\beta)n$ vertices of degree 3. By Corollary~\ref{cor 2.3} with $m = (3(1-\varepsilon) -\beta)n/2$ there are 
\begin{equation*}
    \Theta\left(\dfrac{((3(1-\varepsilon) - \beta)n)!}{2^{(3(1-\varepsilon) -\beta)n/2} ((3(1-\varepsilon) -\beta)n/2)!} \dfrac{1}{2^{\beta n}}\dfrac{1}{6^{(1-\varepsilon - \beta) n}}\right)
\end{equation*}
choices. Multiply by a factor of $2^{\beta n}6^{(1-\varepsilon - \beta)n}$ to count configurations with labeled half-edges rather than graphs.
\item Divide by the total number $(3n-1)!!$ of configurations to reduce the counting above to an expectation.
\end{itemize}

Multiplying all factors leads to the following formula, which gives the order of the expectation of the number of cuts $(B, V\setminus B)$ in $G_3(n)$:
\begin{equation*}
\binom{n}{\varepsilon n} \binom{\varepsilon n}{\beta n}
\binom{(1-\varepsilon)n}{\beta n} 3^{2\beta n} (\beta n)!
\dfrac{((3\varepsilon -\beta)n)!}{2^{(3\varepsilon -\beta)n/2} ((3\varepsilon -\beta)n/2)!}
\dfrac{((3(1-\varepsilon) -\beta)n)!}{2^{(3(1-\varepsilon) -\beta)n/2} ((3(1-\varepsilon) -\beta)n/2)!}
\dfrac{1}{(3n-1)!!}.
\end{equation*}

Applying Stirling's formula given by $k! \underset{k\to +\infty}{\sim} (k/e)^k \sqrt{2\pi k}$ to all factorials and taking the $n$-th root while ignoring factors of subexponential order leads us to
\begin{equation*}
\dfrac{3^{2\beta} \beta^{\beta}}{\beta^{2\beta}(\varepsilon - \beta)^{\varepsilon - \beta}(1 - \varepsilon - \beta)^{1-\varepsilon-\beta}} 
\dfrac{(3\varepsilon - \beta)^{3\varepsilon - \beta}}{2^{(3\varepsilon - \beta)/2}((3\varepsilon - \beta)/2)^{(3\varepsilon - \beta)/2}} 
\dfrac{(3(1-\varepsilon) - \beta)^{3(1-\varepsilon) - \beta}}{2^{(3(1-\varepsilon) - \beta)/2}((3(1-\varepsilon) - \beta)/2)^{(3(1-\varepsilon) - \beta)/2}}
\dfrac{1}{3^{3/2}}.
\end{equation*}

Simplifying further we get
\begin{equation*}
\frac{3^{2\beta}(3\varepsilon-\beta)^{\frac{3\varepsilon-\beta}{2}}(3-3\varepsilon-\beta)^{\frac{3-3\varepsilon-\beta}{2}}}{3^{3/2}(\varepsilon-\beta)^{\varepsilon-\beta}\beta^{\beta}(1-\varepsilon-\beta)^{1-\varepsilon-\beta}}.  
\end{equation*}

Taking logarithms we obtain the following function:
\begin{align}
f(\beta, \varepsilon)&:= 2\beta \log 3 + \frac12(3\varepsilon-\beta)\log(3\varepsilon-\beta)+\frac12(3-3\varepsilon-\beta)\log(3-3\varepsilon-\beta)-\beta \log \beta \nonumber \\
&-(\varepsilon-\beta)\log(\varepsilon-\beta)-(1-\varepsilon-\beta)\log(1-\varepsilon-\beta)-\frac32\log 3, \label{eq:function}
\end{align}
Recall that by assumption $\varepsilon_0 \le  \varepsilon \le \frac{1}{2}$ and $0 \le \beta \le (1-3(0.123331+\varepsilon))\varepsilon$.
Taking the derivative with respect to $\beta$, we obtain
\begin{equation*}
\dfrac{\partial f}{\partial \beta} (\beta, \varepsilon) = 2 \log 3-\frac12 \log(3\varepsilon-\beta)-\frac12 \log(3-3\varepsilon-\beta)-\log \beta+\log(\varepsilon-\beta)+\log(1-\varepsilon-\beta).  
\end{equation*}

We show that for every $\varepsilon$, $\dfrac{\partial f}{\partial \beta}$ is non-negative for every $\beta\in [0, (1-3(0.123331+\varepsilon))\varepsilon]$ and thus $f(\beta, \varepsilon)$ is maximized for $\beta = (1-3(0.123331+\varepsilon))\varepsilon$. Since $\beta < \varepsilon$, this is equivalent to
\begin{equation}\label{eq:derivative}
81(\varepsilon-\beta)^2(1-\varepsilon-\beta)^2 \ge (3\varepsilon - \beta)(3-3\varepsilon-\beta)\beta^2,
\end{equation}
which can be rewritten as
\begin{equation*}
81(\varepsilon-\beta)(1-\varepsilon-\beta) - \left(3 + \dfrac{2\beta}{\varepsilon-\beta}\right)\left(3+\dfrac{2\beta}{1-\varepsilon-\beta}\right)\beta^2 \ge 0.
\end{equation*}
For fixed $\varepsilon > 0$ this a decreasing function of $\beta\in [0, \varepsilon]$. Replacing $\beta = \beta(\varepsilon) = (1-3(0.123331+\varepsilon))\varepsilon$, using standard analysis techniques one verifies the positivity of the above expression for every $\varepsilon\in [0, 1/2]$ - indeed, it is increasing as a function of $\varepsilon$ for $\varepsilon \in [0, +\infty]$ and equal to 0 at $\varepsilon = 0$.\par

Define $g(\varepsilon):= f(\beta(\varepsilon), \varepsilon)$. It remains to verify that $g(\varepsilon) < 0$ for every $\varepsilon\in [\varepsilon_0, 1/2]$. One readily verifies that the derivative of $g$ is negative on the interval $[0, 0.005221]$, positive on the interval $[0.005221, 0.026271]$ and then again negative on $[0.026271, 0.5]$. Since $g(0) = 0$, we have that
\begin{equation*}
\max_{\varepsilon_0\le \varepsilon \le 0.5} g(\varepsilon) = \max(g(\varepsilon_0), g(0.026271)) \le \max(g(\varepsilon_0), -0.891947\times 10^{-5}) < 0.
\end{equation*}

Therefore, summing over all possible $|B|\in [\varepsilon_0 n, n/2]$ and over all possible sizes $e(B, V\setminus B)\in [0, (1-3(0.123331+\varepsilon))\varepsilon]$, the expected number of sets $B$ satisfying conditions~\ref{cond 1},~\ref{cond 2} and~\ref{cond 3} is smaller than $c^n$ for every constant $c\in (\exp(\max_{\varepsilon_0\le \varepsilon \le 0.5} g(\varepsilon)), 1)$ and for every large enough $n$. By Markov's inequality, a.a.s.\ there is no set $B$ satisfying conditions~\ref{cond 1},~\ref{cond 2} and~\ref{cond 3}. This is a contradiction with our assumption that there exists a set $A\subseteq V$ with $q_r(A)\ge 0.789998$. In particular, this shows that that the modularity of $G_3(n)$ is a.a.s.\ less than $0.789998$. Theorem~\ref{UB3reg} is proved.
\end{proof}

\section{An improved lower bound for more general degree sequences}\label{sec:General}
Lemma~\ref{lem:simplelb} gives a simple lower bound on the modularity of any deterministic graph, which in the case of $3-$regular graphs yields an a.a.s.\ lower bound of $2/3$. On the other hand, Section~\ref{sec:Lower} shows that the modularity of $G_3(n)$ is strictly larger. In the current section we prove Theorem~\ref{LBgeneral}, and in particular we almost \footnote{As Theorem~\ref{LBgeneral} suggests, the case $Q = 0$ is excluded from our analysis.} characterize the set of sequences of bounded regular degree sequences for which the bound given by Lemma~\ref{lem:simplelb} may be improved (thus also characterizing those for which the bound given by Lemma
~\ref{lem:simplelb} is sharp).\par

Let $G$ be a graph on $n\ge 6$ vertices and $m$ edges. Assume that $G$ contains no isolated vertices and its maximal degree is at most $\Delta$. Let also $\mathcal A = (A_1, A_2, \dots, A_k)$ be any partition of $V(G)$ with $q(\mathcal A) = q^*(G)$.

\begin{lemma}\label{lem 5.1}
$G[A_i]$ is a connected graph for every $i\in [k]$.
\end{lemma}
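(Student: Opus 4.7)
The plan is to argue by contradiction: suppose some $G[A_i]$ is disconnected, and refine the partition $\mathcal{A}$ by splitting $A_i$ into its connected components. I will show that this refinement strictly increases the modularity, contradicting the optimality of $\mathcal{A}$.

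Suppose that $G[A_i]$ has connected components $C_1, C_2, \dots, C_\ell$ with $\ell \ge 2$, and consider the refined partition
\begin{equation*}
\mathcal{A}' = (A_1, \dots, A_{i-1}, C_1, \dots, C_\ell, A_{i+1}, \dots, A_k).
\end{equation*}
The plan is to compare the two terms in the definition of $q(\mathcal{A})$ and $q(\mathcal{A}')$ separately. For the edge term, since no edges of $G$ run between different $C_j$'s (they are connected components of $G[A_i]$), one has $\sum_{j=1}^\ell e(C_j) = e(A_i)$, so the first term of the modularity is unchanged.

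For the volume term, I will use the elementary identity
\begin{equation*}
\vol(A_i)^2 = \Bigl(\sum_{j=1}^\ell \vol(C_j)\Bigr)^2 = \sum_{j=1}^\ell \vol(C_j)^2 + 2\sum_{1 \le j < j' \le \ell}\vol(C_j)\vol(C_{j'}).
\end{equation*}
Because $G$ has no isolated vertices, every $C_j$ contains at least one vertex of positive degree, so $\vol(C_j) \ge 1$ for every $j$. Hence the cross terms are strictly positive and
\begin{equation*}
\sum_{j=1}^\ell \vol(C_j)^2 < \vol(A_i)^2,
\end{equation*}
which means that the negative penalty term in $q(\mathcal{A}')$ is strictly smaller in absolute value than the corresponding penalty term in $q(\mathcal{A})$. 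Combined with the equality of the edge terms, this yields $q(\mathcal{A}') > q(\mathcal{A}) = q^*(G)$, a contradiction.

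There is essentially no obstacle here: the only subtle point is ruling out the degenerate case $\vol(C_j) = 0$ for some component, which is precisely why the hypothesis that $G$ has no isolated vertices (stated just before the lemma) is invoked. Note that the argument is cleaner in the general setting than the analogous Lemma~\ref{lem 4.0} for $3$-regular graphs, because here we may work directly with the definition of $q(\mathcal{A})$ rather than with relative modularities.
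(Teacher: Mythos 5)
Your proof is correct and follows essentially the same route as the paper: split the disconnected part, observe the edge term is unchanged, and use strict subadditivity of the squared volumes (valid since no component has volume zero) to contradict optimality. The only cosmetic difference is that you split $A_i$ into all of its connected components while the paper splits it into just two pieces with no edges between them, which changes nothing in substance.
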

\begin{proof}
We argue by contradiction. Suppose that for some $i\in [k]$ one has that $G[A_i]$ is a union of two non-empty graphs $G_1$ and $G_2$ with $e(V(G_1), V(G_2)) = 0$. Then, we have 
\begin{equation*}
e(G_1) + e(G_2) = e(A_i) \text{ and } \vol(V(G_1))^2 + \vol(V(G_2))^2 < \vol(A_i)^2.
\end{equation*}
Thus, dividing $A_i$ into $V(G_1)$ and $V(G_2)$ increases the modularity of the partition $\mathcal A$, which was assumed to be maximal - contradiction. The lemma is proved.
\end{proof}

\begin{lemma}\label{lem 5.2}
For every two different parts $A_i, A_j$ of $\mathcal A$ we have that 
\begin{equation}
\emph{\vol}(A_i) \emph{\vol}(A_j) \ge 2m\cdot e(A_i, A_j).
\end{equation}
\end{lemma}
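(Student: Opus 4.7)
The plan is to use the optimality of the partition $\mathcal{A}$: if merging two parts $A_i$ and $A_j$ into a single part $A_i \cup A_j$ (leaving all other parts untouched) produces a new partition $\mathcal{A}'$, then by maximality we must have $q(\mathcal{A}') \le q(\mathcal{A})$. The desired inequality will fall out directly from writing down the change $q(\mathcal{A}') - q(\mathcal{A})$.

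More concretely, I would first observe that the only parts whose contribution to $q$ changes under this merge are the two being merged. The edge term changes by
\begin{equation*}
\frac{e(A_i \cup A_j) - e(A_i) - e(A_j)}{m} = \frac{e(A_i, A_j)}{m},
\end{equation*}
while the volume term changes by
\begin{equation*}
\frac{(\vol(A_i) + \vol(A_j))^2 - \vol(A_i)^2 - \vol(A_j)^2}{4m^2} = \frac{\vol(A_i)\vol(A_j)}{2m^2}.
\end{equation*}
Thus
\begin{equation*}
q(\mathcal{A}') - q(\mathcal{A}) = \frac{e(A_i, A_j)}{m} - \frac{\vol(A_i)\vol(A_j)}{2m^2}.
\end{equation*}

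Since $\mathcal{A}$ achieves $q^*(G)$, the left-hand side is $\le 0$, and multiplying through by $2m^2 > 0$ yields exactly $2m \cdot e(A_i, A_j) \le \vol(A_i)\vol(A_j)$, as required. There is no real obstacle here; the only thing to be a bit careful about is the bookkeeping of the edges $e(A_i, A_j)$ that become internal edges after the merge, and the expansion of $(\vol(A_i) + \vol(A_j))^2$, both of which are routine.
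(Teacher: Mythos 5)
Your proof is correct and is essentially identical to the paper's own argument: both merge $A_i$ and $A_j$ into one part, compute $q(\mathcal A') - q(\mathcal A) = \frac{e(A_i,A_j)}{m} - \frac{\vol(A_i)\vol(A_j)}{2m^2}$, and invoke maximality of $q(\mathcal A)$ to conclude. No gaps.
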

\begin{proof}
Let $\mathcal A'$ be a partition of $V(G)$, obtained from $\mathcal A$ by replacing the parts $A_i$ and $A_j$ by their union $A_i\cup A_j$. Then
\begin{equation*}
q(\mathcal A) - q(\mathcal A') = \dfrac{2\vol(A_i)\vol(A_j)}{4m^2} - \dfrac{e(A_i, A_j)}{m}.
\end{equation*}
By maximality of $q(\mathcal A)$ over all partitions of $V(G)$, the above expression must be non-negative, so
\begin{equation*}
\vol(A_i)\vol(A_j) \ge 2m\cdot e(A_i, A_j).
\end{equation*}
The lemma is proved.
\end{proof}

\begin{corollary}\label{cor 5.3}
For every connected component $C$ of $G$ of size less than $\frac12\sqrt{n}$, $V(C)$ participates as a part in $\mathcal A$.
\end{corollary}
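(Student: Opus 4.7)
The plan is to argue by contradiction. Suppose that $V(C)$ does not appear as a part of $\mathcal{A}$. By Lemma~\ref{lem 5.1}, every part $A_i$ of $\mathcal{A}$ induces a connected subgraph of $G$, so any part intersecting $V(C)$ must be entirely contained in $V(C)$ (as $V(C)$ is a connected component of $G$). Consequently, the parts of $\mathcal{A}$ that lie in $V(C)$ partition $V(C)$ into at least two nonempty parts; and since $C$ is connected, at least one edge of $C$ must join two different such parts. Thus there exist indices $j\ne j'$ with $A_{i_j}, A_{i_{j'}}\subseteq V(C)$ and $e(A_{i_j},A_{i_{j'}})\ge 1$.

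I would then derive two incompatible bounds on the product $\vol(A_{i_j})\vol(A_{i_{j'}})$. From below, Lemma~\ref{lem 5.2} gives
\begin{equation*}
\vol(A_{i_j})\vol(A_{i_{j'}}) \;\ge\; 2m\cdot e(A_{i_j},A_{i_{j'}}) \;\ge\; 2m \;\ge\; n,
\end{equation*}
where the last inequality uses the standing assumption that $G$ has no isolated vertices, so $2m=\sum_v\deg(v)\ge n$. From above, since $C$ is a connected component of $G$, no edge of $C$ leaves $V(C)$, hence $\vol(V(C))=2e(C)$; the hypothesis that the size of $C$ is less than $\tfrac12\sqrt{n}$ (where ``size'' means the number of edges, as fixed in the Notation subsection) then gives $\vol(V(C))<\sqrt{n}$. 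Because $A_{i_j}$ and $A_{i_{j'}}$ are disjoint subsets of $V(C)$, we have $\vol(A_{i_j})+\vol(A_{i_{j'}})\le \vol(V(C))<\sqrt{n}$, and AM-GM yields
\begin{equation*}
\vol(A_{i_j})\vol(A_{i_{j'}}) \;\le\; \left(\frac{\vol(A_{i_j})+\vol(A_{i_{j'}})}{2}\right)^{2} \;<\; \frac{n}{4}.
\end{equation*}

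Combining the two bounds produces $n\le n/4$, which contradicts $n\ge 6$. Hence $V(C)$ must appear as a part of $\mathcal{A}$. There is no real obstacle here once Lemmas~\ref{lem 5.1} and~\ref{lem 5.2} are available: the only step that needs to be noted is that ``size less than $\tfrac12\sqrt{n}$'' refers to the edge count of $C$, so that via $\vol(V(C))=2e(C)$ the AM-GM bound feeds cleanly into the contradiction with the lower bound $\ge n$ coming from Lemma~\ref{lem 5.2}.
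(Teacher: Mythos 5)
Your proof is correct and follows essentially the same route as the paper: by Lemma~\ref{lem 5.1} the parts meeting $V(C)$ partition it, connectedness of $C$ forces two of them with $e(A_{i_j},A_{i_{j'}})\ge 1$, and then Lemma~\ref{lem 5.2} clashes with the volume bound coming from $\vol(V(C))=2e(C)<\sqrt{n}$ and $2m\ge n$. The only cosmetic difference is that the paper simply bounds $\vol(A_{i_j})\vol(A_{i_{j'}})\le \vol(V(C))^2<n$ rather than using AM--GM, which yields the same contradiction.
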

\begin{proof}
We argue by contradiction. Suppose that for every $i\in [k]$, $V(C)$ is different from $A_i$. By Lemma~\ref{lem 5.1}, there are parts $A_{i_1}, A_{i_2},\dots, A_{i_r}$ of $\mathcal A$, for which 
\begin{equation*}
    V(C) = \bigcup_{1\le s\le r} A_{i_s}.
\end{equation*}
On the one hand, since $C$ is a connected graph, there are two parts among $A_{i_1}, A_{i_2},\dots, A_{i_r}$, say $A_{i_1}$ and $A_{i_2}$ without loss of generality, for which $e(A_{i_1}, A_{i_2})\ge 1$. On the other hand, since the graph $G$ has no isolated vertices, it contains at least $n/2$ edges. One concludes that
\begin{equation*}
    \vol(A_{i_1})\vol(A_{i_2})\le \vol(C)^2 < (\sqrt{n})^2 \le 2m\cdot e(A_{i_1}, A_{i_2}).
\end{equation*}
This is a contradiction with Lemma~\ref{lem 5.2}. The corollary follows.
\end{proof}

Before diving into the proof of Theorem~\ref{LBgeneral}, we state a criterion for the existence of a giant component due to Molloy and Reed, see \cite{MR}. Here we present a version for bounded degree sequences although the theorem itself is more general.

\begin{theorem}[\cite{MR}, Theorem 1]\label{MR criterion}
Under the conditions of Theorem~\ref{LBgeneral}:
\begin{itemize}
    \item If $Q < 0$, then there are constants $R_1 = R_1(\boldsymbol{p}, \Delta), R_2 = R_2(\boldsymbol{p}, \Delta) > 0$, such that all components of $G(n)$ have size at most $R_1\log n$, and the total number of cycles in $G(n)$ is at most $R_2\log n$ a.a.s.
    \item If $Q > 0$, then there are constants $\xi_1 = \xi_1(\boldsymbol{p}, \Delta), \xi_2 = \xi_2(\boldsymbol{p}, \Delta) > 0$, for which the largest component in $G(n)$ contains at least $\xi_1 n$ vertices and $\xi_2 n$ cycles a.a.s. Moreover, there is a positive constant $\gamma = \gamma(\boldsymbol{p}, \Delta) > 0$ such that the second largest component in $G(n)$ has size at most $\gamma \log n$ a.a.s.
\end{itemize}
\qed
\end{theorem}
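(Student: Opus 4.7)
The statement is the classical Molloy--Reed criterion, which I would prove via the configuration model together with a branching process approximation.

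The starting point would be to explore the component of a uniformly chosen vertex $v$ in the configuration model by a breadth-first search that reveals one half-edge at a time: at each step pick an open half-edge attached to an already explored vertex, reveal its partner, and if the partner lies in an unexplored vertex $u$ add $u$ together with its $\deg(u)-1$ new open half-edges. The key observation is that, at the start of the exploration, if a half-edge ends at a vertex of degree $i$ the number of new open half-edges added is $i-1$, and the probability that a revealed half-edge lands in a vertex of degree $i$ is asymptotically $ip_i/M$. Thus, as long as only $o(n)$ vertices have been explored, the exploration is well approximated by a Galton--Watson branching process with offspring mean $\sum_i (i-1)\,ip_i/M = (\sum_i i^2p_i - M)/M$, whose criticality is controlled by the sign of $Q=\sum_i i(i-2)p_i$.

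For $Q<0$, the approximating branching process is subcritical, so standard branching process theory (a Chernoff-type tail bound for the total progeny of a subcritical Galton--Watson process with bounded offspring) gives that it reaches size $R_1 \log n$ with probability at most $n^{-c}$ for some $c>1$ if $R_1$ is large enough. To transfer this to the true exploration I would bound the total variation distance between the exploration and the branching process while the explored set has size at most $R_1\log n$; this error is $O((\log n)^2/n)$, so a union bound over starting vertices $v$ still gives that all components have size at most $R_1\log n$ a.a.s. For the cycle count, note that each cycle corresponds to a revealed half-edge whose partner is an already explored open half-edge; given an explored set of size $k$, there are at most $\Delta k$ such half-edges among roughly $Mn$ total, so each step contributes a collision with probability $O(k/n)$. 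Summing over all $O(n)$ exploration steps across all components, each of size $O(\log n)$, yields the $O(\log n)$ bound on cycles.

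For $Q>0$ the branching process is supercritical and has survival probability $\rho>0$. I would show first that with probability bounded away from $0$ the exploration started at a uniformly chosen vertex reaches size $\delta n$ for some $\delta>0$; by a second-moment/sprinkling argument, all such large components must coalesce into one giant component of linear size, yielding $\xi_1 n$. For the $\xi_2 n$ cycles, observe that inside the giant the exploration reveals $\Theta(n)$ half-edges whose partners already lie in the explored set, each producing a cycle, giving linearly many. Finally, the bound on the second largest component follows from the duality principle: conditionally on the giant, the remainder of the graph has, asymptotically, a degree sequence whose associated $Q'$ is strictly negative (this is the standard ``dual'' degree distribution obtained by size-biasing and subtracting the giant), so the previous subcritical argument applies and gives components of size $O(\log n)$.

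The main obstacle is making the branching process coupling rigorous uniformly over the whole exploration, both to get the logarithmic tail in the subcritical regime and to justify the ``dual subcriticality'' in the supercritical regime. The cleanest route I know is the martingale/exploration approach of Molloy and Reed: track the number of open half-edges as a random walk, show its drift changes sign at $Q=0$, and combine Azuma's inequality with a careful analysis of the hitting times of $0$. Once this walk-based analysis is in place, translating the hitting time tails into the stated component and cycle bounds is routine.
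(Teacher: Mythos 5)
The paper does not prove this statement at all: Theorem~\ref{MR criterion} is quoted verbatim from Molloy and Reed (\cite{MR}, Theorem 1) and is stated with an immediate \qed, so there is no in-paper argument to compare yours against. Your sketch follows what is essentially Molloy and Reed's own route (exploration of the configuration model, drift of the open half-edge count changing sign with $Q$, concentration, and discrete duality for the second-largest component), so as an outline it is the standard and correct approach rather than a genuinely different one. Two places would need real work if you carried it out: (i) the subcritical cycle bound as you state it only gives an expected number of collisions of order $\log n$, and Markov's inequality at the threshold $R_2\log n$ then yields a failure probability that is a constant, not $o(1)$; the clean fix is a direct first-moment computation of the number of cycles of each length, whose total expectation is $O(1)$ when $Q<0$, after which any threshold tending to infinity works. (ii) In the supercritical regime, ``sprinkling'' is not immediately available in a configuration model with a prescribed degree sequence, and the coalescence of linear-size explorations into a single giant is usually handled instead by the hitting-time/random-walk analysis of the open half-edge count (as you yourself note at the end), or by a two-round exposure of the matching; similarly, the duality step requires checking that, conditionally on the giant, the remaining degree sequence is again a (subcritical) configuration model satisfying the hypotheses, which is standard but not automatic. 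None of these are wrong ideas, but they are the points where the sketch is genuinely incomplete.
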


In the sequel we assume that $p_i > 0$ for every $i\in [\Delta]$. This is a technical assumption: one may only work on the set of non-zero $p_i-$s and deduce the same results as the ones presented below. For every $i\in [\Delta]$, denote by $d_i(n)$ the number of vertices of degree $i$ in $G(n)$ and set $D(n) = \sum_{1\le i\le \Delta} id_i(n)$.

\subsection{Proof of point~\ref{thm 3.1} of Theorem~\ref{LBgeneral} - the subcritical regime}\label{subsec 5.1}

Under the assumptions of point~\ref{thm 3.1} of Theorem~\ref{LBgeneral}, by the criterion given by Theorem~\ref{MR criterion} we know that the largest component in $G(n)$ is a.a.s.\ of size at most $R_1\log n$ for some positive constant $R_1 = R_1(\boldsymbol{p}, \Delta) > 0$. Thus, by Lemma~\ref{lem 5.1} and Corollary~\ref{cor 5.3} a.a.s.\ the only partition of $V(G(n))$ with maximal modularity is the one given by the vertex sets of the connected components of $G(n)$. We denote it by $\mathcal A = (A_1, A_2, \dots, A_k)$.\par

By definition one has $\sum_{1\le i\le k} e(G(A_i)) = m$. Therefore, we get
\begin{equation*}
q^*(G(n)) = q(\mathcal A) = 1-\sum_{1\le i\le k} \dfrac{\vol(A_i)^2}{4m^2}.
\end{equation*}

For every $n\in \mathbb N$, denote by $\mathcal N_n(H)$ the random variable, equal to the number of isolated copies of the graph $H$ in $G(n)$.

\begin{lemma}\label{Number of copies}
For every tree $T$ of order $t\le \sqrt{\log n}$ and for every large enough $n$ we have
\begin{equation*}
\mathbb P(|\mathcal N_n(T) - \mathbb E[\mathcal N_n(T)]|\ge n^{2/3}) \le \dfrac{1}{n^{1/4}}.
\end{equation*}
\end{lemma}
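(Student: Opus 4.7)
The plan is a standard second-moment computation followed by Chebyshev's inequality. Write
\begin{equation*}
\mathcal N_n(T) = \sum_{S} \mathbb{1}_S,
\end{equation*}
where $S$ ranges over all subsets of $V(G(n))$ of size $t$ and $\mathbb{1}_S$ indicates the event that $G(n)[S]$ is an isolated copy of $T$.

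My first observation is that any two distinct isolated copies of $T$ must be vertex-disjoint: if $v \in S \cap S'$ for two isolated copies $S$ and $S'$, then all neighbours of $v$ in $G(n)$ lie in both $S$ and $S'$, so $N_{G(n)}(v) \subseteq S \cap S'$; iterating this through the connected subgraphs $G(n)[S]$ and $G(n)[S']$ forces $S = S'$. Consequently $\mathcal N_n(T) \le n/t$ with probability $1$, and in particular $\mathbb E[\mathcal N_n(T)] \le n/t$.

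For the variance I would decompose
\begin{equation*}
\mathrm{Var}(\mathcal N_n(T)) = \sum_S \mathrm{Var}(\mathbb{1}_S) + \sum_{\substack{S \neq S' \\ S \cap S' \neq \emptyset}} \mathrm{Cov}(\mathbb{1}_S, \mathbb{1}_{S'}) + \sum_{\substack{S \neq S' \\ S \cap S' = \emptyset}} \mathrm{Cov}(\mathbb{1}_S, \mathbb{1}_{S'}).
\end{equation*}
The diagonal contribution is at most $\mathbb E[\mathcal N_n(T)] \le n/t$. The overlapping contribution is non-positive, since the joint event is empty by the disjointness observation. For the disjoint contribution, the crucial input is that, conditionally on $\mathbb{1}_S = 1$, the $D(n) - 2(t-1)$ unused half-edges are still matched uniformly at random, so we obtain the configuration model on $V(G(n)) \setminus S$ whose class sizes are $d_i(n) - t_i$, where $t_i$ denotes the number of degree-$i$ vertices in $T$. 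Comparing the explicit expressions for $\mathbb P[\mathbb{1}_{S'} = 1]$ and $\mathbb P[\mathbb{1}_{S'} = 1 \mid \mathbb{1}_S = 1]$ factor by factor yields a ratio which is a product of at most $2t$ terms, each either of the form $\tfrac{d_i(n) - k - t_i}{d_i(n) - k}$ (from the count of valid $t$-subsets) or of the form $\tfrac{D(n) - 2j + 1}{D(n) - 2(t-1) - 2j + 1}$ (from the matching probability). Every such factor equals $1 + O(t/n)$ uniformly in $T$, so the whole ratio is $(1 + O(t/n))^{O(t)} = 1 + O(t^2/n)$. Hence $\mathrm{Cov}(\mathbb{1}_S, \mathbb{1}_{S'}) \le O(t^2/n)\, \mathbb P[\mathbb{1}_S = 1]\mathbb P[\mathbb{1}_{S'} = 1]$, and summing over all disjoint pairs gives a contribution at most $O(t^2/n)\,\mathbb E[\mathcal N_n(T)]^2 \le O(t^2/n) \cdot (n/t)^2 = O(n)$.

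Combining the three contributions yields $\mathrm{Var}(\mathcal N_n(T)) = O(n)$ with an implicit constant depending only on $\Delta$ and $\boldsymbol p$ (not on $T$). Chebyshev's inequality then gives
\begin{equation*}
\mathbb P(|\mathcal N_n(T) - \mathbb E[\mathcal N_n(T)]| \ge n^{2/3}) \le \frac{\mathrm{Var}(\mathcal N_n(T))}{n^{4/3}} = O(n^{-1/3}),
\end{equation*}
which is smaller than $n^{-1/4}$ for every sufficiently large $n$. The delicate point is to ensure that the covariance comparison is \emph{uniform} over all trees of order at most $\sqrt{\log n}$; this is precisely where the hypothesis $t \le \sqrt{\log n}$ enters, since it gives $t^2/n = O(\log n / n) = o(1)$ so that the correction factor stays within $1 + o(1)$ regardless of the choice of $T$.
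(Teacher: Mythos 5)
Your proof is correct, and it follows the same basic skeleton as the paper's argument (second moment plus Chebyshev), but the execution differs in a way worth noting. The paper computes $\mathbb E[\mathcal N_n(T)]$ explicitly in the configuration model, shows it is $n^{1-o(1)}$ (using the standing assumption $p_i>0$), bounds $\mathbb{V}\mathrm{ar}(\mathcal N_n(T))\le \mathbb E[\mathcal N_n(T)]+\tfrac{(2+o(1))\log n}{D(n)}\mathbb E[\mathcal N_n(T)]^2$ by an explicit double-counting of (necessarily vertex-disjoint) pairs of copies, and then applies Chebyshev at the threshold $\mathbb E[\mathcal N_n(T)]^{2/3}$, so that the lower bound on the expectation is genuinely needed in the last step. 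You instead exploit the deterministic bound $\mathcal N_n(T)\le n/t$ (from vertex-disjointness of isolated copies), obtain the key ratio estimate $\mathbb P[\mathbb 1_{S'}=1\mid \mathbb 1_S=1]/\mathbb P[\mathbb 1_{S'}=1]=1+O(t^2/n)$ via the conditional uniformity of the leftover matching rather than by explicit counting, and threshold Chebyshev directly at $n^{2/3}$; this makes the argument slightly more economical, since no lower bound on $\mathbb E[\mathcal N_n(T)]$ (and hence no positivity of the $p_i$) is required, and the uniformity over all $T$ with $t\le\sqrt{\log n}$ is immediate because all implicit constants are absolute (using $D(n)\ge n$). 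One small imprecision: for a \emph{fixed} disjoint set $S'$ no factors of the form $\tfrac{d_i(n)-k-t_i}{d_i(n)-k}$ appear in the ratio (the embedding count on $S'$ is identical in both models); such factors would only arise when summing over $S'$, and since they are at most $1$ they can only help your upper bound, so this does not affect the conclusion.
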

\begin{proof}
We apply the second moment method. Let, for every $i\in [\Delta]$, $t_i$ be the number of vertices of degree $i$ in $T$, and let $Aut(T)$ be the automorphism group of $T$. On the one side, the expected number of copies of $T$ is

\begin{align*}
\mathbb E[\mathcal N_n(T)] 
&=\hspace{0.3em} \sum_{\substack{A\subseteq V;\\ |A| = t}} \mathbb P(G[A] = T \text{ and } e(A, V\setminus A)=0)\\ 
&=\hspace{0.3em} \dfrac{1}{|Aut(T)|}\left(\prod_{1\le i\le \Delta} \binom{d_i(n)}{t_i} t_i! i!^{t_i}\right) \dfrac{(D(n) - 1 - 2(t-1))!!}{(D(n) - 1)!!}.
\end{align*}

Indeed, one needs to:
\begin{itemize}
    \item for every $i\in [\Delta]$, choose the $t_i$ vertices of degree $i$ in $\binom{d_i(n)}{t_i}$ ways;
    \item decide on the position of each vertex of degree $i$ in $T$ in $t_i!$ ways;
    \item decide on the order of the half-edges, attached to every vertex of degree $i$ in $T$, in $i!$ ways (since we are counting configurations here, half-edges are labeled);
    \item and finally, divide by the probability of constructing all $t-1$ edges in $T$.
\end{itemize}

One also has that:
\begin{itemize}
    \item $|Aut(T)|\le t!$,
    \item for every $i\in [\Delta]$,
    \begin{equation*}
        \prod_{0\le j\le t_i-1} \dfrac{d_i(n) - j}{d_i(n)} = \exp\left(-(1+o(1))\sum_{0\le j\le t_i-1} \dfrac{j}{d_i(n)}\right) = 1-o(1),
    \end{equation*}
    \item 
    \begin{equation*}
        \prod_{0\le j\le t-2} \dfrac{D(n) - 1 - 2(j-1)}{D(n)} = \exp\left(-(1+o(1))\sum_{0\le j\le t-2} \dfrac{1 + 2(j-1)}{D(n)}\right) = 1-o(1).
    \end{equation*}
\end{itemize}

Therefore, since $t\le \sqrt{\log n}$, in the regime $n\to +\infty$ one may bound the expectation from below by  
\begin{equation*}
(1-o(1)) \dfrac{1}{t!} D(n) \prod_{1\le i\le \Delta} \left(\dfrac{i d_i(n)}{D(n)}\right)^{t_i} = n^{1-o(1)}.
\end{equation*}

The variance of $\mathcal N_n(T)$ is given by 
\begin{align*}
\mathbb Var(\mathcal N_n(T)) = &\hspace{0.2em} \mathbb E[\mathcal N_n(T)^2] - \mathbb E[\mathcal N_n(T)]^2\\ = 
&\hspace{0.2em} \mathbb E[\mathcal N_n(T)] +
\dfrac{1}{|Aut(T)|^2}\mathbb \prod_{1\le i\le \Delta} \binom{d_i(n)}{t_i}\binom{d_i(n)-t_i}{t_i} t_i!^2 i!^{2t_i} \dfrac{(D(n) - 1 - 2(2t-2))!!}{(D(n) - 1)!!} -\\ 
&\hspace{0.2em} \left(\dfrac{1}{|Aut(T)|} \prod_{1\le i\le \Delta} \binom{d_i(n)}{t_i} (t_i)! i!^{t_i} \dfrac{(D(n) - 1 - 2(t-1))!!}{(D(n) - 1)!!}\right)^2\\ = 
&\hspace{0.2em} \mathbb E[\mathcal N_n(T)] + \dfrac{1}{|Aut(T)|^2}\left(\prod_{1\le i\le \Delta} \binom{d_i(n)}{t_i}(t_i)! i!^{t_i} \dfrac{(D(n) - 1 - 2(t-1))!!}{(D(n) - 1)!!}\right)^2\times\\
&\hspace{0.2em} \left(\left(\prod_{1\le i\le \Delta}\prod_{0\le j\le t_i-1} \dfrac{d_i(n) - t_i - j}{d_i(n) - j}\right)\left(\prod_{0\le j\le t-2}\dfrac{D(n) - 1 - 2j}{D(n) - 1 - 2(t-1) - 2j}\right) - 1\right).
\end{align*}
Now, one may use the standard bound
\begin{equation*}
\prod_{1\le i\le t} (1 + x_i) \le \exp\bigg(\sum_{1\le i\le t}x_i\bigg)
\end{equation*}
to deduce that
\begin{equation}
\prod_{0\le j\le t_i-1} \dfrac{d_i(n) - t_i - j}{d_i(n) - j} \le \exp\left(-t_i\sum_{0\le j\le t_i-1} \dfrac{1}{d_i(n) - j}\right)\le \exp\left(-\dfrac{t_i^2}{d_i(n)}\right)
\end{equation}
and 
\begin{align*}
& \prod_{0\le j\le t-2}\dfrac{D(n) - 1 - 2j}{D(n) - 1 - 2(t-1) - 2j}\\ \le
& \exp\left(2(t-1)\sum_{0\le j\le t-2} \dfrac{1}{D(n)-1-2(t-1)-2j}\right)\\ \le 
& \exp\left(\dfrac{2t^2}{D(n)-4t}\right).
\end{align*}

We deduce that for every large enough $n$ 
\begin{align*}
& \mathbb Var(\mathcal N_n(T))\\
\le\hspace{0.25em}
& \mathbb E[\mathcal N_n(T)] + \mathbb E[\mathcal N_n(T)]^2 \left(\exp\left(-\sum_{1\le i\le \Delta} \dfrac{t_i^2}{d_i(n)}+\dfrac{2t^2}{D(n) - 4t}\right) - 1\right)\\ 
\le\hspace{0.25em}
&\mathbb E[\mathcal N_n(T)] + 2\mathbb E[\mathcal N_n(T)]^2 \left(\dfrac{2t^2}{D(n) - 4t}-\sum_{1\le i\le \Delta} \dfrac{t_i^2}{d_i(n)}\right) \le\hspace{0.25em} \mathbb E[\mathcal N_n(T)] + \dfrac{(4+o(1))\log n}{D(n)} \mathbb E[\mathcal N_n(T)]^2.
\end{align*}
We conclude by Chebyshev's inequality that
\begin{equation*}
\mathbb P(|\mathcal N_n(T) - \mathbb E[\mathcal N_n(T)]|\ge \alpha) \le \dfrac{\mathbb Var(\mathcal N_n(T))}{\alpha^2}.
\end{equation*}
Choosing for example $\alpha = \mathbb E[\mathcal N_n(T)]^{2/3}$ leads to
\begin{equation*}
\mathbb P(|\mathcal N_n(T) - \mathbb E[\mathcal N_n(T)]|\ge \mathbb E[\mathcal N_n(T)]^{2/3}) \le \dfrac{1}{\mathbb E[\mathcal N_n(T)]^{1/3}} + \dfrac{(4+o(1))(\log n)\mathbb E[\mathcal N_n(T)]^{2/3}}{D(n)} \le \dfrac{1}{n^{1/4}}.
\end{equation*}
Since $n\ge \mathbb E[\mathcal N_n(T)]$, we have that 
\begin{equation*}
    \mathbb P(|\mathcal N_n(T) - \mathbb E[\mathcal N_n(T)]|\ge n^{2/3})\le \mathbb P(|\mathcal N_n(T) - \mathbb E[\mathcal N_n(T)]|\ge \mathbb E[\mathcal N_n(T)]^{2/3})\le \dfrac{1}{n^{1/4}}.
\end{equation*}
The lemma is proved.
\end{proof}

The next observation is a well-known fact and, as such, it will not be proved here. In a nutshell, its proof relies on, first, the fact that the local limit of $G(n)$ under the conditions of point~\ref{thm 3.1} of Theorem~\ref{LBgeneral} is a subcritical Galton-Watson tree, and second, that the order of a subcritical Galton-Watson tree is a random variable with exponential upper tail. For more details on the topic, we refer the reader to~\cite{Curien} and~\cite{JustinThese}.

\begin{observation}\label{ob 5.6}
Under the conditions of point~\ref{thm 3.1} of Theorem~\ref{LBgeneral} there is a constant $c = c(\boldsymbol{p}, \Delta) > 0$ such that the connected component $C(v)$ of a uniformly chosen vertex $v\in G(n)$ is of order at least $t$ with probability at most $\exp(-ct)$.\qed
\end{observation}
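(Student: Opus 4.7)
The plan is to explore $C(v)$ by breadth-first search in the configuration model and reduce the question to the exponential tail of a subcritical random walk via a moment-generating-function argument, using the subcriticality assumption $Q < 0$ in a crucial way.

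First, I would pick $v$ uniformly in $V(G(n))$ and run BFS from $v$, revealing the pairing of one half-edge at a time. Writing $Z_k$ for the number of unmatched \emph{active} half-edges on already-explored vertices after the $k$-th pairing, we initialise $Z_0 := \deg(v) \le \Delta$. At each step we match one active half-edge to a uniformly random free half-edge: landing on a fresh vertex of degree $d$ gives $Z_{k+1} = Z_k + d - 2$, and landing on another active half-edge gives $Z_{k+1} = Z_k - 2$. With $T := \inf\{k \ge 0 : Z_k = 0\}$, the number of vertices discovered satisfies $|C(v)| \le T + 1$, so it suffices to bound the tail of $T$.

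Next I would study the conditional MGF of the one-step increment. Let $n_i^{(k)}$ denote the number of unexplored vertices of degree $i$ after step $k$, and $H_k := (Z_k - 1) + \sum_{i \ge 1} i\, n_i^{(k)}$ the number of free half-edges available for matching. Then
\begin{equation*}
\mathbb{E}\!\left[e^{\lambda(Z_{k+1}-Z_k)} \,\middle|\, \mathcal F_k\right]
= \frac{Z_k - 1}{H_k}\, e^{-2\lambda} + \sum_{i=1}^{\Delta} \frac{i\, n_i^{(k)}}{H_k}\, e^{(i-2)\lambda}.
\end{equation*}
For $k = o(n)$ one has $n_i^{(k)} = (1+o(1)) p_i n$ and $H_k = (1+o(1)) Mn$, so this quantity converges uniformly to
\begin{equation*}
\varphi(\lambda) := \sum_{i=1}^{\Delta} \frac{i p_i}{M}\, e^{(i-2)\lambda}.
\end{equation*}
Since $\varphi(0) = 1$ and $\varphi'(0) = Q/M < 0$ by hypothesis, there is $\lambda_0 > 0$ with $\varphi(\lambda_0) < 1$, and hence some $\rho \in (0,1)$ and $n_0 \in \mathbb{N}$ such that the conditional MGF above is bounded by $\rho$ for every $n \ge n_0$ and every $k \le \sqrt{n}$.

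Iterating this bound yields, by a straightforward induction, $\mathbb{E}[e^{\lambda_0 Z_k}\, \mathbf{1}\{T \ge k\}] \le e^{\lambda_0 \Delta}\, \rho^{k}$ for every $k \le \sqrt{n}$, and Markov's inequality then gives
\begin{equation*}
\mathbb{P}(|C(v)| \ge t) \;\le\; \mathbb{P}(T \ge t-1) \;\le\; e^{\lambda_0(\Delta-1)}\, \rho^{\, t-1} \;\le\; C\, e^{-c t}
\end{equation*}
for $t \le \sqrt{n}$, with constants $C, c > 0$ depending only on $\boldsymbol p$ and $\Delta$. For the complementary range $t > \sqrt{n}$, Theorem~\ref{MR criterion} already forces every component of $G(n)$ to have size at most $R_1 \log n \ll \sqrt{n}$ a.a.s., and a union bound over possible roots combined with the same MGF computation upgrades this to $\mathbb{P}(|C(v)| \ge t) \le e^{-ct}$ in this range as well, after possibly shrinking $c$.

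The main obstacle is not the MGF estimate itself, but the bookkeeping that certifies the conditional one-step MGF stays uniformly below $1$ throughout the exploration: one must control the error terms coming from the discrepancy between $n_i^{(k)}/n$ and $p_i$ and from the possibility of closing loops during the BFS, and one must show that the large-$t$ regime really yields the exponential bound rather than a merely polynomial one.
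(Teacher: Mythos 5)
Your route is genuinely different from the paper's: the paper does not actually prove Observation~\ref{ob 5.6}, but treats it as a known fact, appealing to the local weak limit of $G(n)$ in the regime $Q<0$ being a subcritical Galton--Watson tree together with the exponential tail of the total progeny of such a tree (with references). Your argument is a self-contained exploration/Chernoff bound inside the configuration model the paper works with: revealing the pairing along a BFS of $C(v)$ and bounding the conditional moment generating function of the increment of the active half-edge count by $\varphi(\lambda)=\sum_i \frac{ip_i}{M}e^{(i-2)\lambda}$, where $\varphi(0)=1$ and $\varphi'(0)=Q/M<0$, so $\varphi(\lambda_0)<1$ for some $\lambda_0>0$. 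The one-step computation, the uniform control of the error terms while only $o(n)$ half-edges have been consumed (the cycle-closing term $\frac{Z_k-1}{H_k}e^{-2\lambda_0}$ is $o(1)$ and $n_i^{(k)}/H_k$ is within $1+o(1)$ of $ip_i/M$ by regularity of the sequence), the supermartingale iteration and the Markov step are all sound, and they give $\mathbb P(|C(v)|\ge t)\le Ce^{-ct}$ jointly over the graph and the uniform vertex for all $t\le\sqrt n$ --- which in particular covers the only place the paper uses the observation, namely $t=\sqrt{\log n}$ in Lemma~\ref{equiv}. What your proof buys is explicit, quantitative and elementary; what the paper's citation buys is brevity.

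The one genuine gap is your treatment of $t>\sqrt n$. Theorem~\ref{MR criterion} is only an a.a.s.\ statement, so it gives nothing better than probability $o(1)$, and a union bound over roots combined with your estimate at horizon $\sqrt n$ yields only $e^{-\Theta(\sqrt n)}$, which for $t$ of order $n$ is far larger than $e^{-ct}$; no amount of ``shrinking $c$'' makes $e^{-c'\sqrt n}\le e^{-ct}$ when $t$ is linear in $n$. The repair stays entirely within your own argument: run the exploration up to a horizon $\delta n$ for a small constant $\delta=\delta(\boldsymbol{p},\Delta)>0$. After $k\le\delta n$ steps at most $\Delta\delta n$ half-edges on unexplored vertices have been consumed, so $H_k\ge (M-\Delta\delta)n-o(n)$, the fresh-vertex weights are at most $\frac{ip_i}{M-\Delta\delta}(1+o(1))$ and the cycle-closing term is $O(\delta)$; by continuity the conditional MGF is still at most some $\rho_\delta<1$ once $\delta$ is small, because $\varphi(\lambda_0)<1$ strictly. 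This gives $\mathbb P(|C(v)|\ge t)\le Ce^{-ct}$ for all $t\le\delta n$, and for $t>\delta n$ you conclude by monotonicity together with $t\le n$: $\mathbb P(|C(v)|\ge t)\le\mathbb P(|C(v)|\ge\delta n)\le Ce^{-c\delta n}\le Ce^{-c\delta t}$. (A cosmetic remark: the bound should be read as $Ce^{-ct}$, or as holding for $t$ larger than a constant; taken literally at $t=1$ the inequality fails for any $c>0$, but that is an artefact of the paper's phrasing, not of your proof.)
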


Let $\mathcal{T}(t)$ be the set of trees with maximal degree $\Delta$ and order at most $t$.

\begin{lemma}\label{equiv}
A.a.s.
\begin{equation*}
\sum_{1\le i\le k} \emph{\vol}(A_i)^2 =(1+o(1))\sum_{\substack{i: G[A_i]\in \mathcal T\left(\sqrt{\log n}\right)}} \emph{\vol}(A_i)^2.
\end{equation*}
\end{lemma}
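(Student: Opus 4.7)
\textbf{Proof plan for Lemma~\ref{equiv}.}
Since $Q < 0$, Theorem~\ref{MR criterion} tells us that a.a.s.\ every component of $G(n)$ has order at most $R_1 \log n$, which is below the threshold $\tfrac12 \sqrt{n}$ of Corollary~\ref{cor 5.3}, so the modularity-maximizing partition $\mathcal A$ coincides a.a.s.\ with the collection of vertex sets of the connected components of $G(n)$. The plan is to split
\begin{equation*}
\sum_{i=1}^{k} \vol(A_i)^2 \;=\; S_{\mathrm{sm}} + S_{\mathrm{lg}} + S_{\mathrm{cyc}},
\end{equation*}
where $S_{\mathrm{sm}}$ collects those $i$ with $G[A_i]$ a tree of order at most $\sqrt{\log n}$, $S_{\mathrm{lg}}$ collects tree components of order in $\bigl(\sqrt{\log n},\, R_1 \log n\bigr]$, and $S_{\mathrm{cyc}}$ collects the (remaining) components that contain a cycle, and then to show that $S_{\mathrm{lg}} + S_{\mathrm{cyc}} = o(S_{\mathrm{sm}})$.

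Bounding $S_{\mathrm{cyc}}$ is immediate from Theorem~\ref{MR criterion}: a.a.s.\ there are at most $R_2 \log n$ cycles in $G(n)$, hence at most $R_2 \log n$ cyclic components, each of volume at most $\Delta R_1 \log n$. Thus $S_{\mathrm{cyc}} = O((\log n)^3)$.

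To control $S_{\mathrm{lg}}$ I would use the exponential tail of Observation~\ref{ob 5.6}. Let $N$ be the total number of vertices lying in components of order at least $\sqrt{\log n}$; Observation~\ref{ob 5.6} gives $\mathbb{E}[N] \le n \exp(-c\sqrt{\log n})$, so Markov's inequality yields $N \le n \exp(-c\sqrt{\log n}/2)$ a.a.s. Since $\vol(C)^2 \le \Delta^2 |C|^2 \le \Delta^2 R_1 (\log n) \cdot |C|$ for every component of order in $\bigl(\sqrt{\log n}, R_1 \log n\bigr]$, summing gives
\begin{equation*}
S_{\mathrm{lg}} \;\le\; \Delta^2 R_1 (\log n) \cdot N \;=\; o(n)
\end{equation*}
a.a.s. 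For the lower bound on $S_{\mathrm{sm}}$, choose $T$ to be the tree consisting of a single edge joining two vertices of degree $1$. The expectation computation in the proof of Lemma~\ref{Number of copies} yields $\mathbb{E}[\mathcal{N}_n(T)] = (1+o(1))\,\frac{p_1^2}{2M}\, n$, which is $\Omega(n)$ because $p_1 > 0$. Lemma~\ref{Number of copies} then gives $\mathcal{N}_n(T) = \Omega(n)$ a.a.s., and since $\vol(T)^2 = 4$, we get $S_{\mathrm{sm}} \ge 4\,\mathcal{N}_n(T) = \Omega(n)$ a.a.s.\ Combining the three estimates yields $S_{\mathrm{lg}} + S_{\mathrm{cyc}} = o(S_{\mathrm{sm}})$, which is exactly the desired conclusion.

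The main obstacle is the bound on $S_{\mathrm{lg}}$: we need the super-polylogarithmic decay $\exp(-c\sqrt{\log n})$ coming from Observation~\ref{ob 5.6} to beat the $\log n$ losses introduced by the maximal component size, which it does precisely because $\sqrt{\log n} \to \infty$. The other two steps follow fairly directly from results that have already been established.
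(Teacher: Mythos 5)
Your proposal is correct and follows essentially the same route as the paper: the same three-way split (small tree components, large tree components, cyclic components), the same $O(\log^3 n)$ bound on cyclic components via Theorem~\ref{MR criterion}, the same use of Observation~\ref{ob 5.6} to make the components of order exceeding $\sqrt{\log n}$ contribute $o(n)$, and the same $\Omega(n)$ lower bound from the concentration of the number of isolated edges via Lemma~\ref{Number of copies}. Your explicit Markov step for $S_{\mathrm{lg}}$ just spells out what the paper leaves implicit.
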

\begin{proof}
First, note that the expected number of trees that consist of a single edge is 
\begin{equation*}
    \dfrac{1}{2}\left(p_1 n\times \dfrac{p_1}{M}\right).
\end{equation*}
By Lemma~\ref{Number of copies}, the number of isolated edges in $G(n)$ is sharply concentrated. Hence, a.a.s. $k = k(n)$ is at least $\dfrac{p^2_1}{4M} n$ and
\begin{equation*}
\sum_{1\le i\le k} \vol(A_i)^2 \ge \dfrac{p^2_1 n}{M}.
\end{equation*}

By Theorem~\ref{MR criterion}, there are at most $R_2 \log n$ cycles in $G(n)$, each containing at most $R_1 \log n$ vertices a.a.s. In total, the components with cycles contribute to $\sum_{1\le i\le k} \vol(A_i)^2$ at most
\begin{equation*}
\Delta^2 R^2_1R_2\log^3 n = O\left(\log^3 n\right) = o(n)
\end{equation*}
a.a.s.

Also, by Observation~\ref{ob 5.6}, there is some positive constant $c > 0$ such that the number of trees of order at least $\sqrt{\log n}$ contains at most $\exp(-c\sqrt{\log n})$ vertices a.a.s. Thus, the connected components of order at least $\sqrt{\log n}$ contribute to $\sum_{1\le i\le k} \vol(A_i)^2$ at most
\begin{equation*}
n\exp\left(-c\sqrt{\log n}\right) (R_1\log n)^2 = o(n).
\end{equation*}
Putting together the three statements above proves the lemma.
\end{proof}

\begin{lemma}\label{lem 5.8}
A.a.s.
\begin{equation*}
\max_{T\in \mathcal{T}\left(\sqrt{\log n}\right)} |\mathcal N_n(T) - \mathbb E[\mathcal N_n(T)]| \le n^{2/3}.
\end{equation*}
\end{lemma}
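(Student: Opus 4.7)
The plan is to reduce this uniform statement to the pointwise concentration bound already established in Lemma~\ref{Number of copies} via a union bound over the class $\mathcal T(\sqrt{\log n})$. For this to succeed the total number of trees we range over must be subpolynomial, so that the pointwise bound $n^{-1/4}$ survives the union.

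First I would bound the cardinality of $\mathcal T(\sqrt{\log n})$. The number of unlabelled trees of order exactly $t$ with maximum degree at most $\Delta$ is at most $K^t$ for some constant $K = K(\Delta) > 0$ (this is the classical bound used already in Section~\ref{sec:Upper} via \cite{Ency1, Ency2}; the constant there was about $2.483$ for $\Delta=3$, and in general one has such an exponential bound for any fixed $\Delta$). Summing over $1 \le t \le \sqrt{\log n}$ gives
\begin{equation*}
|\mathcal T(\sqrt{\log n})| \;\le\; \sum_{t=1}^{\lfloor \sqrt{\log n}\rfloor} K^t \;\le\; 2K^{\sqrt{\log n}} \;=\; \exp\!\left(O(\sqrt{\log n})\right) \;=\; n^{o(1)}.
\end{equation*}

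Next I would invoke Lemma~\ref{Number of copies}, which states that for every fixed $T\in \mathcal T(\sqrt{\log n})$ and every large enough $n$,
\begin{equation*}
\mathbb P\bigl(|\mathcal N_n(T) - \mathbb E[\mathcal N_n(T)]| \ge n^{2/3}\bigr) \;\le\; n^{-1/4}.
\end{equation*}
Taking a union bound over all $T\in \mathcal T(\sqrt{\log n})$ gives
\begin{equation*}
\mathbb P\!\left(\max_{T\in \mathcal T(\sqrt{\log n})} |\mathcal N_n(T) - \mathbb E[\mathcal N_n(T)]| \ge n^{2/3}\right) \;\le\; |\mathcal T(\sqrt{\log n})| \cdot n^{-1/4} \;\le\; n^{-1/4 + o(1)} \;=\; o(1),
\end{equation*}
which is exactly the claim.

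Since the pointwise concentration has already been proven and the counting bound on $\mathcal T(\sqrt{\log n})$ is standard, there is really no hard step: the only thing to check is that the cardinality estimate is genuinely $n^{o(1)}$, which holds because the cutoff $\sqrt{\log n}$ was chosen so that $K^{\sqrt{\log n}}$ is subpolynomial. Had we allowed trees of order up to, say, $\log n$, this union bound would fail against the $n^{-1/4}$ tail, which explains the precise choice of threshold used in Lemma~\ref{equiv}.
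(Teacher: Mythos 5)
Your proposal is correct and follows essentially the same route as the paper: a union bound over $\mathcal T(\sqrt{\log n})$ combined with the pointwise bound $n^{-1/4}$ from Lemma~\ref{Number of copies}. The only (immaterial) difference is the counting estimate: the paper bounds the number of trees of order $t$ by Cayley's $t^{t-2}$, giving $|\mathcal T(\sqrt{\log n})| \le \sqrt{\log n}^{\sqrt{\log n}}$, while you use the exponential bound $K^t$ for unlabelled bounded-degree trees; both are $n^{o(1)}$, so the union bound goes through either way.
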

\begin{proof}
By Lemma~\ref{Number of copies} we have that for a particular tree $T$ of order at most $\sqrt{\log n}$,
\begin{equation*}
\mathbb P(|\mathcal N_n(T) - \mathbb E[\mathcal N_n(T)]| \ge n^{2/3}) \le \dfrac{1}{n^{1/4}}.
\end{equation*}
On the other hand, it is well-known that there are $t^{t-2}$ trees of order $t$ (see for example~\cite{Prufer}), and therefore by a union bound
\begin{align*}
& \mathbb P\left(\max_{T\in \mathcal{T}\left(\sqrt{\log n}\right)} |\mathcal N(T) - \mathbb E[\mathcal N(T)]| \ge n^{2/3}\right)\\ 
\le\hspace{0.25em}
& \sum_{1\le k\le \sqrt{\log n}} k^{k-2} \dfrac{1}{n^{1/4}} \\ 
\le\hspace{0.25em}
& \sqrt{\log n}^{\sqrt{\log n}}\dfrac{1}{n^{1/4}} =\hspace{0.25em} o(1).
\end{align*}
The lemma is proved.
\end{proof}

\begin{corollary}\label{cor 5.9}
A.a.s.
\begin{equation*}
\left|\mathbb E\left[\dfrac{\sum_{G[A_i]\in \mathcal{T}\left(\sqrt{\log n}\right)} \emph{\vol}(A_i)^2}{4m^2}\right] - \dfrac{\sum_{G[A_i]\in \mathcal{T}\left(\sqrt{\log n}\right)} \emph{\vol}(A_i)^2}{4m^2}\right| = o\left(\dfrac{1}{n}\right).
\end{equation*}
\end{corollary}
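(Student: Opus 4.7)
My plan is to reduce the left-hand side to a quantity involving only the random variables $\mathcal{N}_n(T)$ and then invoke the uniform concentration provided by Lemma~\ref{lem 5.8}. Concretely, every tree $T \in \mathcal{T}(\sqrt{\log n})$ with $|V(T)| = t$ satisfies $\operatorname{vol}(T) = 2(t-1) \le 2\sqrt{\log n}$, and grouping the parts $A_i$ with $G[A_i] \in \mathcal{T}(\sqrt{\log n})$ by their isomorphism class yields the deterministic identity
\begin{equation*}
\sum_{G[A_i]\in \mathcal{T}(\sqrt{\log n})} \operatorname{vol}(A_i)^2 \;=\; \sum_{T \in \mathcal{T}(\sqrt{\log n})/\sim} \operatorname{vol}(T)^2\,\mathcal{N}_n(T).
\end{equation*}
Since $m = D(n)/2$ is deterministic under the setup of point~\ref{thm 3.1}, the factor $1/(4m^2)$ can be pulled out of the expectation and taking expectations termwise gives an analogous identity with $\mathcal{N}_n(T)$ replaced by $\mathbb{E}[\mathcal{N}_n(T)]$.

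Next I would apply the triangle inequality to control the difference:
\begin{equation*}
\left|\sum_{G[A_i]\in \mathcal{T}(\sqrt{\log n})} \operatorname{vol}(A_i)^2 - \mathbb{E}\!\left[\sum_{G[A_i]\in \mathcal{T}(\sqrt{\log n})} \operatorname{vol}(A_i)^2\right]\right| \;\le\; \sum_{T \in \mathcal{T}(\sqrt{\log n})/\sim} \operatorname{vol}(T)^2\,\bigl|\mathcal{N}_n(T) - \mathbb{E}[\mathcal{N}_n(T)]\bigr|.
\end{equation*}
Lemma~\ref{lem 5.8} asserts that a.a.s. the deviation $|\mathcal{N}_n(T) - \mathbb{E}[\mathcal{N}_n(T)]|$ is at most $n^{2/3}$ simultaneously for every $T \in \mathcal{T}(\sqrt{\log n})$. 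On this event I can bound each $\operatorname{vol}(T)^2$ by $4\log n$ and the total number of isomorphism classes of trees of order at most $\sqrt{\log n}$ by the (very loose) upper bound $\sum_{k \le \sqrt{\log n}} k^{k-2} \le (\sqrt{\log n})^{\sqrt{\log n}} = n^{o(1)}$ (Cayley's formula for labeled trees suffices).

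Combining these three estimates gives, a.a.s., an upper bound of $4 \log n \cdot n^{o(1)} \cdot n^{2/3} = n^{2/3 + o(1)}$ on the numerator of the left-hand side of the statement. Since $m = \Theta(n)$ by the assumption that $p_i > 0$ for some $i\ge 1$ (so that the average degree $M$ is positive), we have $4m^2 = \Theta(n^2)$, and dividing we obtain the desired bound $n^{-4/3 + o(1)} = o(1/n)$. I do not anticipate any serious obstacle here: the only mildly delicate point is keeping track that the $n^{2/3}$ error from Lemma~\ref{lem 5.8} together with the $n^{o(1)}$ number of tree shapes and the polylogarithmic volume bound still comfortably beat $n$ after dividing by $m^2$, which is the reason Lemma~\ref{Number of copies} was stated with the error term $n^{2/3}$ rather than something coarser.
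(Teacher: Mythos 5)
Your proposal is correct and follows essentially the same route as the paper: decompose the sum over tree isomorphism types, apply the triangle inequality, invoke the uniform $n^{2/3}$ deviation bound of Lemma~\ref{lem 5.8}, bound $\operatorname{vol}(T)^2$ by $O(\log n)$ and the number of tree shapes by $\sum_{t\le\sqrt{\log n}} t^{t-2}=n^{o(1)}$, and divide by $4m^2=\Theta(n^2)$ to get $o(1/n)$. No gaps; this matches the paper's argument.
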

\begin{proof}
We have that
\begin{align*}
& \left|\mathbb E\left[\dfrac{\sum_{G[A_i]\in \mathcal{T}\left(\sqrt{\log n}\right)} \vol(A_i)^2}{4m^2}\right] - \dfrac{\sum_{G[A_i]\in \mathcal{T}\left(\sqrt{\log n}\right)} \vol(A_i)^2}{4m^2}\right|\\
\le & \sum_{G[A_i]\in \mathcal{T}\left(\sqrt{\log n}\right)} \dfrac{1}{4m^2} |\mathbb E[\mathcal N(T)] - \mathcal N(T)|(2e(T))^2.
\end{align*}
By Lemma~\ref{lem 5.8}, we have that a.a.s.\ the last expression is at most 
\begin{equation*}
\sum_{G[A_i]\in \mathcal{T}\left(\sqrt{\log n}\right)} \dfrac{1}{4m^2} n^{2/3} (2e(T))^2 \le \dfrac{\left|\mathcal{T}\left(\sqrt{\log n}\right)\right| n^{2/3}\log n}{m^2}.
\end{equation*}
Since the number of trees on $t$ vertices is $t^{t-2}$, one has
\begin{equation*}
\dfrac{\left|\mathcal{T}\left(\sqrt{\log n}\right)\right| n^{2/3}\log n}{m^2} \le \dfrac{\sqrt{\log n}^{\sqrt{\log n}} n^{2/3}\log n}{m^2} = o\left(\dfrac{1}{n}\right).
\end{equation*}
The corollary is proved.
\end{proof}

\begin{proof}[Proof of point~\ref{thm 3.1} of Theorem~\ref{LBgeneral}]

By combining Lemma~\ref{equiv} and Corollary~\ref{cor 5.9} it is sufficient to estimate 
\begin{equation*}
\mathbb E\left[\dfrac{\sum_{G[A_i]\in \mathcal{T}\left(\sqrt{\log n}\right)} \vol(A_i)^2}{4m^2}\right].
\end{equation*}
We do this now.\par

\begin{itemize}
\item First, for every $i\in [\Delta]$, choose the vertices of degree $i$ in $T$ in $\binom{d_i(n)}{t_i}$ ways. 
\item A well-known result gives the number of trees on $t$ vertices with $t_i$ vertices of degree $i$ for every $i\in [\Delta]$, which is equal to (again, see for example \cite{Prufer}):
\begin{equation*}
\dfrac{(t-2)!}{(\Delta-1)!^{t_{\Delta}}(\Delta-2)!^{t_{\Delta-1}} \dots 2!^{t_3}}.
\end{equation*}

Observe that a sequence $(t_1, t_2, \dots, t_{\Delta})$ is a \textit{tree sequence} (that is, there is a tree with $t_i$ vertices of degree $i$ for every $i\in [\Delta]$) if and only if satisfies that the number of edges is one less than the number of vertices:
\begin{equation*}
\dfrac{t_1 + 2t_2 + \dots \Delta t_{\Delta}}{2} = t_1 + \dots + t_{\Delta} - 1,
\end{equation*}
which is equivalent to
\begin{equation}\label{tree constraint}
t_1 = 2 + \sum_{3\le i\le \Delta} (i - 2) t_i.
\end{equation}

\item Since we count configurations and not simply graphs, for every vertex we multiply by the product of the factorials of the degrees
\begin{equation*}
    \prod_{1\le i\le \Delta} i!^{t_i}
\end{equation*}
(this gives the number of permutations of the half-edges at each vertex). 
\item Finally, multiply by the probability that all $e(T)$ edges are present, which is given by
\begin{equation*}
\dfrac{1}{(D(n) - 1)(D(n) - 3)\dots (D(n) - 1 - 2(e(T)-1))}.
\end{equation*}
\end{itemize}

We get that the expected number of trees with $t_i$ vertices of degree $i$, for every $i\in [\Delta]$, is given by 

\begin{align}
&\label{line 11} \dfrac{\prod_{1\le i\le \Delta} \binom{d_i(n)}{t_i}\prod_{1\le i\le \Delta} i!^{t_i}}{(D(n) - 1)(D(n) - 3)\dots (D(n) - 1 - 2(t - 2))} \dfrac{(t - 2)!}{\prod_{1\le i\le \Delta} (i-1)!^{t_i}} \\ =\hspace{0.25em} 
& D(n) \prod_{0\le i\le t-2}\dfrac{1}{1 - \frac{1+2i}{D(n)}} \prod_{0\le i\le \Delta} \left(\prod_{0\le j\le t_i-1}\left(1 - \frac{j}{d_i(n)}\right)\right) \dfrac{1}{t(t - 1)} \binom{t}{t_1, t_2, \dots, t_{\Delta}} \prod_{1\le i\le \Delta} \left(\dfrac{id_i(n)}{D(n)}\right)^{t_i} \nonumber.
\end{align}

Standard analysis shows that
\begin{equation*}
    \prod_{0\le i\le t-2} \left(1-\dfrac{1+2i}{D(n)}\right) = 1 - \dfrac{(t-1)^2}{D(n)} + o\left(\dfrac{(t-1)^2}{D(n)}\right) = 1 - O\left(\dfrac{\log n}{n}\right),
\end{equation*}
and by assumption
\begin{equation*}
\prod_{0\le i\le \Delta} \left(\prod_{0\le j\le t_i-1}\left(1 - \frac{j}{d_i(n)}\right)\right) = (1+o(1))\prod_{1\le i\le \Delta} \left(1 - \dfrac{t_i(t_i-1)}{2d_i(n)}\right) = 1 - O\left(\dfrac{\log n}{n}\right).
\end{equation*}

Thus, for every tree sequence $(t_1, t_2, \dots, t_{\Delta})$, the above two products do not modify the first order in (\ref{line 11}). Since for every tree $T$ we are interested in the volume of $T$, we multiply each term by $\vol(T)^2 = (2e(T))^2 = 4(t - 1)^2$ and sum over all tree sequences to deduce the value of $\mathbb E\left[\sum_{1\le i\le k}\dfrac{ \vol^2(A_i)}{4m^2}\right]$ up to a $(1+o(1))-$factor:

\begin{align}
&\label{line 13} \sum_{1\le t\le \sqrt{\log n}}\sum_{\substack{t_1+t_2+\dots + t_{\Delta} = t;\\ (t_1, t_2, \dots, t_{\Delta})\\ \text{is a tree sequence}}}\dfrac{ \left(1+O\left(\dfrac{\log n}{n}\right)\right)\dfrac{4(t-1)}{t} \dbinom{t}{t_1, t_2, \dots, t_{\Delta}} \left(\prod_{1\le i\le \Delta} \left(\dfrac{id_i(n)}{D(n)}\right)^{t_i}\right) D(n)}{4\left(\dfrac{D(n)}{2}\right)^2}.
\end{align}

Notice that the factor 
\begin{equation*}
\binom{t}{t_1, t_2, \dots, t_{\Delta}} \left(\prod_{1\le i\le \Delta} \left(\dfrac{id_i(n)}{D(n)}\right)^{t_i}\right)
\end{equation*}
in (\ref{line 13}) can be interpreted as the probability that a die with outcomes $1,2,\dots, \Delta$ with respective probabilities $\dfrac{d_1(n)}{D(n)}, \dfrac{2d_2(n)}{D(n)}, \dots, \dfrac{\Delta d_{\Delta}(n)}{D(n)}$ gives $t_1$ ones, $t_2$ twos, etc. over $t$ independent trials. Under the above interpretation, denote by $X_i$ the number of trials with outcome $i$. Thus, by a Chernoff bound (Lemma~\ref{chernoff:bd}) we have for every $\delta\in (0,1)$ that
\begin{equation*}
\mathbb P\left(\left|X_i - \dfrac{id_i(n) t}{D(n)}\right|\ge \dfrac{\delta id_i(n)t}{D(n)}\right) \le 2\exp\left(-\dfrac{\delta^2 id_i(n) t}{3D(n)}\right).
\end{equation*}

Recall that under the conditions of point~\ref{thm 3.1} of Theorem~\ref{LBgeneral} one has
\begin{equation*}
\sum_{1\le i\le \Delta} i(i-2)p_i < 0.
\end{equation*}

Let 
\begin{equation*}
    \delta(n) = - \sum_{1\le i\le \Delta} \dfrac{i(i-2)d_i(n)}{D(n)}.
\end{equation*}

Since for every $i\in [\Delta]$ we have that $\dfrac{d_i(n)}{n}\underset{n\to +\infty}{\longrightarrow} p_i$, for every large enough $n$ we have 
\begin{equation*}
    \delta(n) \underset{n\to +\infty}{\longrightarrow} \delta := - \sum_{1\le i\le \Delta} \dfrac{i(i-2)p_i}{M}\in (0,1].
\end{equation*}

Moreover, for every $n$, $\delta(n)\le 1$. We deduce that for every large enough $n$  
\begin{align*}
& \mathbb P((X_1, X_2, \dots, X_{\Delta}) \text{ is a tree sequence }|\hspace{0.2em}X_1+\dots+X_{\Delta} = t)\\ 
=\hspace{0.25em} & \mathbb P\left(\sum_{1\le i\le \Delta}(i - 2)X_i = -2\hspace{0.2em}\bigg|\hspace{0.2em}X_1+\dots+X_{\Delta} = t\right)\\
=\hspace{0.25em} & \mathbb P\left(\sum_{1\le i\le \Delta}(i - 2)(X_i-\mathbb E[X_i]) = -\sum_{1\le i\le \Delta}(i - 2)\mathbb E[X_i]-2\hspace{0.2em}\bigg|\hspace{0.2em}X_1+\dots+X_{\Delta} = t\right)\\
\le\hspace{0.25em} & \mathbb P\left(\sum_{1\le i\le \Delta}(i - 2)(X_i-\mathbb E[X_i]) \ge \dfrac{\delta (n) t}{2}\hspace{0.2em}\bigg|\hspace{0.2em}X_1+\dots+X_{\Delta} = t\right)\\
\le\hspace{0.25em} & \sum_{1\le i\le \Delta, i\neq 2} \mathbb P\left(|(i-2)(X_i-\mathbb E[X_i])| \ge \dfrac{|i-2|id_i(n)}{2D(n)} t\hspace{0.2em}\bigg|\hspace{0.2em}X_1+\dots+X_{\Delta} = t\right)\\
=\hspace{0.25em} & \sum_{1\le i\le \Delta, i\neq 2} \mathbb P\left(|X_i-\mathbb E[X_i]| \ge \dfrac{id_i(n)}{2D(n)} t\hspace{0.2em}\bigg|\hspace{0.2em}X_1+\dots+X_{\Delta} = t\right)\\
\le & \sum_{1\le i\le \Delta, i\neq 2} \exp\left(- \dfrac{\left(\frac{1}{2}\right)^2 \frac{id_i(n)}{D(n)}}{3} t\right)\le \hspace{0.25em} \Delta \max_{1\le i\le \Delta, i\neq 2} \exp\left(- \dfrac{ip_i}{24 M} t\right).
\end{align*}

Thus, for every large enough $n$, we have
\begin{align*}
\sum_{\substack{t_1+t_2+\dots +t_{\Delta} = t;\\ (t_1, t_2, \dots, t_{\Delta})\\ \text{ is a tree sequence}}} \dfrac{t-1}{t} \binom{t}{t_1, t_2, \dots, t_{\Delta}} \prod_{1\le i\le \Delta} \left(\dfrac{id_i(n)}{D(n)}\right)^{t_i} 
&\le\hspace{0.3em} \Delta \max_{1\le i\le \Delta, i\neq 2, p_i \neq 0} \exp\left(- \dfrac{ip_i}{24 M} t\right)\\
&=\hspace{0.3em} \Delta  \exp\left(- \min_{1\le i\le \Delta, i\neq 2, p_i \neq 0} \left\{\dfrac{ip_i}{24 M}\right\} t\right).
\end{align*}

By using the dominated convergence theorem we conclude that the sum 
\begin{equation*}
    \sum_{1\le t\le \sqrt{\log n}}\sum_{\substack{t_1+t_2+\dots + t_{\Delta} = t;\\ (t_1, t_2, \dots, t_{\Delta})\\ \text{is a tree sequence}}}\dfrac{4(t-1)}{t} \binom{t}{t_1, t_2, \dots, t_{\Delta}} \left(\prod_{1\le i\le \Delta} \left(\dfrac{id_i(n)}{D(n)}\right)^{t_i}\right)
\end{equation*}
converges to a constant $c = c(\boldsymbol{p}, \Delta) > 0$ given by
\begin{equation*}
    \sum_{t\ge 1}\sum_{\substack{t_1+t_2+\dots + t_{\Delta} = t;\\ (t_1, t_2, \dots, t_{\Delta})\\ \text{is a tree sequence}}}\dfrac{4(t-1)}{t} \binom{t}{t_1, t_2, \dots, t_{\Delta}} \left(\prod_{1\le i\le \Delta} \left(\dfrac{ip_i}{M}\right)^{t_i}\right),
\end{equation*}
which by (\ref{tree constraint}) can be rewritten as
\begin{equation*}
4 \sum_{t_2, \dots, t_{\Delta}\in \mathbb N} \dfrac{t_2+2t_3+\dots+(\Delta-1)t_{\Delta}+1}{t_2+2t_3+\dots+(\Delta-1)t_{\Delta}+2} \binom{t_2+2t_3\dots+(\Delta - 1)t_{\Delta} + 2}{\sum (i-2)t_i + 2, t_2, \dots, t_{\Delta}} \left(\dfrac{p_1}{M}\right)^2 \prod_{2\le i\le \Delta} \left(\dfrac{ip_i}{M}\left(\dfrac{p_1}{M}\right)^{(i-2)}\right)^{t_i}.
\end{equation*}
This finishes the proof of point~\ref{thm 3.1} of Theorem~\ref{LBgeneral}.
\end{proof}

\subsection{Proof of point~\ref{thm 3.3} of Theorem~\ref{LBgeneral} - the supercritical regime}

In this subsection, we prove point~\ref{thm 3.3} of Theorem~\ref{LBgeneral}. First, by Theorem~\ref{MR criterion} a.a.s.\ all connected components except the largest one have size $O(\log n)$. Thus, since $G(n)$ has maximal degree $\Delta$ and contains no isolated vertices, by Corollary~\ref{cor 5.3} each connected component but the largest one forms a part in every partition $\mathcal A$ of $V(G(n))$ with $q(\mathcal A) = q^*(G(n))$ a.a.s. Moreover, the largest component, which we denote by $C_{max}$, contains a.a.s.\ at least $\gamma n$ vertices with $\gamma = \gamma(\boldsymbol{p}, \Delta) > 0$. Thus, it remains to study the giant component. From now on, we condition on the set of vertices of $C_{max}$.\par

The next lemma states that we can find inside the giant component a set of vertices of linear order and high enough density. The component $A_{max}$ found in this lemma plays a role similar to that of $\overline{C_3}$ in the lower bound for random $3-$regular graphs. In the sequel, we identify $A_{max}$ with the graph induced by this vertex set in $C_{max}$.\par

\begin{lemma}\label{new main lemma}
There is a constant $c > 0$ such that the following holds: for every $C>0$ there exists $\varepsilon' > 0$ so that a.a.s.\ there is a set of vertices $A_{max}\subseteq C_{max}$ such that, first,
\begin{equation*}
|A_{max}|\ge \varepsilon' n \mbox{ and } e(A_{max}) \ge |A_{\max}| + C\dfrac{|A_{max}|^2}{n},
\end{equation*}
and second, $C_{max}\setminus A_{max}$ consists of at most $c\varepsilon'^2n$ connected components. 
\end{lemma}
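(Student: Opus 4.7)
My plan is to adapt the multi-phase exploration of Section~\ref{sec:Lower} to the configuration model of $G(n)$ with general bounded degree sequences satisfying $Q > 0$. I would start (Phase 0) from a uniformly random vertex $v_0 \in C_{max}$ and run a BFS-style exploration, revealing one half-edge at a time, until $\varepsilon n$ vertices have been discovered, where $\varepsilon > 0$ will be chosen small enough at the end as a function of $C$. The differential equation method (Theorem~\ref{Thm:DEMethod}) applied to the random variables counting, for each $i \in [\Delta]$, the number of unexplored vertices of degree $i$ and the number of half-edges at the boundary of the explored set $C_0$, provides a.a.s.\ explicit deterministic trajectories for all of these quantities. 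In particular, the number of chord edges discovered during Phase 0 (edges between two explored vertices) will a.a.s.\ be of order $\Theta(\varepsilon^2 n)$, the constant depending on $\boldsymbol{p}$ and $\Delta$.

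After Phase 0, I would process the remaining open half-edges of $C_0$ to absorb cherries (as in Phase 1 of Section~\ref{sec:Lower}) and then absorb $C_1$-chains of length $\ell$ (Phase 2), for some large fixed $\ell = \ell(\boldsymbol{p}, \Delta, C)$. Each absorbed chain of length $\ell$ contributes $\ell - 1$ new vertices and $\ell$ new edges, i.e., one excess edge per chain. Both phases are tracked by systems of ODEs similar to those of Section~\ref{sec:Lower}, and under $Q > 0$ a.a.s.\ a linear number of cherries and chains of length $\ell$ are absorbed. I would set $A_{max}$ to be the resulting component at the end of Phase 2.

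To verify the two conditions, I would observe that $|A_{max}| = (\varepsilon + o(1)) n$ a.a.s., so one may take $\varepsilon' = \varepsilon/2$. The total excess $e(A_{max}) - |A_{max}|$ equals the chord edges of Phase 0 plus one excess edge per cherry in Phase 1 and per chain in Phase 2. The asymptotic ratio of this excess to $|A_{max}|^2/n = \varepsilon^2 n$ is a function of $\varepsilon$, $\ell$ and $\boldsymbol{p}$ whose limit as $\varepsilon \to 0$ grows without bound with $\ell$ (indeed, the main contribution at small $\varepsilon$ comes from chains of length $\ell$, each contributing one excess edge per $\Theta(\ell)$ absorbed vertices). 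Hence, for any target $C > 0$, choosing first $\ell$ large and then $\varepsilon$ correspondingly small yields $e(A_{max}) - |A_{max}| \ge C |A_{max}|^2/n$ a.a.s. The second condition follows by noting that every connected component of $C_{max}\setminus A_{max}$ must be joined to $A_{max}$ by at least one edge, so $|CC(C_{max}\setminus A_{max})| \le e(A_{max}, C_{max}\setminus A_{max})$, and the right-hand side is the number of boundary half-edges remaining at the end of Phase 2, which is $O(\varepsilon^2 n)$ by the DE analysis.

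The main obstacle, as flagged in the overview, is the handling of vertices of degree $1$: each degree-$1$ vertex absorbed into the explored set contributes to $|A_{max}|$ but brings no new open half-edges, so a large number of them could cause the boundary to collapse before Phase 2 has had time to run, terminating the process too early to achieve the required excess. I would therefore need to prove, using $Q > 0$ and the tail estimates coming from the configuration model, that the DE trajectory of the number of boundary half-edges stays uniformly bounded away from zero throughout Phases 0--2, so that the chain-absorption in Phase 2 indeed generates the $\Omega(\varepsilon n)$ chains of length $\ell$ required for the excess-edge count. This amounts to checking that the rate of half-edge absorption by degree-$1$ vertices stays strictly below the rate at which higher-degree vertices uncover fresh half-edges, which is precisely the continuous analogue of the Molloy--Reed condition $\sum_i i(i-2) p_i > 0$.
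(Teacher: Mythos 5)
Your plan for the density part is essentially the paper's mechanism in different clothes: grow an explored set of order $\varepsilon n$, then harvest chains of length $\ell$, each contributing one excess edge, with the number of such chains of order $\varepsilon^2 n$ times a factor growing geometrically in $\ell$ (this is where $Q>0$ enters), and finally take $\ell$ large and then $\varepsilon$ small. The paper does this on the $2$-core of $C_{max}$ (Lemmas~\ref{lem: uniform}--\ref{main lemma supercritical}) precisely to avoid the degree-one issue you flag, and then transfers back to $C_{max}$ by showing the hanging trees inflate $|A_{max}|$ by at most a constant factor (Lemma~\ref{cmp A and A'}); your alternative of controlling degree-one absorption directly through the differential equations is plausible but would need the boundary-half-edge trajectory argument spelled out (Lemma~\ref{lem: MR} is what the paper uses for this purpose). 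Up to that, the first half of the statement is within reach of your approach.

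The genuine gap is in the second condition. Your bound $|CC(C_{max}\setminus A_{max})|\le e(A_{max}, C_{max}\setminus A_{max})$ is correct, but the claim that this edge boundary is $O(\varepsilon^2 n)$ at the end of Phase~2 is false, and in fact contradicts your own Phase~2 analysis: to produce $\Theta(\varepsilon^2 n)$ chains you need $\Theta(\varepsilon n)$ boundary half-edges of the explored set (this is exactly Lemma~\ref{lem: MR}), and absorbing cherries and chains only consumes $O(\varepsilon^2 n)$ of them (up to $\ell$-dependent constants), so $e(A_{max}, C_{max}\setminus A_{max})=\Theta(\varepsilon n)$ remains. That only gives $O(\varepsilon n)$ components, an order of magnitude too weak: the $c\varepsilon'^2 n$ bound is needed so that, in the proof of point~\ref{thm 3.3}, the modularity loss from the extra parts is of the same order $\varepsilon'^2$ as the density gain, and an $\Omega(\varepsilon' n)$ count would swamp it. The paper obtains $O(\varepsilon'^2 n)$ by a different and essential argument (Lemma~\ref{lem 5.31}): pass to the $2$-core, smooth degree-two vertices (Lemma~\ref{decycling lemma}), use that the resulting configuration model of minimum degree three is a.a.s.\ $3$-connected, so any piece of the complement that gets cut off must attach to $A'_{max}$ by at least three edges, or be an $A'_{max}$-chain with exactly two attachments; first/second moment counts then give $O(\varepsilon'^3 n)$ and $O(\varepsilon'^2 n)$ such pieces respectively, while Lemma~\ref{connectivity} keeps all but $O(\log n)$ of the remaining vertices in a single component, and the pendant trees of $C_{max}\setminus C'_{max}$ attach to existing components and so create no new ones. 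Without some substitute for this $3$-connectivity/multi-attachment counting, your proposal does not establish the component bound, and the lemma as stated does not follow.
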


We prove Lemma~\ref{new main lemma} later on. Admitting Lemma~\ref{new main lemma}, we prove point~\ref{thm 3.3} of Theorem~\ref{LBgeneral}.

\begin{proof}[Proof of point~\ref{thm 3.3} of Theorem~\ref{LBgeneral} assuming Lemma~\ref{new main lemma}]
Notice that by assumption of Lemma~\ref{new main lemma}, $C_{max}\setminus A_{max}$ contains at most $c\varepsilon'^2n$ smaller connected components. Note that every component of order larger than $\Delta \sqrt{n}$ has a spanning tree of maximal degree $\Delta$. Thus, by Lemma~\ref{lem:partition}, one may divide each such component of $C_{max}\setminus A_{max}$ into connected components of order at most $\Delta \sqrt{n}$, each containing at most $\Delta^2\sqrt{n}/2$ edges. Thus, one has that the modularity of such a partition into $k$ parts is at least 
\begin{equation*}
    \dfrac{e(A_{max})+n-|A_{max}|-k}{m} - (1+o(1))\dfrac{\Delta^2 |A_{max}|^2}{4m^2}.
\end{equation*}
Since $2m = Mn + o(n)$ and $k \le \mu n + c\varepsilon'^2n+ o(n)$ (recall that $\mu$ is the a.a.s.\ limit of the number of components of $G(n)$ divided by $n$), we have that by Lemma~\ref{new main lemma} the above expression is asymptotically equal to 
\begin{align*}
    & \dfrac{n(1-\mu - c\varepsilon'^2) + C|A_{max}|^2/n}{Mn/2} - (1+o(1))\dfrac{\Delta^2 |A_{max}|^2}{(Mn)^2} + o(1)\\
    \ge \hspace{0.25em} & \dfrac{2(1-\mu)}{M} + (1+o(1))\dfrac{|A_{max}|^2}{(Mn)^2}\left(2MC - \Delta^2 - 2cM\right) + o(1).
\end{align*}
Choosing $c <C - \dfrac{\Delta^2+1}{2M}$ proves the desired result for $\varepsilon = \dfrac{\varepsilon'^2}{M^2}$.
\end{proof}

For a graph $G$, define the $2-$core of $G$ as the largest subgraph of $G$ with respect to inclusion, in which every vertex is of degree at least 2. We include the proof of the following observation for the sake of completeness.
\begin{observation}\label{2core}
The $2-$core of a graph $G$ is well-defined and may be obtained by consecutive deletions of the vertices of degree 0 and 1.
\end{observation}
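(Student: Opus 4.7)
The plan is to establish the two claims in sequence: first the well-definedness of the $2$-core as the unique maximal subgraph of minimum degree at least two, and second the fact that the peeling procedure (iteratively deleting vertices of degree zero or one) terminates with this subgraph.

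For well-definedness, I would argue that if $H_1$ and $H_2$ are two subgraphs of $G$ in which every vertex has degree at least two, then their union $H_1 \cup H_2$ (taking the union of both vertex and edge sets) is again such a subgraph. Indeed, any vertex $v \in V(H_1) \cup V(H_2)$ lies in at least one of $H_1$, $H_2$, where it already has degree at least two; since passing to the union can only add edges incident to $v$, its degree in $H_1 \cup H_2$ is still at least two. Hence the family of subgraphs of $G$ with minimum degree $\geq 2$ is closed under unions, and therefore contains a unique maximal element, which we define to be the $2$-core $K$ of $G$.

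For the algorithmic characterization, let $v$ be any vertex of $G$ with $\deg_G(v) \leq 1$. Then $v$ cannot belong to $K$: since $\deg_K(v) \leq \deg_G(v) \leq 1$, including $v$ in $K$ would violate the minimum-degree condition. Hence $K \subseteq G \setminus v$, and since $K$ still has minimum degree at least two as a subgraph of $G \setminus v$, it is contained in the $2$-core of $G \setminus v$; conversely, the $2$-core of $G \setminus v$ is a subgraph of $G$ with minimum degree at least two, so it is contained in $K$. This shows that the operation of deleting a vertex of degree at most one preserves the $2$-core. Iterating, after finitely many steps the procedure terminates with a graph $H$ in which every remaining vertex has degree at least two. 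Then $H$ itself is a subgraph of $G$ with minimum degree $\geq 2$, so $H \subseteq K$; and by the invariance established above, $K \subseteq H$. Therefore $H = K$, proving that the peeling procedure recovers the $2$-core.

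There is no real obstacle here: both statements follow from the closure-under-unions property and a simple invariance argument along the peeling process. The only point that deserves care is noticing that removing a vertex of degree one may create new vertices of degree zero or one, so the process must be iterated until no such vertex remains, and one must verify that the order in which vertices are removed does not affect the final outcome, which is immediate from the invariance above.
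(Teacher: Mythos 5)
Your proof is correct, but it takes a slightly different route from the paper's. The paper first runs the peeling procedure to get an output $H$ of minimum degree at least two, and then proves maximality of $H$ directly by contradiction: if some $H'$ of minimum degree at least two were not contained in $H$, the \emph{first} vertex of $H'\setminus H$ deleted during the peeling would, at the moment of its deletion, still have all its $H'$-neighbours present and hence degree at least two, contradicting the deletion rule. This chronological argument establishes in one stroke both that the $2$-core is well-defined (as the maximum such subgraph, namely $H$) and that the peeling computes it. You instead split the statement: you get well-definedness from closure of the family of minimum-degree-two subgraphs under unions (so a unique maximal element exists), and you then verify correctness of the peeling via a per-step invariant, namely that deleting a single vertex of degree at most one does not change the $2$-core, concluding $H=K$ at termination. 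Both arguments are elementary and complete; yours has the small added benefit of making the order-independence of the peeling explicit, while the paper's is marginally shorter since the single first-deleted-vertex argument subsumes both claims.
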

\begin{proof}
In the end of the process, one obtains a possibly empty subgraph $H$ of $G$ of minimal degree 2. On the other hand, suppose for the sake of contradiction that there is another graph $H'\not \subseteq H$, which has minimal degree at least 2. Then, $H\cup H'$ is also a subgraph of $G$ of minimal degree at least 2. Let $v$ be the first vertex of $H'\setminus H$ that has been deleted throughout the construction of $H$. At the moment of its deletion, since $v\in H'$, $v$ had degree at least 2, which is a contradiction. Thus, every subgraph of $G$ of minimal degree at least 2 is contained in $H$, which proves the observation.
\end{proof}

The idea of the proof of Lemma~\ref{new main lemma} is roughly as follows. First, we prove that the $2-$core of the giant component $C_{max}$ contains a linear number of vertices by a density argument. Then, after a slight modification of the $2-$core in order to get rid of long paths of degree 2 vertices, we apply an argument similar to the one in the proof in the $3-$regular case to obtain that the modularity of the $2-$core of the giant component itself is "non-trivial". Finally, we come back to the giant component itself to conclude the proof of Lemma~\ref{new main lemma}.\par

We first prove that the giant component has density $1+\varepsilon$ for some $\varepsilon > 0$ depending only on $(p_i)_{1\le i\le \Delta}$ a.a.s. For this, initiate an exploration process of $G(n)$, similar to the one for random $3-$regular graphs, and for all $t\ge 0$ record the number $Z_t$ of open half-edges sticking out of the explored component at time step $t$.

\begin{lemma}[\cite{MR}, Lemma 8]\label{lem: MR}
There are constants $\varepsilon \in (0,1)$ and $\xi \in (0, \min\{1/4, M/4\})$ such that, for every $\delta\in (0,\xi)$, $Z_{\lceil \delta n\rceil} \ge \varepsilon \delta n$ a.a.s. Moreover, there is $0 < z = z(\boldsymbol{p}, \Delta) < 1$, for which the probability of the converse is at most $z^n$.
\end{lemma}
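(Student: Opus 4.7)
The plan is to apply the differential equation method (Theorem~\ref{Thm:DEMethod}) to the joint evolution of $Z_t$ together with the counts $X_i(t)$ of unexplored vertices of degree $i$ for each $i\in [\Delta]$. An elementary one-step analysis of the exploration gives, as long as $Z_t\ge 1$,
\begin{equation*}
\mathbb E[X_i(t+1) - X_i(t) \mid \mathcal F_t] = -\frac{i X_i(t)}{D(n) - 2t - 1},
\end{equation*}
\begin{equation*}
\mathbb E[Z_{t+1} - Z_t \mid \mathcal F_t] = \frac{\sum_{i = 1}^{\Delta} i(i-2) X_i(t) - 2(Z_t - 1)}{D(n) - 2t - 1},
\end{equation*}
with every one-step change bounded in absolute value by $\Delta$. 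After the standard rescaling $\tau = t/n$, $x_i(\tau) = X_i(\lfloor \tau n\rfloor)/n$, $z(\tau) = Z_{\lfloor \tau n\rfloor}/n$, the first family of equations integrates to $x_i(\tau) = p_i(1 - 2\tau/M)^{i/2}$, while $z$ is governed by
\begin{equation*}
z'(\tau) = \frac{\sum_i i(i-2) x_i(\tau) - 2z(\tau)}{M - 2\tau},\qquad z(0) = 0,
\end{equation*}
so in particular $z'(0) = Q/M > 0$ by the supercritical assumption.

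By continuity of $z'$ there exists $\xi \in (0, \min\{1/4, M/4\})$, depending only on $\boldsymbol p$ and $\Delta$, such that $z'(\tau) \ge Q/(2M)$ for every $\tau \in [0, \xi]$, and hence $z(\tau) \ge \tfrac{Q}{2M}\tau$ on this interval; this is the $\xi$ of the lemma, and I would set $\varepsilon := Q/(4M)$ (clipped to $(0,1)$ if need be). For any fixed $\delta \in (0, \xi)$ I would then apply Theorem~\ref{Thm:DEMethod} with $\sigma = \delta$, $\beta = \Delta$, drift-approximation error $O(1/n)$, a bounded open domain $D \subseteq \mathbb R^{\Delta + 2}$ containing a neighborhood of the trajectory whose closure avoids the singular locus $\tau = M/2$ (so that the drift functions extend to Lipschitz maps on $D$), and $\lambda$ equal to a positive constant $\lambda_0 = \lambda_0(\delta, \boldsymbol p, \Delta)$ chosen small enough that $3\exp(LT)\lambda_0 < \tfrac{Q\delta}{4M}$.

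The conclusion of the theorem is then $|Z_{\lceil \delta n\rceil}/n - z(\delta)| \le 3\exp(LT)\lambda_0$ with failure probability at most $2(\Delta + 1)\exp(-n\lambda_0^2/(8T\Delta^2))$; since $\lambda_0$ is a positive constant, this upper bound is of the form $z^n$ for some $z = z(\boldsymbol p, \Delta) \in (0,1)$ and every $n$ large enough. On the complementary event one has $Z_{\lceil \delta n\rceil} \ge (z(\delta) - 3\exp(LT)\lambda_0)\,n \ge \tfrac{Q\delta}{4M}\,n = \varepsilon\, \delta n$, which is the claimed bound. The only mildly non-routine step is the verification of the Lipschitz hypothesis of Theorem~\ref{Thm:DEMethod}: one must choose $D$ so that the denominator $M - 2\tau$ in the drifts does not vanish on its closure, i.e.\ one must stay strictly below $\tau = M/2$; this is automatic because $\xi < M/4$, after which the rest of the argument is a direct application of the differential equation method.
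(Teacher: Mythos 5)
Your drift computations are correct: for $Z_t\ge 1$ the one-step expectations you write down are exact in the configuration model, the rescaled solutions $x_i(\tau)=p_i(1-2\tau/M)^{i/2}$ are right, and $z'(0)=Q/M>0$. Note, though, that the paper itself does not prove this lemma — it is quoted from \cite{MR} (Lemma 8) — so you are supplying a proof of a cited result, and the question is whether your differential-equation argument closes.

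It does not, as written: the gap is the early phase, i.e.\ the excursions of $Z_t$ to $0$. Your drift functions are only valid while $Z_t\ge 1$; when $Z_t=0$ the exploration restarts at a fresh vertex and the one-step expected change of $Z$ (and of the $X_i$) differs from your $F$ by a constant, not by $O(1/n)$. Theorem~\ref{Thm:DEMethod} requires the drift hypothesis at \emph{every} state whose rescaled coordinates lie in $D$, and it also forces $D$ to contain a $3\exp(LT)\lambda_0$-neighbourhood of the trajectory, in particular of the initial point where $z=0$; so states with $Z_t=0$ cannot be excised from $D$ without destroying the initial condition. This is not a technicality one can wave away: since the paper assumes $p_i>0$ for all $i$ (in particular $p_1>0$), the first explored component is of constant size with probability bounded away from zero (e.g.\ an isolated edge), so the process genuinely returns to $Z_t=0$ before taking off, at states where your hypothesis~1 fails. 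You therefore need either a two-phase argument (handle the pre-take-off regime separately, say by a branching-process or ballot-type comparison until $Z$ reaches a polynomial size, and only then run the DE method), or — the simpler route, essentially that of Molloy and Reed — avoid trajectory tracking altogether: for all $t\le \xi n$ one has deterministically $X_i(t)\ge d_i(n)-t$ and $Z_t\le \Delta t+\Delta$, hence for $\xi$ small the conditional expected increment of $Z_t$ is at least a constant $c=c(\boldsymbol{p},\Delta)>0$ at every step with $Z_t\ge 1$, while a restart step increases $Z_t$ by at least $1$; then $Z_t-ct$ is a submartingale with increments bounded by $\Delta$, and Azuma--Hoeffding gives $\mathbb P\big(Z_{\lceil\delta n\rceil}<\tfrac{c}{2}\delta n\big)\le \exp(-\Theta(\delta n))$, which yields the lemma. (A secondary, much smaller issue: your exponential rate comes from $\lambda_0=\lambda_0(\delta)$, so your $z$ depends on $\delta$ rather than only on $(\boldsymbol{p},\Delta)$; the martingale route has the same feature, so this is at most a cosmetic mismatch with the statement, unlike the restart gap above.)
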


Now, fix a constant $\gamma = \gamma(\boldsymbol{p}, \Delta) > 0$ such that, by Lemma~\ref{lem: MR}, the hitting time $t$ of the event $\{Z_t = \gamma n\}$ is a.a.s.\ well-defined. When time $t$ arrives, we know by Theorem~\ref{MR criterion} that we are exploring the giant component. By a  simple concentration argument we deduce the following corollary.
\begin{corollary}\label{cor 5.17}
A.a.s. there is a constant $\gamma' = \gamma'(\boldsymbol{p}, \Delta) > 0$, for which 
\begin{equation*}
    e(C_{max}) - |C_{max}| \ge \gamma' n.
\end{equation*}
\end{corollary}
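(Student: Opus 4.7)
The plan is to leverage Lemma~\ref{lem: MR} by continuing the exploration of $G(n)$ for $\Theta(n)$ more steps past time $\lceil \delta n \rceil$, and showing that a linear number of back-edges (matches that close cycles within the currently explored component) are produced during this window. Since the exploration starts from a single vertex, after $t$ steps the explored subgraph has exactly $t$ edges and $1 + t - B_t$ vertices, where $B_t$ counts back-edges; hence a linear lower bound on $B_t$ translates directly into $e(C_{max}) - |C_{max}| \ge B_t - 1$.

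First I fix $\delta \in (0,\xi)$ so that by Lemma~\ref{lem: MR} the event $\{Z_{\lceil \delta n \rceil} \ge \varepsilon \delta n\}$ holds asymptotically almost surely. Because $Z_{\lceil \delta n \rceil}$ is linear in $n$, while by Theorem~\ref{MR criterion} every component other than $C_{max}$ has $O(\log n)$ vertices and hence $O(\log n)$ half-edges, the exploration at time $\lceil \delta n \rceil$ must already be inside the giant. The key deterministic observation is that $Z_t$ decreases by at most $2$ per exploration step (with equality exactly on a back-edge), so for every $t$ in the window $[\lceil \delta n\rceil, \lceil \delta n\rceil + \lfloor \varepsilon \delta n / 4\rfloor]$ one has $Z_t \ge \varepsilon \delta n/2$. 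Throughout this window the conditional probability that the next match is a back-edge is $(Z_t-1)/(Mn - 2t - 1)$, which is bounded below by the positive constant $p_0 := \varepsilon\delta/(4M)$ for all large enough $n$.

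To upgrade this pointwise constant lower bound on the back-edge probability into a linear lower bound on the total number $B_T$ of back-edges produced during the window, I apply the Azuma-Hoeffding inequality to the martingale $W_t := B_t - \sum_{s < t} P_s$, where $P_s$ denotes the conditional back-edge probability at step $s$. Its increments are bounded by $1$ in absolute value, so $|W_T - W_{\lceil \delta n \rceil}| = O(\sqrt{n \log n})$ asymptotically almost surely, which is negligible compared with the lower bound $\sum P_s \ge p_0 \cdot \lfloor \varepsilon \delta n/4\rfloor = \Theta(n)$ on the compensator. This yields $B_T \ge \gamma' n$ for some $\gamma' = \gamma'(\boldsymbol{p},\Delta) > 0$, and then $e(C_{max}) - |C_{max}| \ge B_T - 1 \ge \gamma' n - 1$, proving the corollary (with $\gamma'$ replaced by any strictly smaller constant if one wishes to absorb the $-1$).

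I do not expect any genuine obstacle. The only slightly delicate point is checking that the exploration at time $\lceil \delta n\rceil$ sits inside the giant, which is immediate from the linear lower bound on $Z_{\lceil \delta n\rceil}$ combined with Theorem~\ref{MR criterion}; the martingale concentration step is routine, and the window length has been chosen small enough (a quarter of $\varepsilon \delta n$) that the deterministic decrease of $Z_t$ by at most $2$ per step guarantees $Z_t$ stays $\Omega(n)$ throughout without any further probabilistic input.
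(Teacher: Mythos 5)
Your proposal is correct and follows essentially the same route as the paper: both arguments hinge on Lemma~\ref{lem: MR} providing a linear number of open half-edges on the explored part of the giant, and then show that a constant fraction of the subsequent pairings land back inside the explored set, each such pairing adding one to $e(C_{max})-|C_{max}|$ (your verification that the exploration sits in $C_{max}$, that $Z_t$ drops by at most $2$ per step, and the Azuma step are all sound). The only difference is bookkeeping: you continue the exploration over a linear window and count back-edges via a compensated martingale, whereas the paper stops at the hitting time of $\gamma n$ open half-edges and matches that frontier in one shot, concentrating the number of internally matched pairs around its expectation of order $\gamma^2 n/(2\Delta)$.
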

\begin{proof}
Consider the process $(Z_t)_{t\ge 0}$, and let $T$ be the hitting time of $\gamma n$ by $(Z_t)_{t\ge 0}$, where $\gamma > 0$ is given by Lemma~\ref{lem: MR}. Then, $T$ is a.a.s.\ smaller than $e(G(n))$. Fix the set $S$ of half-edges, incident to explored vertices at time $T$, and match these with other open half-edges or between themselves uniformly at random. By a straightforward concentration argument we deduce that the number of edges with two half-edges in $S$ is concentrated around its expected value, which is at least $\dfrac{\gamma^2 n}{2 \Delta}$. Choosing $\gamma' = \dfrac{\gamma^2}{4 \Delta}$ gives the desired corollary.
\end{proof}

\begin{corollary}\label{cor 5.18}
A.a.s. the $2-$core $C'_{max}$ of $C_{max}$ contains at least $\dfrac{2 \gamma' n}{\Delta}$ vertices of degree at least 3. 
\end{corollary}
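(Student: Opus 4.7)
The plan is to derive the corollary as an almost immediate consequence of Corollary~\ref{cor 5.17} together with Observation~\ref{2core}. The key observation I would highlight is that the leaf-peeling procedure producing the 2-core preserves the excess $e(G) - |V(G)|$: deleting a vertex of degree exactly one removes one vertex and one edge. Since $C_{max}$ has no isolated vertices, iterating this step yields
\begin{equation*}
e(C'_{max}) - |C'_{max}| \;=\; e(C_{max}) - |C_{max}| \;\ge\; \gamma' n
\end{equation*}
asymptotically almost surely, by Corollary~\ref{cor 5.17}. In particular $C'_{max}$ is non-empty a.a.s.

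Next, I would exploit the handshaking lemma inside $C'_{max}$. Writing $n_i$ for the number of vertices of degree $i$ in $C'_{max}$ (so $n_i = 0$ for $i \le 1$ by definition of the 2-core), the identities $\sum_{i \ge 2} n_i = |C'_{max}|$ and $\sum_{i \ge 2} i \, n_i = 2 e(C'_{max})$ combine to give
\begin{equation*}
\sum_{i \ge 3} (i - 2)\, n_i \;=\; 2\bigl(e(C'_{max}) - |C'_{max}|\bigr) \;\ge\; 2\gamma' n
\end{equation*}
a.a.s. Since $i - 2 \le \Delta$ for every $i \le \Delta$, bounding each coefficient on the left by $\Delta$ yields
\begin{equation*}
\sum_{i \ge 3} n_i \;\ge\; \dfrac{2\gamma' n}{\Delta},
\end{equation*}
which is precisely the number of vertices of degree at least three in $C'_{max}$, as claimed.

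There is no substantive obstacle in this argument: the corollary is a bookkeeping consequence of the density bound already secured by Corollary~\ref{cor 5.17}. The only points requiring mild care are (i) that the iterative peeling described in Observation~\ref{2core} genuinely terminates at a well-defined 2-core, and (ii) that each peeling step truly deletes exactly one edge per vertex removed (which uses that the removed vertex has degree one, hence contributes a single incident edge). Both are immediate from the definitions.
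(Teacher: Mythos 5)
Your proposal is correct and follows essentially the same route as the paper: both use the invariance of the excess $e(G)-|V(G)|$ under deletion of degree-one vertices together with Corollary~\ref{cor 5.17}, and then a handshaking/maximum-degree computation in $C'_{max}$ (the paper bounds $e(C'_{max})$ directly by $|C'_{max}|+\frac{\Delta}{2}\,|\{v:\degg(v)\ge 3\}|$, which is your identity $\sum_{i\ge 3}(i-2)n_i = 2(e(C'_{max})-|C'_{max}|)$ in slightly rearranged form). No gap.
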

\begin{proof}
The $2-$core of a graph may be constructed by step-by-step deletions of the vertices of degree 0 and 1 by Observation~\ref{2core}. Thus, since $C_{max}$ contains no isolated vertices, by Corollary~\ref{cor 5.17} we have that a.a.s.
\begin{equation*}
e(C'_{max}) - |C'_{max}| = e(C_{max}) - |C_{max}| \ge \gamma' n.
\end{equation*}
Since $C'_{max}$ has minimal degree $2$ and maximal degree $\Delta$,
\begin{equation*}
    e(C'_{max}) = \dfrac{1}{2} \sum_{2\le i\le \Delta} i \cdot |\{v\in V(C'_{max})\hspace{0.2em}|\hspace{0.2em}\degg(v) = i\}|\le |C'_{max}| + \dfrac{\Delta}{2} \cdot |\{v\in V(C'_{max})\hspace{0.2em}|\hspace{0.2em}\degg(v) \ge 3\}|.
\end{equation*}
Combining the above two inequalities proves the corollary.
\end{proof}

\begin{lemma}\label{lem: uniform}
Conditionally on $V(C'_{max})$ and the degrees of all vertices of $V(C'_{max})$ in $C'_{max}$, the $2-$core is distributed uniformly at random among all connected graphs on the given degree sequence.
\end{lemma}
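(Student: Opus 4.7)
My plan is to apply the principle of deferred decisions to the underlying uniform perfect matching of half-edges in the configuration model. First, I will recast the peeling procedure that computes the $2$-core (recall Observation~\ref{2core}) as a sequential reveal of pairings in the random configuration $\sigma$: while there exists a vertex $v$ whose number of not-yet-revealed half-edges is at most one, reveal the partner of its remaining open half-edge (if any), remove $v$ from consideration, and decrement the corresponding count at the vertex of the revealed partner. By Observation~\ref{2core}, the surviving set of vertices is exactly $V(C'_{max})$, and the number of unrevealed half-edges attached to $v\in V(C'_{max})$ equals $d'_v$, its degree in $C'_{max}$.

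The crux of the argument is the deferred decisions principle: since $\sigma$ is uniform among all perfect matchings and the above peeling only uses previously revealed information to decide what to reveal next, the conditional distribution of the remaining unrevealed pairings, given all information gathered during peeling (the peeled-off forest, the set $V(C'_{max})$, and the residual degrees $(d'_v)$), is a uniform perfect matching on the remaining open half-edges. Those half-edges are grouped by vertex, with exactly $d'_v$ of them at each $v\in V(C'_{max})$, so this uniform matching is precisely a configuration model on vertex set $V(C'_{max})$ with degree sequence $(d'_v)$, and it realizes the multigraph $C'_{max}$.

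Next, I observe that the conditional distribution just described depends only on $V(C'_{max})$ and $(d'_v)_{v\in V(C'_{max})}$, and not on the specific peeled-off forest; averaging over the latter, the $2$-core $C'_{max}$ conditional on $(V(C'_{max}),(d'_v))$ is distributed as the configuration model on degree sequence $(d'_v)$. To match the statement of the lemma, it remains to note that the $2$-core of a connected graph is connected: otherwise, a shortest path in $C_{max}$ between two putative components of $C'_{max}$ would contain an internal vertex that gets peeled off, and the first such internal vertex to be peeled would, at the moment of its peeling, still have both of its path-neighbors unpeeled (by minimality of the path and by choice of the first peeled internal vertex), contradicting the peeling rule that a vertex is removed only when its current degree drops to at most one. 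Hence the conditional distribution is supported on connected (multi)graphs, yielding the claimed uniformity in the configuration-model sense.

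I do not foresee a major obstacle, the argument being standard for the configuration model once the peeling is phrased as a sequential exposure; the only subtlety worth double-checking is that the sequence of revealed pairings is properly adapted to its natural filtration, so that the deferred decisions principle applies cleanly at each step.
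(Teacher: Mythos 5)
Your deferred-decisions argument is applied to the configuration of the whole graph $G(n)$, and the peeling it encodes computes the $2$-core of $G(n)$, not the $2$-core of the giant component. The surviving vertex set is therefore not $V(C'_{max})$ in general: in the supercritical regime the $2$-core of $G(n)$ also contains the $2$-cores of small components with cycles, and it is typically disconnected. This mismatch propagates into your conclusion, which is in fact internally inconsistent: you claim that, conditionally, the object you peel down to is a plain configuration model on the residual degrees, and simultaneously that its law is supported on connected multigraphs; an unconditioned configuration model on a linear-size degree sequence gives positive probability to disconnected outcomes, so both cannot hold. The lemma asserts uniformity among \emph{connected} graphs, i.e.\ the uniform law conditioned on connectivity, and your argument never produces that conditioning — the final sentence silently upgrades ``the $2$-core of a connected graph is connected'' into the conditioned-uniform statement, which is exactly the step that needs proof.

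The missing piece is the one accounting for how $C'_{max}$ is selected: it is the component of the full $2$-core lying inside the largest component of $G(n)$. To repair your route you would have to condition additionally on the split of the full $2$-core into $V(C'_{max})$ and the rest with no crossing edges, check that the residual uniform matching restricted to the half-edges of $V(C'_{max})$ is then uniform and independent of the rest, and verify that the further events ``this restriction is connected'' and ``its component, after re-attaching the peeled trees, is the largest component of $G(n)$'' bias the internal structure of $C'_{max}$ only through the connectivity conditioning (the ``largest'' event depending only on vertex sets, the peeled forest and the structure outside $V(C'_{max})$). None of this appears in your write-up; and if instead you meant to run the peeling inside $C_{max}$ only, the underlying matching is already conditioned on connectivity of $C_{max}$, so it is not an unconditioned uniform matching and the deferred-decisions principle cannot be invoked as stated. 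For comparison, the paper's proof is a one-line symmetry argument of a different flavour: any two connected $2$-cores with the same vertex set and the same degrees admit the same number of extensions to $G(n)$, so uniformity of $G(n)$ immediately gives uniformity of $C'_{max}$ among connected graphs on its degree sequence, with no exposure process needed.
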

\begin{proof}
This follows from the fact that a $2-$core with given vertices and vertex degrees possesses the same number of extensions to $G(n)$ as any other connected $2-$core $C'_{max}$ with the same degrees. Since $G(n)$ is sampled uniformly at random, the restriction of its distribution to $C'_{max}$ conditionally on the vertices and the vertex degrees of $C'_{max}$ is therefore also a uniform distribution.
\end{proof}

Let $\overline{G}(n)$ be a random graph on a given degree sequence with degrees $2,3,\dots, \Delta$ and of order $n$ such that
\begin{equation*}
    \limsup_{n\to +\infty} \dfrac{d_2(\overline{G}(n))}{n} < 1.
\end{equation*}

In particular, one has $\Delta \ge 3$.

\begin{lemma}\label{connectivity 1}
The probability that the random graph $\overline{G}(n)$ is connected is uniformly bounded from below by a positive constant.
\end{lemma}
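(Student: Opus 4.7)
The plan is to work in the configuration model on the degree sequence of $\overline{G}(n)$ and then transfer the result to the uniform distribution on simple graphs via the contiguity recalled after Lemma~\ref{count}. Lemma~\ref{connectivity} already shows that, asymptotically almost surely, the giant component contains all but at most $\log n$ vertices; hence connectivity fails only if at least one component of size $O(\log n)$ lies outside the giant. Since every degree is at least $2$, any such component has minimum degree $\geq 2$ and therefore contains a cycle. The strategy is a Poisson approximation: I would show that the number of components which are pure cycles on degree-$2$ vertices converges to a Poisson variable with finite mean, and that more complex small components are vanishingly rare.

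Let $X_k$ denote the number of connected components that are cycles of length $k$ on degree-$2$ vertices only. A standard configuration-model count gives
\[
\mathbb{E}[X_k] \;=\; \binom{d_2(n)}{k}\cdot (k-1)!\cdot 2^{k-1}\cdot \frac{(2m-2k-1)!!}{(2m-1)!!} \;=\; (1+o(1))\,\frac{1}{2k}\!\left(\frac{d_2(n)}{m}\right)^{\!k}.
\]
The assumption $\limsup_n d_2(n)/n < 1$ together with $m \geq n$ (since the minimum degree is $\geq 2$) yields a constant $\rho^* < 1$ with $d_2(n)/m \leq \rho^*$ uniformly in $n$, so $\sum_{k \geq 1} \mathbb{E}[X_k]$ is uniformly bounded. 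A parallel count shows that a small component of order $k$ containing at least one vertex of degree $\geq 3$ has $\geq k+1$ edges, picking up an extra factor of $O(1/n)$ in the matching probability; thus the total expected count of such ``non-cycle'' small components is $O(1/n)$.

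Next, by the method of factorial moments (cf.~\cite{JLR}), the vector $(X_k)_{k \geq 1}$ converges jointly in distribution to independent Poisson variables with the limiting means above (passing to a subsequence if necessary). Together with a truncation at $k = K$, using that $\sum_{k > K}\mathbb{E}[X_k] \leq (\rho^*)^K/(2(1-\rho^*))$ is arbitrarily small for $K$ large, this gives
\[
\liminf_{n \to \infty}\,\mathbb{P}\bigl(\text{no component outside the giant}\bigr) \;\geq\; \prod_{k \geq 1} e^{-(\rho^*)^{k}/(2k)} \;=\; \sqrt{1-\rho^*} \;>\; 0.
\]
Combined with Lemma~\ref{connectivity}, the configuration model is therefore connected with probability bounded below. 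Lifting to $\overline{G}(n)$ is immediate: by the remark preceding Corollary~\ref{cor 2.3}, the probability that the configuration produces a simple graph is bounded below by a positive constant depending only on $\Delta$, and conditionally on simplicity the distribution is uniform.

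The main obstacle is the \emph{uniform} control of the Poisson approximation over all cycle lengths $k$ up to $\log n$: the joint factorial-moment computation must establish that disjoint small cycle components are asymptotically independent in the configuration model, which amounts to verifying that the joint probabilities of several pairwise-disjoint isolated cycles factorise. This is standard but notationally heavy, and the truncation argument above is what allows one to reduce the verification to finitely many fixed values of $k$.
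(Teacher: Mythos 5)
Your configuration-model computation is sound and it takes a genuinely different route from the paper: the paper's proof is a two-line subsequence/contradiction argument that delegates all the work to the connectivity result of \cite{vHF} behind Lemma~\ref{connectivity}, whereas you essentially re-derive the content of that result by hand (isolated components of a minimum-degree-$2$ graph are cycles of degree-$2$ vertices or have more edges than vertices; the counts $X_k$ of isolated $k$-cycles have uniformly summable means since $d_2(n)/m\le\rho^*<1$; joint factorial moments plus truncation give a Poisson limit and hence a positive $\liminf$ for the probability of no small component). That part, modulo the routine uniformity checks you flag, is fine and is more self-contained than the paper's argument.

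The genuine gap is the last step, the transfer from the configuration model to the uniform simple graph $\overline{G}(n)$. Knowing $\mathbb{P}_{CM}(\text{connected})\ge c$ and $\mathbb{P}_{CM}(\text{simple})\ge c'$, together with the fact that conditioning on simplicity yields the uniform distribution, does \emph{not} bound $\mathbb{P}_{CM}(\text{connected}\mid\text{simple})$ from below: the only a priori inequality is $\mathbb{P}_{CM}(\text{connected}\cap\text{simple})\ge \mathbb{P}_{CM}(\text{simple})-\mathbb{P}_{CM}(\text{not connected})$, and here $\mathbb{P}_{CM}(\text{not connected})$ is itself of constant order, so the right-hand side need not be positive; nothing you have proved rules out that non-connectivity and non-simplicity exhaust the probability space in the limit. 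Contiguity only transfers events of vanishing probability, which is not what you need. The fix is to fold simplicity into the same Poisson machinery you already invoke: run the joint factorial-moment computation for the vector consisting of the number of loops, the number of double edges, and $X_3,\dots,X_K$ (note that for $k=1,2$ an ``isolated cycle component'' is itself non-simple, so after conditioning only $k\ge3$ matters). These converge to independent Poisson variables, so $\mathbb{P}_{CM}(\text{simple, no isolated cycle of length}\le K)$ converges to a positive product, and combining with your truncation and the rarity of denser small components gives $\mathbb{P}(\overline{G}(n)\text{ connected})=\mathbb{P}_{CM}(\text{connected}\mid\text{simple})$ bounded below. Alternatively, one can cite the joint short-cycle distribution of Wormald~\cite{Wor81b} recalled before Corollary~\ref{cor 2.3}, or simply quote \cite{vHF} as the paper does; but as written, your lifting step is a conditional-probability non sequitur and needs this extra argument.
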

\begin{proof}
We argue by contradiction. Extracting a subsequence of $(\overline{G}(n))_{n\ge 1}$ if necessary, one may suppose that, first, the probability of $\overline{G}(n)$ being connected tends to 0, and second, the proportion of vertices of degree $i\in [\Delta]$, $i\ge 2$ tends to $\overline{p}_i$ with $\overline{p}_2 < 1$. Under these conditions Lemma~\ref{connectivity} implies that the probability that no vertex remains outside of the largest connected component in $\overline{G}_n$ is a positive constant, which leads to a contradiction. The lemma is proved.
\end{proof}

\begin{corollary}\label{cor 5.20}
If an event $A_n$ happens a.a.s.\ for $\overline{G}(n)$, then $A_n$ happens a.a.s.\ for $\overline{G}(n)$ conditionally on $\overline{G}(n)$ being a connected graph.\qed
\end{corollary}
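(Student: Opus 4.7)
The plan is to use the definition of conditional probability together with the uniform lower bound on the probability of connectivity supplied by Lemma~\ref{connectivity 1}. Let $B_n$ denote the event that $\overline{G}(n)$ is connected. By Lemma~\ref{connectivity 1}, there is a constant $c > 0$ (independent of $n$) such that $\mathbb{P}(B_n) \ge c$ for every sufficiently large $n$. By hypothesis, $\mathbb{P}(A_n) \to 1$, and hence $\mathbb{P}(A_n^c) \to 0$.

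Using the identity $\mathbb{P}(A_n^c \mid B_n) = \mathbb{P}(A_n^c \cap B_n) / \mathbb{P}(B_n)$ together with the trivial inclusion $A_n^c \cap B_n \subseteq A_n^c$, we get
\begin{equation*}
\mathbb{P}(A_n^c \mid B_n) \le \frac{\mathbb{P}(A_n^c)}{\mathbb{P}(B_n)} \le \frac{\mathbb{P}(A_n^c)}{c} \longrightarrow 0
\end{equation*}
as $n \to +\infty$. Consequently $\mathbb{P}(A_n \mid B_n) \to 1$, which is exactly the claim.

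There is essentially no obstacle here; the only point that needs verifying is that the constant $c$ in Lemma~\ref{connectivity 1} is indeed uniform in $n$, but that is precisely the content of that lemma. The corollary is therefore nothing more than a routine conditioning argument on an event of non-vanishing probability.
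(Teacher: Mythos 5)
Your argument is correct and is exactly the routine conditioning bound $\mathbb{P}(A_n^c \mid B_n) \le \mathbb{P}(A_n^c)/\mathbb{P}(B_n)$ that the paper has in mind: the corollary is stated with no written proof precisely because it follows immediately from the uniform lower bound on the connectivity probability in Lemma~\ref{connectivity 1}. Nothing further is needed.
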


Now, let $C'_{cm, max}$ be the random graph, constructed on the degree sequence of $C'_{max}$ (which is random in itself, so we condition on this degree sequence) according to the configuration model. By abuse of notation we view $C_{max}$ as an extension of $C'_{max}$ as well as of $C'_{cm, max}$ below. By Lemma~\ref{lem: uniform} and Corollary~\ref{cor 5.20} we know that if some event happens a.a.s.\ for $C'_{cm, max}$, then it also happens a.a.s.\ for $C'_{max}$.\par

Fix $\varepsilon' > 0$. We start an exploration process of $C'_{cm, max}$ as follows. Let $S_0$ be an empty set. We pick a random initial vertex $v_0$ of $C'_{cm, max}$ with probability proportional to its degree in $C'_{cm, max}$, and construct $S_1 = \{v_0\}$. At time $t$, choose an arbitrary open half-edge $e_{1/2}$ sticking out of a vertex in $S_t$ and explore the edge that contains it. If it leads to a vertex $v_t$ that has never been seen before, let $S_{t+1} = S_t\cup \{v_t\}$. If not, let $S_{t+1} = S_t$ and continue. Finally, stop the process at time $t'$ when $\varepsilon' n$ vertices of $C'_{cm, max}$ have been explored, and define $S = S_{t'}$. Moreover, let $E$ be the set of explored edges up to time $t'$ (that is, the edges obtained from paired half-edges inside $S$).\par

Then, fix an even positive integer $\ell$. Now, order in a row the explored vertices with an open half-edge and, for every vertex in the row, explore step by step all vertices of $C'_{cm, max}$ at distance at most $\ell/2$ from the current one. Moreover, at any step when one finds an $S-$chain, add all vertices of this $S-$chain to $S$ and continue the exploration.\par

Let $A'_{max}$ be the graph, induced by the set $S$ together with the vertices in its $\frac{\ell}{2}-$neighborhood in $C'_{max}$ (which is additionally explored after step $t'$) that participate in $S$-chains of length at most $\ell$.
Let $A_{max}$ be the graph, induced in $C_{max}$ by the set of vertices of $A'_{max}$ and the ones which connect to $A'_{max}$ in $C_{max}\setminus C'_{cm, max}$ via paths with edges in $E(C_{max})\setminus E(C'_{cm, max})$. Otherwise said, $A_{max}$ is constructed from $A'_{max}$ by attaching trees to the vertices of $A'_{max}$ which were deleted in the construction of the $2-$core of $C_{max}$. Notice that vertices inside $C'_{cm, max}$ participate in the $2-$core, and if they are not in $A'_{max}$, they will not be added to $A_{max}$, see Figure~\ref{fig2core}.

\begin{figure}
\centering
\begin{tikzpicture}[scale = 0.7, line cap=round,line join=round,x=1cm,y=1cm]
\clip(-12.8,-6.04) rectangle (12.8,6.04);
\draw [line width=0.5pt] (-0.52-6,0)-- (0.7-6,-0.88);
\draw [line width=0.5pt] (0.7-6,-0.88)-- (2.56-6,-0.66);
\draw [line width=0.5pt] (2.56-6,-0.66)-- (2.26-6,0.9);
\draw [line width=0.5pt] (2.26-6,0.9)-- (4.2-6,0.26);
\draw [line width=0.5pt] (4.2-6,0.26)-- (2.56-6,-0.66);
\draw [line width=0.5pt] (2.26-6,0.9)-- (1.04-6,1.8);
\draw [line width=0.5pt] (1.04-6,1.8)-- (0.7-6,-0.88);
\draw [line width=0.5pt] (-0.52-6,0)-- (-0.5-6,-2.02);
\draw [line width=0.5pt] (-0.5-6,-2.02)-- (0.7-6,-0.88);
\draw [line width=0.5pt] (-0.52-6,0)-- (-1.24-6,1.26);
\draw [line width=0.5pt] (-1.24-6,1.26)-- (-3.18-6,-0.94);
\draw [line width=0.5pt] (-3.18-6,-0.94)-- (-0.5-6,-2.02);
\draw [line width=0.5pt] (-0.5-6,-2.02)-- (1.12-6,-2.82);
\draw [line width=0.5pt] (1.12-6,-2.82)-- (2.56-6,-0.66);
\draw [line width=0.5pt] (2.56-6,-0.66)-- (3.56-6,-2.14);
\draw [line width=0.5pt] (3.56-6,-2.14)-- (4.2-6,0.26);
\draw [line width=0.5pt] (3.56-6,-2.14)-- (1.12-6,-2.82);
\draw [line width=0.5pt] (-1.24-6,1.26)-- (-0.44-6,2.94);
\draw [line width=0.5pt] (-0.44-6,2.94)-- (1.04-6,1.8);
\draw [rotate around={54.77081338981406:(-1.81-6,1)},line width=0.5pt] (-1.81-6,1) ellipse (4.31271415952206cm and 3.5998615836920846cm);

\draw [line width=0.5pt] (-0.52+6,0)-- (0.7+6,-0.88);
\draw [line width=0.5pt] (0.7+6,-0.88)-- (2.56+6,-0.66);
\draw [line width=0.5pt] (2.56+6,-0.66)-- (2.26+6,0.9);
\draw [line width=0.5pt] (2.26+6,0.9)-- (4.2+6,0.26);
\draw [line width=0.5pt] (4.2+6,0.26)-- (2.56+6,-0.66);
\draw [line width=0.5pt] (2.26+6,0.9)-- (1.04+6,1.8);
\draw [line width=0.5pt] (1.04+6,1.8)-- (0.7+6,-0.88);
\draw [line width=0.5pt] (-0.52+6,0)-- (-0.5+6,-2.02);
\draw [line width=0.5pt] (-0.5+6,-2.02)-- (0.7+6,-0.88);
\draw [line width=0.5pt] (-0.52+6,0)-- (-1.24+6,1.26);
\draw [line width=0.5pt] (-1.24+6,1.26)-- (-3.18+6,-0.94);
\draw [line width=0.5pt] (-3.18+6,-0.94)-- (-0.5+6,-2.02);
\draw [line width=0.5pt] (-0.5+6,-2.02)-- (1.12+6,-2.82);
\draw [line width=0.5pt] (1.12+6,-2.82)-- (2.56+6,-0.66);
\draw [line width=0.5pt] (2.56+6,-0.66)-- (3.56+6,-2.14);
\draw [line width=0.5pt] (3.56+6,-2.14)-- (4.2+6,0.26);
\draw [line width=0.5pt] (3.56+6,-2.14)-- (1.12+6,-2.82);
\draw [line width=0.5pt] (-1.24+6,1.26)-- (-0.44+6,2.94);
\draw [line width=0.5pt] (-0.44+6,2.94)-- (1.04+6,1.8);
\draw [rotate around={54.77081338981406:(-1.81+6,1)},line width=0.5pt] (-1.81+6,1) ellipse (4.31271415952206cm and 3.5998615836920846cm);
\draw [line width=0.5pt] (-1.24+6,1.26)-- (-2.92+6,1.74);
\draw [line width=0.5pt] (-2.92+6,1.74)-- (-3.28+6,2.72);
\draw [line width=0.5pt] (-2.92+6,1.74)-- (-4.12+6,1.58);
\draw [line width=0.5pt] (-4.12+6,1.58)-- (-4.26+6,2.64);
\draw [line width=0.5pt] (-4.12+6,1.58)-- (-4.96+6,1.64);
\draw [line width=0.5pt] (-3.18+6,-0.94)-- (-4.24+6,-0.9);
\draw [line width=0.5pt] (-4.24+6,-0.9)-- (-5.16+6,-0.32);
\draw [line width=0.5pt] (-0.44+6,2.94)-- (-1.74+6,3.18);
\draw [line width=0.5pt] (-0.44+6,2.94)-- (-1+6,3.82);
\draw [line width=0.5pt] (-1+6,3.82)-- (-1.84+6,4.3);
\draw [line width=0.5pt] (-1+6,3.82)-- (-0.84+6,4.62);
\draw [line width=0.5pt] (-0.44+6,2.94)-- (-0.04+6,3.82);
\draw [line width=0.5pt] (-1.74+6,3.18)-- (-2.6+6,3.7);
\begin{scriptsize}
\draw [fill = black](-0.52-6,0) circle (2.5pt);
\draw [fill = black](0.7-6,-0.88) circle (2.5pt);
\draw [fill = black](2.56-6,-0.66) circle (2.5pt);
\draw [fill = black](2.26-6,0.9) circle (2.5pt);
\draw [fill = black](4.2-6,0.26) circle (2.5pt);
\draw [fill = black](1.04-6,1.8) circle (2.5pt);
\draw [fill = black](-0.5-6,-2.02) circle (2.5pt);
\draw [fill = black](-1.24-6,1.26) circle (2.5pt);
\draw [fill = black](-3.18-6,-0.94) circle (2.5pt);
\draw [fill = black](1.12-6,-2.82) circle (2.5pt);
\draw [fill = black](3.56-6,-2.14) circle (2.5pt);
\draw [fill = black](-0.44-6,2.94) circle (2.5pt);

\draw [fill = black](-0.52+6,0) circle (2.5pt);
\draw [fill = black](0.7+6,-0.88) circle (2.5pt);
\draw [fill = black](2.56+6,-0.66) circle (2.5pt);
\draw [fill = black](2.26+6,0.9) circle (2.5pt);
\draw [fill = black](4.2+6,0.26) circle (2.5pt);
\draw [fill = black](1.04+6,1.8) circle (2.5pt);
\draw [fill = black](-0.5+6,-2.02) circle (2.5pt);
\draw [fill = black](-1.24+6,1.26) circle (2.5pt);
\draw [fill = black](-3.18+6,-0.94) circle (2.5pt);
\draw [fill = black](1.12+6,-2.82) circle (2.5pt);
\draw [fill = black](3.56+6,-2.14) circle (2.5pt);
\draw [fill = black](-0.44+6,2.94) circle (2.5pt);
\draw [fill = black](-2.92+6,1.74) circle (2.5pt);
\draw [fill = black](-3.28+6,2.72) circle (2.5pt);
\draw [fill = black](-4.12+6,1.58) circle (2.5pt);
\draw [fill = black](-4.26+6,2.64) circle (2.5pt);
\draw [fill = black](-4.96+6,1.64) circle (2.5pt);
\draw [fill = black](-4.24+6,-0.9) circle (2.5pt);
\draw [fill = black](-5.16+6,-0.32) circle (2.5pt);
\draw [fill = black](-1.74+6,3.18) circle (2.5pt);
\draw [fill = black](-1+6,3.82) circle (2.5pt);
\draw [fill = black](-1.84+6,4.3) circle (2.5pt);
\draw [fill = black](-0.84+6,4.62) circle (2.5pt);
\draw [fill = black](-0.04+6,3.82) circle (2.5pt);
\draw [fill = black](-2.6+6,3.7) circle (2.5pt);

\draw[color=black] (-3,-3.25) node {\Large $C'_{max}$};
\draw[color=black] (9,-3.25) node {\Large $C_{max}$};
\draw[color=black] (3.45,-2.55) node {\large $A_{max}$};
\draw[color=black] (-8.5,-2.55) node {\large $A'_{max}$};

\end{scriptsize}
\end{tikzpicture}
\caption{The giant component $C_{max}$, its $2-$core $C'_{max}$ and the ``dense'' sets $A_{max}$ and $A'_{max}$.}
\label{fig2core}
\end{figure}
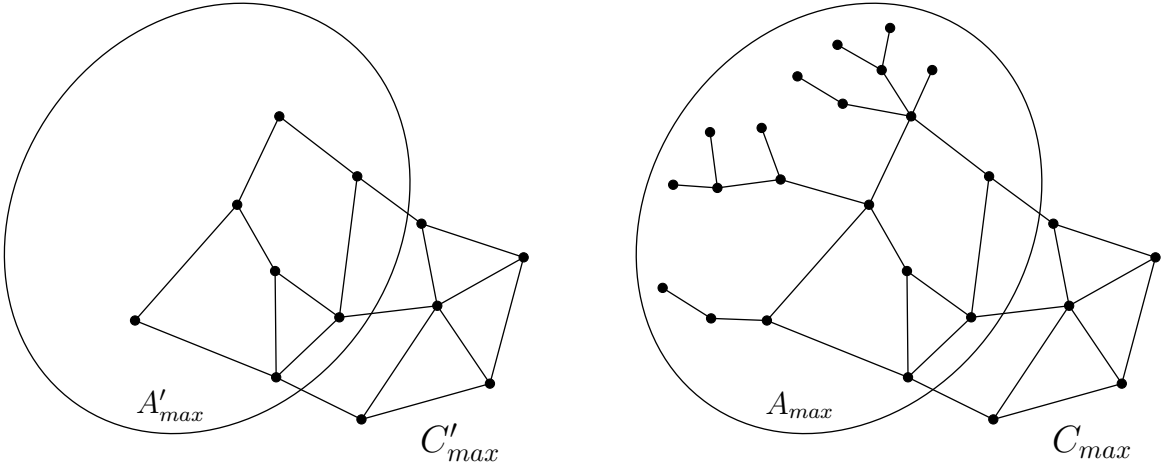

\begin{observation}\label{ob 5.22}
The graph $A'_{max}$ contains at most
\begin{equation*}
    \dfrac{(\Delta^{\frac{\ell}{2} + 1} - 1)\varepsilon' n}{\Delta - 1}
\end{equation*}
vertices.
\end{observation}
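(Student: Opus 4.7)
The plan is a direct counting bound using the maximum degree constraint. By construction, $A'_{max}$ is built from $S_{t'}$, which contains exactly $\varepsilon' n$ vertices of $C'_{cm,max}$. Every additional vertex of $A'_{max}$ arises because it lies on some $S$-chain, i.e., a path through unexplored vertices between two vertices already in the current set, obtained during the exploration of the $\ell/2$-neighborhoods of vertices in $S_{t'}$. In particular, every vertex added in this phase is at graph distance at most $\ell/2$ from some vertex of $S_{t'}$ in $C'_{cm,max}$.

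The key observation is that, since the maximum degree of $C'_{cm,max}$ is bounded by $\Delta$, the closed ball of radius $\ell/2$ around any single vertex $v$ contains at most
\begin{equation*}
1 + \Delta + \Delta^2 + \dots + \Delta^{\ell/2} \;=\; \dfrac{\Delta^{\ell/2+1}-1}{\Delta-1}
\end{equation*}
vertices. Therefore, the union of these balls over the $\varepsilon' n$ vertices of $S_{t'}$ contains at most $\dfrac{(\Delta^{\ell/2+1}-1)\varepsilon' n}{\Delta-1}$ vertices in total. Since $V(A'_{max})$ is contained in this union, the bound follows.

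There is no real obstacle here; the only thing to be slightly careful about is to verify that every vertex newly added to $S$ during the second exploration stage indeed sits on a path of length at most $\ell$ between two vertices of the current explored set, so that each endpoint of this path lies in $S_{t'}$ (or was added from an even earlier iteration, and hence is itself within distance $\ell/2$ of some vertex of $S_{t'}$). This is immediate from the description of the process, since we only explore up to distance $\ell/2$ from each vertex of $S_{t'}$ and only add vertices lying on $S$-chains discovered within this range.
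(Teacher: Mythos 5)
Your argument is correct and is exactly the intended one: the paper's one-line proof ("a simple consequence of the fact that every vertex is of degree at most $\Delta$") is precisely your observation that $A'_{max}$ lies in the union of the radius-$\ell/2$ balls around the $\varepsilon' n$ vertices of $S_{t'}$, each of size at most $\frac{\Delta^{\ell/2+1}-1}{\Delta-1}$. Your closing remark that every vertex added on an $S$-chain is discovered within distance $\ell/2$ of some vertex of $S_{t'}$ is the right detail to check, and it holds by the definition of the exploration.
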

\begin{proof}
This is a simple consequence of the fact that every vertex is of degree at most $\Delta$.
\end{proof}

\begin{lemma}\label{linear nb trees}
There is a constant $C = C(\boldsymbol{p}, \Delta) > 0$, such that the number of vertices in $C'_{cm, max}$ incident to at least one edge in $E(C_{max})\setminus E(C'_{max})$ is at least $Cn$.
\end{lemma}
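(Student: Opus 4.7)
If $p_1 = 0$ then every vertex of $C_{max}$ has degree at least $2$, so $C_{max} = C'_{max}$ and the set to be counted is empty; I would therefore assume $p_1 > 0$ throughout. Write $T := V(C_{max}) \setminus V(C'_{max})$ and let $R$ denote the number of vertices of $C'_{cm,max}$ to be bounded below. By Theorem~\ref{MR criterion}, at most $O(\log n)$ vertices of $G(n)$ lie outside $C_{max}$, so all but $O(\log n)$ leaves of $G(n)$ belong to $T$; in particular $|T| \ge p_1 n - o(n)$. The subgraph $C_{max}[T]$ is moreover a forest (an induced cycle in $T$ would have minimum degree at least $2$ and would not be peeled), and since $C_{max}$ is connected while $C'_{max} \ne \emptyset$ by Corollary~\ref{cor 5.18}, every connected component of $C_{max}[T]$ is attached to $C'_{max}$ by at least one edge. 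Writing $c(T)$ for the number of components of $C_{max}[T]$, this yields $e(T, C'_{max}) \ge c(T)$, and because each vertex of $C'_{max}$ has degree at most $\Delta$ we get $R \ge e(T, C'_{max})/\Delta \ge c(T)/\Delta$. Hence it will suffice to establish $c(T) = \Omega(n)$.

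For the lower bound on $c(T)$, my approach is the standard branching-process description of the peeling process in the configuration model (see~\cite{MR}). Let $q^* \in (0,1)$ be the unique non-trivial solution of
\begin{equation*}
q = \sum_{k \ge 1} \frac{k p_k}{M}\, q^{k-1},
\end{equation*}
whose existence follows from $Q > 0$ together with $p_1 > 0$. A hanging tree rooted at a half-edge pointing "tree-wards" is asymptotically distributed as a Galton--Watson tree with offspring distribution $\tilde q_j = (j+1) p_{j+1} (q^*)^j / (Mq^*)$ and mean $\mu = \sum_{k} k(k-1) p_k (q^*)^{k-1}/(Mq^*) \le 1$; generically $\mu < 1$ because $g(q) := \sum_k k p_k q^{k-1} - Mq$ changes sign simply at $q^*$. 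The expected size of a hanging tree is therefore the finite constant $\overline s = 1/(1-\mu)$. A first-moment computation in the configuration model then yields $\mathbb{E}[|T|] \le \overline s \cdot \mathbb{E}[c(T)]$ and, combined with $|T| \ge p_1 n - o(n)$ and a concentration argument, gives $c(T) \ge \alpha n$ a.a.s.\ for some $\alpha = \alpha(\boldsymbol{p}, \Delta) > 0$. The lemma follows with $C := \alpha/\Delta$.

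The main obstacle will be to make the concentration step rigorous: the branching-process heuristic immediately delivers the required constants, but transferring the local independence of "tree-direction" decisions into a quantitative variance bound within the configuration model requires either a second-moment computation in the spirit of Subsection~\ref{subsec 5.1} or an Azuma-type bound on a suitable exploration martingale. If this becomes too cumbersome, a shortcut is to invoke directly the Molloy--Reed asymptotic description of the $2$-core~\cite{MR}, which identifies $|V(C'_{max})|$ and $|T|$ in terms of $\boldsymbol{p}$ and $q^*$ and from which $c(T) = \Theta(n)$ follows at once thanks to the bounded-average-tree-size estimate above.
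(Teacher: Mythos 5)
There is a genuine gap, and it sits at the exact point the lemma is really about. Your chain $R \ge e(T, C'_{max})/\Delta \ge c(T)/\Delta$ (each pendant component of $C_{max}[T]$ meeting the $2$-core in at least one edge, each $2$-core vertex absorbing at most $\Delta$ such edges) is fine, so everything reduces to showing $c(T)=\Omega(n)$, equivalently that linearly many vertices of the giant component are peeled away. But your lower bound $|T| \ge p_1 n - o(n)$ rests on the claim that "by Theorem~\ref{MR criterion}, at most $O(\log n)$ vertices of $G(n)$ lie outside $C_{max}$", and that is a misreading of the theorem: it only says the \emph{second largest} component has size $O(\log n)$, not that the union of all non-giant components is $O(\log n)$. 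In the supercritical regime with $p_1>0$ there are a.a.s. $\Theta(n)$ vertices outside the giant (for instance $\Theta(n)$ isolated edges formed by two degree-one vertices paired together); this is precisely why the quantity $\mu>0$ appears in point~\ref{thm 3.3} of Theorem~\ref{LBgeneral}. So a constant fraction of the leaves need not belong to $T$, and the step that was supposed to give $|T|=\Omega(n)$ fails. The subsequent branching-process bookkeeping (hanging trees of bounded expected size, hence $c(T)\gtrsim |T|$) cannot rescue this, because it takes the linear lower bound on $|T|$ as input; and the "shortcut" of invoking a Molloy--Reed-type description of the $2$-core is exactly the unproved statement you would need, not something established in the paper or made precise in your proposal.

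What is missing is an argument that linearly many $2$-core vertices of the giant are actually adjacent to degree-one vertices. The paper gets this directly from the exploration process: run the exploration to the a.a.s. well-defined hitting time at which $Z_t=\gamma n$ open half-edges stick out of the explored (giant) component (Lemma~\ref{lem: MR}), note that these half-edges sit on at least $\gamma n/\Delta$ distinct boundary vertices, that a positive proportion of these already belong to the $2$-core, and that when the remaining half-edges are matched uniformly a positive proportion of those vertices pair at least one open half-edge with a degree-one vertex (there is still a linear reservoir of unmatched leaf half-edges). Each such pairing is an edge of $E(C_{max})\setminus E(C'_{max})$ incident to a $2$-core vertex, which yields the constant $C(\boldsymbol{p},\Delta)$. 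To repair your write-up you would need to replace the false "$O(\log n)$ outside the giant" step by an argument of this kind (showing a positive fraction of leaves, or of peeled vertices, attach to the giant), after which your reduction via $c(T)$ would indeed finish the proof; the concentration issue you flag is real but secondary to this missing ingredient.
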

\begin{proof}
Recall the exploration process $(Z_t)_{t\ge 0}$ defined just before Lemma~\ref{lem: MR} and let $T$ be the a.a.s.\ well-defined hitting time of $\gamma n$ by $(Z_t)_{t\ge 0}$, where $\gamma > 0$ is given after Lemma~\ref{lem: MR}. At this stage, match all $\gamma n$ open half-edges sticking out of the connected component we are exploring. We prove that there is a constant $C > 0$ depending only on $\boldsymbol{p}$ (recall that $\gamma$ depends only on $\boldsymbol{p}$ and $\Delta$ as well) such that a.a.s.\ there are at least $Cn$ vertices among the ones incident to the open half-edges at time $T$, which are all connected to vertices of degree 1: indeed, there are a.a.s.\ at least $\gamma n/\Delta$ vertices incident to open half-edges at time $T$, and moreover a.a.s.\ a positive proportion of these vertices will match at least one of their remaining open half-edges to a vertex of degree 1, which proves the lemma.
\end{proof}

\begin{lemma}\label{cmp A and A'}
There exists $\alpha = \alpha(\boldsymbol{p}, \Delta) > 1$ such that for every small enough $\varepsilon' > 0$ a.a.s.\ $|A_{max}| \le \alpha |A'_{max}|$.
\end{lemma}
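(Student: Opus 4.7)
The plan is to decompose
\begin{equation*}
|A_{max}| \;=\; |A'_{max}| \;+\; \sum_{v\in A'_{max}} T(v),
\end{equation*}
where $T(v)$ denotes the number of vertices of $C_{max}\setminus C'_{cm,max}$ attached to $v$ via paths of edges in $E(C_{max})\setminus E(C'_{cm,max})$. In other words, $T(v)$ is the total size of the ``hanging trees'' rooted at $v$. My goal is to show that $\sum_{v\in A'_{max}}T(v)\le (\alpha-1)|A'_{max}|$ asymptotically almost surely for some $\alpha=\alpha(\boldsymbol{p},\Delta)$.

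First I would show that there is a constant $c=c(\boldsymbol{p},\Delta)$ such that $\mathbb{E}[T(v)]\le c$ uniformly in the 2-core vertex $v$. This is a standard consequence of the duality for the supercritical configuration model (compare with the ideas behind Theorem~\ref{MR criterion} and Lemma~\ref{lem: MR}): after stripping the 2-core $C'_{max}$ of $C_{max}$, each tree rooted at a 2-core vertex can be explored by a branching process whose offspring distribution has mean strictly less than $1$; indeed, the assumption $Q>0$ makes the sum-of-squares branching process \emph{super}critical, and hence the \emph{dual} process controlling the mantle of trees is subcritical. Since the offspring distribution is bounded (the degrees do not exceed $\Delta$), the expected total progeny is finite and depends only on $\boldsymbol{p}$ and $\Delta$. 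Because $v$ has at most $\Delta$ ``tree half-edges'' attached, this yields a bound $\mathbb{E}[T(v)]\le c$.

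Next, I would use this expectation bound to control $\sum_{v\in A'_{max}}T(v)$. The key decoupling is that $A'_{max}$ is constructed by an exploration that inspects only edges of $C'_{cm,max}$; hence, by Lemma~\ref{lem: uniform} and Corollary~\ref{cor 5.20}, one may first sample the 2-core $C'_{cm,max}$ and the set $A'_{max}$, and only afterwards reveal the mantle by matching the remaining open half-edges. Conditionally on $A'_{max}$, the trees $T(v)$ are then near-independent configuration-model trees attached at the (at most $\Delta|A'_{max}|$) boundary half-edges emanating from $A'_{max}$. Linearity of expectation gives the bound $\mathbb{E}\bigl[\sum_{v\in A'_{max}}T(v)\,\bigm|\,A'_{max}\bigr]\le c|A'_{max}|$, and a second-moment computation exploiting the exponential tail of each $T(v)$ (from subcriticality) gives conditional variance $O(|A'_{max}|)$. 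Since $|A'_{max}|\ge \varepsilon' n\to\infty$, Chebyshev's inequality then yields $\sum_{v\in A'_{max}}T(v)\le (c+1)|A'_{max}|$ asymptotically almost surely; taking $\alpha:=c+2$ finishes the proof.

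The hardest step will be justifying the near-independence of the $T(v)$ rigorously: the trees share the pool of unused half-edges, so they are not exactly independent, and the conditioning coming from the (random) $A'_{max}$ must be handled carefully. I would formalise this either by running a breadth-first exploration of the mantle from $A'_{max}$ and comparing it step-by-step with a genuine branching process via coupling (exploiting the fact that, as long as only $O(|A'_{max}|)$ vertices have been exposed so far and $|A'_{max}|=o(n)$, the local neighbourhoods in the configuration model are a.a.s.\ trees), or by tracking the exploration through the differential equation method of Theorem~\ref{Thm:DEMethod}. In either case the subcriticality established in the first step of the plan ensures that the exploration terminates in expected time $O(|A'_{max}|)$ with exponential concentration, which is the bound I need.
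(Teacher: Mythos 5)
Your decomposition $|A_{max}| = |A'_{max}| + \sum_{v\in A'_{max}}T(v)$ and the decoupling step (sample the $2$-core and $A'_{max}$ first, then attach the mantle; Lemma~\ref{lem: uniform} and Corollary~\ref{cor 5.20}) are sound, and the overall route — per-vertex expected hanging-tree size $O(1)$ plus concentration of the aggregate — can be made to prove the lemma, but it is genuinely different from the paper's argument and leans on a heavier input than necessary. The paper never controls individual hanging trees: it shows (Sublemma~\ref{sublem}) that the vertices of $A'_{max}$ carry only an $O(\varepsilon')$-fraction of all half-edges of the $2$-core pointing into the mantle, observes that conditionally the mantle is a uniformly attached forest built by sequentially matching open half-edges, and then runs a two-urn exploration analysed via the differential equation method (Theorem~\ref{Thm:DEMethod}) to conclude that the trees rooted in $A'_{max}$ capture at most their ``fair share'' — an $O(\varepsilon')$-fraction — of the $O(n)$ mantle vertices; since $|A'_{max}|\ge \varepsilon' n$, this yields a constant $\alpha$. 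That aggregate fair-share argument needs no subcriticality, no per-tree tail bounds and no covariance estimates. Your plan, by contrast, requires (a) that the mantle exploration is strictly subcritical with exponential tails, uniformly, \emph{conditionally} on the $2$-core degree sequence, on $C_{max}$ and on $A'_{max}$ — the duality heuristic ``$Q>0$ implies the dual is subcritical'' concerns the unconditioned local limit, and transferring it to the conditional uniform-forest model is precisely the nontrivial work your plan defers — and (b) near-independence of the $T(v)$ for the variance bound, which you yourself flag as the hard step. Two smaller corrections: $|A'_{max}|$ is a small \emph{linear} fraction of $n$ (between $\varepsilon' n$ and $C_{\Delta}\varepsilon' n$), not $o(n)$, so any coupling must tolerate linear depletion of the half-edge pool; and the bound $\mathbb{E}[T(v)]\le c$ follows more cheaply from exchangeability of the root half-edges in the uniform forest together with Lemma~\ref{linear nb trees} (total mantle size divided by a linear number of root half-edges), with no branching-process duality at all — which is essentially the observation the paper exploits in aggregate form.
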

\begin{proof}
Recall that $A_{max}$ is constructed from $A'_{max}$ by adding vertices of $C_{max}\setminus C'_{max}$. We work in the configuration model $C'_{cm, max}$ associated to the degree sequence of the $2-$core. Conditionally on the vertices and the half-edges contained in $C'_{cm, max}$, one may define the \textit{type} of a vertex $v$ in $C'_{cm, max}$ as $(\deg_{C_{max}}(v) - \deg_{C'_{cm, max}}(v), \deg_{C'_{cm, max}}(v))$. Denote also by $D'_{cm, max}$ the number of vertices $v$ in $C'_{cm,max}$ with $\deg_{C_{max}}(v) - \deg_{C'_{cm, max}}(v)\ge 1$.\par

Let $C_{\Delta} = \dfrac{\Delta^{\frac{\ell}{2}+1}-1}{\Delta - 1}$ and choose $\varepsilon' < \dfrac{\gamma'}{\Delta C_{\Delta}}$. This ensures that $|A'_{max}| < |C'_{max}|/2$ by Corollary~\ref{cor 5.18} and Observation~\ref{ob 5.22}.

\begin{sublemma}\label{sublem}
Fix $d''\in [\Delta]$. Among the vertices of types $\{(s', s''): s''\le d'', s'\le \Delta - s''\}$ at most a $\frac{C_{\Delta}\Delta\varepsilon' d''}{\gamma'}-$proportion participates in $A'_{max}$ a.a.s.
\end{sublemma}
\begin{proof}[Proof of Sublemma~\ref{sublem}]
$A'_{max}$ is constructed by consecutive exploration of open half-edges and at any step the probability that a vertex of type $(s', s'')$ with $s''\le d''$ is added to $A'_{max}$ is at most $\frac{d''}{2(|C'_{cm, max}| - |A'_{max}|)}$ (every vertex in $C'_{cm, max}$ is of degree at least 2 in $C'_{cm, max}$), which is bounded from above by $\frac{d''}{|C'_{cm, max}|}$. We conclude that the expected proportion of the vertices of type $(d', d'')$ is bounded from above by $|A'_{max}|\frac{d''}{|C'_{cm, max}|}$. By a Chernoff bound\footnote{Note that the events of adding a vertex of type $(d', d'')$ over several steps are dependent, but they are dominated by a family of independent Bernoulli random variables by the above analysis.} (Lemma~\ref{chernoff:bd}) over the $|A'_{max}|$ steps when a new vertex is added to the explored component we conclude that the proportion of the vertices of type $(d', d'')$ is a.a.s.\ bounded from above by $2|A'_{max}|\frac{d''}{|C'_{cm, max}|}$. We conclude since $|A'_{max}|\le C_{\Delta} \varepsilon' n$ by Observation~\ref{ob 5.22} and $|C'_{max}|\ge \frac{2\gamma' n}{\Delta}$ by Corollary~\ref{cor 5.18}.
\end{proof}

In particular, Sublemma~\ref{sublem} applied with $d'' = \Delta$ ensures that the proportion of the vertices in $A'_{max}$ that have at least one half-edge outside $C'_{max}$ is at most $\frac{C_{\Delta}\Delta^2\varepsilon'}{\gamma'}$ of all such vertices in $C'_{cm, max}$ since $d''\le \Delta$.\par

Now, notice that the distribution of the graph consisting of the half-edges in $C_{max}$, but not in $C'_{cm, max}$, is uniform among the forests with roots in $D'_{cm, max}$. Indeed, conditionally on the vertex set of $C_{max}$, every connected graph constructed on this set of vertices has the same number of extensions to $G(n)$. Based on this observation, Theorem 2 in~\cite{Pav96}\footnote{Theorem 2 in~\cite{Pav96} gives a precise information about the distribution of the maximum size of a tree of a uniform random forest on $N$ roots and $n$ non-root vertices. The setting of a degree sequence prescribed in advance is a special case of the theorem. We provide a justification of the main technical assumption. Assume that $N/n$ tends to $b$ as $n\to +\infty$. Denote by $q_k$ the proportion of vertices in $C_{max}\setminus C'_{cm,max}$ with $k$ children in the tree attached to $C'_{cm,max}$. (Our $q_k$ corresponds to $p_k$ in the original paper but this notation has already been introduced in our setting.) Set $F(t) = \sum_{i=0}^{\Delta - 1} q_k t^k + b$. Then, let $\lambda$ be a root of the equation $t F'(t)/F(t) = n/(N+n)$ - it is well defined and unique since the function $t\mapsto t F'(t)/F(t)$ is strictly increasing for $\Delta\ge 2$ (which is our case) and its image on $\mathbb R_+$ is $[0,\Delta-1)$. We need to justify now that $\lambda/F(\lambda) \in (0,1)$. We argue by contradiction - then, $F'(\lambda) \le n/(N+n) = F'(1)$, so since $F'$ is strictly increasing on $\mathbb R_+$ one has that $\lambda \le 1$. Moreover, for any $t$ sufficiently close to $1$ one may ensure that $t < F(t)$ since $F$ is continuous and $F(1) = 1+b$, which means that $\lambda F'(\lambda)/F(\lambda)\le F'(1)/F(1) < n/(N+n)$. This contradicts the definition of $\lambda$. In particular, we deduce that the largest tree in a uniformly chosen forest is of size $O(\log n)$.} justifies that the maximum size of a tree attached to a vertex in $D'_{cm, max}$ is of order $O(\log n)$. Now, let us sample the forest induced by the vertices in $C_{max}\setminus C'_{cm, max}$ (but without providing the corresponding root in $C'_{cm,max}$) - this ensures a family of $\Omega(n)$ trees of sizes $O(\log n)$ a.a.s. Let us condition on these sizes. The last key ingredient in the proof is the following sublemma, which might be well known.
\begin{sublemma}\label{sublem 1}
Fix $n$ real numbers $x_1, \dots, x_n$ in the interval $[1,M]$ and an integer $k\in [n]$. Then, fix a random subset $\Lambda$ of $[n]$ of size $k$ chosen uniformly at random and denote $S = \sum_{i\in \Lambda} x_i$. Then, for all $t\ge 0$
\begin{equation*}
    \mathbb P(|S-\mathbb E S|\ge t)\le \exp\left(-\dfrac{t^2}{2nM}\right).
\end{equation*}
\end{sublemma}
\begin{proof}[Proof of Sublemma~\ref{sublem 1}]
For every $i\in \{0,\dots,n\}$ define $Y_i = \mathbb E[S\mid \Lambda\cap [i]]$. Then, $(Y_i)_{i=0}^n$ is a martingale satisfying $Y_0 = \mathbb E S$ and $Y_n = S$. Let us show that for every $i\in [n]$ one has $|Y_{i-1}-Y_i|\le M$. 

Fix any $i\in [n]$ and any subset $X_{i-1}$ of $[i-1]$ with at most $k$ elements. Then, we have that
\begin{align*}
    \mathbb E[S\mid \Lambda\cap [i-1] = X_{i-1}] 
    &=\hspace{0.3em} \mathbb E[S\mid \Lambda\cap [i] = X_{i-1}\cup \{i\}] \mathbb P(i\in \Lambda\mid \Lambda\cap [i-1] = X_{i-1})\\ 
    &+\hspace{0.3em} \mathbb E[S\mid \Lambda\cap [i] = X_{i-1}] \mathbb P(i\notin \Lambda\mid \Lambda\cap [i-1] = X_{i-1}).
\end{align*}
Moreover, from the set $\Lambda$ chosen uniformly at random and conditioned on $\Lambda\cap [i] = X_{i-1}\cup \{i\}$ one obtains the set $\Lambda$ chosen uniformly at random and conditioned on $\Lambda\cap [i] = X_{i-1}$ by taking out the element $i$ and adding an element in $[n]\setminus ([i]\cup \Lambda)$ chosen uniformly at random. Conversely, from the set $\Lambda$ chosen uniformly at random and conditioned on $\Lambda\cap [i] = X_{i-1}$ one obtains the set $\Lambda$ chosen uniformly at random and conditioned on $\Lambda\cap [i] = X_{i-1}\cup \{i\}$ by taking out an element in $([n]\setminus [i])\cap \Lambda$ chosen uniformly at random and adding the element $i$. We conclude that
\begin{equation*}
|\mathbb E[S\mid \Lambda\cap [i] = X_{i-1}\cup \{i\}] - \mathbb E[S\mid \Lambda\cap [i] = X_{i-1}]|\le \max_{i+1\le j\le n} |x_i - x_j| < M.
\end{equation*}
We conclude by a direct application of Azuma's martingale inequality (see for example Chapter 2 in~\cite{JLR}).
\end{proof}

We apply Sublemma~\ref{sublem 1} with $t = n^{2/3}$. In particular, the proportion of vertices in $C_{max}\setminus C'_{max}$ that attach to some root in $D'_{cm,max}$ is sharply concentrated around its expected value, and in particular at most $2 C_{\Delta} \Delta^3/\gamma'$ a.a.s. This proves that one may choose e.g. $\alpha = 1 + 2C_{\Delta}\Delta^3/\gamma'$.
\end{proof}

We denote by $(d'_2(n), \dots, d'_{\Delta}(n))$ the degree sequence of $C'_{cm, max}$ on which we conditioned earlier. We fix a positive integer $\ell$ that we specify in the sequel. Recall that $S = S_{t'}$ is the set of the first $\varepsilon' n$ explored vertices in $C'_{cm, max}$.

\begin{lemma}\label{lem 5.24}
The expected number of $S-$chains of length $\ell$ in $C'_{cm, max}$ is at least 
\begin{equation*}
    (1+o(1))\dfrac{\delta^2 \varepsilon'^2 n}{M} \left(1+\dfrac{6\gamma'}{\Delta M}\right)^{\ell-1},
\end{equation*}
where $\delta = \delta(\varepsilon') > 0$ is constant over the interval $\varepsilon'\in (0,\xi)$, for $\xi$ given by Lemma~\ref{lem: MR}, and $\gamma' > 0$ is given by Corollary~\ref{cor 5.17}.
\end{lemma}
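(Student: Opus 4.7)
The plan is to condition on the entire first phase of exploration, which fixes $S$, the revealed edges inside $S$, and the number $Z$ of open half-edges sticking out of $S$. By Lemma~\ref{lem: MR} we may assume $Z \ge \delta\varepsilon' n$ for a positive constant $\delta = \delta(\varepsilon')$ uniform over $\varepsilon' \in (0, \xi)$. Conditionally on this information, the remaining $D_{\mathrm{rem}} := Z + \operatorname{vol}(V(C'_{cm,max}) \setminus S)$ half-edges are paired uniformly at random, so we reduce to the configuration model on this restricted set; in particular $D_{\mathrm{rem}} \le D' \le Mn$, where $D' = \sum_{i} i\, d'_i(n)$ is the total degree of $C'_{cm,max}$.

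Next, I compute the expected number of ordered $S$-chains of length $\ell$. Writing it as a sum over choices of endpoints $u_0, u_\ell \in S$ and internal vertices $u_1, \ldots, u_{\ell-1} \in V(C'_{cm,max}) \setminus S$, the number of half-edge selections realising a specific chain equals $d^{\mathrm{open}}(u_0)\, d^{\mathrm{open}}(u_\ell) \prod_{i=1}^{\ell-1} d(u_i)(d(u_i)-1)$, while the probability that the $\ell$ chosen pairings are simultaneously realised equals $\prod_{j=0}^{\ell-1}(D_{\mathrm{rem}} - 2j - 1)^{-1} = (1+o(1))\, D_{\mathrm{rem}}^{-\ell}$ for constant $\ell$. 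The endpoint sums contribute $\sum_{u \in S} d^{\mathrm{open}}(u) = Z$ each, and the sum over distinct internal vertices yields $(1 - o(1))\, T^{\ell-1}$ with $T := \sum_{u \in V(C'_{cm,max}) \setminus S} d(u)(d(u)-1)$. The corrections from enforcing distinctness of the $\ell+1$ vertices are of order $O(1/n)$, so
\begin{equation*}
\mathbb{E}\bigl[\#\,S\text{-chains of length }\ell\bigr] \;\ge\; (1 - o(1))\,\frac{Z^2}{D_{\mathrm{rem}}}\,\biggl(\frac{T}{D_{\mathrm{rem}}}\biggr)^{\ell-1}.
\end{equation*}

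To conclude, I bound each factor. The leading one satisfies $Z^2/D_{\mathrm{rem}} \ge \delta^2 \varepsilon'^2 n / M$ by $Z \ge \delta \varepsilon' n$ and $D_{\mathrm{rem}} \le Mn$. For the branching factor, Corollary~\ref{cor 5.17} together with the fact that stripping leaves preserves $e - |V|$ gives $\sum_i (i-2)\, d'_i(n) = 2(e(C'_{max}) - |C'_{max}|) \ge 2\gamma' n$. Since $d'_1(n) = 0$ and $i(i-2) \ge 3(i-2)$ for every $i \ge 2$, this upgrades to $\sum_i i(i-2)\, d'_i(n) \ge 6\gamma' n$, equivalently $\sum_i i(i-1)\, d'_i(n) \ge D' + 6\gamma' n$. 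Subtracting the $O(\varepsilon' n)$ contribution of vertices in $S$ and using $D_{\mathrm{rem}} \le D'$ and $D_{\mathrm{rem}} \le Mn$ yields
\begin{equation*}
\frac{T}{D_{\mathrm{rem}}} \;\ge\; 1 + \frac{6\gamma' n}{D_{\mathrm{rem}}} - O(\varepsilon') \;\ge\; 1 + \frac{6\gamma'}{M} - O(\varepsilon') \;\ge\; 1 + \frac{6\gamma'}{\Delta M}
\end{equation*}
for $\varepsilon'$ sufficiently small, where the last step uses $\Delta \ge 2$. Raising to the $(\ell-1)$-th power and multiplying by the leading factor delivers the claimed bound.

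The main technical hurdle is that the chain count is a sum of products of many small probabilities, and the natural small-error terms (vertex-distinctness losses, the $D_{\mathrm{rem}}^{-\ell}$ approximation, and the $O(\varepsilon' n)$ slack in $T$) compound across $\ell - 1$ factors. The weakening from $1 + 6\gamma'/M$ to $1 + 6\gamma'/(\Delta M)$ in the statement of the lemma (available because $\Delta \ge 2$ in $C'_{cm,max}$) is precisely calibrated to absorb these losses into the $(1 + o(1))$ prefactor.
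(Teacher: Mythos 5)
Your proposal is correct and takes essentially the same route as the paper's proof: condition on the exploration that builds $S$, invoke Lemma~\ref{lem: MR} for the $\delta\varepsilon' n$ boundary half-edges, compute the conditional first moment of the chain count in the configuration model as (boundary factor)$\times$(branching ratio)$^{\ell-1}$, and bound the branching ratio from below using the edge excess of the $2$-core. The only minor variation is in that last bound: you derive $\sum_i i(i-2)d'_i(n)\ge 6\gamma' n$ directly from Corollary~\ref{cor 5.17} (the paper instead uses the weaker $6\gamma' n/\Delta$ coming from Corollary~\ref{cor 5.18}) and then spend the resulting slack, via $\Delta\ge 2$ and $\varepsilon'$ small, to absorb the $O(\varepsilon')$ losses and land on the same stated constant $1+\tfrac{6\gamma'}{\Delta M}$.
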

\begin{proof}
By Lemma~\ref{lem: MR} for every small enough $\varepsilon'$ the number of half-edges, attached to vertices in $S$, which participate in an edge between a vertex in $S$ and a vertex outside $S$, is at least $\delta \varepsilon' n$ a.a.s. Denote this set of half-edges by $\partial_{1/2} S$. Conditionally on the set $S$, there remains a number of non-explored vertices (at time $t'$) of degree $i$, which we denote by $d''_i(n)$. Now, any chain of length $\ell$ is defined by:

\begin{itemize}
    \item a tuple $(\ell_2, \dots, \ell_\Delta)$,
    \item for all $i\in \{2,\dots,\Delta\}, \ell_i$ vertices outside $S$ of degree $i$,
    \item the order of the $\ell-1$ vertices outside $S$ in the chain, and
    \item the choice of half-edges that participate in the chain.
\end{itemize}

To find the expected number of $S$-chains of length $\ell$ it remains to multiply by the probability of each edge being present. In total, this gives 
\begin{align}
    & |\partial_{1/2} S|^2 \sum_{\ell_2 + \dots + \ell_{\Delta} = \ell-1} (\ell-1)! \left(\prod_{2\le i\le \Delta} \binom{d''_i(n)}{\ell_i} (i(i-1))^{\ell_i}\right)\dfrac{\left(\left(\sum_{2\le i\le \Delta} id''_i(n)\right)-2\ell-1\right)!!}{\left(\left(\sum_{2\le i\le \Delta} id''_i(n)\right)-1\right)!!}\nonumber \\ \ge
    &\hspace{0.3em} (1+o(1))(\delta\varepsilon' n)^2 \sum_{\ell_2+\dots+\ell_{\Delta} = \ell-1}\binom{\ell-1}{\ell_2, \ell_3, \dots, \ell_{\Delta}}\left(\prod_{2\le i\le \Delta}\left(i(i-1)d''_i(n)\right)^{\ell_i}\right)\dfrac{1}{\left(\sum_{2\le i\le \Delta} id''_i(n)\right)^{\ell}} \nonumber\\ =
    &\hspace{0.3em} \label{last line} (1+o(1))\dfrac{(\delta\varepsilon' n)^2}{2d''_2(n)+\dots+\Delta d''_{\Delta}(n)}\left(\sum_{2\le i\le \Delta}\dfrac{i(i-1)d''_i(n) }{\left(\sum_{2\le j\le \Delta} id''_i(n)\right)}\right)^{\ell - 1}.
\end{align}
Now, we have that, first, $\sum_{2\le i\le \Delta} id''_i(n)\le D(n)$, and second,
\begin{equation*}
 \sum_{2\le i\le \Delta} i(i-2)d''_i(n)\ge \dfrac{6\gamma'}{\Delta}n.  
\end{equation*}
by Corollary~\ref{cor 5.18}. Thus, since $D(n) = Mn + o(n)$, (\ref{last line}) is bounded from below by
\begin{equation*}
(1+o(1))\dfrac{\delta^2 \varepsilon'^2 n}{M} \left(1+\dfrac{6\gamma'}{\Delta M}\right)^{\ell-1}.
\end{equation*}
The lemma is proved.
\end{proof}

\begin{observation}\label{short cycles ob}
The number of vertices of distance at most $\ell$ from a cycle of length at most $2\ell$ in $C'_{cm, max}$ is $O(\log n)$ a.a.s.
\end{observation}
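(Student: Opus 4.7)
My plan is a routine first-moment argument, working under the configuration model $C'_{cm,max}$ on the (random) degree sequence $(d'_i(n))_{2\le i\le \Delta}$ of the $2$-core. Conditionally on this degree sequence, I would show that for every fixed $k\in \{3,\ldots,2\ell\}$, the expected number of cycles of length $k$ in $C'_{cm,max}$ is bounded by a constant depending only on $\Delta$ and $k$, hence (since $\ell$ is fixed) the expected number of cycles of length at most $2\ell$ is $O(1)$. Markov's inequality then yields that a.a.s. there are at most $\log n$ such short cycles.

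To obtain the per-length bound, I would count ordered $k$-tuples of distinct vertices $(v_1,\ldots,v_k)$, choose an ordered pair of distinct half-edges at each $v_i$ in $\deg_{C'_{cm,max}}(v_i)(\deg_{C'_{cm,max}}(v_i)-1)$ ways, match consecutive half-edges around the putative cycle, and divide by $2k$ to correct for the rotations and the reflection of an unoriented cycle. Each such configuration appears with probability $\prod_{j=0}^{k-1}(2m'-2j-1)^{-1}$, where $2m' = \sum_{i\ge 2} i d'_i(n)$. Using $\sum_{i\ge 2} i(i-1)d'_i(n) \le (\Delta-1)\sum_{i\ge 2} i d'_i(n) = (\Delta-1)\cdot 2m'$, the sum telescopes to give an upper bound of $\tfrac{(\Delta-1)^k}{2k}$, which is uniform in the degree sequence.

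Finally, each short cycle contains at most $2\ell$ vertices, and since $C'_{cm,max}$ has maximum degree $\Delta$, the closed ball of radius $\ell$ around any vertex has at most $1+\Delta+\cdots+\Delta^{\ell}\le \Delta^{\ell+1}$ vertices. Multiplying, the number of vertices at distance at most $\ell$ from a cycle of length at most $2\ell$ is at most $2\ell\,\Delta^{\ell+1}\,\log n=O(\log n)$ a.a.s.

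There is no real obstacle here; the one point requiring slight care is that the degree sequence of $C'_{cm,max}$ is random, but the bound $\tfrac{(\Delta-1)^k}{2k}$ is uniform in the degree sequence, so the argument applies after conditioning on the realisation of $(d'_i(n))_{2\le i\le \Delta}$.
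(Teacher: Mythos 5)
Your proposal is correct and takes essentially the same route as the paper: a first-moment bound showing that the expected number of cycles of each fixed length $k\le 2\ell$ in the configuration model on the conditioned degree sequence is $O(1)$, Markov's inequality to get a.a.s. at most $\log n$ such cycles, and then the bounded-degree ball estimate (at most $2\ell\cdot\frac{\Delta^{\ell+1}-1}{\Delta-1}$ vertices within distance $\ell$ of any one cycle). Two cosmetic points only: your bound $\frac{(\Delta-1)^k}{2k}$ really holds up to a $(1+o(1))$ factor, since each ratio $\frac{2m'}{2m'-2j-1}$ slightly exceeds $1$ (harmless because $m'=\Theta(n)$ a.a.s.), and in the configuration multigraph one should also allow $k=1,2$ (loops and multiple edges), which the identical computation covers.
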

\begin{proof}
By an immediate first moment calculation (in the same way as the proof of Lemma~\ref{Number of copies} for trees, see also Theorem 9.5 in \cite{JLR} in the regular case) one may deduce that for every $\ell\in \mathbb N$ the number of cycles of length at most $2\ell$ is a.a.s.\ at most $O(\log n)$ and the same holds for the number of vertices in such cycles. Since $C'_{cm, max}$ has maximum degree at most $\Delta$, each vertex is at distance at most $\ell$ from at most $\frac{\Delta^{\ell+1}-1}{\Delta - 1}$ other vertices in $C'_{cm, max}$. Thus, the number of vertices at distance at most $\ell$ from cycles of length at most $2\ell$ is $O(\log n)$ as well.
\end{proof}

\begin{lemma}\label{lem 5.26}
There are constants $c_{\ell}, c'_{\ell} > 0$ such that the expected number of vertices:
\begin{itemize}
    \item in $C'_{cm, max}\setminus S$, which participate in an $S-$chain of length at most $\ell$, is at most $c_{\ell} \varepsilon'^2 n$.
    \item in $C'_{cm, max}\setminus S$, which participate in more than one $S-$chain of length at most $\ell$, is at most $c'_{\ell} \varepsilon'^3 n$.
\end{itemize}
\end{lemma}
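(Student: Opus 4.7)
The plan is to upper-bound both expectations by direct first-moment calculations in the configuration model on $C'_{cm,max}$, paralleling (and reversing) the lower-bound argument of Lemma~\ref{lem 5.24}.

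\emph{First claim.} For each $k\in \{1,\dots,\ell\}$, I will bound the expected number $L_{k}$ of $S$-chains of length exactly $k$ by taking the same expression as in the proof of Lemma~\ref{lem 5.24}, now read as an upper bound:
\begin{equation*}
L_{k}\le |\partial_{1/2}S|^{2}\sum_{\ell_{2}+\cdots+\ell_{\Delta}=k-1}(k-1)!\prod_{2\le i\le \Delta}\binom{d''_{i}(n)}{\ell_{i}}(i(i-1))^{\ell_{i}}\prod_{j=0}^{k-1}\frac{1}{D(n)-2j-1}.
\end{equation*}
Inserting $|\partial_{1/2}S|\le \Delta\varepsilon' n$, $d''_{i}(n)=O(n)$, and $D(n)=\Theta(n)$, the same manipulations as in the last line of Lemma~\ref{lem 5.24} yield $L_{k}\le C_{k}\,\varepsilon'^{2} n$ for some constant $C_{k}=C_{k}(\boldsymbol{p},\Delta)>0$. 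Since each chain of length $k$ contributes at most $k-1\le \ell-1$ interior vertices in $V\setminus S$, summing over $k\in\{1,\dots,\ell\}$ gives the desired bound $c_{\ell}\,\varepsilon'^{2} n$ for some $c_{\ell}>0$.

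\emph{Second claim.} For every $w\in V\setminus S$ let $N_{w}$ be the number of $S$-chains of length at most $\ell$ in which $w$ is an interior vertex. Since $\mathbf{1}[N_{w}\ge 2]\le \binom{N_{w}}{2}$, the expected number of vertices of $V\setminus S$ lying in more than one chain is at most $\tfrac{1}{2}\sum_{w}\mathbb{E}[N_{w}(N_{w}-1)]$, which equals $\tfrac{1}{2}$ times the expected number of ordered triples $(C_{1},C_{2},w)$ with $C_{1}\ne C_{2}$ two chains of length $\le \ell$ and $w$ an interior vertex common to both. I will bound this expectation by case analysis on the combined graph $H:=C_{1}\cup C_{2}$. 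Let $v'$ and $e'$ denote the numbers of vertices and edges of $H$, and let $s\in\{2,3,4\}$ denote the number of \emph{distinct} chain-endpoints of $C_{1}$ and $C_{2}$ that lie in $S$. A standard configuration-model first-moment estimate shows that the expected count of a fixed labelled such structure is $O_{\ell}(n^{v'-e'}\,\varepsilon'^{s})$: the factor $\varepsilon'^{s}$ arises because each distinct chain-endpoint half-edge must be matched to one of the $O(\varepsilon' n)$ half-edges in $\partial_{1/2}S$ out of $D(n)=\Theta(n)$ total.

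The case analysis then runs as follows. If $s=2$, the two chains share both pairs of endpoints, so $H$ contains a cycle and $v'-e'\le 0$, giving a contribution of $O_{\ell}(\varepsilon'^{2})$. If $s=3$, the tree-like configurations (where $C_{1}\cap C_{2}$ is a single subpath including the shared endpoint) satisfy $v'-e'=1$, yielding $O_{\ell}(\varepsilon'^{3}n)$, while remaining cycle-containing configurations contribute at most $O_{\ell}(\varepsilon'^{3})$. If $s=4$, the tree-like configurations are exactly those in which $C_{1}\cap C_{2}$ is a single (possibly trivial) interior subpath, and they contribute $O_{\ell}(\varepsilon'^{4}n)$; the remaining cycle-containing configurations contribute at most $O_{\ell}(\varepsilon'^{4})$. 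Summing over all cases and over $k_{1},k_{2}\in\{1,\dots,\ell\}$ gives a total of $O_{\ell}(\varepsilon'^{3}n)$, which is at most $c'_{\ell}\,\varepsilon'^{3} n$ for some $c'_{\ell}>0$, as required.

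The main technical obstacle will be the case analysis in the second claim, specifically verifying that whenever $C_{1}\cap C_{2}$ is not a single connected subpath, an extra cycle is introduced into $H$ and $v'-e'\le 0$; once this is granted, the dominant term is the $s=3$ tree-like subcase, and the overall bound $O_{\ell}(\varepsilon'^{3} n)$ matches the one stated in the lemma.
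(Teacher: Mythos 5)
Your proof is correct, but it takes a genuinely different route from the paper's. The paper argues locally: a vertex of $C'_{cm,max}\setminus S$ lying in an $S$-chain of length at most $\ell$ must have at least two of the at most $C_{\Delta,\ell}:=\frac{\Delta^{\ell+1}-1}{\Delta-1}$ vertices within distance $\ell$ of it inside $S$, and a vertex lying in more than one such chain must either have at least three nearby $S$-vertices (two chains with a common subpath) or lie within distance $\ell$ of a cycle of length at most $2\ell$; the former events are bounded per vertex by $\binom{C_{\Delta,\ell}}{2}(\Delta\varepsilon')^2$ and $\binom{C_{\Delta,\ell}}{3}(\Delta\varepsilon')^3$ respectively, and the cycle case is handled separately via Observation~\ref{short cycles ob}. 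You instead run global first-moment counts in the configuration model: the expected number of chains of each length is $O_{\ell}(\varepsilon'^2 n)$, which bounds vertex--chain incidences and hence the first claim, and the expected number of ordered pairs of distinct chains sharing an interior vertex is controlled by classifying the union $H=C_1\cup C_2$ according to its excess and the number $s$ of distinct $S$-endpoints; the two key facts, that a fixed labelled shape contributes $O_{\ell}(n^{v'-e'}\varepsilon'^{s})$ and that a disconnected intersection of two paths forces a cycle (so $v'-e'\le 0$), do hold, and the dominant tree-like $s=3$ case gives $O_{\ell}(\varepsilon'^3 n)$. What each approach buys: yours stays entirely at the level of expectations and absorbs the cycle-containing configurations into the same count (they contribute only $O_{\ell}(1)$), so it does not need Observation~\ref{short cycles ob}, at the price of the union/excess case analysis; the paper's neighbourhood union bound is shorter and yields explicit constants. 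One point to tighten: in your edge-probability factors the denominators should be the number of still-unmatched half-edges of the conditional configuration model on $C'_{cm,max}$ after exposing $S$, not $D(n)$; since that quantity is $\Theta(n)$ for $\varepsilon'$ small this only changes the constants $C_k, c_{\ell}, c'_{\ell}$, but as written the displayed bound on $L_k$ is not literally an upper bound.
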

\begin{proof}
We prove the second point first. If a vertex participates in more than one $S-$chain of length at most $\ell$, then either it is part of two $S-$chains with a common subpath or it is at distance at most $\ell$ from a cycle in $C'_{cm, max}$ of length at most $2\ell$, see Figure~\ref{fig: example}. By Observation~\ref{short cycles ob} there are a.a.s.\ at most $O(\log n)$ vertices of the second type in $C'_{cm, max}$. Therefore, it remains to count the number of vertices of the first type.\par
The number of vertices at distance at most $\ell$ from a vertex is at most $\frac{\Delta^{\ell+1}-1}{\Delta - 1}$. Therefore, the probability that a vertex participates in two $S-$chains of length at most $\ell$ with a common subpath is at most the probability that at least 3 of the vertices at distance at most $\ell$ from it are in $S$, which is at most 
\begin{equation*}
    \binom{\frac{\Delta^{\ell+1}-1}{\Delta - 1}}{3} (\Delta \varepsilon')^3.
\end{equation*}
Summing over all vertices in $C'_{cm, max}\setminus S$, we may choose
\begin{equation*}
c'_{\ell} = \binom{\frac{\Delta^{\ell+1}-1}{\Delta - 1}}{3} \Delta^3,
\end{equation*}
which proves the second point. The first point follows along the same lines, with the constant $c_{\ell}$ given by 
\begin{equation*}
c_{\ell} = \binom{\frac{\Delta^{\ell+1}-1}{\Delta - 1}}{2} \Delta^2.
\end{equation*}
\end{proof}

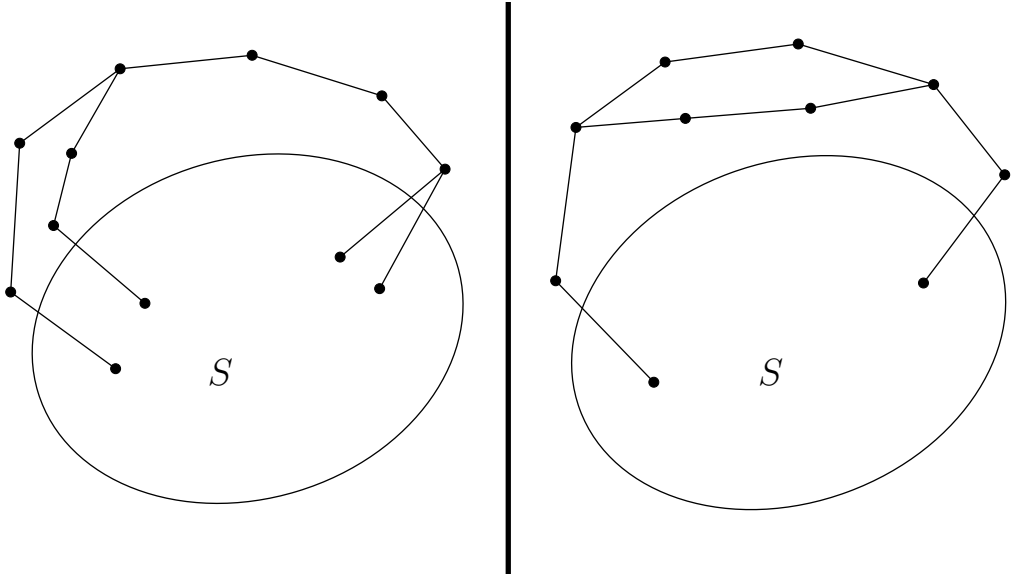
\begin{figure}
\centering
\begin{tikzpicture}[scale = 0.75, line cap=round,line join=round,x=1cm,y=1cm]
\clip(-13,-6.14) rectangle (13,4);
\draw [rotate around={16.87878388767263:(-5.82,-1.79)},line width=0.5pt] (-5.82,-1.79) ellipse (3.8831798726771267cm and 3.0165188419044777cm);
\draw [rotate around={20.213749138595876:(3.77,-1.86)},line width=0.5pt] (3.77,-1.86) ellipse (3.9455234627317224cm and 3.0134125829309406cm);
\draw [line width=0.5pt] (-8.16,-2.5)-- (-10.02,-1.14);
\draw [line width=0.5pt] (-10.02,-1.14)-- (-9.86,1.5);
\draw [line width=0.5pt] (-9.86,1.5)-- (-8.08,2.82);
\draw [line width=0.5pt] (-8.08,2.82)-- (-5.74,3.06);
\draw [line width=0.5pt] (-5.74,3.06)-- (-3.44,2.34);
\draw [line width=0.5pt] (-3.44,2.34)-- (-2.32,1.04);
\draw [line width=0.5pt] (-2.32,1.04)-- (-3.48,-1.08);
\draw [line width=0.5pt] (-8.08,2.82)-- (-8.94,1.32);
\draw [line width=0.5pt] (-8.94,1.32)-- (-9.26,0.04);
\draw [line width=0.5pt] (-9.26,0.04)-- (-7.64,-1.34);
\draw [line width=0.5pt] (-2.32,1.04)-- (-4.18,-0.52);
\draw [line width=0.5pt] (1.38,-2.74)-- (-0.36,-0.94);
\draw [line width=0.5pt] (-0.36,-0.94)-- (0,1.78);
\draw [line width=0.5pt] (0,1.78)-- (1.58,2.94);
\draw [line width=0.5pt] (1.58,2.94)-- (3.94,3.26);
\draw [line width=0.5pt] (3.94,3.26)-- (6.34,2.54);
\draw [line width=0.5pt] (6.34,2.54)-- (7.6,0.94);
\draw [line width=0.5pt] (7.6,0.94)-- (6.16,-0.98);
\draw [line width=0.5pt] (0,1.78)-- (1.94,1.94);
\draw [line width=0.5pt] (1.94,1.94)-- (4.16,2.12);
\draw [line width=0.5pt] (4.16,2.12)-- (6.34,2.54);
\draw [line width=2pt] (-1.2,-10)-- (-1.2,10);
\begin{scriptsize}
\draw [fill=black] (-8.16,-2.5) circle (2.5pt);
\draw [fill=black] (-3.48,-1.08) circle (2.5pt);
\draw [fill=black] (1.38,-2.74) circle (2.5pt);
\draw [fill=black] (6.16,-0.98) circle (2.5pt);
\draw [fill=black] (-10.02,-1.14) circle (2.5pt);
\draw [fill=black] (-9.86,1.5) circle (2.5pt);
\draw [fill=black] (-8.08,2.82) circle (2.5pt);
\draw [fill=black] (-5.74,3.06) circle (2.5pt);
\draw [fill=black] (-3.44,2.34) circle (2.5pt);
\draw [fill=black] (-2.32,1.04) circle (2.5pt);
\draw [fill=black] (-8.94,1.32) circle (2.5pt);
\draw [fill=black] (-9.26,0.04) circle (2.5pt);
\draw [fill=black] (-7.64,-1.34) circle (2.5pt);
\draw [fill=black] (-4.18,-0.52) circle (2.5pt);
\draw [fill=black] (-0.36,-0.94) circle (2.5pt);
\draw [fill=black] (0,1.78) circle (2.5pt);
\draw [fill=black] (1.58,2.94) circle (2.5pt);
\draw [fill=black] (3.94,3.26) circle (2.5pt);
\draw [fill=black] (6.34,2.54) circle (2.5pt);
\draw [fill=black] (7.6,0.94) circle (2.5pt);
\draw [fill=black] (1.94,1.94) circle (2.5pt);
\draw [fill=black] (4.16,2.12) circle (2.5pt);
\draw[color=black] (3.45,-2.55) node {\Large $S$};
\draw[color=black] (-6.3,-2.55) node {\Large $S$};
\end{scriptsize}
\end{tikzpicture}
\caption{On the left: an example of two $S-$chains of length 7 with a common subpath. On the right: an example of two $S-$chains of length 7, whose union contains a cycle.}
\label{fig: example}
\end{figure}

\begin{observation}\label{ob: concentration}
Fix $\varepsilon'\in \left(0, \dfrac{\gamma'}{2(\Delta^{\ell+1}-1)\Delta}\right)$. The number of $S-$chains of length $\ell$, the number of vertices in at least one $S-$chain of length at most $\ell$ and the number of vertices in more than one $S-$chain of length at most $\ell$ are sharply concentrated around their expected values.
\end{observation}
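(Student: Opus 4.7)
The plan is to prove concentration by applying the Azuma-Hoeffding inequality to the Doob martingale obtained by exposing the matching in the configuration model $C'_{cm, max}$ pair by pair. Fixing the set $S$, the degree sequence of $C'_{cm, max}$, and the matching within $S$, enumerate the remaining half-edges $h_1, h_2, \dots, h_{r}$ with $r = \Theta(n)$ in a canonical order, and reveal the matching step by step: at step $i$, if the smallest-indexed unmatched half-edge is $h_j$, expose its partner $h_k$. Let $X$ denote any of the three random variables in the statement: the number $N_\ell$ of $S$-chains of length $\ell$, the number $V_\ell$ of vertices in at least one $S$-chain of length at most $\ell$, or the number $V_\ell^{\geq 2}$ of vertices in more than one such chain. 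The associated Doob martingale $M_i = \mathbb{E}[X \mid \mathcal{F}_i]$ has $r = \Theta(n)$ steps.

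The key estimate is a one-step Lipschitz bound. Any change in $X$ caused by the exposure at step $i$ can only affect $S$-chains that traverse the newly-revealed edge $h_j h_k$, hence chains lying entirely within the ball of radius $\ell$ around its two endpoints in $C'_{cm, max}$. Since every vertex has degree at most $\Delta$, this ball contains at most $2(\Delta^{\ell+1}-1)/(\Delta - 1)$ vertices, so the number of paths of length at most $\ell$ through the new edge is $O(\Delta^{2\ell})$. Consequently, each of $N_\ell$, $V_\ell$, $V_\ell^{\geq 2}$ is altered by at most some constant $K_\ell = K_\ell(\Delta) > 0$ when a single pair is revealed, and hence $|M_i - M_{i-1}| \leq 2 K_\ell$ almost surely.

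Applying Azuma-Hoeffding over the $r = \Theta(n)$ steps yields
\begin{equation*}
\mathbb{P}\big(|X - \mathbb{E}[X]| \geq t\big) \leq 2\exp\bigg(-\dfrac{t^2}{8 K_\ell^2 r}\bigg).
\end{equation*}
Choosing $t = \sqrt{n} \log n$ we get a probability of deviation that is $o(1)$. By Lemma~\ref{lem 5.24} and Lemma~\ref{lem 5.26}, the expected values of $N_\ell$, $V_\ell$ and $V_\ell^{\geq 2}$ are of order $\varepsilon'^2 n$, $\varepsilon'^2 n$ and at most $\varepsilon'^3 n$ respectively (for the fixed regime of $\varepsilon'$ and $\ell$), each of which dominates the deviation scale $\sqrt{n}\log n$, giving the asserted concentration.

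The main subtlety lies in justifying the Lipschitz bound rigorously: the configuration-model matching is not a product of independent variables, since the entries are constrained to form a perfect matching. The cleanest workaround is to view the matching as the equivalence class under right-multiplication by involutions of a uniformly random permutation of half-edges and invoke the Azuma-type martingale inequality for random permutations, checking that a single transposition changes each of $N_\ell, V_\ell, V_\ell^{\geq 2}$ by at most $O(\Delta^{2\ell})$ by the same local-ball argument as above. Equivalently, one may apply McDiarmid's bounded-differences inequality to the standard representation of the uniform pairing by independent uniform choices, where the same Lipschitz constant applies.
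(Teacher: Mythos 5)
Your proof is correct, but it takes a genuinely different route from the paper. The paper proves this observation by running the edge-by-edge exploration of the $\frac{\ell}{2}$-neighbourhood of $S$ and invoking the differential equation method (Theorem~\ref{Thm:DEMethod}): the tracked quantities $|\mathcal C(t)|$, $|\mathcal V_1(t)|$, $|\mathcal V_{\ge 2}(t)|$ have bounded per-step increments because each revealed edge lies in only boundedly many potential $S$-chains of length at most $\ell$, and the restriction on $\varepsilon'$ keeps the explored neighbourhood away from exhausting $C'_{cm,max}$ so that the process stays inside the domain of the theorem. You instead work with the Doob martingale of the pair-by-pair exposure of the configuration-model matching and apply an Azuma-type bound, with the same key local observation (every edge of the pairing meets only $O_{\Delta,\ell}(1)$ paths of length at most $\ell$, hence a bounded Lipschitz constant for all three counts), yielding an explicit $O(\sqrt{n}\log n)$ deviation window against expectations of order $\varepsilon'^2 n$, which suffices for the downstream use in Corollary~\ref{cor 5.28}. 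Your approach avoids setting up the exploration process and the ODE machinery altogether and gives exponential tails; the paper's approach fits uniformly with the method already used throughout Section~\ref{sec:Lower}. One caveat: your closing ``equivalently, McDiarmid's bounded-differences inequality applied to independent uniform choices'' is not literally available, since the uniform pairing is not a product measure; the correct and standard tool is exactly the one you name first, namely the martingale/Azuma inequality for random pairings justified by a switching argument (two pairs are swapped, so the one-step difference is at most twice the single-edge effect), so your primary argument stands and only that side remark should be dropped or rephrased.
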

\begin{proof}
Given the degree sequence $(d'_i(n))_{2\le i\le \Delta}$, the number of $S-$chains of length $\ell$, the number of vertices at least one $S-$chain of length at most $\ell$ and the number of vertices in more than one $S-$chain of length at most $\ell$ can be computed via the differential equation method. To do this, explore the $\frac{\ell}{2}-$th neighborhood of $S$ by revealing one edge per time step. Start with $\mathcal C(0) = \emptyset$, which will be the number of $S-$chains of length $\ell$, $\mathcal V_1(0) = \emptyset$, which will be the number of vertices in some $S-$chain of length at most $\ell$, and $\mathcal V_{\ge 2}(0) = \emptyset$, which will be the number of vertices in at least two $S-$chains of length at most $\ell$. Notice that every revealed edge may participate in a bounded number of $S-$chains of length at most $\ell$, and therefore at any time step $t\ge 1$ each of $|\mathcal C(t) - \mathcal C(t-1)|$, $|\mathcal V_1(t) - \mathcal V_1(t-1)|$ and $|\mathcal V_{\ge 2}(t) - \mathcal V_{\ge 2}(t-1)|$ may increase in size by only a bounded number of new elements. Moreover, by choice of $\varepsilon'$, the $\frac{\ell}{2}-$th neighborhood of $S$ contains at most $\gamma' n/\Delta$ vertices (that is, by Corollary~\ref{cor 5.18} at most half of the vertices of $C'_{cm, max}$ a.a.s.), ensuring concentration throughout the process. Thus, the conditions of Theorem~\ref{Thm:DEMethod} are satisfied for $|\mathcal C(t)|$, $|\mathcal V_1(t)|$ and $|\mathcal V_{\ge 2}(t)|$ as functions of $t$, and we deduce that all of them are concentrated around their expected values in the end of the exploration process.
\end{proof}

\begin{corollary}\label{cor 5.28}
A.a.s. for every small enough $\varepsilon' > 0$, the number of $S-$chains of length $\ell$ of vertices participating in only one $S-$chain of length $\ell$ is at least $\dfrac{\delta^2 \varepsilon'^2 n}{2M}\left(1+\dfrac{\gamma'}{\Delta M}\right)^{\ell-1}$.
\end{corollary}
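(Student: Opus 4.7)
The plan is to combine Lemma~\ref{lem 5.24}, Lemma~\ref{lem 5.26}, and Observation~\ref{ob: concentration} into a straightforward subtraction argument. By Lemma~\ref{lem 5.24} together with the concentration provided by Observation~\ref{ob: concentration}, the total number of $S$-chains of length $\ell$ in $C'_{cm,max}$ is a.a.s.\ within a $(1+o(1))$ factor of $\frac{\delta^2 \varepsilon'^2 n}{M}\bigl(1+\frac{6\gamma'}{\Delta M}\bigr)^{\ell-1}$. By Lemma~\ref{lem 5.26} and again by the concentration from Observation~\ref{ob: concentration}, the number of ``bad'' vertices --- that is, vertices of $C'_{cm,max}\setminus S$ lying in more than one $S$-chain of length at most $\ell$ --- is a.a.s.\ at most $(1+o(1))\, c'_\ell \varepsilon'^3 n$.

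Next I would observe that the number of $S$-chains of length $\ell$ through any single vertex is bounded by some constant $K_\ell = K_\ell(\Delta)>0$, since such a chain is determined by its two endpoints together with the two length-$\le\ell$ subpaths to the vertex, and the $\ell$-neighborhood of any vertex contains at most $\frac{\Delta^{\ell+1}-1}{\Delta-1}$ vertices. Consequently, the number of $S$-chains of length $\ell$ meeting at least one bad vertex is a.a.s.\ at most $(1+o(1))\, K_\ell c'_\ell \varepsilon'^3 n$.

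Subtracting this upper bound from the lower bound on the total gives an a.a.s.\ lower bound of
\[
(1-o(1))\,\dfrac{\delta^2 \varepsilon'^2 n}{M}\left(1+\dfrac{6\gamma'}{\Delta M}\right)^{\ell-1} \;-\; (1+o(1))\, K_\ell c'_\ell \varepsilon'^3 n
\]
on the number of $S$-chains of length $\ell$ all of whose interior vertices participate in only one $S$-chain of length at most $\ell$ (in particular, of length exactly $\ell$). The first term scales as $\varepsilon'^2$ while the second scales as $\varepsilon'^3$, and since $\bigl(1+\tfrac{6\gamma'}{\Delta M}\bigr)^{\ell-1} > \bigl(1+\tfrac{\gamma'}{\Delta M}\bigr)^{\ell-1}$, choosing $\varepsilon'$ sufficiently small (depending on $\ell, \Delta, M, \gamma', \delta, c'_\ell$) makes the $\varepsilon'^3$ term negligible compared to the gap between the two $(1+\cdots)^{\ell-1}$ factors, yielding the claimed bound of $\frac{\delta^2 \varepsilon'^2 n}{2M}\bigl(1+\frac{\gamma'}{\Delta M}\bigr)^{\ell-1}$.

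No real obstacle is expected: the substantive work has been absorbed into Lemmas~\ref{lem 5.24} and~\ref{lem 5.26} and Observation~\ref{ob: concentration}, and the corollary itself is only a one-step union-bound-and-smallness argument. The only mildly delicate point is being explicit that ``in only one $S$-chain of length $\ell$'' follows from ``in only one $S$-chain of length at most $\ell$'', which is immediate since the latter set of vertices is contained in the former; one then simply has to verify that $K_\ell$ is finite, which it is by the $\Delta$-regularity bound on the $\ell$-ball.
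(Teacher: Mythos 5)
Your proposal is correct and follows essentially the same route as the paper: lower-bound the total number of $S$-chains of length $\ell$ via Lemma~\ref{lem 5.24} and Observation~\ref{ob: concentration}, upper-bound the vertices in more than one chain via Lemma~\ref{lem 5.26} and the same concentration, and absorb the resulting $O(\varepsilon'^3 n)$ loss into the slack between $\bigl(1+\tfrac{6\gamma'}{\Delta M}\bigr)^{\ell-1}$ and $\tfrac12\bigl(1+\tfrac{\gamma'}{\Delta M}\bigr)^{\ell-1}$ for small $\varepsilon'$. You are in fact slightly more explicit than the paper, which leaves implicit the constant $K_\ell$ bounding the number of length-$\ell$ chains through a single vertex; this is a welcome clarification, not a deviation.
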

\begin{proof}
By Lemma~\ref{lem 5.26} and Observation~\ref{ob: concentration} there is a constant $c'_{\ell}$ such that the number of vertices in $C'_{max}$ in at least two $S-$chains is a.a.s.\ at most $2c'_{\ell} \varepsilon'^3 n$. On the other hand, by Lemma~\ref{lem 5.24} and Observation~\ref{ob: concentration} there are a.a.s.\ at least $\dfrac{3}{4}\dfrac{\delta^2 \varepsilon'^2 n}{M}\left(1+\dfrac{\gamma'}{\Delta M}\right)^{\ell-1}$ $S-$chains of length $\ell$. We conclude that a.a.s.\ for every small enough $\varepsilon'$ there are at least
\begin{equation*}
    \dfrac{1}{2}\dfrac{\delta^2 \varepsilon'^2 n}{M}\left(1+\dfrac{\gamma'}{\Delta M}\right)^{\ell-1}
\end{equation*}
$S-$chains of length $\ell$, each of which contains only vertices, which participate in only one such chain. The lemma is proved.
\end{proof}

\begin{lemma}\label{main lemma supercritical}
For every $C > 0$, for every large enough $\ell$ and every small enough $\varepsilon' > 0$ (depending on $C$) a.a.s.
\begin{equation*}
\dfrac{e(A'_{max})}{|A'_{max}|} \ge 1 + C\dfrac{|A'_{max}|}{n}.
\end{equation*}
\end{lemma}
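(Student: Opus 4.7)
The plan is to combine the two main quantitative facts from Corollary~\ref{cor 5.28} and Lemma~\ref{lem 5.26} with one simple topological observation, namely that $A'_{max}$ is connected. Connectedness is immediate from the construction: $S$ is built by a one-edge-at-a-time exploration from a single vertex $v_0$, and every $S$-chain we later attach has both of its endpoints in $S$, so adding its interior vertices cannot produce a new connected component. Consequently $e(A'_{max}) - |A'_{max}| = -1 + \beta_1(A'_{max})$, and the task reduces to bounding the first Betti number of $A'_{max}$ from below.

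For the lower bound on $\beta_1$, by Corollary~\ref{cor 5.28} we obtain a.a.s.\ a family of
$N \ge \tfrac{\delta^2 \varepsilon'^2 n}{2M}\bigl(1 + \tfrac{\gamma'}{\Delta M}\bigr)^{\ell-1}$
vertex-disjoint $S$-chains of length $\ell$ (disjoint because, by the statement of that corollary, each of their vertices belongs to only one length-$\ell$ chain). Start with $G[S]$, which is connected, and add these $N$ chains one at a time. A chain of length $\ell$ contributes $\ell-1$ new interior vertices together with $\ell$ new edges, and since its two endpoints lie in the already-connected set $S$, it increases the number of independent cycles by exactly one while keeping the graph connected. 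Hence after inserting all $N$ chains we have $e - |V| \ge -1 + N$.

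Next I want to add the remaining vertices of $A'_{max}$ (the interiors of shorter chains, and any further interior vertices of length-$\ell$ chains not in the disjoint family) without losing surplus. I would add them in BFS order starting from $S$: every such vertex is interior to some $S$-chain and therefore has a path to $S$ lying entirely inside $A'_{max}$, so when its turn comes at least one of its $A'_{max}$-neighbours (the predecessor on this path) is already present. Thus each insertion increases $|V|$ by $1$ and $e$ by at least $1$, so $e - |V|$ is monotone non-decreasing and we conclude
\[
e(A'_{max}) - |A'_{max}| \ge N - 1
\]
a.a.s.

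It remains to choose parameters. By Lemma~\ref{lem 5.26} together with the concentration from Observation~\ref{ob: concentration}, a.a.s.\ the number of interior vertices added to $S$ is at most $2c_\ell \varepsilon'^2 n$, so $|A'_{max}| \le \varepsilon' n + 2 c_\ell \varepsilon'^2 n$. For $\varepsilon' \le 1/(2c_\ell)$ this gives $|A'_{max}| \le 2\varepsilon' n$ and hence $|A'_{max}|^2/n \le 4 \varepsilon'^2 n$. The required inequality rearranges to $e(A'_{max}) - |A'_{max}| \ge C |A'_{max}|^2/n$, so it is enough to have $N - 1 \ge 4C \varepsilon'^2 n$. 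Since $\delta, M, \gamma', \Delta$ depend only on $\boldsymbol{p}$ and $1 + \gamma'/(\Delta M) > 1$, the factor $\bigl(1 + \gamma'/(\Delta M)\bigr)^{\ell-1}$ in the lower bound on $N$ is unbounded in $\ell$; picking $\ell$ large enough in terms of $C$ and then $\varepsilon'$ small enough in terms of $\ell$ (in particular $\varepsilon' \le 1/(2c_\ell)$ and also small enough for Corollary~\ref{cor 5.28} and Observation~\ref{ob: concentration} to apply) makes this inequality hold for all sufficiently large $n$, proving the lemma. The only mildly delicate point is the BFS monotonicity argument, which ensures that the extra vertices forced into $A'_{max}$ by shorter or overlapping chains cannot undo the gain of $N$ extra cycles contributed by the disjoint family.
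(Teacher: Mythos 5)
Your proof is correct and follows essentially the same route as the paper: the surplus $e(A'_{max})-|A'_{max}|$ is bounded below by the number of internally disjoint length-$\ell$ chains supplied by Corollary~\ref{cor 5.28}, the order $|A'_{max}|$ is bounded above via Lemma~\ref{lem 5.26} and Observation~\ref{ob: concentration}, and then $\ell$ is taken large and $\varepsilon'$ small so that $\frac{\delta^2}{2M}\left(1+\frac{\gamma'}{\Delta M}\right)^{\ell-1}$ dominates $C$. The only difference is that you spell out the cycle-surplus bookkeeping (connectedness of $S$, each chain contributing $\ell$ edges but only $\ell-1$ vertices, and the remaining added vertices each bringing at least one edge), which the paper leaves implicit.
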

\begin{proof}
We fix a positive integer $\ell$ that we specify in the sequel. Fix also $C > 0$. By Corollary~\ref{cor 5.28} there are a.a.s.\ at least $\dfrac{\delta^2 \varepsilon'^2 n}{2M}\left(1 + \dfrac{\gamma'}{\Delta M}\right)^{\ell - 1}$ $S-$chains of length at most $\ell$ with vertices in only one chain of this type. On the other hand, $A'_{max}$ contains a.a.s.\ at most $\varepsilon'n + c_{\ell}\varepsilon'^2 n$ vertices by Lemma~\ref{lem 5.26} and Observation~\ref{ob: concentration} (recall that $A'_{max}$ contains the set $S$ together with all $S$--chains of length at most $\ell$). We conclude that for $\varepsilon' \to 0$, a.a.s. 
\begin{equation*}
    \dfrac{n(e(A'_{max}) - |A'_{max}|)}{|A'_{max}|^2} \ge \dfrac{\dfrac{\delta^2 \varepsilon'^2 n^2}{2M}\left(1 + \dfrac{\gamma'}{\Delta M}\right)^{\ell - 1}}{(\varepsilon' (1 + O_{\varepsilon'}(\varepsilon'))^2 n^2}.
\end{equation*}
Now, taking the limits, first, with respect to $n\to +\infty$, and then, with respect to $\varepsilon' \to 0$, we conclude that it is sufficient to choose $\ell$ such that $\dfrac{\delta^2}{2M} \left(1 + \dfrac{\gamma'}{\Delta M}\right)^{\ell-1} > C$. The lemma is proved.
\end{proof}

\begin{corollary}\label{cor critical}
The conclusion of Lemma~\ref{main lemma supercritical} holds a.a.s.\ also when replacing $C'_{cm, max}$ by $C'_{max}$.
\end{corollary}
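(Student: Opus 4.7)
The plan is to transfer the conclusion of Lemma~\ref{main lemma supercritical} from the configuration-model graph $C'_{cm, max}$ to the true $2$-core $C'_{max}$ by exploiting Lemma~\ref{lem: uniform}, which identifies (conditionally on the degree sequence) the distribution of $C'_{max}$ with that of $C'_{cm, max}$ conditioned on being connected. To transfer an asymptotically almost sure event across this conditioning I would apply Corollary~\ref{cor 5.20}, whose only hypothesis is that the degree sequence satisfies $\limsup_n d'_2(n)/|V(C'_{cm, max})| < 1$.

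First I would verify this hypothesis on the random degree sequence $(d'_i(n))_{2\le i\le \Delta}$ of $C'_{max}$. By Corollary~\ref{cor 5.18}, asymptotically almost surely at least $2\gamma' n/\Delta$ vertices of $C'_{max}$ have degree at least three. Since $|V(C'_{max})|\le n$, the proportion of degree-$2$ vertices in $C'_{max}$ is a.a.s.\ bounded above by $1-2\gamma'/\Delta$, which is a positive constant strictly less than $1$ depending only on $\boldsymbol{p}$ and $\Delta$. Consequently, on the a.a.s.\ event that the random degree sequence of $C'_{max}$ satisfies this bound, Lemma~\ref{connectivity 1} applies and the conditional probability that $C'_{cm, max}$ is connected is bounded below by a positive constant uniformly in $n$.

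Second, fix $C>0$ together with the corresponding $\ell$ and $\varepsilon'$ provided by Lemma~\ref{main lemma supercritical}, and set
\begin{equation*}
E_n := \left\{ \dfrac{e(A'_{max})}{|A'_{max}|} \ge 1 + C\dfrac{|A'_{max}|}{n} \right\},
\end{equation*}
where $A'_{max}$ is obtained via the exploration procedure described before Observation~\ref{ob 5.22}. Since the construction of $A'_{max}$ is a measurable functional of the underlying graph together with an independent uniform initialisation, its law is entirely determined by the law of the graph. Lemma~\ref{main lemma supercritical} asserts that $E_n$ holds a.a.s.\ under the unconditional configuration-model law of $C'_{cm, max}$, and Corollary~\ref{cor 5.20} (applicable on the a.a.s.\ event from the previous paragraph) then transfers this conclusion to the law of $C'_{cm, max}$ conditioned on being connected. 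Lemma~\ref{lem: uniform} identifies this conditional law with the law of $C'_{max}$, finishing the proof.

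The only minor technicality, and the main point to be careful about, is that the degree sequence of $C'_{max}$ is itself random, so one must apply Corollary~\ref{cor 5.20} conditionally on the degree sequence and then integrate against its distribution. The a.a.s.\ event provided by Corollary~\ref{cor 5.18} ensures the connectedness hypothesis holds for typical degree sequences, and combined with the uniform positive lower bound on the probability of connectedness, this makes the transfer of the a.a.s.\ statement survive the averaging over the random degree sequence.
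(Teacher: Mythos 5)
Your proposal is correct and follows essentially the same route as the paper, which simply cites Corollary~\ref{cor 5.20} (whose application implicitly rests on Lemma~\ref{lem: uniform}, Lemma~\ref{connectivity 1} and the degree bound from Corollary~\ref{cor 5.18}, exactly as you spell out). The extra care you take with the randomness of the degree sequence is a reasonable elaboration of what the paper leaves implicit, not a different argument.
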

\begin{proof}
This is a direct consequence of Corollary~\ref{cor 5.20}.
\end{proof}

A \textit{smoothing} of a vertex of degree 2 in a graph consists in deleting the vertex and then joining its two neighbors by an edge. In particular, we might obtain a multigraph: if the two neighbors are already connected by one or more edges, the number of edges between them increases by 1 after the smoothing. A \textit{contraction of an edge} in a graph consists in deleting the edge and identifying its endvertices. Remark that smoothing of a vertex $v$ of degree 2 is equivalent to contracting any of the edges incident with $v$. Let $G'$ be a uniform random graph on $(d'_i)_{2\le i\le \Delta}$ vertices of degree $(i)_{2\le i\le \Delta}$, respectively. 

\begin{lemma}\label{decycling lemma}
By sampling $G'$, smoothing all vertices of degree 2 and deleting isolated vertices incident to at most one loop, we generate a uniform random graph $G''$ on $(d'_i)_{3\le i\le \Delta}$ vertices of degree $(i)_{3\le i\le \Delta}$, respectively.
\end{lemma}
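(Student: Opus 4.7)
The plan is to prove this by a direct enumeration argument: for every fixed target $G''$ on the degree sequence $(d'_i)_{3 \le i \le \Delta}$, I will show that the number of configurations of $G'$ that smooth (and then have their degree-2 cycle components discarded) to exactly $G''$ depends only on the full degree sequence $(d'_i)_{2 \le i \le \Delta}$ and not on which particular $G''$ is chosen. Given that $G'$ is uniform, this will establish that $G''$ is uniform on configurations with the reduced degree sequence.

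Structurally, a preimage $G'$ of $G''$ is obtained in two independent parts: (i) inserting into each edge $e$ of $G''$ a chain of $k_e \ge 0$ degree-2 vertices whose half-edges are matched in sequence along the chain, and (ii) using the remaining $k_C = d'_2 - \sum_e k_e$ degree-2 vertices to form an arbitrary 2-regular configuration on their $2 k_C$ half-edges (this 2-regular piece smooths to a disjoint union of vertices each carrying at most one loop, which the lemma discards). I will count these two parts and then sum over all allocations.

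For the chain part, given a composition $(k_e)_e$ with $\sum_e k_e = d'_2 - k_C$, the multinomial coefficient $\binom{d'_2}{k_C, k_{e_1}, \ldots, k_{e_{m''}}}$ counts the assignments of labeled degree-2 vertices to the cycle pool and to each individual edge, and for each edge $e$ there are $k_e!$ orderings of its chain and $2^{k_e}$ choices for which half-edge of each chain vertex points towards the next one. The product simplifies to $\dfrac{d'_2!}{k_C!} \cdot 2^{d'_2 - k_C}$, which, crucially, does not depend on the individual $k_e$'s. For the cycle part, any perfect matching on the $2 k_C$ half-edges produces a 2-regular configuration, and the number of such matchings is $(2 k_C - 1)!!$. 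Summing over all compositions of $d'_2 - k_C$ into $m'' = \tfrac{1}{2}\sum_{i \ge 3} i d'_i$ non-negative parts (there are $\binom{d'_2 - k_C + m'' - 1}{m'' - 1}$ of them) and then over $k_C$, the number of preimages of $G''$ equals
\begin{equation*}
\sum_{k_C = 0}^{d'_2} \frac{d'_2!}{k_C!} \cdot 2^{d'_2 - k_C} \cdot (2 k_C - 1)!! \cdot \binom{d'_2 - k_C + m'' - 1}{m'' - 1},
\end{equation*}
which depends only on $d'_2$ and $m''$; both are determined by the full degree sequence, so the count is the same for every choice of $G''$.

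The main delicate step will be the per-edge factor $k_e! \cdot 2^{k_e}$: one has to check case-by-case that no overcounting occurs when $e$ is a loop of $G''$, or when one suspects that the simultaneous reversal of ordering and orientation along a chain might yield the same configuration. A direct inspection shows that, because all half-edges are distinctly labeled, every combination of an ordering of the chain and a choice of orientation for each chain vertex gives a distinct perfect matching on the half-edges involved; the same remark justifies the formula $(2 k_C - 1)!!$ for the cycle contribution, which can otherwise be derived from the exponential generating function identity $\exp\bigl(-\tfrac{1}{2}\log(1 - 2x)\bigr) = (1 - 2x)^{-1/2}$.
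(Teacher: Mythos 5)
Your proposal is correct, but it takes a genuinely different route from the paper. The paper proves the lemma by an explicit coupling of the two configuration models: it generates the matching of the points of $G'$ sequentially, and each time the exposure of a (possibly empty) chain of degree-two vertices starting at a point of a degree-$\ge 3$ vertex terminates at another such point, it matches the corresponding pair of points of $G''$; the degree-two points left over at the end are matched uniformly among themselves and correspond exactly to the cycle components that get smoothed and discarded. You instead fix a target configuration $G''$ and count its preimages directly, obtaining $\sum_{k_C=0}^{d'_2}\frac{d'_2!}{k_C!}\,2^{\,d'_2-k_C}\,(2k_C-1)!!\,\binom{d'_2-k_C+m''-1}{m''-1}$, which depends only on $d'_2$ and $m''=\tfrac12\sum_{i\ge 3} i d'_i$ and hence is the same for every target; this is a valid and self-contained argument, and it even admits a cheap cross-check, since summing your count over the $(2m''-1)!!$ possible targets must (and does) return $(2(m''+d'_2)-1)!!$, confirming in particular that the per-edge factor $k_e!\,2^{k_e}$ carries no hidden factor of two. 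Two small points to make the write-up airtight: first, fix once and for all a traversal direction on each matched pair of $G''$ (its two endpoints are distinct labeled points, also when the pair forms a loop); then the correspondence between (ordering of the chain read from the chosen first endpoint, orientation of each chain vertex) and chain matchings is an honest bijection, which disposes of both the reversal and the loop worries in one stroke, rather than by case inspection. Second, be explicit that the statement you prove is uniformity at the level of configurations (labeled half-edge matchings); this matches the paper, whose own proof is also a statement about configurations and which applies the lemma to configuration-model multigraphs, so no additional automorphism bookkeeping is required. In exchange for the extra summation, your approach yields an exact preimage count; the paper's coupling is shorter, avoids enumeration entirely, and is better adapted to the exploration-style arguments used elsewhere in that section.
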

\begin{proof}
Let $(Q_{s,t,j})_{1\leq s\leq j, 1\leq t\leq d'_j, 2\le j\le \Delta}$ be the points of the matching at the origin of the configuration model for $G'$. Let also $(R_{s,t,j})_{1\leq s\leq i, 1\leq t\leq d'_j, 3\le j\le \Delta}$ be the points of the matching at the origin of the configuration model for the random graph $G''$. We present a coupling between the probability space of the matchings of $(Q_{s,t,j})_{1\leq s\leq j, 1\leq t\leq d'_j, 2\le j\le \Delta}$ and of $(R_{s,t,j})_{1\leq s\leq j, 1\leq t\leq d'_j, 3\le j\le \Delta}$. We perform the following algorithm generating the graphs $G'$ and $G''$ at the same time.
\begin{enumerate}
    \item Choose an arbitrary point $Q_{s', t', j'}$ with $1\leq s'\leq j, 1\leq t'\leq d'_j, 3\le j'\le \Delta$ (if it exists, if not, go to point 5.) that has not been matched yet. Prepare to match the point $R_{s', t', j'}$. 
    \item Match $Q_{s', t', j'}$ with some unmatched point $Q = Q_{s'', t'', j''}$ among $(Q_{s,t,j})_{1\leq s\leq j, 1\leq t\leq d'_j, 2\le j\le \Delta}$.
    \item If $j'' \ge 3$, match $R_{s', t', j'}$ and $R_{s'', t'', j''}$. Then, return to 1.
    \item If $j'' = 2$, then keep the point $R_{s', t', j'}$ waiting and perform 2. with $Q_{3 - s'', t'', j''}$ instead of $Q_{s', t', j'}$.
    \item Match all points among $(Q_{s,t,j})_{1\leq s\leq j, 1\leq t\leq d'_j, 2\le j\le \Delta}$ that remain unmatched uniformly at random.
\end{enumerate}
\end{proof}

\begin{lemma}\label{lem 5.31}
For every small enough $\varepsilon' > 0$ there exists a constant $c > 0$ such that the graph $C'_{cm, max}\setminus A'_{max}$ contains at most $c\varepsilon'^2 n$ connected components a.a.s.
\end{lemma}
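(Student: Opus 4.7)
The plan is to control the number of connected components of $C'_{cm,max}\setminus A'_{max}$ by viewing the induced subgraph $C'_{cm,max}[V\setminus A'_{max}]$ as a perturbation of a configuration model. Since $|A'_{max}| = O(\varepsilon' n)$ is a small fraction of the $2$-core, the induced subgraph should have one giant component plus few small ones. The components containing a cycle will contribute at most $O(\log n)$ via a Molloy--Reed argument, while the tree components will contribute $O(\varepsilon'^2 n)$ via a first-moment bound.

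Conditioning on the edges inside $A'_{max}$ and on the matching of the boundary half-edges of $A'_{max}$ to half-edges in $V\setminus A'_{max}$, the graph $C'_{cm,max}[V\setminus A'_{max}]$ is distributed as a configuration model on the residual degree sequence (each vertex's residual degree being its $C'_{cm,max}$-degree minus the number of its edges to $A'_{max}$). I would then pass to its $2$-core, which by a coupling argument analogous to Lemma~\ref{decycling lemma} is itself uniform on its (random) degree sequence. For $\varepsilon'$ sufficiently small, this degree sequence satisfies the hypotheses of Lemma~\ref{connectivity}: minimum degree $2$ by construction, and a strictly positive fraction of vertices of degree $\geq 3$ inherited from Corollary~\ref{cor 5.18}, since only $O(\varepsilon' n)$ vertices have their degree modified by the boundary. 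Hence the $2$-core has at most $O(\log n)$ connected components a.a.s., accounting for all cycle-containing components of $C'_{cm,max}[V\setminus A'_{max}]$.

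To bound the tree components of the induced subgraph I would apply a first-moment calculation. A tree $T$ on $k$ vertices is a component precisely when all its external half-edges (those at vertices of $T$ not used for the edges of $T$) are matched inside $A'_{max}$. The number of such external half-edges equals $\text{ext}(T) = \vol(T) - 2(k-1) \geq 2$, where $\vol(T)$ is computed in $C'_{cm,max}$ and the inequality uses that $C'_{cm,max}$ has minimum degree $2$. In the configuration model each external half-edge matches to $A'_{max}$ with probability $\Theta(\varepsilon')$, so the probability that a given tree shape is a component is at most $(c_1 \varepsilon')^{\text{ext}(T)}$. The dominant contribution comes from paths of $k$ degree-$2$ vertices, with expected count $\Theta(n \varepsilon'^2 q^k)$ for $q = 2d'_2(n)/D'(n) < 1$ bounded away from $1$ by Corollary~\ref{cor 5.18}; summing over $k$ gives $O(\varepsilon'^2 n)$. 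Trees involving vertices of degree $\geq 3$ satisfy $\text{ext}(T) \geq 3$ and contribute at most $O(\varepsilon'^3 n)$. A standard second-moment argument transfers this bound to an a.a.s.\ statement via Chebyshev's inequality. The main obstacle is verifying that the $2$-core of the induced subgraph inherits a configuration-model distribution after the conditioning, and establishing the geometric decay $q^k$ for paths of degree-$2$ vertices that makes the first-moment series summable to $O(\varepsilon'^2 n)$ rather than $O(\varepsilon' n)$.
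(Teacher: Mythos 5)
Your proposal is correct in substance and reaches the same dominant term, but by a genuinely different route than the paper. The paper first smooths all degree-two vertices of $C'_{cm,max}$ (Lemma~\ref{decycling lemma}) so as to work with a random graph of minimum degree three, invokes its a.a.s.\ $3$-connectivity (\cite{FernholzRama}) to argue that every component of the complement containing a branch vertex sends at least three edges into $A''_{max}$, and then splits the components of the graph induced by $E(C'_{cm,max})\setminus E(A'_{max})$ into $A'_{max}$-chains (paths of degree-two vertices attached at both ends to $A'_{max}$), which give the dominant $C_2\varepsilon'^2 n$ term, and branch-vertex components, which give $O(\varepsilon'^3 n)$ plus $o(n)$ terms for large or cyclic pieces. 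You instead (i) dispose of the cyclic components of the induced subgraph by passing to its $2$-core and applying Lemma~\ref{connectivity}, and (ii) replace $3$-connectivity by the elementary half-edge count: a tree component $T$ has exactly $\vol(T)-2(|T|-1)$ half-edges forced into $A'_{max}$, which is at least $3$ as soon as $T$ contains a vertex of degree at least three and equals $2$ precisely for paths of degree-two vertices; these paths are exactly the paper's $A'_{max}$-chains and give your $\Theta(\varepsilon'^2 n)$ main term, everything else being $O(\varepsilon'^3 n)$. Your route is more elementary (no smoothing, no appeal to \cite{FernholzRama}); the paper's classification is slightly heavier but gives it the $3$-edge attachment property for \emph{all} branch-vertex components, cyclic ones included, in one stroke. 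If you write your version up, three points need care: the uniformity of the $2$-core of the residual graph given its degree sequence is an extension-counting argument in the spirit of Lemma~\ref{lem: uniform} rather than the coupling of Lemma~\ref{decycling lemma}, and the hypothesis of Lemma~\ref{connectivity} must be verified for that $2$-core rather than for the induced subgraph itself --- this works because deleting vertices of degree at most one does not decrease $e(H)-|H|\ge \gamma' n/2$, so the $2$-core retains a positive proportion of vertices of degree at least three; in the first-moment bound you should condition only on which half-edges of $A'_{max}$ remain unmatched (not on their matching), so that the residual pairing is a configuration model and the $(c_1\varepsilon')^{\vol(T)-2(|T|-1)}$ factor is legitimate; and Markov's inequality alone does not yield an a.a.s.\ bound with a fixed constant, so the second-moment step you mention is genuinely needed (the paper does the same).
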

\begin{proof}
By Lemma~\ref{connectivity}, $C'_{cm, max}$ contains a.a.s.\ a giant connected component which contains all but at most $\log n$ vertices (hence at most $\log n + 1$ connected components). Let $C''_{cm, max}$ be obtained by smoothing $C'_{cm, max}$ and deleting isolated vertices (that have loops attached to them) and let $A''_{max}$ be the subgraph of $C''_{cm, max}$ obtained from $A'_{max}$ by these operations. Now, perform the following procedure:
\begin{enumerate}
    \item Expose all edges going out of $A''_{max}$.
    \item For every connected component on the set of vertices in $C''_{cm, max}\setminus A''_{max}$, do the following: if this component is incident to only one unexposed half-edge, expose this half-edge. Repeat this procedure as long as such components on the set of currently exposed edges exist (but do not add them to $A''_{max}$; see Figure~\ref{A'_max} for an illustration before the smoothing).
    \item Delete the graph $A''_{max}$ and contract all connected components on the set of exposed edges. The graph that remains consists of only unexposed half-edges and all vertices are of degree different from 1.
\end{enumerate}

Here smoothing the graph $C'_{cm, max}$ is done to avoid the exploration of long paths of degree 2 and to a.a.s.\ ensure higher connectivity of the $2-$core. Another reason is that random graphs of minimal degree 3 are a.a.s.\ $3-$connected, which does not hold for random graphs of minimal degree 2.\par 

Consider the set of non-isolated vertices in $C''_{cm,max}$. Since the set of unexposed half-edges is matched according to the configuration model on a graph of minimal degree 2 and a positive proportion of vertices of degree at least 3 (this is ensured by Observation~\ref{ob 5.22} by choosing $\varepsilon'$ small enough), $C''_{cm, max}$ contains a.a.s.\ at most $\log n$ connected components, different from the giant component, by Lemma~\ref{connectivity}.\par

On the other hand, the set of isolated vertices in $C''_{cm, max}$ corresponds to connected components of exposed edges, which remain disconnected from the rest of the graph $C''_{cm, max}$ after deletion of $A''_{max}$.
Note that any such component of $C''_{cm,max} \setminus A''_{max}$ is a.a.s. connected by at least $3$ edges to $A''_{max}$ since $C''_{max}$ has minimum degree $3$ and by (\cite{FernholzRama}, Theorem 5.1) it is a.a.s.\ $3-$connected (note that our sequence is $2-$smooth in the terminology of~\cite{FernholzRama}, since $\lim_{n \to \infty} d_i(n)/n = p_i$ with $d_i(n) = 0$ for every $i\ge \Delta + 1$ and $n\in \mathbb N$ and this applies then also to the degree sequence of $C''_{max}$).\par

Define $C'_{del}$ to be the graph induced by $E(C'_{cm, max})\setminus E(A'_{max})$, and define $C''_{del}$ to be the graph induced by $E(C''_{cm, max})\setminus E(A''_{max})$. Observe that both $C'_{del}$ and $C''_{del}$ follow a configuration model. Set $C_{\Delta} = \dfrac{\Delta^{\frac{\ell}{2}+1} - 1}{\Delta - 1}$.
\begin{itemize}
    \item We saw that at most $\log n$ vertices of $C'_{cm,max}$ are not in the connected component of $A'_{max}$ a.a.s.
    \item The connected components in $C'_{del}$ are obtained from subdivisions of connected components in $C''_{del}$. If one such connected component contains a vertex of $C''_{cm, max}\setminus A''_{max}$, by $3-$connectivity of $C''_{cm, max}$ (see~\cite{FernholzRama}, Theorem 5.1) it connects to $A'_{max}$ using at least 3 edges. The remaining analysis is therefore similar to the analysis given in Subsection~\ref{subsec 5.1}. In particular, we have:
    \begin{itemize}
        \item the number of such components of order more than $\sqrt{\log n}$ is at most $o(n)$;
        \item as before, the number of components of order at most $\sqrt{\log n}$ containing a cycle is $o(n)$ by a first moment calculation similar to the one in Lemma~\ref{Number of copies} for trees;
        \item the expected number of acyclic components of order at most $\sqrt{\log n}$ is $C_1\varepsilon'^3 n$ for some constant $C_1 = C_1(\boldsymbol{p}, \Delta) > 0$. Indeed, all components of this type are connected by at least 3 edges to $A'_{max}$ and the number of open half-edges incident to $A'_{max}$ is at most $\Delta C_{\Delta}\varepsilon' n$. Moreover, the number of such components is concentrated around its expected value, which may be seen once again by a second moment method applied analogously to the proof of Lemma~\ref{Number of copies} in the subcritical case.
    \end{itemize}
On the other hand, if the connected component of $C'_{del}$ does not contain a vertex from $C''_{cm,max} \setminus A''_{max}$, we may obtain an $A'_{max}$-chain. Summing over all possible lengths of such $A'_{max}-$chains and applying the second moment method again analogously to the proof of Lemma~\ref{Number of copies} we obtain that there is a constant $C_2 = C_2(\boldsymbol{p}, \Delta) > 0$, such that the number of $A'_{max}-$chains is concentrated around its expected value, which is $C_2\varepsilon'^2 n$. Indeed, as above, all chains of this type are connected by exactly two edges to $A'_{max}$ and the number of open half-edges incident to $A'_{max}$ is at most $\Delta C_{\Delta}\varepsilon' n$. 
\end{itemize}
Thus, for every small enough $\varepsilon' > 0$ one may define $c = 2C_2$. This proves the lemma.
\end{proof}

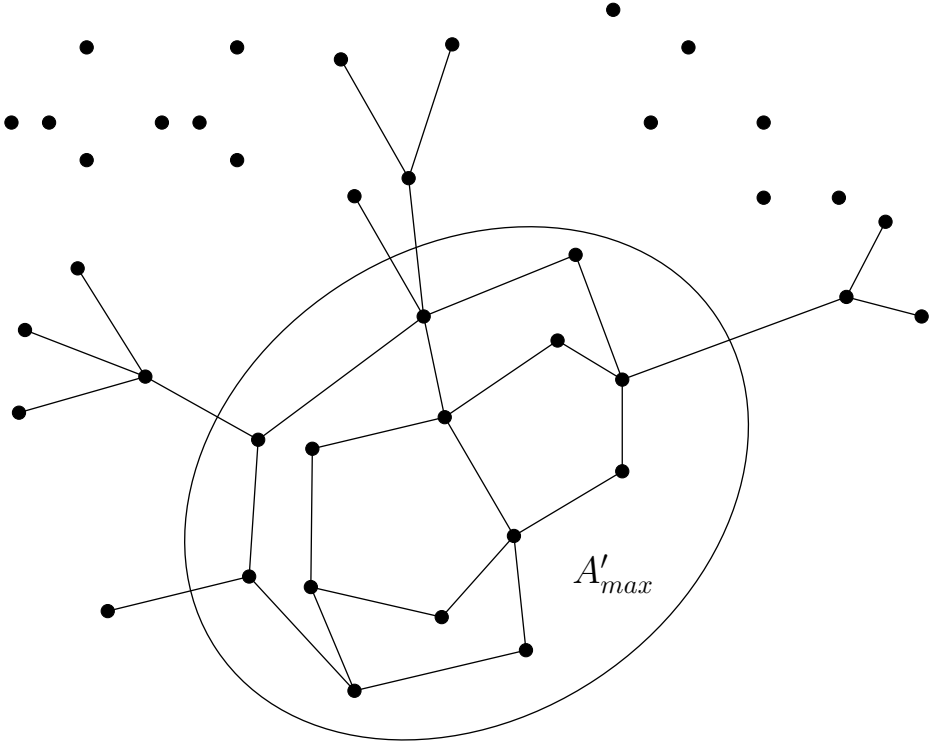
\begin{figure}
\centering
\begin{tikzpicture}[line cap=round,line join=round,x=1cm,y=1cm]
\clip(-8.5,-6) rectangle (8,4.6);
\draw [rotate around={33.69006752597976:(0.05,-1.8)},line width=0.5pt] (0.05,-1.8) ellipse (3.992680696373897cm and 3.122851124084646cm);
\draw [line width=0.5pt] (-2.02,-3.18)-- (-0.28,-3.58);
\draw [line width=0.5pt] (-0.28,-3.58)-- (0.68,-2.5);
\draw [line width=0.5pt] (0.68,-2.5)-- (-0.24,-0.92);
\draw [line width=0.5pt] (-0.24,-0.92)-- (-2,-1.34);
\draw [line width=0.5pt] (-2,-1.34)-- (-2.02,-3.18);
\draw [line width=0.5pt] (-2.02,-3.18)-- (-1.44,-4.56);
\draw [line width=0.5pt] (-1.44,-4.56)-- (0.84,-4.02);
\draw [line width=0.5pt] (0.84,-4.02)-- (0.68,-2.5);
\draw [line width=0.5pt] (0.68,-2.5)-- (2.12,-1.64);
\draw [line width=0.5pt] (2.12,-1.64)-- (2.12,-0.42);
\draw [line width=0.5pt] (2.12,-0.42)-- (1.26,0.1);
\draw [line width=0.5pt] (1.26,0.1)-- (-0.24,-0.92);
\draw [line width=0.5pt] (-0.24,-0.92)-- (-0.52,0.42);
\draw [line width=0.5pt] (-0.52,0.42)-- (1.5,1.24);
\draw [line width=0.5pt] (1.5,1.24)-- (2.12,-0.42);
\draw [line width=0.5pt] (-0.52,0.42)-- (-2.72,-1.22);
\draw [line width=0.5pt] (-2.72,-1.22)-- (-2.84,-3.04);
\draw [line width=0.5pt] (-2.84,-3.04)-- (-1.44,-4.56);
\draw [line width=0.5pt] (-0.52,0.42)-- (-1.44,2.02);
\draw [line width=0.5pt] (-0.52,0.42)-- (-0.72,2.26);
\draw [line width=0.5pt] (-0.72,2.26)-- (-1.62,3.84);
\draw [line width=0.5pt] (-0.72,2.26)-- (-0.14,4.04);
\draw [line width=0.5pt] (-2.72,-1.22)-- (-4.22,-0.38);
\draw [line width=0.5pt] (-4.22,-0.38)-- (-5.12,1.06);
\draw [line width=0.5pt] (-4.22,-0.38)-- (-5.82,0.24);
\draw [line width=0.5pt] (-4.22,-0.38)-- (-5.9,-0.86);
\draw [line width=0.5pt] (-2.84,-3.04)-- (-4.72,-3.5);
\draw [line width=0.5pt] (2.12,-0.42)-- (5.1,0.68);
\draw [line width=0.5pt] (5.1,0.68)-- (5.62,1.68);
\draw [line width=0.5pt] (5.1,0.68)-- (6.1,0.42);
\begin{scriptsize}
\draw [fill=black] (-2.02,-3.18) circle (2.5pt);
\draw [fill=black] (2.12,-0.42) circle (2.5pt);
\draw [fill=black] (-0.28,-3.58) circle (2.5pt);
\draw [fill=black] (0.68,-2.5) circle (2.5pt);
\draw [fill=black] (-0.24,-0.92) circle (2.5pt);
\draw [fill=black] (-2,-1.34) circle (2.5pt);
\draw [fill=black] (-1.44,-4.56) circle (2.5pt);
\draw [fill=black] (0.84,-4.02) circle (2.5pt);
\draw [fill=black] (2.12,-1.64) circle (2.5pt);
\draw [fill=black] (1.26,0.1) circle (2.5pt);
\draw [fill=black] (-0.52,0.42) circle (2.5pt);
\draw [fill=black] (1.5,1.24) circle (2.5pt);
\draw [fill=black] (-2.72,-1.22) circle (2.5pt);
\draw [fill=black] (-2.84,-3.04) circle (2.5pt);
\draw [fill=black] (-1.44,2.02) circle (2.5pt);
\draw [fill=black] (-0.72,2.26) circle (2.5pt);
\draw [fill=black] (-1.62,3.84) circle (2.5pt);
\draw [fill=black] (-0.14,4.04) circle (2.5pt);
\draw [fill=black] (-4.22,-0.38) circle (2.5pt);
\draw [fill=black] (-5.12,1.06) circle (2.5pt);
\draw [fill=black] (-5.82,0.24) circle (2.5pt);
\draw [fill=black] (-5.9,-0.86) circle (2.5pt);
\draw [fill=black] (-4.72,-3.5) circle (2.5pt);
\draw [fill=black] (5.1,0.68) circle (2.5pt);
\draw [fill=black] (5.62,1.68) circle (2.5pt);
\draw [fill=black] (6.1,0.42) circle (2.5pt);
\draw [fill=black] (5,2) circle (2.5pt);
\draw [fill=black] (4,2) circle (2.5pt);
\draw [fill=black] (4,3) circle (2.5pt);
\draw [fill=black] (3,4) circle (2.5pt);
\draw [fill=black] (3,5) circle (2.5pt);
\draw [fill=black] (2,4.5) circle (2.5pt);
\draw [fill=black] (2.5,3) circle (2.5pt);
\draw [fill=black] (-2,5) circle (2.5pt);
\draw [fill=black] (-3,4) circle (2.5pt);
\draw [fill=black] (-3.5,3) circle (2.5pt);
\draw [fill=black] (-3,2.5) circle (2.5pt);
\draw [fill=black] (-4,3) circle (2.5pt);
\draw [fill=black] (-2-3,4) circle (2.5pt);
\draw [fill=black] (-3-2.5,3) circle (2.5pt);
\draw [fill=black] (-3-2,2.5) circle (2.5pt);
\draw [fill=black] (-2-4,3) circle (2.5pt);
\draw[color=black] (2,-3) node {\Large $A'_{max}$};
\end{scriptsize}
\end{tikzpicture}
\caption{The component $A'_{max}$, the set of explored vertices and edges in $C'_{max}$ and a number of vertices, non-explored throughout the construction of $A'_{max}$ (these are shown as isolated in the figure).}
\label{A'_max}
\end{figure}

\begin{corollary}\label{cor 5.30}
The conclusion of Lemma~\ref{lem 5.31} holds for $C'_{max}$ instead of $C'_{cm, max}$ as well. 
\end{corollary}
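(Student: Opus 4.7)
The plan is to transfer the conclusion of Lemma~\ref{lem 5.31} from the configuration model $C'_{cm,max}$ to the actual $2$-core $C'_{max}$ using the same contiguity argument that was used to deduce Corollary~\ref{cor critical} from Lemma~\ref{main lemma supercritical}.

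First, I would recall that by Lemma~\ref{lem: uniform}, conditionally on the vertex set $V(C'_{max})$ and on the degree sequence $(d'_i(n))_{2 \le i \le \Delta}$, the $2$-core $C'_{max}$ is a uniformly random \emph{connected} graph with this degree sequence. The model $C'_{cm,max}$, by construction, is the configuration model on the same degree sequence without the connectivity constraint. Thus the distribution of $C'_{max}$ is precisely that of $C'_{cm,max}$ conditioned on being connected.

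The key step is then to invoke Corollary~\ref{cor 5.20}, which requires the degree sequence to have $\limsup_{n \to \infty} d'_2(n)/n < 1$. This is exactly what Corollary~\ref{cor 5.18} provides: a positive linear fraction (at least $2\gamma'n/\Delta$) of the vertices of $C'_{cm,max}$ has degree at least three a.a.s., so the proportion of degree-$2$ vertices is bounded away from $1$ a.a.s. On the event that the degree sequence satisfies this bound (which has probability tending to $1$), Lemma~\ref{connectivity 1} yields a uniform positive lower bound on the probability that $C'_{cm,max}$ is connected, and hence Corollary~\ref{cor 5.20} applies. Since by Lemma~\ref{lem 5.31} the event ``$C'_{cm,max} \setminus A'_{max}$ contains at most $c\varepsilon'^2 n$ connected components'' holds a.a.s.\ in $C'_{cm,max}$, it also holds a.a.s.\ conditionally on connectivity, and therefore a.a.s.\ in $C'_{max}$.

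I do not anticipate any real obstacle: all the heavy lifting has already been done in Lemma~\ref{lem 5.31} itself, and the transfer step merely assembles Lemma~\ref{lem: uniform}, Corollary~\ref{cor 5.18}, Lemma~\ref{connectivity 1}, and Corollary~\ref{cor 5.20}. The only minor care needed is that the event being transferred depends on the auxiliary set $A'_{max}$, which is defined from an exploration of $C'_{cm,max}$; but because this exploration and the event it defines are measurable functions of the (random) graph on the fixed degree sequence, the contiguity argument of Corollary~\ref{cor 5.20} applies verbatim.
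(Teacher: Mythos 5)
Your proposal is correct and follows exactly the paper's route: the paper's proof of this corollary is simply an appeal to Corollary~\ref{cor 5.20}, which is precisely the transfer-by-conditioning-on-connectivity argument you spell out (with the supporting facts from Lemma~\ref{lem: uniform}, Corollary~\ref{cor 5.18} and Lemma~\ref{connectivity 1} that justify its applicability). Your more detailed justification, including the measurability remark about $A'_{max}$, is a sound elaboration of the same one-line argument.
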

\begin{proof}
This follows directly from Corollary~\ref{cor 5.20}.
\end{proof}

\begin{proof}[Proof of Lemma~\ref{new main lemma}]
If $p_1 = 0$, then $C_{max} = C'_{max}$ and the claim is a direct consequence of Lemma~\ref{main lemma supercritical}.\par 
By Lemma~\ref{main lemma supercritical} we have that for every positive constant $C$ there exists $\varepsilon_0 > 0$, such that, for every $\varepsilon' \in (0,\varepsilon_0)$, the set of vertices $A'_{max}$ satisfies a.a.s.
\begin{equation*}
 \dfrac{e(A'_{max})}{|A'_{max}|}\ge \left(1 + \dfrac{C|A'_{max}|}{n}\right).
\end{equation*}
Moreover, by Lemma~\ref{cmp A and A'}, there exists $\alpha > 1$, such that a.a.s.\ $A_{max}$ has size at most $\alpha |A'_{max}|$. Thus, a.a.s. the density of $A_{max}$ satisfies
\begin{equation*}
\dfrac{e(A_{max})}{|A_{max}|} \ge \dfrac{e(A'_{max})+(\alpha-1)|A'_{max}|}{\alpha |A'_{max}|}\ge \dfrac{\alpha+C\dfrac{|A'_{max}|}{n}}{\alpha}\ge 1+\dfrac{C|A_{max}|}{\alpha^2 n}.
\end{equation*}
Now, up to the choice of small enough $\varepsilon' > 0$, one may choose $C$ arbitrarily large. The conclusion of the first part of Lemma~\ref{new main lemma} is satisfied. The conclusion of the second part of Lemma~\ref{new main lemma} follows directly by Lemma~\ref{lem 5.31}.
\end{proof}

\section{Discussion}\label{sec:Discussion}
On the one hand, we showed that in sparse random graphs coming from a supercritical configuration model the modularity is strictly larger than the trivial lower bound obtained by partitioning into connected components. On the other hand, for random $3-$regular graphs we are more precise by quantifying the gain over this trivial bound (which in this case is equal to $2/3$). Without doubt, by adding more stages (that is, by considering longer chains) in the analysis of the lower bound in the case of random $3-$regular graphs, one could improve the lower bound given by Theorem~\ref{LB3reg} by a little bit. The upper bound still being far, we opted for not pushing this to the limit. Needless to say, it would be interesting to find the exact value of the modularity of random $3-$regular graphs, but our methods are not strong enough to determine this value. Another point for further thought is to find an asymptotic expression for $q^*(G(n))$ when $Q = 0$ in Theorem~\ref{LBgeneral} - it seems to us that, as it is often the case, this critical regime may be the most complicated to describe quantitatively.

\section{Acknowledgements}
The authors would like to thank the anonymous referee for useful suggestions and remarks.

\bibliographystyle{plain}
\bibliography{References}

\end{document}